\providecommand{\dy}{\, \mathrm{d} y}
\newcommand{\en}[1]{\left< #1 \right>}
\providecommand{\R}{\mathbb{R}}
\providecommand{\oh}{\frac{1}{2}}
\providecommand{\h}{\mathrm{h}}
\providecommand{\ah}{a_{\mathrm{h}}}
\providecommand{\vt}{\tilde v_\h} 
\providecommand{\ut}{\tilde u_\h}
\providecommand{\PfStart}[1]{\newcounter{#1}\refstepcounter{#1}} 
\providecommand{\PfStep}[2]{ \ifnum\value{#1}=1
\else\medskip\fi{\sc Step }\arabic{#1}\label{#2}\refstepcounter{#1}.} 
\newcommand{\BR}[1]{%
    \IfEqCase{#1}{%
        {1}{\color{red}} 
	{2}{\color{blue}}
	{3}{\color{black}}
	{4}{\color{green}}
	{5}{\color{black}}
	{6}{\color{black}}
    }[\PackageError{BC}{Undefined option to tree: #1}{}]%
}
\newcommand{\ER}{\color{black}}
\DeclareMathOperator*{\argmin}{\arg\!\min}
\newcommand{\Xk}{{X}_k}
\newcommand{\Xke}{{X}_k^{\mathrm{h}}}
\newcommand{\Yke}{{Y}_k^{\mathrm{h}}}
\newcommand{\Rd}{\mathbb{R}^d}
\newcommand{\Y}{{Y}}
\newcommand\Item[1][]{%
  \ifx\relax#1\relax  \item \else \item[#1] \fi
  \abovedisplayskip=0pt\abovedisplayshortskip=0pt~\vspace*{-.4cm}}
\newcommand{\ignore}[1]{}
\newtheorem{proposition}{Proposition}
\newtheorem{theorem}{Theorem}
\newtheorem{remark}{Remark}
\newtheorem{lemma}{Lemma}
\newtheorem{corollary}{Corollary}
\begin{document}

\title{Effective Multipoles in Random media}
\author{Peter Bella}
\thanks{The first author is supported by the German Science Foundation DFG in the context of the Emmy Noether junior research group BE 5922/1-1.}
\address{Mathematisches Institut, Universit\"at Leipzig, Augustusplatz 10, 04109 Leipzig, Germany} \email{bella@math.uni-leipzig.de}
\author{Arianna Giunti}
\address{Max Planck Institute for Mathematics in the Sciences, Inselstrasse 22, 04103 Leipzig, Germany} \email{giunti@mis.mpg.de}
\author{Felix Otto}
\address{Max Planck Institute for Mathematics in the Sciences, Inselstrasse 22, 04103 Leipzig, Germany} \email{otto@mis.mpg.de}

\begin{abstract}

In a homogeneous medium, the far-field generated by a localized source can be expanded in terms of multipoles;
the coefficients are determined by the moments of the localized charge distribution. We show that this structure
survives to some extent for a random medium in the sense of quantitative stochastic homogenization:
In three space dimensions, the effective dipole and quadrupole -- but not the octupole -- can be inferred without
knowing the realization of the random medium far away from the (overall neutral) source and the point of interest. 

Mathematically, this is achieved by using the two-scale expansion to higher order to construct isomorphisms between the
hetero- and homogeneous versions of spaces of harmonic functions that grow at a certain rate, or decay at a certain rate
away from the singularity (near the origin); these isomorphisms crucially respect the natural pairing between growing and decaying harmonic functions
given by the second Green's formula. This not only yields effective multipoles (the quotient of the spaces of
decaying functions) but also intrinsic moments (taken with respect to the elements of the spaces of growing functions). The construction
of these rigid isomorphisms relies on a good 
(and dimension-dependent) control on the higher-order correctors and their flux potentials.
\end{abstract}

\maketitle

\vspace{-.8cm}
\tableofcontents

\vspace{-.8cm}

\section{Mathematical context}

This paper is a contribution to quantitative stochastic homogenization of elliptic equations in divergence form.
More precisely, it provides an estimate of the homogenization error on the level of the gradients in a strong norm,
that is, using a two-scale expansion.
Recently, there has been a lot of activity in providing such error estimates of optimal scaling (in the ratio between
the correlation length and the macroscopic scale) and optimal stochastic integrability (of the random constant in
the error estimate). 
The error estimate provided in this paper is non-standard in two ways: It considers a right-hand side that is localized
(say, near the origin and on the scale of the correlation length) and it provides a pointwise error estimate for 
the gradient (again on the scale of the correlation length) that is increasingly better as one moves away from the support of the right-hand side. Loosely speaking, it can be seen as relating the quenched Green's function to the homogenized Green's function,
on the level of gradients. 
In this sense, this work takes up the
analysis started by the three authors in \cite{BellaGiuntiOttoPCMI}. Going beyond \cite{BellaGiuntiOttoPCMI}, this paper provides a {\it second}-order error analysis.
Moreover, instead of quenched Green's function, this paper is phrased in the language of {\it multipoles} (dipoles, quadrupoles, ...).
Roughly speaking, it establishes that in random heterogeneous media as described by the coefficient field $a$, there is a notion of {\it effective} multipoles (up to
a level that grows with the dimension $d$), and that these may be computed like in a homogeneous medium by evaluating (intrinsic) moments of the right-hand side.

\medskip
This result arises from analyzing the spaces $X_m$ of $a$-harmonic functions on the whole space $\mathbb{R}^d$ that grow at most at rate $m$
and the spaces $Y_k$ of $a$-harmonic functions in an exterior domain that decay at least at rate $(d-2)+k$, and the natural pairing between
these spaces (rather their quotients) that is given by the second Green's formula (an element already implicitly present in \cite{BellaGiuntiOttoPCMI}, but developed to
full strength here). More precisely, we shall construct canonical isomorphisms
between this pair of sequence of spaces on the Riemannian side on the one hand, and the Euclidean side on the other hand. Hence the (deterministic)
core of our analysis in Section \ref{abstract.r} is rather geometric and algebraic in flavor (and is worked out under very general assumptions).
The investigation of the spaces $\{X_m\}$ is classical in geometry and related to Liouville principles (of order $m$); they are finite-dimensional
under general assumptions \cite{ColdingMinicozzi,PeterLi}, they have the same dimension as in the Euclidean case for a periodic medium \cite{AvellanedaLinLiouville},
the same holds true for the stochastic case under mild assumptions \cite{BenjaminiDuminilKozma+} for the case of $m=1$, and by \cite[Corollary 4]{FischerOtto} in conjunction with
\cite[(16) in Theorem 1]{FischerOtto2} for the case of general $m$; under different but somewhat stronger statistical assumptions, a similar result was established in
\cite{AKM.CMP16}. Recently, these Liouville principles have been extended to the degenerate elliptic case \cite{BellaFehrmanOtto} as well as \BR6 the uniformly parabolic case~\cite{BellaChiariniFehrman} \ER under qualitative ergodic assumptions for $m=1$, to Bernoulli percolation \cite{ArmstrongDario} for any $m$, 
and to the parabolic case \cite{ArmstrongBordasMourrat} for any $m$.
The present paper, especially in its abstract Section \ref{abstract.r}, draws a lot from the work of Fischer with the last author \cite{FischerOtto}.

\medskip

This intimate connection between quantitative stochastic homogenization and elliptic regularity, of which Liouville principles are a qualitative
expression, was present from the beginning: Yurinskii used Nash's bounds on the heat kernel \cite{yurinski86} to get some rates, 
Naddaf and Spencer used Meyer's estimate \cite{NaddafSpencer} to get optimal rates in case of low-contrast media, Gloria and the last author used
a combination of both to get optimal rates \cite{GO1,GO2} for any (finite) contrast, see also \cite{GNO2} for a unified approach based
on the semi-group and spectral analysis. Up to then, elliptic regularity theory,
essentially on the level of the celebrated H\"older-theory by de Giorgi, Nash and Moser, has been used as an input. In \cite[Corollary 4]{MarahrensOtto} it was
first worked out by Marahrens and the last author that randomness {\it creates} large-scale regularity on the level of $C^{0,1-}$, in particular
beyond the DeGiorgi-Nash-Moser theory. On the level of periodic homogenization, such a large-scale regularity theory had been developed by Avellaneda and Lin
\cite{AL1} on the basis of qualitative arguments and a Campanato-type iteration, giving rise to a $C^{0,1}$-theory. Replacing the qualitative
argument by a quantitative argument, this approach was extended to the random case by Armstrong and Smart \cite{ArmstrongSmart}. In its essence, this Campanato approach
is independent of the DeGiorgi-Nash-Moser theory and in particular can be applied to systems. Also this paper allows for systems which
is of interest in particular because of the system of linear elasticity. Incidentally, equipped with \cite[Corollary 2]{ConlonGiuntiOtto},
also \cite{MarahrensOtto} extends to systems. The subsequent work of Gloria, Neukamm
and the third author \cite{GNO4} refined this Campanato-approach by working with the ``flux corrector'' $\sigma$ (or vector potential of the flux), 
see (\ref{i5}), which plays a crucial technical role in the present paper, and which is also known from periodic homogenization \cite[p.27]{JikovKozlovOleinik}.

\medskip

By now, there are several results on optimal error estimates in stochastic homogenization. Based on some of the above-mentioned
results, the first optimal error estimate in the $H^1$-norm (and thus on the level of the two-scale expansion) was given in
\cite[Theorem 1.1]{GNO1}. That at least the fluctuating part of the error obeys CLT-scaling in a {\it weak} norm (and thus is much smaller in
that topology for $d>2$) was first
established in \cite[Corollary 3 and (17)]{MarahrensOtto}. The leading-order fluctuations on the level of the corrector were identified by Mourrat and the last author in
\cite[Theorem 2.1]{MourratOtto} based on the Green's function estimates in \cite[Theorem 1]{MarahrensOtto}. This was extended by Gu and Mourrat, in a non-obvious way, to 
characterize the leading order of the fluctuations of the homogenization error and to show that they are Gaussian \cite{gumourrat-fluctuations},
still relying on \cite[Theorem 1]{MarahrensOtto}. Incidentally, the Gaussianity of fluctuations in stochastic homogenization was first
established on the level of the error in the representative volume element method: \cite{biskupsalviwolff} in the small-contrast case, 
\cite{glorianolen} based on Nolen's \cite{Nolen1}, and \cite{Rossignol}. A quite different approach to, among other things, Gaussianity of leading-order fluctuations
of the corrector, motivated by the approach to quantitative stochastic homogenization in \cite{ArmstrongSmart}, was carried out by Armstrong, Kuusi and Mourrat 
\cite{ArmstrongKuusiMourrat} (after being announced in \cite{gumourrat-fluctuations}). 
A closer connection between the leading-order fluctuations of the corrector (in form of a ``homogenization commutator'')
and the leading-order fluctuations of the homogenization error was drawn by Duerinckx, Gloria and the last author in \cite[Theorem 1]{DuerinckxGloriaOtto}.
The first result that the strong error estimate improves when passing to higher order in the two-scale expansion is due to Gu \cite{Gu_highorder}
and relies on constructing stationary higher-order correctors (provided the dimension is sufficiently high); using some of the calculus developed
in \cite{GNO2,MarahrensOtto}. While \cite{Gu_highorder} contains a result on correctors of any order, it is somewhat suboptimal in e.~g. dimension $d=3$,
because then, the second-order corrector generically is not stationary but still has tamed growth leading to an error estimate of the order $\frac{3}{2}-$.
This gap was closed by Fehrman, Fischer, and the first and last author in \cite[Theorem 3]{BellaFehrmanFischerOtto}, using the calculus developed
in \cite{GNO4}, and introducing a flux corrector $\Psi$ (see \eqref{Psi}) also on the level of the second-order corrector $\psi$, which plays a crucial technical role
in the present paper. It is \cite{BellaFehrmanFischerOtto} we rely on for the probabilistic ingredient to this paper.

\medskip

All these works are concerned with a macroscopic right-hand side; as mentioned, this paper deals with a localized right-hand side as first considered by the authors in \cite[Theorem 2]{BellaGiuntiOttoPCMI}. Moreover, like in that work, we not only consider a strong error estimate, but we establish 
essentially an error estimate in $C^1$. Error estimates with a localized right-hand side can be seen as (rather: post-processed to) estimates on the error between
the quenched Green's function and the annealed Green's function, on the level of the second mixed derivatives, \cite[Corollary 3]{BellaGiuntiOttoPCMI}. 
In the case of periodic media, such estimates were derived in \cite{AvellanedaLin91}; in case of small-contrast
random media, such optimal estimates have been established in \cite{ConlonSpencer}. 

\medskip

When it comes to the probabilistic ingredients, which in this paper we ``take from the shelf'', there are essentially two approaches:
There is the approach from Naddaf and Spencer \cite{NaddafSpencer} relying on an underlying product structure of the probability space,
at least in form of a Spectral Gap Estimate, later refined in \cite[Definition 1]{MarahrensOtto} to a Logarithmic Sobolev Estimate 
because of the ensuing concentration of measure phenomenon \cite[Lemma 4]{MarahrensOtto} and adapted in \cite{GNO4}
and \cite[Lemma 1 \& Proposition 1]{FischerOtto2} to thick correlation tails. This approach and the related sensitivity calculus has been
used in a large number of contributions and is also the one we (indirectly) rely on. While this approach is most
natural in case of a discrete medium, it has been extended to continuum media starting with \cite{GO14per}.
Armstrong and Smart \cite{ArmstrongSmart} have introduced another approach relying on 
a finite range assumption; this assumption is particularly suitable to quantify the qualitative approach by Dal Maso and Modica
\cite{dalmasomodica} to stochastic homogenization,
it is a variational approach based on decomposition and concatenation of representative volume elements. This approach can be extended to more
general mixing conditions \cite{ArmstrongMourrat}. While the first approach might lead to the optimal rates
in a more straightforward manner, the second approach naturally gives the optimal stochastic integrability of the random constant appearing in the
error estimates \cite{ArmstrongKuusiMourrat,GO5} --- at least on the level of integrable correlations on the one hand
and finite range on the other. In this paper, the main contribution of which is deterministic, we do not strive for optimal stochastic moments.

\section{The main result}\label{main.r}

Throughout the paper, we consider uniformly elliptic, not necessarily symmetric, 
coefficient fields $a$ in the $d$-dimensional space $\mathbb{R}^d$.
By uniformly elliptic, we understand that there exists a $\lambda>0$ such that
%
\begin{equation}\label{i01}
\forall\;x\in\mathbb{R}^d,\quad\forall\;\xi\in\mathbb{R}^d:\quad \xi\cdot a(x)\xi \geq \begin{cases}
                      \lambda|\xi|^2\\
                       |a(x)\xi|^2
                     \end{cases}
\end{equation}
Note that the second inequality implies $|a(x)\xi|\le|\xi|$, and is equivalent to the latter in case
of symmetric coefficients; we opt for the above form because it is the one preserved under homogenization.
Clearly, the upper bound \BR5 (i.e. the second inequality in~\eqref{i01}) \ER normalized to unity is no loss of generality. While we also allow for tensor fields that give rise to elliptic systems, we use scalar language and notation as in (\ref{i01}). 
We often think of $a$ as defining a Riemannian metric, and speak of the Euclidean case when
$a$ is homogeneous (i.e. independent of $x$, but not necessarily equal identity).

\medskip

\BR5 Motivated by the model of linear elasticity, it turns out that our results holds also if  the lower-bound assumption in~\eqref{i01} is replaced by a weaker integral version
\begin{equation}\label{weakell}
\forall \zeta\in C^\infty_0(\R^d): \quad  \int \nabla \zeta \cdot a \nabla \zeta \geq \lambda \int |\nabla\zeta|^2.
\end{equation}
In the scalar case this definition is equivalent to \eqref{i01}; in the case of systems (where now the smooth functions $\zeta$ are vector fields), \eqref{weakell} is weaker and implies \eqref{i01} only for rank-one matrices $\xi$. \BR6 There is a good reason to consider this more general assumption: \ER In the case of a model of linearly elastic materials only the symmetric part of the gradient is controlled, in particular lower bound in~\eqref{i01} does not hold, while by Korn's inequality its integral version~\eqref{weakell} does hold. \ER

\medskip

Our object of interest are $a$-harmonic functions $u$, that is, functions satisfying 
$-\nabla\cdot a\nabla u=0$, i.e. being harmonic with respect to the Laplace-Beltrami operator. 
More precisely, for two non-negative integers $m$ and $k$ 
we introduce the following spaces $X_m$ and $Y_k$:
\medskip
\begin{itemize}
\item We consider the space $X_m$ of $a$-harmonic functions on $\mathbb{R}^d$ that grow at most
at rate $m$, as measured in a square-averaged sense on the level of the gradients:
\begin{equation*}
\limsup_{R\uparrow\infty}R^{-m+1}\bigg(\frac{1}{R^d}\int_{|x| < R}|\nabla u|^2\bigg)^\frac{1}{2}<\infty.
\end{equation*}
These spaces are finite-dimensional under the assumption (\ref{i01}) \BR6 in the case of a single equation\ER, \cite[Theorem 0.3]{ColdingMinicozzi}, \cite[Corollary 7]{PeterLi}.
\medskip
\item We also consider the space $Y_k\BR6(r)$ of $a^*$-harmonic functions defined in the
exterior domain \BR3 $\{ |x| > r \}$, 
\ER for some $r<\infty$, that decay at least 
at rate $k+(d-2)$ in the sense of
\begin{equation*}
\limsup_{R\uparrow\infty}R^{k+(d-2)+1}\bigg(\frac{1}{R^d}\int_{|x| > R}|\nabla v|^2\bigg)^\frac{1}{2}<\infty.
\end{equation*}
Here $a^*$ denotes the (pointwise) transpose of $a$, which preserves (\ref{i01}).
Note that the index $k$ is normalized such that $Y_0$ contains the Green's function,
while $Y_1$ does not (at least in the Euclidean case). These spaces are infinite-dimensional,
as can be seen from considering the Lax-Milgram solution of $-\nabla\cdot a^*\nabla v=\nabla\cdot g$,
where $g$ runs through all (square-integrable) vector fields supported in $\{ |x| < r \}$. From a PDE point of view,
the spaces $\{Y_k\}_k$ are more pertinent than the spaces $\{X_m\}_m$.
\end{itemize}

\medskip

For an $a$-harmonic function $u$ and an $a^*$-harmonic function $v$, both defined on some
exterior domain $\{ |x| > r \}$, the vector-field $\xi:=va\nabla u-ua^*\nabla v$ is obviously
divergence-free. Hence its flux $\int_{\partial\Omega}\xi\cdot\nu$ through the boundary
of a bounded domain $\Omega\supset \{ |x| < r\}$ does not depend on $\Omega$ and is an invariant of
the pair $u$ and $v$. In the case of a non-smooth coefficient field $a$ and thus only distributionally harmonic functions, 
the definition
\begin{equation}\label{bil.def}
(u,v):=-\int\nabla\eta\cdot(va\nabla u-ua^*\nabla v),
\end{equation}
which does not depend on the compactly supported function $\eta$ provided it is equal to one on $\{ |x| < r \}$,
is more convenient. We understand $(\cdot,\cdot)$ as a bilinear form on 
$X_m\times Y_k$ for all non-negative integers $m$ and $k$. This bilinear form can easily
seen to vanish identically for $k>m$:
\begin{equation}\label{o02}
\forall\;k>m, \quad \forall\;u\in X_m, \quad \forall\;v\in Y_k : \quad (u,v)=0,
\end{equation}
see Corollary \ref{Lbil}. As we shall see at the end of the section, this bilinear form plays an important role from a PDE point of view 
because, in view of its definition, it encodes a conservation law
that allows to link local and far-field \BR3 behavior.\ER

\medskip

In the {\it Euclidean} case it is folklore that $(\cdot,\cdot)$ provides an
isomorphism between the (finite-dimensional) quotient space $Y_{k}/Y_{m+1}$
and the (algebraic) dual $(X_m/X_{k-1})^*$ of the quotient space $X_m/X_{k-1}$ for any integers $m\ge k\ge 1$:
\begin{align}\label{o10}
Y_{k}/Y_{m+1}\cong (X_m/X_{k-1})^*\quad\mbox{via}\quad(\cdot,\cdot).
\end{align}
We note that, because of (\ref{o02}), the linear map $Y_{k}\ni v\mapsto (\cdot,v)\in (X_m/X_{k-1})^*$ is always well-defined
and has kernel containing $Y_{m+1}$. It thus lifts to 
a linear map $Y_{k}/Y_{m+1}\ni v\mapsto (\cdot,v)\in (X_m/X_{k-1})^*$. Hence the non-trivial
part of the statement is that this map is onto and one-to-one;
where the latter means that the kernel of $Y_{k}\ni v\mapsto (\cdot,v)\in X_m^*$
is contained in $Y_{m+1}$. Knowing the latter, the property of being onto is equivalent to
the quotient spaces $(X_m/X_{k-1})^*$ and $Y_{k}/Y_{m+1}$ having the same dimension.

\medskip

A self-contained proof of the Euclidean statement (\ref{o10}) is provided by Lemma \ref{folklore}, 
which also shows that the space $X_m$ consists of polynomials of degree $\le m$,
and that the quotient space $Y_m/Y_{m+1}$ is spanned by $\{\partial^\alpha G\}_{\alpha}$, where  
$G$ denotes the fundamental solution and $\alpha$ runs over all multi-indices of degree $m$.
For the special case of $a={\rm id}$ (to which the scalar case can always be reduced to),
the elements of the space $Y_m/Y_{m+1}$ can be identified with the spherical harmonics of degree $m$. 
In this case, Arnol'd \cite[Lecture 11, pg.122]{ArnoldPDE} has shown that in fact the quotient space $Y_m/Y_{m+1}$ is {\it given} 
by all $\BR6m$-th order directional derivatives of $G$ (where it is not obvious that the latter is a linear space).

\medskip

Under which conditions on the metric $a$ and to which extent
do these algebraic properties of $\{X_m\}_m$, $\{Y_k\}_k$, and
$(\cdot,\cdot)$, survive in the Riemannian case? We give an answer in the context of {\it homogenization};
homogenization means that one can assimilate the given (heterogeneous) coefficient field $a$ with a 
homogeneous coefficient $\ah$ in the sense that the resolvents of the elliptic operators 
$-\nabla\cdot a\nabla$ and $-\nabla\cdot \ah\nabla$ are close on large scales. Hence the above question
may be rephrased as follows: Under which conditions we may construct isomorphisms between the
quotient spaces $X_m/X_{k-1}$ and $X_m^\h/X_{k-1}^\h$ (where the superscript $\h$ indicates
that the spaces refer to the homogeneous coefficient $\ah$) and between the quotient spaces $Y_{k}/Y_{m+1}$ 
and $Y_{k}^\h/Y_{m+1}^\h$, that at the same time convert the bilinear form $(\cdot,\cdot)$ into its Euclidean counterpart
$(\cdot,\cdot)_\h$:
\begin{align}\label{i00}
X_m/X_{k-1}\cong X_m^\h/X_{k-1}^\h,\;\;
Y_{k}/Y_{m+1}\cong Y_{k}^\h/Y_{m+1}^\h\;\mbox{compatible through}\;(\cdot,\cdot),(\cdot,\cdot)_\h.
\end{align}
We stress that because of the compatibility, this \BR3 contains \ER more information than the one that the spaces 
have the same dimension. Requiring this compatibility makes the isomorphisms (more) canonical.
Note that for fixed $m$, the property (\ref{i00}) is \BR3 stronger \ER the smaller $k$ is.
As we shall discuss after the statement 
of Theorem~\ref{main.sym}, this is not just a pleasing
academic question, but of practical significance for the effective behavior of heterogeneous media in the sense of \textit{effective multipoles}.

\medskip

While most of the intermediate results, see Section \ref{abstract.r}, apply to a general situation of $H$-convergence, 
we have periodic and in particular random homogenization in mind. In the case of {\it periodic homogenization}
(i.e., when the coefficient field $a$ is periodic, say, with respect to the cubic unit cell $[0,1)^d$), 
these isomorphisms can be constructed for the {\it entire range} $m\ge k\ge 1$ (where the statement is strongest
for $k=1$). While we do not display the proof of this result, the reader will see that this follows from the construction
of higher-order correctors \cite{AvellanedaLinLiouville} and their flux potentials, 
a generalization of the corrections of the two-scale expansion in the sense of Lemmas \ref{Rom1} and \ref{Rom2}, and the results in Section \ref{deterministic.r}.

\medskip

The situation is more delicate in the case of {\it random homogenization}. 
By the random case one understands that we are given an ensemble 
(i.e. a probability measure) $\langle\cdot\rangle$ of uniformly elliptic coefficient fields $a$ 
that is stationary (i.e. invariant under spatial translations of the fields) and such that
on distant spatial patches, the restrictions of $a$ are nearly independent (which means ergodicity).
Under this assumption (with a mild quantification of ergodicity, see \cite{FischerOtto2} for a class of examples) 
we are able to construct this (canonical) isomorphism in the {\it restricted range} $m=k\ge 1$.
This (implicitly) follows from the work of Fischer and the last author \cite{FischerOtto} on the construction
of higher-order correctors, again when upgraded with the results of Section \ref{abstract.r} of the present paper. 

\medskip

However, in this paper, we work under the strongest (but realistic) ergodicity assumption, which loosely speaking
corresponds to integrable correlation tails, and 
which we encode in the assumption of a Logarithmic Sobolev Inequality (LSI) (see~\cite[Theorem 1]{GNO4} \BR6 or~\eqref{LSI}\ER).
Under this natural assumption, the result turns out to be dependent on the dimension $d$: For $d>2$, 
the isomorphisms can be constructed in the {\it enlarged range} $m=k+1$, which is
the content of our main result Theorem \ref{main}. On the stochastic side, this relies on the fact that provided $d>2$, 
there exist {\it stationary} first-order correctors $\{\phi_i\}_{i=1,\ldots,d}$ \cite{GO1},
endowed with stationary flux potentials \cite{GNO2}, 
and second-order correctors $\{\psi_{ij}\}_{i,j=1,\ldots,d}$, which together with their flux potentials
grow at a rate strictly less than one (close to $\frac{1}{2}$ in $d=3$) \cite{BellaFehrmanFischerOtto}.
Based \BR3 on the stochastic side of the work \ER \cite{Gu_highorder},
the reader will see that this result generalizes as follows: For dimension $d>2n$ with $n$ an integer, 
the isomorphisms can be constructed in the range $m=k+n$.
In this sense, the higher the dimension $d$, the more the random case is as well-behaved as the periodic one.

\medskip

{\bf Notation.} Throughout this paper we use Einstein's summation convention over repeated indices, i.e. we write for instance $\phi_i \partial_i u$ for $\sum_{i=1}^d \phi_i \partial_i u $. We also adopt the compact notation 
$\fint_{|x|< R}$ and $\fint_{|x|> R}$ for $R >0$, which stands for $\frac{1}{|\{ |x|< R\}|} \int_{|x|< R}$ and $\frac{1}{|\{ |x|< R\}|} \int_{|x|> R}$, respectively.
For a given $a$, the coefficient field $a^*$  is defined such that for (almost every) $x\in \Rd$ the tensor $a^*(x)$ is the transposed of $a(x)$. Given a random field $F=F(a;\cdot)$, we use the notation $F^*$  to denote $F(a^*, \cdot)$. 

\medskip

We now state our main result, at first restricted to ensembles of coefficient fields which are not only stationary, but also either invariant under \textit{central symmetries}, namely such that $\langle \cdot \rangle$ is invariant under
the transformation \mbox{$a \rightarrow \{ x \mapsto a( -x) \}$}, or supported on symmetric coefficient fields (i.e. \mbox{$a=a^*$ $\en{\cdot}$-almost surely}). 
We then comment how to extend this theorem in the case of ensembles which do not satisfy any of these properties.
\begin{theorem}\label{main.sym}
Let $\langle\cdot\rangle$ be a stationary ensemble of uniformly elliptic coefficient fields
on $\mathbb{R}^d$ that satisfies a LSI, see \cite[(35) of Theorem 1]{GNO4} \BR6 or~\eqref{LSI}.\ER 
In addition, let us assume that either
\BR3
\begin{itemize}
 \item coefficient fields $a$ are symmetric, in the sense that $a = a^*$ $\en{\cdot}$-almost surely, or
 \item $\en{\cdot}$ is invariant under central symmetries.
\end{itemize}
\ER
Suppose $d>2$ and pick an exponent $\beta>1$ with $\beta<\frac{3}{2}$ for $d=3$ and $\beta<2$ for $d\ge 4$.
Then there exists a constant tensor $\ah$ satisfying~\eqref{i01} and for $\langle\cdot\rangle$-almost every realization $a$, there exist functions $\{\phi_i\}_{i=1,\ldots,d}, \{\phi_i^*\}_{i=1,\ldots,d}$
and $\{\psi_{ij}\}_{i,j=1,\ldots,d}, \{\psi_{ij}^*\}_{i,j=1,\ldots,d}$ with the following properties for any integer $m\ge 2$:

\smallskip

\begin{itemize}
\item For every $u_\h\in X_m^\h$ there exists an $u\in X_m$ such that
\begin{align}\label{o04.sym}
\lim_{|y|\uparrow\infty}|y|^{-m+1+\beta}
\bigg(\int_{|x-y|< 1}\left|\nabla\left(u-(1+\phi_i\partial_i+\psi_{ij}\partial_{ij})u_\h\right)\right|^2\bigg)^\frac{1}{2}
&=0.
\end{align}
%
Likewise, for every $u\in X_m$ there exists $u_\h\in X_m^\h$ such that (\ref{o04.sym}) holds.
Furthermore, this defines an isomorphism between $X_m^\h/X_{m-2}^\h$ and $X_m/X_{m-2}$.
\\
In case of $m=2$, $(1+\phi_i\partial_i+\psi_{ij}\partial_{ij} )u_\h$ itself is $a$-harmonic.

\item For every $v_\h\in Y_{m-1}^\h$ there exists a $v\in Y_{m-1}$,  
and for every $v\in Y_{m-1}$ there exists \BR3 a \ER $v_\h\in Y_{m-1}^\h$ such that
\begin{align}\label{o05.sym}
\lim_{|y|\uparrow\infty}|y|^{(m-1)+(d-2)+1+\beta}
\bigg(\int_{|x-y|<1}|\nabla(v-(1+\phi_i^*\partial_i+\psi_{ij}^*\partial_{ij})v_\h)|^2\bigg)^\frac{1}{2}
&=0.
\end{align}
This defines an isomorphism between $Y_{m-1}^\h/Y_{m+1}^\h$ and $Y_{m-1}/Y_{m+1}$.

\item Finally, these two isomorphisms 
respect the bilinear forms: $(u,v)=(u_\h,v_\h)_\h$.

\end{itemize}

\end{theorem}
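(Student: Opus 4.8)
The plan is to deduce the Riemannian statement from the abstract, purely algebraic isomorphism theory of Section~\ref{abstract.r}, feeding in the probabilistic input from \cite{BellaFehrmanFischerOtto}. The three claimed isomorphisms and the compatibility with $(\cdot,\cdot)$ cannot be proved independently: the whole point of the abstract framework is that they come as a package. So I would first fix, once and for all, the homogenized tensor $\ah$ from \cite{GO1} and the first-order correctors $\phi_i,\phi_i^*$ together with their stationary flux potentials $\sigma$ from \cite{GNO2}, and the second-order correctors $\psi_{ij},\psi_{ij}^*$ together with their flux potentials $\Psi$ from \cite{BellaFehrmanFischerOtto}; by that reference, in the stated range of $\beta$ (with $\beta<\tfrac32$ forced in $d=3$), the pair $(\psi,\Psi)$ grows sublinearly at rate strictly less than $\beta-1$, i.e. $\fint_{|x|<R}(|\psi|^2+|\Psi|^2)^{1/2}\lesssim R^{\beta-1}$ modulo additive constants, while $\phi,\sigma$ are stationary hence $o(R^\epsilon)$. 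The symmetry hypothesis (either $a=a^*$ a.s., or $\en{\cdot}$ invariant under central symmetries) is exactly what is needed to kill the would-be-linearly-growing part of $\psi$ in $d=3$ — under central symmetry $\psi_{ij}$ is even, so its linear Fourier mode has vanishing expectation, and one gets honest sublinear growth of $\nabla\psi$ in the averaged sense; this is the place where the hypothesis is consumed.

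Next I would invoke the deterministic upgrade of the two-scale expansion, Lemmas~\ref{Rom1} and~\ref{Rom2}, which I read as asserting: given correctors and flux potentials growing slower than rate $\beta-1$ away from the origin, the second-order two-scale expansion $(1+\phi_i\partial_i+\psi_{ij}\partial_{ij})$ maps $a_{\mathrm{h}}$-harmonic functions to functions that are $a$-harmonic up to an error whose gradient, measured in the unit-ball average at $y$, decays by a factor $|y|^{-\beta}$ relative to the size of $\nabla^{(3)}u_\h$ near $y$. Applying this to $u_\h\in X_m^\h$ (a polynomial of degree $\le m$ by Lemma~\ref{folklore}, so $\nabla^{(3)}u_\h$ grows at rate $m-3$) produces an approximately $a$-harmonic function; a Lax–Milgram/energy correction then yields a genuine $u\in X_m$ — it lies in $X_m$ rather than a larger space because the correction gradient is square-summable at infinity with a margin — and \eqref{o04.sym} holds with the stated power $|y|^{-m+1+\beta}$. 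For $m=2$ the cubic and higher derivatives of $u_\h$ vanish, the error is identically zero, and $(1+\phi_i\partial_i+\psi_{ij}\partial_{ij})u_\h$ is exactly $a$-harmonic. The same construction, run with $a^*$ in place of $a$ and applied to $v_\h\in Y^\h_{m-1}$ (a linear combination of derivatives of the homogeneous fundamental solution of order $\ge m-1$, decaying at rate $(m-1)+(d-2)$, whose third derivatives decay at rate $(m-1)+(d-2)+3$), gives \eqref{o05.sym}.

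Once both families of approximate maps exist, Section~\ref{abstract.r} does the rest: the decay/growth rates matching the thresholds defining $X_{m-2}$ and $Y_{m+1}$ mean that the maps descend to well-defined maps on the quotients $X^\h_m/X^\h_{m-2}\to X_m/X_{m-2}$ and $Y^\h_{m-1}/Y^\h_{m+1}\to Y_{m-1}/Y_{m+1}$; injectivity follows because a nonzero element of a quotient has a definite growth/decay rate that the error term, being $o$ of that rate, cannot cancel; and surjectivity (equivalently bijectivity) is forced by the abstract machinery together with the fact that the Euclidean quotient spaces are finite-dimensional of known dimension (Lemma~\ref{folklore}) and homogenization preserves that dimension. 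The compatibility $(u,v)=(u_\h,v_\h)_\h$ is the crux and the last step: since $\xi:=va\nabla u-ua^*\nabla v$ is divergence-free, $(u,v)$ is a flux through any large sphere $\{|x|=R\}$; substituting $u\approx(1+\phi_i\partial_i+\psi_{ij}\partial_{ij})u_\h$ and $v\approx(1+\phi_i^*\partial_i+\psi_{ij}^*\partial_{ij})v_\h$ and using that $a(e_i+\nabla\phi_i)=\ah e_i+\nabla\cdot(\text{skew }\sigma_i)$ (and the analogous identity at second order with $\Psi$), the flux of $\xi$ over $\{|x|=R\}$ converges, as $R\uparrow\infty$, to the flux of the Euclidean $\xi_\h:=v_\h\ah\nabla u_\h-u_\h\ah^*\nabla v_\h$, the cross terms involving $\nabla\phi,\nabla\sigma,\nabla\psi,\nabla\Psi$ integrating to zero by the skew-symmetry/divergence-form structure and the error terms vanishing by the rate estimates \eqref{o04.sym}–\eqref{o05.sym} — here one checks that the product of the two errors, together with the product of each error with the leading term, is $o(R^{-(d-1)})\cdot R^{d-1}\to 0$ on the sphere, which is precisely why the exponent budget $(m-1+\beta)+((m-1)+(d-2)+1+\beta)$ comfortably beats $2m+(d-2)-1$ for $\beta>1$.

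The main obstacle is this last compatibility computation: one must show the sphere-flux of $va\nabla u-ua^*\nabla v$ converges to the homogenized one, and the delicate point is handling the bilinear cross-terms between the flux correctors $\sigma,\Psi$ of $u$'s expansion and the gradients of $v$'s expansion (and vice versa) — these are the terms that, in the periodic/stationary setting, integrate exactly to zero, but here are only approximately so, and one needs the $\beta>1$ margin, the sublinear corrector bounds, and an integration by parts exploiting the skew-symmetry of $\sigma$ and $\Psi$ to control them on each dyadic annulus. Everything else (the Lax–Milgram corrections, the descent to quotients, injectivity) is routine once Section~\ref{abstract.r} is in place; it is matching the flux correctors against the far-field derivatives of $v$ with the right power-counting that requires care, and it is also where the precise thresholds in the definitions of $X_m$ and $Y_k$ — and the normalization of the index $k$ so that $Y_0$ just barely contains $G$ — are used.
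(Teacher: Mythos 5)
Your architecture — feed the probabilistic corrector bounds into the abstract isomorphism machinery of Section~\ref{abstract.r}, and conclude compatibility with $(\cdot,\cdot)$ via a flux computation on large spheres — is the right one and matches the paper's at a high level. But there are two points where your plan would go wrong or stall.

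First, you have misplaced where the symmetry hypothesis is consumed. You claim it is what kills "the would-be-linearly-growing part of $\psi$ in $d=3$." That is not so: the growth estimate~\eqref{T.2} on $(\psi,\Psi)$ (with $\beta<\tfrac32$ in $d=3$) comes from \cite{BellaFehrmanFischerOtto} for \emph{any} stationary LSI-ensemble, symmetric or not. The role of the symmetry hypothesis is entirely algebraic and happens one level up: the paper first proves Theorem~\ref{main} for general ensembles, using two-scale expansions $E u_\h = (1+\phi_i\partial_i)(u_\h+\ut)+\psi_{ij}\partial_{ij}u_\h$ that carry extra constant-coefficient correction terms $\ut,\vt$ built from the tensor $C_{ijk}$ (Lemmas~\ref{Rom1},~\ref{Rom2}); Lemma~\ref{tensor.C} then shows that under your symmetry hypotheses $C^{\mathrm{sym}}=0$, whence $\ut=\vt=0$ by the uniqueness statements in those lemmas, and Theorem~\ref{main} collapses to Theorem~\ref{main.sym}. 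You have also misread Lemmas~\ref{Rom1}--\ref{Rom2}: they do not assert a small residuum for the two-scale expansion (that calculation is Step~1 of the proof of Proposition~\ref{Rom4}, using the flux potentials $\sigma,\Psi$); rather, they \emph{define} $\ut,\vt$. Your proof would therefore prove the wrong thing for non-symmetric $a$ and not engage the symmetry hypothesis at all.

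Second, your handling of the flux computation for $(u,v)=(u_\h,v_\h)_\h$ is too optimistic. You say the cross-terms "integrate to zero by the skew-symmetry/divergence-form structure and the error terms vanishing by the rate estimates." Power-counting alone does not suffice: after Step~\ref{P10:reduce} of the paper's proof of Proposition~\ref{invariants.preserved} one is left with surviving terms of borderline homogeneity $-d+1$ on annuli (e.g. $\phi_i^*\partial_iv_\h'\ah\nabla u_\h'$ and $\partial_i u_\h'\partial_j v_\h'\,\sigma_j(e_i+\nabla\phi_i^*)$), whose flux does \emph{not} tend to zero for decay reasons — instead, they converge to ensemble averages by the Birkhoff-type Corollary~\ref{ergodicity}, and it is the normalization $\langle\phi\rangle=\langle\sigma\rangle=0$ that kills them. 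Moreover, the residual $C$-terms are tamed by a genuine Null-Lagrangian cancellation (Step~\ref{P10:Null}: a $3\times3$ minor of the Jacobian of $(v,u,\eta)$ is a divergence), after which they match the boundary conditions~\eqref{u.tilde.condition},~\eqref{v.tilde.condition} imposed on $\ut,\vt$. Without the ergodic theorem for the cross-terms and the Null-Lagrangian identity for the $C$-terms, the sphere flux would not reduce to $(u_\h,v_\h)_\h$, even with the margin $\beta>1$ you invoke.
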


A few technical comments are in place: The fact that 
$(1+\phi_i\partial_i+\psi_{ij}\partial_{ij})u_\h$ is $a$-harmonic for all $u_\h\in X_2^\h$
implies that $\{\phi_i\}_{i=1,\ldots,d}$ and $\{\psi_{ij}\}_{i,j=1,\ldots,d}$ are in fact 
first and second-order correctors, respectively. We note in passing that $\{\psi_{ij}\}_{i,j=1,\ldots,d}$ is only needed
in the combination of $\psi_{ij}E_{ij}$, where $E$ is a matrix with $a_{\h ij}E_{ij}=0$.
The expression $(1+\phi_i\partial_i+\psi_{ij}\partial_{ij})u_\h$ amounts to the first three terms in the
asymptotic (two-scale) expansion in (periodic or random) homogenization.
In this sense, (\ref{o04.sym}) and (\ref{o05.sym}) amount to an estimate of the homogenization error;
they state that the relative homogenization error is of the order $\beta$. The homogenization error is almost local
on the level of the gradient; assuming in addition local smoothness of the coefficient field one obtains
a pointwise result by standard regularity theory. For dimensions $d\ge 4$, the order $\beta$ is arbitrarily close to $2$, as one
would expect after correcting with first and second-order correctors. However, in $d=3$, one looses half of an order
since the second-order corrector is typically non-stationary and grows with rate $\frac{1}{2}$ (worsened by a 
logarithm).

\medskip

Before turning to the general case, which requires the introduction of further objects, we address the significance of Theorem \ref{main.sym}, focussing on the most relevant dimension of $d=3$,
and using the language of electrostatics. As it will become apparent with the statement of the main result in the case of general coefficient fields, this same remark extends also to the case of more general statistics of $a$. 
Suppose we are given a localized dipole distribution, as described by a vector field $g$ supported, say, in $\{|x| < 1 \}$.
In our medium of conductivity $a^*$, this charge distribution $\nabla\cdot g$ generates an electric potential $v$, 
which is $a^*$-harmonic outside of $\{|x| < 1 \}$ and solves
\begin{align}\label{o05.b}
-\nabla\cdot a^*\nabla v=\nabla\cdot g.
\end{align} 
%

%
It follows from Lemma \ref{LF1} ( b)$\Rightarrow$ a) ) that the Lax-Milgram solution of \eqref{o05.b} satisfies $v \in Y_1$. Therefore, by Theorem \ref{main.sym} there exists a $v_\h \in Y_1^\h$ such that
\begin{align}\label{o06}
\lim_{|y|\uparrow\infty}|y|^{3+\beta}
\bigg(\int_{|x-y|<1}|\nabla(v-(1+\phi_i^*\partial_i+\psi_{ij}^*\partial_{ij})v_\h)|^2\bigg)^\frac{1}{2}
=0,
\end{align}
which determines $v_\h$ through $v$ up to an element in $Y_3^\h$. This is more or less standard --- the interesting
question is whether we may easily characterize $v_\h$ as an element of $Y_1^\h/Y_3^\h$ in terms of the charge distribution
$\nabla\cdot g$. Note that in the language of electrostatics, $Y_1^\h/Y_3^\h$ is the information about the far field
of a dipole and a quadrupole; in the Euclidean case it can be extracted from the first and second moments
of the charge distribution. 

\medskip

The isomorphism of Theorem \ref{main.sym} ensures that the analogue stays true in the
Riemannian case: Because of the natural isomorphism between $Y_1^\h/Y_3^\h$ and $(X_2^\h/X_0^\h)^*$ provided through
$(\cdot,\cdot)_\h$,  $v_\h\in Y_1^\h/Y_3^\h$ is uniquely determined through the linear form
\begin{align*}
\ell.u_\h=(u_\h,v_\h)_\h \ \ \quad\mbox{for}\;u_\h\in X_2^\h.
\end{align*}
By Theorem \ref{main.sym}, we have
\begin{align}\label{o07}
(u_\h,v_\h)_\h=(u,v),
\end{align}
where because of $u_\h\in X_m$ with $m=2$, $u$ is related to $u_\h$ by the formula 
$u=(1+\phi_i\partial_i+\psi_{ij}\partial_{ij})u_\h$. It follows from integration by parts based on (\ref{o05.b})
that
\begin{align}\label{o08}
(u,v)=\int \nabla\bigl((1+\phi_i\partial_i+\psi_{ij}\partial_{ij})u_\h\bigr)\cdot g,
\end{align}
where the right-hand side may be interpreted as the first and second moments of the charge distribution $\nabla\cdot g$,
however not taken with respect to the Euclidean polynomials, but with respect to the $a$-harmonic ``polynomials''.
Incidentally, while $\psi_{ij}$ may be changed by an additive constant without affecting the value of (\ref{o08}),
this is not true for $\phi_i$. In fact, for (\ref{o08}) to hold, the constant is fixed by the requirement that
$\lim_{R\uparrow\infty}\BR3 \fint_{|x|< R} \phi_i=0$, which almost surely follows from the normalization $\langle \phi_i\rangle=0$, see Section \ref{stochastic.r} and Corollary~\ref{ergodicity}.
Note that this result is not covered by standard homogenization since the characteristic scale of the right-hand side 
$\nabla\cdot g$ of (\ref{o05.b}) is of order one and not large (compared to the effective correlation length of order one of
the medium).

\medskip

Let us interpret this finding in yet another way: If we are interested in predicting $\nabla v$ in the neighborhood \BR3 $\{ |x - y| < 1\}$ 
of some point $y$ with $|y|\gg 1$, in view of (\ref{o06}), we need to know $\phi_i^*$ and $\psi_{ij}^*$ in the neighborhood of $y$\ER , and we need to know $v_\h$. In order to get $v_\h$, in view of (\ref{o07}) and (\ref{o08}), we need to know   
$\phi_i$ and $\psi_{ij}$ in the neighborhood $\{ |x| < 1 \}$ of the origin. Hence the local knowledge of the
first and second-order correctors near the two distant points $y$ and $0$ is enough to understand how the
random medium transmits the information of the charge distribution $\nabla\cdot g$ in $\{ |x| < 1 \}$ to the field $-\nabla v$ near $y$. 
Since good approximations to the first and second-order correctors can locally be
obtained by the representative volume element method, Theorem~\ref{main.sym} (or more generally Theorem~\ref{main}) teaches us that given a charge distribution
$\nabla\cdot g$ localized near the origin, 
the field $-\nabla v$ \BR3 near $y$ 
for some distant point $y$ \ER may be inferred
(to order $\beta$)
without knowing (the details of) the medium $a^*$ further away from (and in particular between) the points $0$ and $y$.
In view of our discussion before Theorem \ref{main.sym}, this is expected to be true to arbitrary rate $\beta<\infty$
in the periodic case, while in the random case, the rate is limited to $\beta<\frac{d}{2}$, which in three dimensions
is nevertheless enough to predict the quadrupole next to the dipole.

\medskip

If we do not impose a symmetry property on the ensemble, 
then the same statement of Theorem \ref{main.sym} holds, provided we modify the maps which induce the isomorphisms between the quotient spaces.
More precisely, in the estimate~\eqref{o04.sym} the (two-scale) expansions for $u_\h$, i.e. the term $(1 + \phi_i\partial_i + \psi_{ij}\partial_{ij})u_\h$, has an additional term given by the expansion 
$(1+ \phi_i\partial_i)\ut$. Similarly, \BR6 in~\eqref{o05.sym} we expand \ER the term $(1+ \phi_i^*\partial_i + \psi_{ij}^*\partial_{ij})v_\h$ by $(1+ \phi_i^* \partial_i)\vt$. Here, $\ut$ and $\vt$ are homogeneous
solutions of a constant-coefficient equation and are uniquely determined by $u_\h$ and $v_\h$ \BR6 (see~\eqref{u.tilde}, \eqref{v.tilde} and the discussion afterwards)\ER. In other words, in order for an analogue of Theorem \ref{main.sym} to hold also in the general case, we first construct two maps
$X_m^\h \ni u_\h \mapsto \ut$, $m\geq 3$, and $Y_{m-1}^\h \ni v_\h \mapsto \vt$, $m \geq 2$, with which we modify the two-scale expansions in \eqref{o04.sym} and \eqref{o05.sym}. 

\medskip

To give a definition of these two maps, we first need to recall that since $\ah$ is constant, we may identify the quotient spaces  $X_m^\h/ \BR6 X_{m-1}^\h$ and  $Y_{m-1}^\h/ Y_{m}^\h$  with the space
of $\ah$-harmonic homogeneous polynomials of degree $m$ and the space of linear combinations of $\{ \partial^\alpha G \}_{\BR6 |\alpha|=m-1}$, respectively (for a self-contained proof, see Lemma \ref{folklore} in Section \ref{deterministic.r}). If we denote by $u_\h'$ and $v_\h'$ the
projections of $u_\h$ and $v_\h$  onto these two spaces, respectively, we require that $\ut$ and $\vt$ solve the equations
\begin{align}
-\nabla \cdot \ah \nabla\ut &= \nabla \cdot \bigl( \partial_{ij}u_\h'C_{ij}^{\mathrm{sym}} \bigr) \quad &&\text{ in $\mathbb{R}^d$,}\label{u.tilde}\\
-\nabla \cdot \ah^* \nabla\vt &= \nabla \cdot \bigl(\partial_{ij}v_\h' C_{ij}^{*,\mathrm{sym}} \bigr) \quad  &&\text{ in $\mathbb{R}^d \backslash \{ 0 \}$,}\label{v.tilde}
\end{align}
where $C^{\mathrm{sym}} =\{ C_{ijk}^{\mathrm{sym}} \}_{i,j,k=1,\ldots,d}$ is the symmetrization of a constant tensor $C$ in all three indices (see below for definition of $C$ and $C^*$). 
The appearance of this tensor and the need for the additional corrections $(1+ \phi_i\partial_i)\ut$ and $(1 + \phi_i \partial_i^*)\vt$, with $\ut$ and $\vt$ solving equations 
\eqref{u.tilde} and \eqref{v.tilde}, are not surprising: In homogenization they naturally arise when computing the two-scale expansion for the heterogeneous solution via second or higher-order correctors. 
In the case of periodic homogenization, where correctors of any order exist and are bounded, this is a well-established result (see, for instance, \cite[(2.13)]{AllBrVan}).

\medskip

\BR6 In order to be of lower order with respect to $u_\h$ and $v_\h$, we require $\ut$ to be a homogeneous polynomial of degree $m-1$ and $\vt$ to be a homogeneous function of degree $-(d-2 + m)$\ER. It is clear that the equations above and this last condition are not enough to uniquely define $\ut$ and $\vt$ (in fact, we may add any homogeneous element of $X_{m-1}^\h$ and $Y_{m}^\h$, respectively). 
To obtain uniqueness, we require conditions which at this stage may look obscure, but which are enough to uniquely define $\ut$ and $\vt$ (see Lemmas \ref{Rom1} and \ref{Rom2}) and which are needed for preservation of the bilinear forms $( \cdot, \cdot )$ and $(\cdot, \cdot )_\h$ under the isomorphisms: For any radius $R>0$, we require $\ut$ and $\vt$ to be such that for every $v' \in Y_{m-1}^\h/Y_{m}^\h$ 
\begin{align}
\int_{|x| = R \ER}\nu\cdot \big(v'(\ah\nabla\ut+\partial_{ij}u_\h'C_{ij}^{\mathrm{sym}})-\ut \ah^*\nabla v'\big)= 0,
\label{u.tilde.condition}
\end{align}
and also that for every  $u'\in X_{m}^\h/X_{m-1}^\h$
\begin{equation}
\int_{|x|=R}\nu\cdot \big(\vt \ah\nabla u'-u'(\ah^*\nabla\vt +\partial_{ij}v_\h'C_{ij}^{*,\mathrm{sym}})\big)
= \int_{|x|=R}\nu_k \partial_i v_\h' \partial_j u' C_{ijk}^{\mathrm{sym}}.
\label{v.tilde.condition}
\end{equation}
We remark that it is completely arbitrary whether $\ut$ or $\vt$ takes care of the extra term $\int_{|x|=R}\nu_k \partial_i v_\h' \partial_j u' C_{ijk}^{\mathrm{sym}}$.

\medskip
Existence and uniqueness of $\ut$ and $\vt$, as defined by~\eqref{u.tilde}, \eqref{u.tilde.condition}, and~\eqref{v.tilde}, \eqref{v.tilde.condition}, are established in Lemma~\ref{Rom1} and Lemma~\ref{Rom2}, respectively.  

\begin{remark}\label{symmetrization}
In our framework of stochastic homogenization, the tensor $C=\{C_{ijk}\}_{i,j,k=1,\ldots,d}$  can be written as an ensemble average involving the first-order corrector, see \eqref{definition.C}. We argue in Lemma \ref{tensor.C} 
that $C$ (or at least its symmetric part $C^{\mathrm{sym}}$) vanishes for an ensemble that is invariant under central reflections or is supported on symmetric coefficient fields, as considered in Theorem~\ref{main.sym}. 
In this case, it follows from the uniqueness
statements in Lemmas \ref{Rom1} and \ref{Rom2} that $\vt$ and $\ut$ always vanish, so that Theorem \ref{main} below
reduces to Theorem~\ref{main.sym}.


%
%
\end{remark}

We finally give the statement of the main theorem in the general case:

\begin{theorem}\label{main}
Let $\langle\cdot\rangle$ be a stationary ensemble of uniformly elliptic coefficient fields
on $\mathbb{R}^d$ that satisfies a LSI, see \cite[(35) of Theorem 1]{GNO4} or~\eqref{LSI}. 
Suppose $d>2$ and pick an exponent $\beta>1$ with $\beta<\frac{3}{2}$ for $d=3$ and $\beta<2$ for $d\ge 4$. 
Then there exists \BR5 a tensor $\ah$ satisfying~\eqref{i01} \ER and a tensor $C=\{C_{ijk}\}_{i,j,k=1,\ldots,d}$ and, for $\langle\cdot\rangle$-almost every configuration $a$,
functions $\{\phi_i\}_{i=1,\ldots,d}$, $\{\phi_i^*\}_{i=1,\ldots,d}$, $\{\psi_{ij}\}_{i,j=1,\ldots,d}$, 
and $\{\psi_{ij}^*\}_{i,j=1,\ldots,d}$ with the following property for any integer $m\ge 2$. If we consider
the two linear maps 
\begin{equation*}
X_m^\h/X_{m-1}^\h\ni u_\h\mapsto \ut, \quad Y_{m-1}^\h/Y_{m}^\h\ni v_\h\mapsto \vt
\end{equation*}
defined through Lemmas \ref{Rom1} and \ref{Rom2}, then we have:


%
%
%

\begin{itemize}
\item For every $u_\h \in X_m^\h$ there exists a $u\in X_m$, and for every $u\in X_m$ there exists a $u_\h\in X_m^\h$, such that

\begin{equation}\label{o04}
\lim_{|y|\uparrow\infty}|y|^{-m+1+\beta}
\Big(\int_{|x-y|<1}\hspace{-0.4cm}|\nabla\big(u- ( (1+\phi_i\partial_i)(u_\h + \ut) + \psi_{ij}\partial_{ij}u_\h )\big)|^2\Big)^\frac{1}{2}
=0.
\end{equation}

This defines an isomorphism between $X_m^\h/X_{m-2}^\h$ and $X_m/X_{m-2}$.

\smallskip


\item For every $v_\h\in Y_{m-1}^\h$ there exists a $v\in Y_{m-1}$, and for every $v\in Y_{m-1}$ a $v_\h\in Y_{m-1}^\h$, such that
\begin{equation}\label{o05}
\lim_{|y|\uparrow\infty}|y|^{(d-1) + (m-1)+\beta}
\Big(\int_{|x-y|<1}\hspace{-0.4cm}|\nabla\big(v-((1+\phi_i^*\partial_i)(v_\h + \vt)+\psi_{ij}^*\partial_{ij}v_\h ) \big)|^2\Big)^\frac{1}{2}
=0.
\end{equation}
This defines an isomorphism between $Y_{m-1}^\h/Y_{m+1}^\h$ and $Y_{m-1}/Y_{m+1}$.

\item Finally, these two isomorphisms are such that they respect the bilinear forms: $(u,v)=(u_\h,v_\h)_\h$.

\end{itemize}
\end{theorem}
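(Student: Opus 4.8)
The proof of Theorem~\ref{main} proceeds by reducing the Riemannian statement to the Euclidean one (Lemma~\ref{folklore}) via the deterministic machinery of Section~\ref{deterministic.r}, fed by the stochastic estimates on the correctors $\phi_i,\psi_{ij}$ and their flux potentials $\sigma_i,\Psi_{ij}$ coming from \cite{GNO2,BellaFehrmanFischerOtto} (and \cite{GO1} for stationarity of $\phi$). First I would fix, once and for all, the homogenized tensor $\ah$, the tensor $C$ (defined in~\eqref{definition.C}), and for $\langle\cdot\rangle$-almost every $a$ the correctors and their flux potentials, and record the growth bounds one gets from LSI: $\phi_i,\sigma_i$ are stationary with bounded second moments (hence sublinear growth a.s.), while $\psi_{ij},\Psi_{ij}$ grow at rate strictly below one (rate $\tfrac12$ up to a logarithm in $d=3$). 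These are precisely the hypotheses under which the abstract/deterministic results of Sections~\ref{abstract.r} and~\ref{deterministic.r} apply; in particular they give the admissible range of the exponent $\beta$ ($\beta<d/2$, i.e. $\beta<\tfrac32$ for $d=3$ and $\beta<2$ for $d\ge4$).

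\medskip

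Next I would establish the $X$-part of the isomorphism. Given $u_\h\in X_m^\h$, form its two-scale expansion $w:=(1+\phi_i\partial_i)(u_\h+\ut)+\psi_{ij}\partial_{ij}u_\h$, where $\ut$ is the homogeneous polynomial of degree $m-1$ produced by Lemma~\ref{Rom1}. The point of including $\ut$ (and of the third-order tensor $C^{\mathrm{sym}}$) is exactly that it cancels the non-divergence-free part of $a\nabla w$ that would otherwise obstruct solvability; one computes $-\nabla\cdot a\nabla w=\nabla\cdot h$ with an explicit $h$ built from $\phi,\sigma,\psi,\Psi$ and the derivatives of $u_\h$, and the corrector/flux-potential identities make $h$ of strictly lower growth — this is the content of Lemmas~\ref{Rom1},~\ref{Rom2}. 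One then solves $-\nabla\cdot a\nabla(u-w)=\nabla\cdot h$ (Lax--Milgram on the relevant weighted/growing class, or the construction via $X$-spaces in Section~\ref{abstract.r}), and the a.s. growth bounds on $h$ translate into the decay of the relative error at rate $\beta$ claimed in~\eqref{o04}. Uniqueness modulo $X_{m-2}$ and the fact that distinct classes go to distinct classes (injectivity), together with a dimension count matching the Euclidean $X_m^\h/X_{m-2}^\h$, upgrades this to an isomorphism $X_m/X_{m-2}\cong X_m^\h/X_{m-2}^\h$; the reverse direction (given $u\in X_m$, produce $u_\h$) is the same argument run the other way, using that a.s. every $a$-harmonic function of growth $\le m$ has a well-defined ``leading homogenized part''. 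The $Y$-part is entirely parallel with $a\rightsquigarrow a^*$, growing polynomials replaced by decaying multipoles $\partial^\alpha G$, $\ut$ replaced by $\vt$ from Lemma~\ref{Rom2}, and the exterior-domain analogues of the same estimates; here the shifted exponent in~\eqref{o05} reflects the $(d-2)$-decay of the fundamental solution.

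\medskip

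It then remains to check compatibility with the bilinear forms, $(u,v)=(u_\h,v_\h)_\h$. Using the definition~\eqref{bil.def} of $(\cdot,\cdot)$ with a cutoff $\eta\equiv1$ on $\{|x|<r\}$, one inserts the two-scale expansions for $u$ and $v$ and expands $(u,v)$; the leading term reproduces $(u_\h,v_\h)_\h$ (after using $\lim_{R\uparrow\infty}\fint_{|x|<R}\phi_i=0$, which holds a.s. by Corollary~\ref{ergodicity}), while the remaining terms are boundary integrals over large spheres of quantities involving the correctors, their flux potentials, and the mismatch between $u$ (resp.~$v$) and its expansion. The conditions~\eqref{u.tilde.condition} and~\eqref{v.tilde.condition} imposed on $\ut,\vt$ are designed so that the ``algebraic'' boundary terms cancel exactly (this is why those conditions look the way they do), and the error-term boundary integrals vanish in the limit $R\uparrow\infty$ because the combined decay from~\eqref{o04}/\eqref{o05} and the growth of the dual factor beats the surface area $R^{d-1}$ — precisely where $\beta>1$ is used.

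\medskip

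\emph{Main obstacle.} The genuinely delicate step is not the bookkeeping but controlling the homogenization error at the sharp rate $\beta$ using only the available, \emph{non-stationary} second-order corrector in $d=3$: one must propagate the rate-$\tfrac12$ (logarithmically worsened) growth of $\psi,\Psi$ through the Lax--Milgram estimate in the growing/decaying classes without losing the endpoint, and simultaneously ensure the construction is \emph{canonical} enough — i.e. independent of the various admissible gauges (additive constants in $\psi$, choice of $\ut$ beyond~\eqref{u.tilde.condition}) — that the bilinear-form identity is an identity and not merely an asymptotic match. This is where Section~\ref{abstract.r}'s rigidity of the pairing and the precise normalizations in Lemmas~\ref{Rom1},~\ref{Rom2} do the essential work; the probabilistic input from \cite{BellaFehrmanFischerOtto} is exactly calibrated to make the $d=3$ endpoint $\beta<\tfrac32$ reachable.
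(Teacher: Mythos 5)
Your high-level plan --- reducing Theorem~\ref{main} to the Euclidean Lemma~\ref{folklore} through the deterministic machinery (Propositions~\ref{Pg},~\ref{Pd},~\ref{Pdual},~\ref{Pbidual} and Theorem~\ref{Rom3}), fed by the stochastic corrector bounds, and verifying the bilinear-form identity via large-$R$ boundary integrals, cancellations from conditions~\eqref{u.tilde.condition}--\eqref{v.tilde.condition}, and Corollary~\ref{ergodicity} --- does match the architecture of the paper's proof. However, there is a genuine gap in the passage from the averaged estimate to the pointwise claim~\eqref{o04},~\eqref{o05}.

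You assert that ``the a.s. growth bounds on $h$ translate into the decay of the relative error at rate $\beta$ claimed in~\eqref{o04}''. But the deterministic machinery (Propositions~\ref{Pg},~\ref{Pd}, Theorem~\ref{Rom3}) delivers bounds in the semi-norms~\eqref{m01}--\eqref{m02}, i.e.\ averaged over balls or exterior domains \emph{centered at the origin}. The statement~\eqref{o04} requires decay of $\bigl(\int_{|x-y|<1}|\nabla(u-Eu_\h)|^2\bigr)^{1/2}$ on \emph{unit} balls around points $y$ with $|y|\uparrow\infty$. Setting $R\sim|y|$ in the averaged estimate only controls the $L^2$-average over a ball of radius $\sim|y|$, not the local energy near $y$. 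To pass from the former to the latter one must re-run the large-scale regularity theory (Proposition~\ref{Ck.1} / the abstract hypotheses~\eqref{F1},~\eqref{F2}) \emph{centered at $y$}, which is only available from radius $r_*(y)$ onward; this is precisely the content of Theorem~\ref{T3} and Proposition~\ref{localization}, neither of which appears in your sketch.

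This step carries a separate, non-trivial stochastic input that your proposal omits: one needs $r_*(y)\ll|y|$ as $|y|\uparrow\infty$, almost surely, uniformly in $y$. That is the estimate~\eqref{rstar:bound} in Corollary~\ref{r.star.log} ($\sup_y r_*(y)/(1+|y|)^\varepsilon<\infty$ a.s.\ for any $\varepsilon>0$), which follows from the finite algebraic moments of $r_*$ via a union bound. Without this control, the exponents in the localized version (Theorem~\ref{T3}) would be spoiled by the growth of $r_*(y)$, and one could not recover the endpoint $\beta$ in~\eqref{o04},~\eqref{o05}. (Indeed the exponent $\varepsilon=(\bar\beta-\beta)/d$ is tuned precisely so that the loss $r_*(y)^{d/2}\sim|y|^{\varepsilon d/2}$ is absorbed by the slack $\bar\beta-\beta$ in the corrector estimates.) So the probabilistic input is not ``exactly calibrated'' merely by choosing $\beta<d/2$ as you suggest; the almost-sure sublinear growth of the minimal radius field $y\mapsto r_*(y)$ is an essential additional ingredient.

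A minor organizational remark: the paper proves~\eqref{o04},~\eqref{o05} by first establishing the averaged-norm isomorphism in Theorem~\ref{Rom3} and then \emph{separately} showing that the relation~\eqref{o04} is equivalent to~\eqref{m03} (easy direction by $\beta>1$; hard direction by Theorem~\ref{T3}, Lemma~\ref{Rom5}, and Lemma~\ref{Lup} applied at $y$). Your sketch aims directly at~\eqref{o04}; this is fine in principle but it is exactly where the missing localization step hides.
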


\section{First and second-order correctors in stochastic homogenization}\label{stochastic.r}

This section is devoted to collect the results on the correctors in stochastic homogenization on which we rely in the proof of the main results of this paper. The next section provides the deterministic results needed for Theorem \ref{main} and relies on
the properties of the correctors which are taken as an input. Here, we report that these properties are generic, i.e. hold for $\langle \cdot \rangle$-almost every coefficient field $a$, provided that the ensemble is stationary and satisfies a LSI. 
As for the main statement, we assume that $d > 2$.

\medskip

The purpose of the (first-order) correctors is to correct
affine functions to $a$-harmonic functions. 
In terms of the standard Euclidean basis, the first-order
correctors $\{\phi_i\}_{i=1,\ldots,d}$ are thus characterized by
\begin{align}\label{i2}
-\nabla\cdot a\nabla(x_i+\phi_i)=0,
\end{align}
with the understanding that this and the following differential equations hold in the whole
space $\mathbb{R}^d$. The fundamental result in qualitative stochastic homogenization,
i.\ e.\ for stationary ensembles $\langle\cdot\rangle$ that are just ergodic, states that
there exists a unique curl-free random vector field $\nabla\phi_i$, $i=1,\ldots,d$, 
that is stationary, of bounded second moments and vanishing expectation, and such that
(\ref{i2}) holds almost-surely in $a$. Here
stationarity of a random field $g$, i.\ e.\ of a function(al) $g=g(a,x)$, 
means shift invariance in the sense of
$g(a(\cdot+y),x)=g(a,x+y)$. This result is due to Kozlov and Papanicolaou \& Varadhan
\cite{papvar,Kozlov}, see \cite[Chapter 7, Section 7.2]{JikovKozlovOleinik} for a textbook. 
As a consequence, the potential $\phi_i$ has sublinear 
growth so that indeed the correction does not dominate the affine function $x_i$. 
It follows from this, for instance by the method of oscillating test functions,
that the operator $-\nabla\cdot a\nabla$ $H$-converges to
$-\nabla\cdot \ah\nabla$ with the constant and deterministic coefficient $\ah$ given by
\begin{align}\label{ir12}
\ah e_i=\langle a(e_i+\nabla e_i)\rangle,
\end{align}
see \cite{TartarBook2009} for an introduction to this notion of convergence and the
type of arguments.

\medskip

In view of (\ref{i2}) and (\ref{ir12}) the flux $a(e_i+\nabla\phi_i)$ is divergence-free
and of expectation $\ah e_i$. It is thus natural to consider, next to the correction
$\phi_i$ of the (scalar) potential of the closed $1$-form $e_i+\nabla\phi_i$,
also 
the correction of the (vector) potential of the closed $(d-1)$-form $a(e_i+\nabla\phi_i)$.
This potential is a $(d-2)$-form and thus represented by a skew symmetric tensor 
$\sigma_i=\{\sigma_{ijk}\}_{j,k=1,\ldots,d}$. Hence we are lead to consider
\begin{align}\label{i5}
\sigma_i\;\mbox{skew},\quad\nabla\cdot\sigma_i=a(e_i+\nabla\phi_i)-\ah e_i,
\end{align}
with the understanding that for a tensor field $\sigma$, 
$(\nabla\cdot\sigma)_j:=\partial_k\sigma_{jk}$. We note that because of the
skew-symmetry we have $\nabla\cdot\nabla\cdot\sigma_i=0$ so that (\ref{i5}) contains
(\ref{i2}). Even in case of periodic homogenization, (\ref{i5}) leaves $\sigma_i$
under-determined up to a $(d-3)$-form, so that a gauge has to be chosen for
the construction of $\sigma_i$. Under the assumptions of qualitative
stochastic homogenization, this can be done in such a way that there exists
a unique random curl-free tensor-field $\nabla \sigma_{ijk}$, $i,j,k=1,\ldots,d$,
that is stationary, of bounded second moments and vanishing expectation, and such that
(\ref{i5}) holds almost surely \cite[Lemma 1]{GNO4}. As a consequence,
$\sigma_i$ grows sublinearly almost surely. Hence the status of the flux corrector
$\{\sigma_i\}_{i=1,\ldots,d}$ is similar to the one of $\{\phi_i\}_{i=1,\ldots,d}$.

\medskip

In homogenization, given an $\ah$-harmonic function $u_\h$,
it is common to consider the two-scale expansion
$(1+\phi_i\partial_i)u_\h$, which is a first-order approximation to
an $a$-harmonic function. The main merit of the flux corrector $\{\sigma_i\}_{i=1,\ldots,d}$
is that it allows for a divergence-form representation of the residuum:
\begin{align}\label{ir11}
-\nabla\cdot a\nabla(1+\phi_i\partial_i)u_\h
=-\nabla\cdot((\phi_ia-\sigma_i)\nabla\partial_iu_\h).
\end{align}
This is an easy calculation, see the arguments leading to (\ref{two.scale.1}) in Proposition \ref{Rom4}
for a second-order version of this calculation. We note that (\ref{ir11})
turns into (\ref{i2}) for $u_\h\in X_1^\h$, that is, for affine $u_\h$. 
This motivates the following notion of {\it second-order} correctors. 
Given $u_\h\in X_2^\h$, that is, a quadratic $\ah$-harmonic function $u_\h$,
we seek a function $\psi_{u_\h}$ that corrects $(1+\phi_i\partial_i)u_\h$
so that $(1+\phi_i\partial_i)u_\h+\psi_{u_\h}$ is $a$-harmonic. In view
of (\ref{ir11}), this second-order corrector must satisfy
\begin{align*}
-\nabla\cdot a\nabla\psi_{u_\h}
=\nabla\cdot((\phi_ia-\sigma_i)\nabla\partial_iu_\h).
\end{align*}
Like for the first-order correctors, it is convenient to work with a basis
$\{\psi_{ij}\}_{i,j=1,\ldots,d}$ characterized by the equation
\begin{align}\label{ir13}
-\nabla\cdot a\nabla\psi_{ij}
=\nabla\cdot((\phi_ia-\sigma_i)e_j)
\end{align}
so that $\psi_{u_\h}=\psi_{ij}\partial_{ij}u_\h$. However, we
only need $\{\psi_{ij}\}_{i,j=1,\ldots,d}$ in form of linear combinations
$E_{ij}\psi_{ij}$ with coefficients $\{E_{ij}\}_{i,j=1,\ldots,d}$
that are symmetric and trace-free in the sense of  $a_{\h ij}E_{ij}=0$
(so that $u_\h(x)=\frac{1}{2}E_{ij}x_ix_j$ is $\ah$-harmonic). We note that
the first and second-order correctors $(\phi,\psi):=\{\phi_i,\psi_{ij}\}_{i,j=1,\ldots,d}$,
are characterized by providing a map 
\begin{align*}
X_2^\h\ni u_\h\mapsto (1+\phi_i\partial_i+\psi_{ij}\partial_{ij})u_\h\in X_2,
\end{align*}
provided of course they grow sublinearly and subquadratically, respectively.

\medskip

The solution theory for $\psi_{ij}$, even on the level of the gradient $\nabla\psi_{ij}$,
is more subtle than for (\ref{i2}). Under the assumption that the augmented
corrector $(\phi,\sigma):=\{\phi_i,\sigma_{ijk}\}_{i,j,k=1,\ldots,d}$ 
grows by a logarithm less than linearly, there exists
a unique subquadratic solution of (\ref{ir13}) \cite{FischerOtto}. This
slightly tamer growth of $(\phi,\sigma)$ holds under a mild quantification
of ergodicity, see for instance \cite{FischerOtto2}. However, we need more
than that; in particular, we need that $\nabla\psi_{ij}$ can be chosen to be 
stationary and of vanishing expectation, so that $\psi_{ij}$ has sublinear
and not just subquadratic growth. 
In view of (\ref{ir13}) this can only be expected if $(\phi,\sigma)$ and not
just their gradients can be chosen to be stationary, which under the best ergodicity
assumptions can only be expected in $d>2$. That this stationarity can be achieved
under natural ergodicity assumptions was shown in \cite{GO1} (for $\phi$
and in \cite{GNO2} for $\sigma$). For $d>4$, the function $\psi_{ij}$ itself
(and thus a fortiori $\nabla\psi_{ij}$) can be chosen to be stationary so that it has
even subalgebraic growth \cite{Gu_highorder};
again, this can only be expected to hold for $d>4$. In order to optimally
treat the most relevant case of $d=3$, we need to resort to the results of
\cite{BellaFehrmanFischerOtto}, which are collected in Proposition \ref{stoch.results}.
Loosely speaking, (\ref{T.2}) states that $\psi_{ij}$ grows not much worse than the square root
in $d=3$ and subalgebraically for $d>3$.

\medskip

The existence of second-order correctors suggests to upgrade the two-scale
expansion from first to second order as $(1+\phi_i\partial_i+\psi_{ij}\partial_{ij})u_\h$ 
for a given $\ah$-harmonic $u_\h$. In order to capture that this function is closer to being 
$a$-harmonic than the first-order version, it is convenient to introduce flux-correctors
on the second-order level like we did on the first-order level, see~\eqref{i5}.
In view of (\ref{ir13}) (and following the notation of \cite{BellaFehrmanFischerOtto})
for $i,j=1,\ldots,d$ we introduce $\Psi_{ij}:=\{\Psi_{ijkn}\}_{k,n=1,\ldots,d}$ via
\begin{align}\label{Psi}
\Psi_{ij}\;\mbox{skew},\quad \nabla\cdot\Psi_{ij} =a\nabla\psi_{ij}+(\phi_ia-\sigma_i)e_j
-C_{ij},
\end{align}
where the vector $C_{ij}$ plays a similar role to $\ah$ in \eqref{i5}, so that in view of
(\ref{ir12}), we define
\begin{align}\label{ir14}
C_{ij}:=\langle a\nabla\psi_{ij}+(\phi_ia-\sigma_i)e_j\rangle.
\end{align}
The latter makes only sense when $\phi_i$, $\sigma_j$, and $\nabla\psi_{ij}$
are stationary, which as discussed above is the case under our assumptions.
Equipped with $\Psi_{ij}$ we obtain the second-order version of (\ref{ir11});
under the additional assumption that the tensor $C$ vanishes, it assumes the form
\begin{align*}
-\nabla\cdot a\nabla(1+\phi_i\partial_i+\psi_{ij}\partial_{ij})u_\h
=-\nabla\cdot((\psi_{ij}a-\Psi_{ij})\nabla\partial_{ij}u_\h),
\end{align*}
which is an easy calculation, leading to~\eqref{two.scale.1} in Proposition 8.
We note that as for (\ref{i5}) and (\ref{i2}), (\ref{Psi}) contains (\ref{ir13}).

\medskip

After these motivations we now collect the properties of $(\phi, \sigma)$ and 
$(\psi, \Psi)$ known to hold under assumptions of stationarity and LSI 
on the ensemble $\langle \cdot \rangle$. 

\begin{proposition}{\bf (Existence and sublinearity of the correctors, 
\cite{GNO4, FischerOtto, BellaFehrmanFischerOtto}) \, }\label{stoch.results} 
Let $d>2$ and $\beta\in(1,\frac{3}{2})$ in case of $d=3$ and $\beta\in(1,2)$ in case of $d>3$. 
Let $\langle \cdot \rangle$ be a stationary ensemble satisfying a $LSI$. 
Then there exist random fields $\phi=\{\phi_i\}_{i=1,\ldots,d}$, $\sigma=\{\sigma_{ijk}\}_{i,j,k=1,\ldots,d}$, 
$\psi=\{\psi_{ij}\}_{i,j=1,\ldots,d}$, $\Psi=\{\Psi_{ijkn}\}_{i,j,k,n=1,\ldots,d}$, and $r_*>0$, 
with the following properties:
\begin{itemize}
\item The random fields $\phi$, $\sigma$, $\nabla\psi$, $\nabla \Psi$, and $r_*$ are stationary; 
$\nabla\phi$, $\phi$, \BR6 $\nabla \sigma$, \ER $\sigma$, $\nabla\psi$, and $\nabla \Psi$ have finite second moments,
so that in particular $\ah$ and $C$ are well-defined through (\ref{ir12}) and (\ref{ir14}). 
We impose the normalization
\begin{align}\label{ir14bis}
\langle\phi\rangle=0,\quad\langle\sigma\rangle=0.
\end{align}
\item For $\langle \cdot \rangle$-almost every $a$, $(\phi,\sigma)$ satisfies (\ref{i5}) 
and $(\psi,\Psi)$ satisfies (\ref{Psi}).
\item All algebraic moments of $\int_{|x|<1}|(\phi,\sigma)|^2$ and $r_*$ are finite.
\item For $\langle \cdot \rangle$-almost every $a$, we have $(2-\beta)$-growth of 
$(\psi,\Psi)$ starting from radius $r_*(y)$ at any point $y$, that is,
\begin{align}
\frac{1}{R^2} \biggl(\fint_{|x-y|<R}\biggl|(\psi,\Psi)-\fint_{|x-y|< R}(\psi,\Psi)\biggr|^2\biggr)^\frac{1}{2}
&\le\biggl(\frac{r_*(y)}{R}\biggr)^{\beta} \quad\mbox{for all}\;R\ge r_*(y), \label{T.2}
\end{align}
where the stationary extension of $r_*$ is defined by $r_*(y) := r_*(a(\cdot+y))$. 
\end{itemize}
The same properties hold for the objects $\phi^*$, $\sigma^*$, $\psi^*$, $\Psi^*$, $\ah^*$, and $C^*$ 
coming from the transposed coefficient field $a^*$. 
\end{proposition}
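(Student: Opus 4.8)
Since this proposition merely assembles results that are by now standard, the plan is to quote the relevant constructions, check that they are all available under the single pair of hypotheses (stationarity and LSI) with $d>2$, and then to upgrade the origin-centred estimates to the pointwise-in-$y$ form \eqref{T.2} by stationarity. I would begin with the first-order pair $(\phi,\sigma)$. Existence of a stationary, curl-free, mean-zero $\nabla\phi$ of finite second moments solving \eqref{i2} is the classical result of Papanicolaou--Varadhan and Kozlov \cite{papvar,Kozlov} and needs only ergodicity; under LSI and $d>2$ one moreover gets that $\phi$ \emph{itself} may be chosen stationary, with the normalization $\langle\phi\rangle=0$ fixing the additive constant and with $\int_{|x|<1}|\phi|^2$ having finite moments of every algebraic order (see \cite{GO1} and \cite[Theorem 1]{GNO4}). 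The flux corrector is analogous: in the gauge of \cite[Lemma 1]{GNO4} there is a unique skew $\sigma$ with $\nabla\sigma$ stationary, mean-zero and square-integrable solving \eqref{i5}, $\sigma$ itself stationary for $d>2$, and all algebraic moments of $\int_{|x|<1}|\sigma|^2$ finite \cite{GNO2,GNO4}. With $\phi$ and $\sigma$ stationary and square-integrable, $\ah$ and $C$ are then well-defined through the ensemble averages \eqref{ir12} and \eqref{ir14}.

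I would then pass to the second-order pair $(\psi,\Psi)$. From the fact that $(\phi,\sigma)$ grows by a logarithm less than linearly---which holds under LSI---\cite{FischerOtto} provides a unique subquadratic solution $\psi_{ij}$ of \eqref{ir13}; since the right-hand side of \eqref{ir13} is assembled from the stationary fields $\phi,\sigma$, one reads off that $\nabla\psi$ is stationary with finite second moments, whereupon $\Psi$ is introduced via \eqref{Psi} exactly as $\sigma$ was introduced via \eqref{i5}, with $\nabla\Psi$ skew, stationary, mean-zero and square-integrable. The quantitative part---mean-zero-ness of $\nabla\psi$ and $\nabla\Psi$, the existence of a stationary minimal radius $r_*$ with all algebraic moments finite, and the $(2-\beta)$-growth \eqref{T.2} of $(\psi,\Psi)$ from scale $r_*$---is precisely the content extracted from \cite[Theorem 3]{BellaFehrmanFischerOtto}, which is where the genuine analytic work lies: the sensitivity/LSI calculus, the large-scale $C^{1,\alpha}$-regularity theory, and the bookkeeping that in $d=3$ forces $\beta<\tfrac32$ (growth rate $2-\beta$ barely above $\tfrac12$, worsened by a logarithm) while allowing $\beta<2$ for $d\ge4$. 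I would quote this verbatim rather than reprove it.

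It remains to make \eqref{T.2} pointwise in $y$ and to dispose of the transposed field. By construction the shifted fields $x\mapsto\nabla\psi(a(\cdot+y),x)$ and $x\mapsto\nabla\Psi(a(\cdot+y),x)$ coincide with $\nabla\psi(a,\cdot+y)$ and $\nabla\Psi(a,\cdot+y)$, and $r_*(y)=r_*(a(\cdot+y))$; applying the origin-centred estimate of \cite{BellaFehrmanFischerOtto} to the shifted ensemble member $a(\cdot+y)$ and noting that the oscillation appearing in \eqref{T.2} is insensitive to additive constants yields \eqref{T.2} for all $y$ outside a $\langle\cdot\rangle$-null set (take a countable dense set of $y$, then use continuity). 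The starred objects $\phi^*,\sigma^*,\psi^*,\Psi^*,\ah^*,C^*$ are handled by running the whole argument on the ensemble of transposed fields, which is again stationary, uniformly elliptic in the sense of \eqref{i01}, and satisfies the same LSI (transposition is a linear involution on coefficient fields preserving the ellipticity constant). The main ``obstacle'' here is therefore organisational---checking that the cited theorems are quoted under a common hypothesis set; the one genuinely hard ingredient, the growth estimate \eqref{T.2}, is taken off the shelf from \cite{BellaFehrmanFischerOtto}.
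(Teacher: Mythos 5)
Your plan follows essentially the same route as the paper: first-order pair from \cite{GNO4}, second-order pair via \cite{FischerOtto}, quantitative growth from \cite{BellaFehrmanFischerOtto}, and stationarity of the ensemble to handle $y$ and the transpose. Two steps, however, are glossed over in a way that the paper does not.

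First, you claim that since \cite{FischerOtto} provides a unique subquadratic $\psi_{ij}$ and the right-hand side of \eqref{ir13} is assembled from the stationary fields $\phi,\sigma$, ``one reads off that $\nabla\psi$ is stationary with finite second moments.'' This does not follow: the construction in \cite{FischerOtto} is deterministic for a fixed realization and yields a subquadratic function, whereas stationarity of $\nabla\psi$ and finiteness of its second moment require a separate Lax--Milgram argument on the probability space (with $\phi,\sigma$ stationary this is an adaptation of \cite[Lemma~1]{GNO4}, and is exactly what Appendix~\ref{construction_psiPsi} supplies for both $\nabla\psi$ and $\nabla\Psi$). Your proposal would leave this existence step unproved. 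Second, you assert that \eqref{T.2} is ``precisely the content extracted'' from the corresponding theorem of \cite{BellaFehrmanFischerOtto}. In fact that reference states its estimate for a single radius $R\ge1$ and with the randomness encoded in a multiplicative prefactor $C(a,y,\beta)$, not in a minimal radius $r_*(y)$. Passing to all (dyadic) radii costs a logarithm, which must be absorbed by slightly decreasing $\beta$ (this is exactly why the hypotheses require $\beta<\tfrac32$ and $\beta<2$ strictly), and one then has to set $r_*(y):=C(a,y,\beta)^{1/\beta}$ and divide by $R^2$ to obtain the stated form, checking that the moment bounds on $C$ transfer to $r_*$. Without this reformulation the statement you quote is not literally what the cited reference proves. (Also, the relevant result is Theorem~4, not Theorem~3, of \cite{BellaFehrmanFischerOtto}.) The remaining parts of your outline --- the first-order correctors, the normalization \eqref{ir14bis}, definedness of $\ah$ and $C$, the use of stationarity for $y\ne 0$, and the transposed fields --- match the paper's intent.
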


The proof of existence of stationary gradients $\nabla \phi$ and $\nabla \sigma$ for stationary and ergodic ensembles can be found for example in~\cite[Lemma 1]{GNO4}, while the stationarity of $\phi$ and $\sigma$ as well as the finiteness of all moments for $\int_{|x|<1} |(\phi,\sigma)|^2$ (assuming $\en{\cdot}$ satisfies LSI) 
are the content of~\cite[Theorem 3]{GNO4}. Provided the first-order correctors $\phi$ and $\sigma$ are stationary, 
the existence of stationary $\nabla \psi$ and $\nabla \Psi$ follows from an argument along the lines of the proof of~\cite[Lemma 1]{GNO4}. For completeness, we sketch the argument in the appendix (see~Appendix~\ref{construction_psiPsi}). 

\medskip
Finally, estimates similar to~\eqref{T.2} on the growth of the second-order correctors $\psi$ and $\Psi$ are given in~\cite[Theorem 4]{BellaFehrmanFischerOtto}.
The difference between our assumption~\eqref{T.2} and statement of~\cite[Theorem 4]{BellaFehrmanFischerOtto} is twofold. First, as noted below, the estimate in~\cite[Theorem 4]{BellaFehrmanFischerOtto} is given for one radius $R \ge 1$, but can be easily extended to a set of (say dyadic) radii at the expense of additional logarithmic factor. In turn, such factor can be bounded by $R^{\epsilon}$ for any $\epsilon > 0$, hence giving an estimate with a  slightly smaller exponent $\beta$. Nevertheless, since the values of $\beta$ are assumed to be strictly smaller than $3/2$ and $2$ in the case $d=3$ and $d\ge4$, respectively, we still obtain for these values of $\beta$ the estimate
\begin{equation}\label{T.2BFFO}
\biggl(\fint_{|x-y|<R}\biggl|(\psi,\Psi)-\fint_{|x-y|< R}(\psi,\Psi)\biggr|^2\biggr)^\frac{1}{2}
\le C(a,y,\beta) R^{\beta} \quad\mbox{for all}\;R\ge 1,
\end{equation}
where $C(a,y,\beta)$ has all algebraic moments (in fact even stretched exponential). Second, while randomness in~\eqref{T.2} is expressed through the smallest radius $r_*$ from which the estimate starts to hold, in~\cite[Theorem 4]{BellaFehrmanFischerOtto} the randomness enters through the random factor $C(a,y,\beta)$ on the right-hand side. To obtain~\eqref{T.2}, choosing $r_*(y) := C(a,y,\beta)^{1/\beta}$ and
dividing~\eqref{T.2BFFO} by $R^2$ yields \eqref{T.2} for $R \ge r_*(y)$. Since all algebraic moments of $C$ are bounded, the same holds also for $r_*(y)$. 

\medskip
As already mentioned, the stationary ensemble $\en{\cdot}$ is assumed to satisfy the Logarithmic Sobolev Inequality (see, e.g. \cite[(35) of Theorem 1]{GNO4} or~\cite[Definition 1]{BellaFehrmanFischerOtto}), which for any random variable $\xi=\xi(a)$ requires
\begin{equation}
\label{LSI}
\en{ \xi^2 \log \xi^2 }-\en{ \xi^2 } \en{\log \xi^2}
\le 
\sum_{z \in \mathbb{Z}^d} \left(\int_{z+[0,1)^d} \left|\frac{\partial \xi}{\partial a}(x)\right| \,dx \right)^2,
\end{equation}
where $\frac{\partial \xi}{\partial a}$ denotes the Fr\'echet derivative of $\xi$ with respect to the coefficient field $a=a(x)$. \BR6 In the present paper we consider the classical Logarithmic Sobolev inequality (which loosely speaking corresponds to integrable correlations tails for the ensemble), in which case the partition of $\R^d$ appearing on the right-hand side~\eqref{LSI} 
consists of unit cubes. To cover more general ensembles, following~\cite{BellaFehrmanFischerOtto,GNO4} one can replace the unit cubes in~\eqref{LSI} with a partition $\{ D \}$ of $\R^d$ 
satisfying for some $\tilde \beta \in [0,1)$
\begin{align*}
(\operatorname{dist}(D,0)+1)^{\tilde \beta} \leq \operatorname{diam}(D)\leq C(d) (\operatorname{dist}(D,0)+1)^{\tilde \beta}.
\end{align*}
 The parameter $\tilde \beta$ is related to the decay rate of correlations of the coefficient fields, and the classical LSI corresponds to the choice $\tilde \beta = 0$. As can be read from~\cite[Theorem 3]{GNO4}, the first-order correctors $(\phi,\sigma)$ are stationary (hence grow slower than any positive power of $|x|$) for values $0 \le \tilde \beta < 1-\frac{2}{d}$. The second-order correctors $(\psi,\Psi)$ are stationary provided $0 \le \tilde \beta < 1 - \frac 4 d$~\cite[Theorem 4]{BellaFehrmanFischerOtto}, and grow like $|x|^{2-d(1-\tilde\beta)/2}$ for $1 - \frac 4 d \leq \tilde \beta < 1$. In particular, one can see the limitations in the most physically relevant case $d=3$: even assuming that $\en{\cdot}$ has integrable correlations, which corresponds to the best value $\tilde \beta = 0$, the second-order correctors will grow like square root (which then corresponds to $\beta < 3/2$).

\medskip 
While the LSI assumption in the form of~\eqref{LSI} is satisfied by a large family of models for random coefficients, for example by Gaussian ensembles~\cite[Lemma 4]{GNO4}, models for random coefficients constructed from the Poisson Point Process (see~\cite[example after Theorem 6]{BellaOtto}) do not satisfy~\eqref{LSI} due to the use of the Fr\'echet derivatives on the right-hand side. Nevertheless, the Poisson Point Process would satisfy~\eqref{LSI} provided the Fr\'echet derivatives are replaced with an oscillation, while the results of~\cite{GNO4,BellaFehrmanFischerOtto} would still be true assuming such modified \eqref{LSI} (with modifications in the corresponding proofs).

\ER


\medskip

The following almost uniform estimates (we think of $\alpha$ as being close to $1$ and $\varepsilon$ close to $0$) 
are an easy consequence of stationarity and the algebraic moment bounds \BR6 of $(\phi,\sigma)$ and $r_*$.\ER 

\begin{corollary}\label{r.star.log}
Let $0 < \alpha < 1$ be given. After a redefinition of $r_*$, retaining (\ref{T.2}), 
we may assume in addition that for $\langle\cdot\rangle$-almost every $a$ and every $x \in \R^d$ we have
\begin{align}
\frac1R \biggl(\fint_{|y-x|<R}|(\phi,\sigma)|^2 \dy \biggr)^\frac{1}{2}
\le\bigg(\frac{r_*(y)}{R}\bigg)^{\alpha} \quad\mbox{for all}\;R\ge r_*(y).\label{T.1}
\end{align}
Moreover, for any $\varepsilon > 0$ we have $\en{\cdot}$-almost surely 
\begin{equation}\label{rstar:bound}
\sup_{y \in \mathbb{Z}^d}\frac{r_*(y)}{(1+|y|)^\varepsilon}<\infty.
\end{equation}
\end{corollary}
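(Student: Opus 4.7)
The plan is to define the new radius as
\begin{equation*}
\tilde r_*(x) := \max\Bigl\{ r_*(x),\; \bigl(C_{d,\alpha} A(x)\bigr)^{1/\alpha}, \, 1\Bigr\},
\qquad A(x) := \sup_{k \ge 0} M(x,2^k)\, 2^{k(\alpha-1)},
\end{equation*}
where $M(x,R) := \bigl( \fint_{|y-x|<R} |(\phi,\sigma)|^2 \dy \bigr)^\oh$ and $C_{d,\alpha}$ is a dimensional constant. The desired estimate \eqref{T.1} is equivalent to $M(x,R) R^{\alpha-1} \le \tilde r_*(x)^\alpha$ for every $R \ge \tilde r_*(x)$, and I would justify it by comparing the continuous supremum over $R$ with the dyadic supremum $A(x)$, using $B_R(x)\subset B_{2^{k+1}}(x)$ and $|B_{2^{k+1}}|/|B_R|\le 2^d$ for $R \in [2^k,2^{k+1}]$, combined with the elementary $R^{\alpha-1}\le 2^{1-\alpha}(2^{k+1})^{\alpha-1}$ since $\alpha<1$. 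This $\tilde r_*$ manifestly retains \eqref{T.2} because $\tilde r_* \ge r_*$, and it is stationary because $r_*$ and $M$ are. What remains is the moment bound on $\tilde r_*$ and the pointwise growth estimate \eqref{rstar:bound}.

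\medskip

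For the moments, the key input is the uniform bound $\en{M(x,R)^{2q}} \le C(q,d)$ for every $x \in \Rd$, $R \ge 1$, and integer $q$. To obtain it, I would cover $B_R(x)$ by unit cubes $z + [0,1)^d$, $z$ ranging over a set of size $N_R \le C(d) R^d$, so that
\begin{equation*}
M(x,R)^2 \le \frac{C(d)}{N_R} \sum_{z} X_z, \qquad X_z := \int_{z+[0,1)^d} |(\phi,\sigma)|^2,
\end{equation*}
and then apply Jensen's inequality $(\frac{1}{N_R}\sum X_z)^q \le \frac{1}{N_R}\sum X_z^q$ together with stationarity of $(\phi,\sigma)$ and the finiteness of all algebraic moments of $X_0$ from Proposition~\ref{stoch.results}. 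A layer-cake argument at the level of the dyadic supremum then yields
\begin{equation*}
\en{A(x)^{2q}} \le \sum_{k \ge 0} \en{M(x,2^k)^{2q}}\, 2^{2kq(\alpha-1)} \le C(q,d,\alpha),
\end{equation*}
since the geometric series converges for $\alpha < 1$. Consequently $\tilde r_*(x)$ inherits all algebraic moments from $r_*$ and $A$.

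\medskip

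For \eqref{rstar:bound}, I would fix $\varepsilon > 0$, pick $p$ with $p\varepsilon > d$, and use stationarity together with the previous moment bound to write $\en{\tilde r_*(y)^p}=\en{\tilde r_*(0)^p}<\infty$ uniformly in $y \in \mathbb{Z}^d$. Markov's inequality and summation then give $\sum_{y \in \mathbb{Z}^d} \Pr\bigl(\tilde r_*(y) > (1+|y|)^\varepsilon\bigr) \le C_p \sum_y (1+|y|)^{-p\varepsilon} < \infty$, so by the Borel--Cantelli lemma one has, $\en{\cdot}$-almost surely, $\tilde r_*(y) \le (1+|y|)^\varepsilon$ for all but finitely many $y \in \mathbb{Z}^d$, which makes the supremum in \eqref{rstar:bound} finite. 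A countable intersection over $\varepsilon=1/n$ then extends the statement to all $\varepsilon > 0$ simultaneously. The main technical point of the whole argument is the uniform-in-$R$ bound on $\en{M(x,R)^{2q}}$; it cannot be deduced from pointwise $L^{2q}$ moments of $(\phi,\sigma)$ because those are not part of the hypotheses, and must instead be extracted from the algebraic moments of $X_0$ via the unit-cube covering above.
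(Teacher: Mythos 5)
Your proof is correct and follows essentially the same route as the paper's: a dyadic reduction for \eqref{T.1}, control of the dyadic quantity by covering with unit cells, Jensen's inequality, stationarity of $(\phi,\sigma)$ together with the algebraic moments of the unit-scale energy, and then stationarity of $r_*$ plus summation over $\mathbb{Z}^d$ for \eqref{rstar:bound}. The only cosmetic differences are that you bound the moments of the dyadic supremum directly by the sum over scales (the paper instead estimates the tail probabilities of the smallest admissible dyadic radius), and that you conclude \eqref{rstar:bound} via Markov and Borel--Cantelli rather than by bounding the $q$-th moment of the supremum itself.
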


The following statement is 
an immediate consequence of the ergodicity of $\langle \cdot \rangle$, 
normalization (\ref{ir14bis}) of the first-order correctors and the identity for $C$
\begin{align}\label{definition.C}
C_{ijk}= \en{ e_j \cdot \sigma_i \nabla\phi_k^* - e_j \cdot \sigma_k^* \nabla \phi_i }
\end{align}
(see Lemma~\ref{tensor.C} for the proof of~\eqref{definition.C}).

\begin{corollary}\label{ergodicity}
For $\langle \cdot \rangle$-almost every $a$ and for $i,j,k=1,\ldots, d$ we have the
distributional convergences as the rescaling parameter $R$ tends to infinity
\begin{align*}
\phi_i(R \, \cdot \, ) &\rightharpoonup 0,\\
\sigma_{ijk}(R \, \cdot \, ) &\rightharpoonup 0,\\
\bigl( e_j \cdot \sigma_i \nabla\phi_k^* - e_j \cdot \sigma_k^* \nabla \phi_i \bigr)( R \, \cdot \,) &\rightharpoonup C_{ijk},
\end{align*}
with the analogous convergences being true also in the transposed case. 
\end{corollary}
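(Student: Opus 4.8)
The plan is to deduce all three distributional convergences from the classical ergodic theorem, exploiting the stationarity and the moment bounds recorded in Proposition \ref{stoch.results} and Corollary \ref{r.star.log}. First I would observe that, since $\nabla\phi$ and $\nabla\sigma$ are stationary with vanishing expectation and finite second moments, the quantitative sublinearity estimate \eqref{T.1} together with the almost-sure bound \eqref{rstar:bound} on $r_*$ gives, for $\langle\cdot\rangle$-almost every $a$, that $\frac1R(\fint_{|y|<R}|(\phi,\sigma)|^2\dy)^{1/2}\to 0$ as $R\uparrow\infty$; rescaling, this says precisely that $\phi_i(R\,\cdot\,)\rightharpoonup 0$ and $\sigma_{ijk}(R\,\cdot\,)\rightharpoonup 0$ in (say) $L^2_{\rm loc}$, hence distributionally. (One may alternatively invoke the sublinearity of the corrector coming from the qualitative Kozlov--Papanicolaou--Varadhan theory, but \eqref{T.1} is already available.)

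For the third line I would write $F_{ijk}:=e_j\cdot\bigl(\sigma_i\nabla\phi_k^*-\sigma_k^*\nabla\phi_i\bigr)$ and note that, since $\phi_i,\sigma_{ijk}$ and $\phi_k^*,\sigma_{ijk}^*$ are all stationary and $\sigma$ has finite second moments while $\nabla\phi^*$ has finite second moments, the product $F_{ijk}$ is a stationary random field which is integrable (in fact in $L^1$ by Cauchy--Schwarz on the unit cell, using that $\int_{|x|<1}|(\phi,\sigma)|^2$ and $\int_{|x|<1}|\nabla\phi^*|^2$ have finite moments). The multi-dimensional Birkhoff ergodic theorem then yields, for $\langle\cdot\rangle$-almost every $a$, the almost-everywhere and $L^1_{\rm loc}$ convergence of the averages $\fint_{|x|<R}F_{ijk}(a,R\,\cdot\,)$ — equivalently $F_{ijk}(R\,\cdot\,)\rightharpoonup \langle F_{ijk}\rangle$ distributionally — and by ergodicity of $\langle\cdot\rangle$ the limit is the deterministic constant $\langle F_{ijk}\rangle$, which by \eqref{definition.C} is exactly $C_{ijk}$. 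The identity \eqref{definition.C} itself — that $\langle a\nabla\psi_{ij}+(\phi_ia-\sigma_i)e_j\rangle$ equals $\langle e_j\cdot(\sigma_i\nabla\phi_k^*-\sigma_k^*\nabla\phi_i)\rangle$ — is the content of Lemma \ref{tensor.C}, which I would simply cite. The transposed statements follow verbatim with $a$ replaced by $a^*$, using that Proposition \ref{stoch.results} grants the same properties for the starred objects.

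The only point requiring a little care — and the closest thing to an obstacle — is the passage from the ergodic theorem (which controls full averages $\fint_{|x|<R}$) to distributional convergence against an arbitrary test function $\zeta\in C_0^\infty(\mathbb{R}^d)$. This is handled by the standard device: covering the support of $\zeta$ by dyadic annuli (or by cubes), applying the ergodic theorem on each, and summing; the finite-moment bound on $\int_{|x|<1}|(\phi,\sigma)|^2$ (and on $\int_{|x|<1}|\nabla\phi^*|^2$) ensures the tail contributions are controlled, so that $\int \zeta(x)\,F_{ijk}(a,R x)\dx\to C_{ijk}\int\zeta$. Since everything reduces to the ergodic theorem plus the already-established moment bounds, I expect the proof to be short once Lemma \ref{tensor.C} is in place, and the writeup would mostly consist of recording these reductions rather than any genuinely new estimate.
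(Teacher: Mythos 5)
Your overall strategy — reduce everything to the ergodic theorem plus the normalization of the correctors plus the identity \eqref{definition.C} from Lemma \ref{tensor.C} — is exactly the one the paper intends (the paper only says the Corollary is ``an immediate consequence of the ergodicity of $\langle\cdot\rangle$, normalization (\ref{ir14bis}) of the first-order correctors and the identity for $C$''), and your treatment of the third convergence is correct.

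However, the argument you give for the first two convergences is flawed. You claim that the sublinearity estimate \eqref{T.1}, which yields $\frac1R\bigl(\fint_{|y|<R}|(\phi,\sigma)|^2\bigr)^{1/2}\to 0$, rescales to $\phi_i(R\,\cdot\,)\rightharpoonup 0$ in $L^2_{\rm loc}$. This is not so: by change of variables, $\int_{|x|<1}|\phi(Rx)|^2\,dx = |B_1|\,\fint_{|y|<R}|\phi|^2\,dy$, and sublinearity only controls this quantity by $o(R^2)$ — it does \emph{not} send it to zero. Indeed, for stationary $\phi$ with finite second moments the ergodic theorem gives $\fint_{|y|<R}|\phi|^2\,dy\to\bigl\langle\fint_{|x|<1}|\phi|^2\bigr\rangle$, which is generically positive, so $\phi(R\,\cdot\,)$ is bounded in $L^2_{\rm loc}$ but certainly does not tend to zero strongly, and sublinearity alone gives you neither boundedness nor weak convergence. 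The correct mechanism for the first two lines is the same one you already invoke for the third: $\phi$ and $\sigma$ are stationary, integrable (indeed $L^2$), and normalized by \eqref{ir14bis} to have $\langle\phi\rangle=\langle\sigma\rangle=0$; the multi-parameter Birkhoff/Wiener ergodic theorem then gives $\fint_{|x|<R}\phi\to 0$ and $\fint_{|x|<R}\sigma\to 0$ almost surely, and the passage to arbitrary test functions goes exactly as you describe in your final paragraph. In short: drop the appeal to \eqref{T.1} and \eqref{rstar:bound} for the first two convergences and replace it by the ergodic theorem together with \eqref{ir14bis}; with that fix the proof agrees with the paper's intent.
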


\ER

\BR3
Finally, we state the following (higher-order) regularity statement for $a$-harmonic functions:

\begin{proposition}{\bf ($C^{k,1}$-regularity)}\label{Ck.1} 
Let the coefficient field $a$ be such that $(\phi, \sigma)$ satisfying~\eqref{i5} exist and satisfy for some $r_* < +\infty$ and some $0 < \alpha < 1$ 
\begin{equation}\label{Ckeq}
 \frac1R \biggl(\fint_{|x|<R}|(\phi,\sigma)|^2\biggr)^\frac{1}{2}
\le\biggl(\frac{r_*}{R}\biggr)^{\alpha}\quad\mbox{for all}\;R\ge r_*.
\end{equation}
\BR3
Then 
the following holds:
\begin{itemize}
\item Let $k\geq 1$. For every $u_\h \in X_k^\h$ there exists a corrector $\psi_{u_\h}^{(k)}$ such that $( 1 + \phi_i\partial_i )u_\h + \psi_{u_\h}^{(k)} \in X_k$ 
\BR6 and for a constant $C=C(d,\lambda,k,\alpha) < \infty$
\begin{align}\label{higher.correctors}
\sup_{R \geq r_*}\biggl( \frac {R}{ r_*}\biggr)^{1-k}\biggl( \fint_{|x|<R}|\nabla((1+\phi_i\partial_i)u_\h + \psi_{u_\h}^{(k)})|^2 \biggr)^\oh \leq C 
\biggl( \fint_{|x|<r_*}|\nabla ((1+\phi_i\partial_i)u_\h + \psi_{u_\h}^{(k)})|^2 \biggr)^\oh.
\end{align}
\ER
Here, we use the understanding that $\psi^{(1)}_{u_\h} \equiv 0$ and that $\psi^{(2)}_{u_\h}$ coincides with $\psi_u$ obtained via \eqref{ir13}.
\item For $R \geq r_*$, let $u$ be an $a$-harmonic function in $\{|x| < R \}$. Then for every \BR5 $k \geq 0$ \ER there exists a constant $C= C(d, \lambda, k, \alpha) < +\infty$ such that
\begin{equation}\label{eqCk1}
\inf_{u_\h \in X_k^\h}\biggl(\fint_{|x| < r}\hspace{-0.3cm}|\nabla\bigl(u - ( (1 + \phi_i \partial_i) u_\h + \psi_{u_\h}^{(k)}) \bigr)|^2 \biggr)^\oh \leq C\biggl(\frac{r}{R} \biggr)^{\BR5 (k+1)-1}\hspace{-0.1cm}\biggl( \fint_{|x|<R}\hspace{-0.2cm}|\nabla u|^2 \biggr)^\oh
\end{equation} 
for all $r_* \leq r \leq R$, \BR6 with the understanding that $\psi_{u_\h}^{(0)}\equiv 0$. \ER
\item It holds that \BR6
\begin{equation}\label{Fischer1}
X_k= \bigl\{ (1+ \phi_i \partial_i)u_\h + \psi_{u_\h}^{(k)} \ | \ u_\h \in X_k^\h \bigr\}\quad \textrm{ and } \quad {\rm dim}X_k={\rm dim}X_k^\h.
\end{equation}
\end{itemize}
\end{proposition}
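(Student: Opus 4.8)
The plan is to run a Campanato-type excess-decay iteration, exactly as in the $C^{0,1}$-theory of Avellaneda--Lin \cite{AL1} and its random-media incarnation \cite{ArmstrongSmart, GNO4}, but pushed to higher order by correcting with the full hierarchy of correctors $\psi^{(k)}_{u_\h}$. The key quantity is the excess
\begin{equation*}
\mathrm{Exc}(r):=\inf_{u_\h\in X_k^\h}\biggl(\fint_{|x|<r}\bigl|\nabla\bigl(u-((1+\phi_i\partial_i)u_\h+\psi^{(k)}_{u_\h})\bigr)\bigr|^2\biggr)^\oh,
\end{equation*}
and the goal \eqref{eqCk1} is the statement that $\mathrm{Exc}(r)\lesssim (r/R)^{k}\bigl(\fint_{|x|<R}|\nabla u|^2\bigr)^\oh$ for $r_*\le r\le R$. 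First I would establish the harmonic-approximation lemma: an $a$-harmonic function $u$ on $\{|x|<R\}$ is, at a scale $\theta R$ with $\theta\in(0,1)$ small but fixed, close (in the $\fint|\nabla\cdot|^2$ sense, up to a price $C\theta$ times the energy) to a corrected polynomial $(1+\phi_i\partial_i)u_\h+\psi^{(k)}_{u_\h}$ with $u_\h\in X_{k+1}^\h$. This rests on the qualitative homogenization statement together with the sublinearity encoded in \eqref{Ckeq}: rescaling, the coefficient field $H$-converges and $u$ converges to an $\ah$-harmonic limit, for which the Euclidean Taylor expansion to order $k$ (with quantitative Schauder control) does the job; the correctors $\phi,\sigma$ (and $\psi^{(k)}$) transfer this back to the heterogeneous function with the help of the two-scale expansion identity \eqref{ir11} and its higher-order analogue (the divergence-form representation of the residuum). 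Iterating this one-step improvement down dyadic scales from $R$ to $r_*$, and summing a geometric series, yields \eqref{eqCk1}; the restriction $r\ge r_*$ is exactly what makes the corrector contributions subdominant at every scale.

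For the first bullet, \eqref{higher.correctors}, the idea is that the constructed corrector $\psi^{(k)}_{u_\h}$ makes $v:=(1+\phi_i\partial_i)u_\h+\psi^{(k)}_{u_\h}$ an $a$-harmonic function that, on large scales, ``looks like'' the degree-$k$ polynomial $u_\h$; so applying the already-proven estimate \eqref{eqCk1} (with $k$ there and this particular $v$ in the role of $u$) gives that $v$ lies in $X_k$, i.e.\ the $X_k^\h$-best-approximation excess of $v$ decays at rate $k$, forcing the average gradient energy of $v$ to grow at most like $R^{k-1}$. Quantitatively, one writes the standard iteration: $\bigl(\fint_{|x|<R}|\nabla v|^2\bigr)^\oh \lesssim \bigl(\fint_{|x|<\theta R}|\nabla v|^2\bigr)^\oh + (\text{lower order})$ reversed — more precisely, split $\nabla v$ into the best-approximating corrected polynomial of degree $k$ (whose energy scales exactly like $R^{k-1}$, since $\nabla(\text{degree }k) \sim R^{k-1}$ and the corrector terms are controlled via \eqref{Ckeq}, \eqref{T.2}) plus the excess (which by \eqref{eqCk1} is of strictly lower order once $R\ge r_*$), and conclude by absorbing. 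The existence of $\psi^{(k)}_{u_\h}$ itself is obtained inductively: given $u_\h\in X_k^\h$, the residuum $-\nabla\cdot a\nabla((1+\phi_i\partial_i)u_\h+\psi^{(k-1)}_{\text{(lower part)}})$ has a divergence-form right-hand side built from $(\phi,\sigma)$ and lower-order correctors, and one solves for the correction with subpolynomial growth using \eqref{Ckeq}; for $k=2$ this is precisely \eqref{ir13}.

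Finally, \eqref{Fischer1} follows formally: the map $X_k^\h\ni u_\h\mapsto (1+\phi_i\partial_i)u_\h+\psi^{(k)}_{u_\h}$ lands in $X_k$ by \eqref{higher.correctors} (the growth rate $k-1$ on the gradient is exactly membership in $X_k$), and it is injective because if the image grows slower than rate $k$ then by \eqref{eqCk1} applied at level $k-1$ together with uniqueness of the corrector decomposition, $u_\h$ must itself have been of lower degree — made rigorous by peeling off the top homogeneous part of $u_\h$ and using that nontrivial degree-$k$ $\ah$-harmonic polynomials genuinely grow at rate $k$; surjectivity is the content of \eqref{eqCk1} with $r=r_*$ fixed, $R\uparrow\infty$, which forces any $u\in X_k$ to agree (modulo the iteration error, which vanishes) with some corrected polynomial. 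The dimension equality is then immediate. The main obstacle is the harmonic-approximation step feeding the iteration: one needs the one-scale improvement to be genuinely quantitative and to interact cleanly with \emph{all} the correctors $\phi,\sigma,\psi^{(j)}$ simultaneously, so that the error terms generated at each scale are controlled by \eqref{Ckeq} (and \eqref{T.2} for $j=2$) and summable; bookkeeping the corrector contributions at the right order, rather than any deep new idea, is where the real work lies — this is the higher-order analogue of the large-scale $C^{1,\alpha}$ argument and parallels \cite[Corollary 4]{FischerOtto}.
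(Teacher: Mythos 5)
The paper does not actually prove Proposition~\ref{Ck.1}: it takes the statement off the shelf, citing \cite[Theorem~1 \& Lemma~17]{FischerOtto}, which in fact hold under the weaker hypothesis that $(\phi,\sigma)$ grows sublinearly by just a logarithm. Your sketch of a Campanato-type excess-decay iteration driven by the higher-order two-scale expansion is indeed the skeleton of the argument in that reference, so the route you propose is the correct one and is closer to a self-contained proof than what this paper offers. A few points must be made rigorous to carry it out: (i) the correctors $\psi^{(k)}_{u_\h}$ appear inside the excess that drives the iteration, so their construction and the validity of~\eqref{eqCk1} have to be established jointly by induction on $k$, not sequentially as your last paragraph suggests; (ii) the one-step improvement cannot rest on a qualitative $H$-convergence/compactness argument, since the constant in~\eqref{eqCk1} must be quantitative in $\alpha$ --- the explicit residuum estimate from the two-scale expansion (the ``higher-order analogue of~\eqref{ir11}'' you mention) is the load-bearing ingredient, not a mere transfer device; (iii) for~\eqref{higher.correctors}, passing from excess decay to a bound on the full energy requires showing that the best-approximating $u_\h$ does not degenerate across scales, which is the same coefficient-tracking step as in the large-scale $C^{1,\alpha}$ theory of~\cite{GNO4}. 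None of these is a fatal flaw --- they are exactly the ``bookkeeping'' you flag at the end --- but they are where all the work lies, which is why the paper delegates the proof in its entirety to~\cite{FischerOtto}.
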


\medskip
Proposition~\ref{Ck.1} is a direct consequence of \cite[Theorem 1 \& Lemma 17]{FischerOtto}, which hold under weaker assumption of $(\phi,\sigma)$ having sublinear \BR6 growth quantified by just a logarithm~\cite[assumption (4)]{FischerOtto}\ER. Indeed, \cite[(10) in Theorem 1]{FischerOtto} provides an estimate on $\psi_{u_\h}^{(k)}$, which combined with the estimate of the first-order corrector $\phi$ \eqref{Ckeq} and the homogeneity of $u_\h$ yields~\eqref{higher.correctors}, while the regularity estimate~\eqref{eqCk1} is contained in~\cite[Lemma 17]{FischerOtto}. The last statement~\eqref{Fischer1} then follows directly from~\eqref{eqCk1}. 

\medskip

In the next lemma we discuss some properties of the tensor $C$ as defined in~\eqref{ir14} by
$$
C_{ijk} = \langle e_k\cdot \bigl( a\nabla \psi_{ij} + (\phi_ia-\sigma_i)e_j \bigr) \rangle
$$
and argue that it vanishes under the assumptions on the ensemble $\langle \cdot \rangle$ of Theorem \ref{main.sym}.
This, as explained in Remark~\ref{symmetrization}, implies that in this case Theorem \ref{main} actually reduces to Theorem \ref{main.sym}. We postpone the proof of this result to the Appendix~\ref{subsect_lemma1}.

\medskip

\begin{lemma}\label{tensor.C}
Let $\langle \cdot \rangle$ be a stationary ensemble satisfying an LSI.
\BR5 Then the coefficient $\ah \in \R^{d \times d}$ defined in~\eqref{ir12} satisfies
\begin{equation}\nonumber
  \forall\;\xi\in\mathbb{R}^d\quad \xi\cdot \ah\xi \geq \begin{cases}
                      \lambda|\xi|^2\\
                       |\ah\xi|^2,
                       \end{cases}
\end{equation}
\ER 
and the tensor $C \in \mathbb{R}^{d \times d\times d}$ defined above satisfies
\begin{align}\label{C.bdd}
|C| \lesssim 1,
\end{align}
and  may be rewritten as in \eqref{definition.C}. Furthermore
\begin{itemize}
 \item[i)] the symmetrizations of $C$ and $C^*$ in all the indices satisfy
 $$
 C^{*, sym}= - C^{sym}.
 $$
 This in particular implies that if the ensemble $\langle \cdot \rangle$ is supported only on symmetric coefficient fields,
 then $C^{sym}= C^{*,sym} = 0$.
 \item[ii)] If the ensemble is invariant under central reflections, then $C = 0$.
\end{itemize}
\end{lemma}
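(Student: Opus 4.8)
The plan is to establish the four assertions in order, treating the ellipticity of $\ah$, the bound $|C|\lesssim 1$, the alternate representation \eqref{definition.C}, and then the two symmetry statements i) and ii). First I would dispose of the ellipticity of $\ah$: the lower bound $\xi\cdot\ah\xi\ge\lambda|\xi|^2$ follows from testing the corrector equation \eqref{i2} with $x_i+\phi_i$ and using stationarity of $\nabla\phi$ together with $\langle\nabla\phi\rangle=0$ to get $\ah_{ij}=\langle(e_i+\nabla\phi_i)\cdot a(e_j+\nabla\phi_j)\rangle$ in the symmetrized sense, then invoking \eqref{i01}; the upper bound $|\ah\xi|^2\le\xi\cdot\ah\xi$ is preserved under $H$-convergence by the standard div-curl / weak-lower-semicontinuity argument, or alternatively by a direct computation using the flux corrector representation $\ah e_i = \langle a(e_i+\nabla\phi_i)\rangle$. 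The bound $|C|\lesssim 1$ is then immediate from \eqref{ir14}: each factor $\phi_i$, $\sigma_i$, and $\nabla\psi_{ij}$ has finite second moments by Proposition \ref{stoch.results} (together with the finite second moment of $a$), so Cauchy–Schwarz gives \eqref{C.bdd}.

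Next I would derive the alternate representation \eqref{definition.C}. The idea is to start from $C_{ijk}=\langle e_k\cdot(a\nabla\psi_{ij}+(\phi_i a-\sigma_i)e_j)\rangle$ and rewrite the $a\nabla\psi_{ij}$ term. Testing the $\psi_{ij}$-equation \eqref{ir13} against the adjoint first-order corrector $x_k+\phi_k^*$ (the $a^*$-corrector), using that $-\nabla\cdot a^*\nabla(x_k+\phi_k^*)=0$, and exploiting stationarity to pass the test function through the expectation, one should obtain
\begin{align*}
\langle e_k\cdot a\nabla\psi_{ij}\rangle = -\langle\nabla(x_k+\phi_k^*)\cdot(\phi_i a-\sigma_i)e_j\rangle + (\text{term that cancels }\langle e_k\cdot(\phi_i a-\sigma_i)e_j\rangle).
\end{align*}
Expanding and using the defining relation \eqref{i5} for $\sigma_i$ (namely $\nabla\cdot\sigma_i=a(e_i+\nabla\phi_i)-\ah e_i$), together with the skew-symmetry of $\sigma_i$ and stationarity, the flux terms reorganize so that the $a$- and $\ah$-contributions drop out and one is left with $C_{ijk}=\langle e_j\cdot\sigma_i\nabla\phi_k^* - e_j\cdot\sigma_k^*\nabla\phi_i\rangle$. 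This is the step I expect to be the main obstacle: it is a somewhat delicate integration-by-parts/stationarity computation in which one must carefully track which boundary-type terms vanish (by stationarity and the finite second moments, after a standard localization/cutoff argument as in the construction of $\sigma$ in \cite[Lemma 1]{GNO4}), and one must use the skew-symmetry of both $\sigma$ and $\sigma^*$ to symmetrize the expression into the manifestly antisymmetric-in-$(i,k)$ form; the appearance of $\sigma_k^*$ and $\nabla\phi_i$ in the second term (rather than $\sigma_i^*$, $\nabla\phi_k^*$) is exactly what must be produced by this rearrangement.

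Finally, the symmetry statements follow from \eqref{definition.C}. For i), replacing $a$ by $a^*$ throughout exchanges the roles of the starred and unstarred objects, so $C^*_{ijk}=\langle e_j\cdot\sigma_i^*\nabla\phi_k - e_j\cdot\sigma_k\nabla\phi_i^*\rangle$; comparing with $C_{kji}=\langle e_j\cdot\sigma_k\nabla\phi_i^* - e_j\cdot\sigma_i^*\nabla\phi_k\rangle$ one reads off $C^*_{ijk}=-C_{kji}$, and symmetrizing in all three indices yields $C^{*,\mathrm{sym}}=-C^{\mathrm{sym}}$; when the ensemble is supported on symmetric fields, $\sigma^*=\sigma$ and $\phi^*=\phi$, so $C^*=C$ and hence $C^{\mathrm{sym}}=0$. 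For ii), under the central-reflection $a\mapsto\{x\mapsto a(-x)\}$ one checks from the defining equations \eqref{i2}, \eqref{i5} that $\phi_i$, $\sigma_i$ transform as $\phi_i(x)\mapsto-\phi_i(-x)$, $\sigma_{ijk}(x)\mapsto\sigma_{ijk}(-x)$ (the sign comes from $x_i$ being odd and $\sigma_i$ being the potential of the even flux); consequently the integrand $e_j\cdot\sigma_i\nabla\phi_k^* - e_j\cdot\sigma_k^*\nabla\phi_i$ evaluated at $-x$ equals minus itself, so taking the ensemble average (invariant under the reflection) forces $C=0$.
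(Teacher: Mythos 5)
Your outline tracks the paper's argument quite closely, and the bound \eqref{C.bdd}, the overall strategy for deriving \eqref{definition.C} (test the $\psi_{ij}$-equation against the adjoint corrector $x_k+\phi_k^*$ in probability space, then reduce via \eqref{i5} for $\sigma_k^*$ and the normalization $\langle\phi\rangle=\langle\sigma\rangle=0$), item i), and the strategy for ii) all match. One minor slip: in the displayed heuristic identity, $-\langle\nabla(x_k+\phi_k^*)\cdot(\phi_ia-\sigma_i)e_j\rangle$ plus the would-be cancelling term $+\langle e_k\cdot(\phi_ia-\sigma_i)e_j\rangle$ leaves $-\langle\nabla\phi_k^*\cdot(\phi_ia-\sigma_i)e_j\rangle$, whereas the two substitutions $\langle e_k\cdot a\nabla\psi_{ij}\rangle=-\langle\nabla\phi_k^*\cdot a\nabla\psi_{ij}\rangle$ (from the $\phi_k^*$-equation) and $\langle\nabla\phi_k^*\cdot a\nabla\psi_{ij}\rangle=-\langle\nabla\phi_k^*\cdot(\phi_ia-\sigma_i)e_j\rangle$ (from the $\psi_{ij}$-equation) actually give $+\langle\nabla\phi_k^*\cdot(\phi_ia-\sigma_i)e_j\rangle$; this sign slip is harmless since your final formula is correct, but you should fix it before fleshing the step out.

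The genuine problem is in ii). You claim $\sigma_{ijk}(x)\mapsto\sigma_{ijk}(-x)$, i.e.\ that $\sigma_i$ is \emph{even} under the central reflection. But your own reasoning (``$\sigma_i$ being the potential of the even flux'') points the other way: since $\nabla\phi_i(a(-\cdot),x)=\nabla\phi_i(a,-x)$, the flux $a(e_i+\nabla\phi_i)-\ah e_i$ and hence $\nabla\cdot\sigma_i$ are even, and because the divergence is a first-order operator, the potential $\sigma_i$ must be \emph{odd} (up to gauge), i.e.\ $\sigma_i(a(-\cdot),x)=-\sigma_i(a,-x)$, exactly like $\phi_i$. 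With your stated (even) rule, the integrand $e_j\cdot\sigma_i\nabla\phi_k^*-e_j\cdot\sigma_k^*\nabla\phi_i$ is a product of two even quantities and would therefore be \emph{even}, so the argument as written does not force $C=0$. With the corrected (odd) rule for $\sigma_i$ the integrand is odd and $C=0$ follows as you intend. The paper sidesteps this pitfall entirely: rather than tracking how the gauge-dependent object $\sigma_i$ itself transforms, it keeps $C_{ijk}=\langle\phi_ie_j\cdot\nabla\cdot\sigma_k^*-\nabla\phi_k^*\cdot\sigma_ie_j\rangle$ in the pre-integration-by-parts form, observes that $\phi_i\nabla\cdot\sigma_k^*$ is manifestly odd (product of an odd function and an even, gauge-independent divergence), concludes $\langle\phi_i\nabla\cdot\sigma_k^*\rangle=0$, and only then integrates by parts. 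That route avoids committing to a transformation rule for $\sigma_i$ that, while true under the chosen normalization, requires a separate uniqueness argument to justify.
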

\begin{remark}\label{remark.tensor.C}
The tensor $C$ has a natural interpretation: As we show in the appendix, $\en{\cdot}$-almost surely, we have
\begin{equation}\label{li01}
e_j\cdot\Big(\big((x_i+\phi_i)a^* (e_k+\nabla\phi_k^*)-(x_k+\phi_k^*)a (e_i+\nabla\phi_i)\big)
            -\big( x_i        \ah^*e_k                - x_k          \ah e_i              \big)\Big)
\rightharpoonup C_{ijk},
\end{equation}
where the weak convergence is in the sense of Corollary~\ref{ergodicity}, that means that (large-scale) averages of the left-hand side converge to $C$.
This characterization is natural in our context, since the first term on the left-hand side in (\ref{li01}),
namely the divergence-free vector field $(x_i+\phi_i)a^* (e_k+\nabla\phi_k^*)$ 
$-(x_k+\phi_k^*)a (e_i+\nabla\phi_i)$, would come up if it would make sense to define an
invariant for the functions $x_i+\phi_i$ and $x_k+\phi_k^*$, which are $a$-harmonic and $a^*$-harmonic, respectively.
Likewise, the second term on the left-hand side of (\ref{li01}) is just the $\ah$ counterpart of the
first term. 
\end{remark}

\section{Deterministic results}
%

Throughout this section we assume that for a fixed coefficient field $a$ the correctors $(\phi,\sigma), (\phi^*,\sigma^*)$ and $(\psi,\Psi), (\psi^*,\Psi^*)$ exist and satisfy the properties enumerated in Proposition~\ref{stoch.results} and Proposition~\ref{Ck.1}.
Without loss of generality, we assume that 
\begin{equation}\label{psi.avgzero}
\fint_{|x|<r_*}(\psi,\Psi) = \fint_{|x|<r_*}(\psi^*,\Psi^*) =0,
\end{equation}
\BR5 where we write $r_* = r_*(0)$ for conciseness.\ER

\medskip

It is convenient to introduce an abbreviation for the second-order two-scale expansion in \eqref{o04}, \eqref{o05} of Theorem \ref{main}: For $u_\h \in X_m^\h$, $m\ge 2$, and $v_\h \in Y_k^\h$, $k\ge 1$, we write
\begin{equation}\label{m15}
 \begin{aligned}
 Eu_\h:&=(1+\phi_i\partial_i+\psi_{ij} \partial_{ij})u_\h+(1+\phi_i\partial_i)\ut,\\
Ev_\h:&=(1+\phi_i^*\partial_i+\psi_{ij}^*\partial_{ij})v_\h+(1+\phi_i^*\partial_i)\vt, 
 \end{aligned}
\end{equation}
where $\ut$ is associated to $u_\h$ according to \eqref{u.tilde}, \eqref{u.tilde.condition} and $\vt$ to $v_\h$ according to \eqref{v.tilde}, \eqref{v.tilde.condition} (see Lemma \ref{Rom1} and Lemma \ref{Rom2} for their construction). 

\medskip

We also introduce the following abbreviations.
For any (possibly non-integer) $m,k\ge 0$ we define the following semi-norms
\begin{align}
\|u\|_{m}&:=\sup_{R\ge r_*}\bigg(\frac{r_*}{R}\bigg)^{m-1}\bigg(
\fint_{|x| < R}|\nabla u|^2\bigg)^\frac{1}{2},\label{m01}\\
\|v\|_{k}&:=\sup_{R\ge r_*}\bigg(\frac{R}{r_*}\bigg)^{k+(d-2)+1}\bigg(
\fint_{|x|>R}|\nabla v|^2\bigg)^\frac{1}{2},\label{m02}
\end{align}
%
for growing functions $u$ and decaying functions $v$,
where it will always be clear from the context whether we mean the first or the second expression.
We note that $\|u\|_{m}$ becomes stronger with decreasing $m$ while $\|v\|_k$ becomes stronger with
increasing $k$.
We note that for integer $m\ge 1$, (\ref{m01}) is a semi-norm on $X_m$ and for integer $k\ge 0$, (\ref{m02})
is a semi-norm on $Y_k$. For any $r\ge r_*$ we introduce the spaces $Y_k(r) \subset Y_k$ constituted by the elements of $Y_k$ which are $a^*$-harmonic in $\{ |x| > r \}$ and need the following modification of $\|\cdot\|_{k}$
\begin{align}\label{m02.bis}
\|v\|_{k,r}&
:=\sup_{R\ge r}\bigg(\frac{R}{r}\bigg)^{k+(d-2)+1}\bigg(
\fint_{|x| > R}|\nabla v|^2\bigg)^\frac{1}{2}.
\end{align}

\medskip

\begin{theorem}\label{Rom3}
Let assumption \eqref{T.2} be satisfied for some $1 < \beta < 2$, 
and let assumption~\eqref{T.1} be satisfied with $\max(1/2,\beta-1) < \alpha < 1$, both (only) at $y=0$.  
Then we have for any integer $m\ge 2$: 

\smallskip

Relating $u_\h\in X_m^\h$ and $u\in X_m$ by
\begin{align}\label{m03}
\lim_{R\uparrow\infty}\frac{1}{R^{(m-1)-1}}\bigg(
\fint_{|x|<R}|\nabla(u-Eu_\h)|^2\bigg)^\frac{1}{2}=0
\end{align}
defines an isomorphism between the normed linear spaces $X_m^\h/X_{m-2}^\h$ and $X_m/X_{m-2}$.
%
%
It yields two linear maps
\begin{align*}
&L_X^\h: X_m^\h \rightarrow X_m, \ \ \ \ u_\h \mapsto u \text{\ \ \ with\ \ } \|u-Eu_\h\|_{m-\beta}\leq C \|u_\h\|_m,\\
&L_X: X_m \rightarrow X_m^\h, \ \ \ \ u \mapsto u_\h \text{\ \ \ with\ \ } \|u-Eu_\h\|_{m-\beta}\leq C\|u\|_m,
\end{align*}
with the constant $C$ depending on the dimension $d$, the ellipticity ratio $\lambda$, the order $m$ and the exponents $\alpha$ and $\beta$.

\smallskip

Relating $v_\h\in Y_{m-1}^\h$ and $v\in Y_{m-1}$ by
\begin{align}\label{m04}
\lim_{R\uparrow\infty}R^{(d-1)+ (m-1) +1}\bigg(
\fint_{|x|>R}|\nabla(v-Ev_\h)|^2\bigg)^\frac{1}{2}=0
\end{align}
defines an isomorphism between the normed linear spaces $Y_{m-1}^\h/Y_{m+1}^\h$ and $Y_{m-1}/Y_{m+1}$.
%
%
For any $r\ge r_*$, it yields two linear maps
\begin{align*}
&L_Y^\h: Y_{m-1}^\h(r) \rightarrow Y_{m-1}(r), \ \ \ \ v_\h\mapsto v \text{\ \ \ with\ \ } \|v-Ev_\h\|_{m-1+\beta,r} \leq C \|v_\h\|_{m-1,r},\\
&L_Y: Y_{m-1}^\h(r) \rightarrow Y_{m-1}(r), \ \ \ \ v\mapsto v_\h \text{\ \ \ with\ \ } \|v-Ev_\h\|_{m-1+\beta,r} \leq C\|v\|_{m-1,r},
\end{align*}
with the constant $C$ \BR5 as above. \ER

\smallskip

Finally, we have that the two isomorphisms preserve the bilinear forms: $\,(u,v)=(u_\h,v_\h)_\h$.
\end{theorem}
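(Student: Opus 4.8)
The plan is to build the two isomorphisms by an ``existence plus well-posedness'' strategy that is now standard in this circle of ideas, and then to deduce the compatibility with the bilinear forms from a boundary-flux computation. First I would treat the $X$-side. Given $u_\h \in X_m^\h$, I would run a Caccioppoli/Campanato-type iteration: on each dyadic annulus use the second-order two-scale expansion $Eu_\h$ as a comparison function, control the residuum $-\nabla\cdot a\nabla(u - Eu_\h)$ in divergence form using the flux correctors $\sigma$ and $\Psi$ (the identity analogous to \eqref{ir11} and \eqref{two.scale.1}), and invoke the corrector growth \eqref{T.2}, \eqref{T.1} together with the $C^{k,1}$-regularity of Proposition \ref{Ck.1}. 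The exponent $\beta$ enters precisely through \eqref{T.2}, and the hypothesis $\max(1/2,\beta-1)<\alpha<1$ is what is needed to absorb the lower-order products of $\phi,\sigma$ with $\psi,\Psi$ into the $\beta$-gain. This yields $u \in X_m$ with $\|u - Eu_\h\|_{m-\beta}\lesssim \|u_\h\|_m$, hence a linear map $L_X^\h$. Conversely, given $u \in X_m$, I would use \eqref{Fischer1} (or directly \eqref{eqCk1}) to extract its ``homogenized part'' $u_\h \in X_m^\h$ and check the same estimate; this gives $L_X$. To see that \eqref{m03} characterizes the relation and descends to an isomorphism of the quotients $X_m^\h/X_{m-2}^\h \cong X_m/X_{m-2}$, I would observe that the kernel of $u_\h \mapsto u \mod X_{m-2}$ is exactly $X_{m-2}^\h$ (the two-scale expansion of a lower-order polynomial is itself of lower order), and that $L_X^\h$ and $L_X$ are mutually inverse modulo $X_{m-2}$ because the difference of two comparison functions satisfying \eqref{m03} is $a$-harmonic and of growth $<m-1$, hence in $X_{m-2}$ by the Liouville principle \eqref{Fischer1}.

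The $Y$-side is handled by the ``same'' argument run at infinity. Given $v_\h \in Y_{m-1}^\h(r)$, the expansion $Ev_\h$ (now built from $\phi^*,\psi^*$ and $\vt$) again produces a divergence-form residuum controlled by $\sigma^*,\Psi^*$, and the dyadic iteration towards $|x|=\infty$ — using the decay semi-norm \eqref{m02.bis} and, crucially, the fact that on the exterior domain one compares against elements of $Y_{m-1}^\h$ whose decay rate is $(d-2)+(m-1)$ — gives $v \in Y_{m-1}(r)$ with $\|v - Ev_\h\|_{m-1+\beta,r}\lesssim\|v_\h\|_{m-1,r}$, and symmetrically a map in the other direction. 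The matching of quotients $Y_{m-1}^\h/Y_{m+1}^\h \cong Y_{m-1}/Y_{m+1}$ is the statement that the gain $\beta$ beats exactly one decay order plus an $\varepsilon$ (that is, $1<\beta<2$ lands the error strictly between the $(m-1)$ and $(m+1)$ decay classes), together with the analogue of the Liouville principle for decaying functions.

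For the final assertion, that the isomorphisms respect the bilinear forms, I would argue as follows. Recall $(u,v) = -\int \nabla\eta\cdot(va\nabla u - u a^*\nabla v)$ is an invariant, computable as the flux of $\xi = va\nabla u - u a^*\nabla v$ through any large sphere $\{|x|=R\}$. I would substitute the two-scale expansions $u = Eu_\h + (\text{error})$, $v = Ev_\h + (\text{error})$ and expand the flux on $\{|x|=R\}$. The error contributions vanish in the limit $R\uparrow\infty$: by \eqref{m03}–\eqref{m04} the errors gain the factor $R^{-\beta}$ relative to the natural scaling of $Eu_\h$ and $Ev_\h$, and the product of a growing and a decaying field is tuned (because we pair $X_m$ with $Y_{m-1}$) so that without the error the integrand scales like $R^{-(d-1)}$ against a sphere of area $R^{d-1}$; hence the error terms scale like $R^{-\beta}\to 0$. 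What remains is the flux of $\widetilde\xi := Ev_\h\, a\nabla Eu_\h - Eu_\h\, a^*\nabla Ev_\h$ through $\{|x|=R\}$; using the corrector equations \eqref{i5}, \eqref{Psi} (and their $*$-counterparts) plus the skew-symmetry of $\sigma,\Psi$ to rewrite $a\nabla(1+\phi_i\partial_i+\psi_{ij}\partial_{ij})u_\h$ as $\ah\nabla u_\h$ plus a divergence of a skew field, this flux reduces — up to the divergence-of-skew terms, which integrate to zero over a sphere, and up to the carefully designed ``boundary'' identities \eqref{u.tilde.condition}, \eqref{v.tilde.condition} which are exactly what is needed to cancel the $\vt,\ut$ and $C^{\mathrm{sym}}$ contributions — to the Euclidean flux $v_\h\,\ah\nabla u_\h - u_\h\,\ah^*\nabla v_\h$ through $\{|x|=R\}$, which is $(u_\h,v_\h)_\h$. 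I expect this last flux bookkeeping — keeping track of the $\sigma,\Psi$ skew tensors, the $\ut,\vt$ pieces, and verifying that \eqref{u.tilde.condition}, \eqref{v.tilde.condition} were chosen precisely to make everything match — to be the main obstacle; the $X$- and $Y$-side constructions themselves are close in spirit to \cite{FischerOtto,BellaGiuntiOttoPCMI} and the corrector estimates are taken ready-made from Propositions \ref{stoch.results} and \ref{Ck.1}.
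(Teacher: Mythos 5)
Your plan for the forward maps $L_X^\h,L_Y^\h$ (two-scale expansion as comparison function, residuum in divergence form via $\sigma,\Psi$, dyadic/Campanato iteration using the corrector bounds) is exactly what the paper packages into Proposition~\ref{Rom4}, and your sketch of the flux computation for $(u,v)=(u_\h,v_\h)_\h$ --- substitute the expansions, kill the error terms at infinity by the $\beta$-gain, rewrite the flux of $Ev_\h\,a\nabla Eu_\h-Eu_\h\,a^*\nabla Ev_\h$ using the corrector equations and skew-symmetry, and invoke \eqref{u.tilde.condition}--\eqref{v.tilde.condition} to cancel the $\ut,\vt$ and $C^{\mathrm{sym}}$ pieces --- is precisely the content of the proof of Proposition~\ref{invariants.preserved}. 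You are right that this last bookkeeping is the delicate part: besides the cancellations you name, the paper also needs the normalization $\langle\phi\rangle=\langle\sigma\rangle=0$ through Corollary~\ref{ergodicity} (Step~\ref{P10:reduce3}) and a null-Lagrangian identity (Step~\ref{P10:Null}) to pass from $C$ to its symmetrization.

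Where your plan has a genuine gap is in the construction of the inverse map $L_X$ (and likewise $L_Y$). You propose, given $u\in X_m$, to ``use \eqref{Fischer1} (or directly \eqref{eqCk1}) to extract its homogenized part $u_\h\in X_m^\h$ and check the same estimate.'' But the expansion in \eqref{Fischer1}/\eqref{eqCk1} is $(1+\phi_i\partial_i)u_\h+\psi^{(m)}_{u_\h}$ with an \emph{$m$-th order} corrector $\psi^{(m)}_{u_\h}$, whereas the $E$ of \eqref{m15} is truncated at second order ($\psi_{ij}\partial_{ij}u_\h$, plus the $\ut$-correction). These two expansions agree only up to an error that you have not controlled, so the desired bound $\|u-Eu_\h\|_{m-\beta}\lesssim\|u\|_m$ does not simply ``check out''. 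The paper circumvents this by never inverting $u\mapsto u_\h$ directly: it restricts $L_X^\h$ to the complement $W_m^\h\oplus W^\h_{m-1}$, shows via the upgrades (a), (b), (c) that $\|u\|_m\sim\|u_\h\|_m$ on this restriction, and then uses the dimension equality from \eqref{Fischer1} to conclude the restricted map is onto $W_m\oplus W_{m-1}$; only then is $L_X$ defined as projection onto $W_m\oplus W_{m-1}$ (bounded by Corollary~\ref{Cor5.b}) composed with the inverse of this finite-dimensional isomorphism. It is this reduction to a finite-dimensional bijection that produces the quantitative estimate for $L_X$; the Liouville principle alone (your invocation of \eqref{Fischer2}) gives only the uniqueness-modulo-$X_{m-2}$ statement, not the norm bound. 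The same remark applies to $L_Y$, where the paper uses the complements $Z_{m-1}(r_*)\oplus Z_m(r_*)$ from Proposition~\ref{Pdual} and Corollary~\ref{Cor5.b}.
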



A couple of semantic comments are in place: Given the natural semi-norms $\|\cdot\|_m$ and $\|\cdot\|_k$,
defined in (\ref{m01}) and (\ref{m02}), on the spaces $X_m$ and $Y_k$ (as well as $X_m^\h$ and $Y_k^\h$), respectively,
we endow the quotient spaces with the induced norm. For the quotient space $X_m/X_{m-2}$ we consider the norm
$\inf_{w\in X_{m-2}}\|u-w\|_{m}$, which in fact is a norm and not just a semi-norm.
Saying that the relation (\ref{m03}) between elements
of $X_m^\h$ and elements of $X_m$ defines an isomorphism between the spaces $X_m^\h/X_{m-2}^\h$ and $X_m/X_{m-2}$,
amounts to the following fact: For every $u_\h\in X_m^\h$ there exists a $u\in X_m$, unique up to an element in $X_{m-2}$,
such that (\ref{m03}) holds and, likewise, for every $u\in X_m$, there exists a $u_\h\in X_m^\h$, 
unique up to an element in $X_{m-2}^\h$, such that (\ref{m03}) holds. The theorem claims that this bijection is
linear (which is obvious since by the triangle inequality the relation (\ref{m03}) is compatible with the linear structure) 
and that it is bounded with bounded inverse. 
>From the point of view of quantitative homogenization, among the
four quantitative estimates the most pertinent is that given $\BR6 v\in Y_{k}$, $k\ge 1$, (i.e. a decaying $a$-harmonic
function), there exists $\BR6 v_\h \in Y_{k}^\h$ (i.e. a constant-coefficient harmonic function of the same order of decay),
so that \BR6 the difference between $v$ and the correction $Ev_\h$ of $v_\h$ \ER is small with a relative decay rate of $\beta$.

\medskip


\begin{theorem}\label{T3}
\BR5 Let assumption \eqref{T.2} be satisfied for some $1 < \beta < 2$, and let assumption~\eqref{T.1} be satisfied with $\max(1/2,\beta-1) < \alpha < 1$, both at $y$ and at $y=0$. \ER 
Provided $|y| \ge \BR5 4r_*(0)$, for any integer $m \ge 2$ the linear maps $L_X^\h$, $L_X$ of Theorem \ref{Rom3} satisfy 
\begin{align}
\biggl(\fint_{|x-y| < r_*(y)}|\nabla( u - Eu_\h)|^2\biggr)^\frac{1}{2}&\le C \biggl(\frac{|y|}{\BR5 r_*(0) }\biggr)^{m - 1 - \beta} || u_\h||_{m},\label{growth.loc}\\
\biggl(\fint_{|x-y| < r_*(y)}|\nabla( u - Eu_\h)|^2\biggr)^\frac{1}{2}&\le C \biggl(\frac{|y|}{\BR5 r_*(0) }\biggr)^{m - 1- \beta} ||u||_m,\label{growth.loc2}
\end{align}
with the constant $C$ depending on the dimension $d$, the ellipticity ratio $\lambda$, the order $m$ and the exponents $\alpha$ and $\beta$.

\smallskip

Similarly, for $r\geq r_*(0)$ if 
$y$ satisfies $|y| \geq 2( r_*(y) + r)$, then the linear maps $L_Y^\h, L_Y$ are such that 
\begin{align}
\biggl(\fint_{|x-y| < r_*(y)}|\nabla(v - Ev_\h)|^2\biggr)^\frac{1}{2}&\le C \biggl(\frac{\BR5 r}{ |y| }\biggr)^{(d -1) + m-1 + \beta} || v_\h||_{m-1,r},\label{dec.loc}\\
\biggl(\fint_{|x-y| < r_*(y)}|\nabla( v - E v_\h)|^2\biggr)^\frac{1}{2}&\le C \biggl(\frac{\BR5 r}{ |y| }\biggr)^{(d -1) + m-1 + \beta}|| v||_{m-1,r},\label{dec.loc2}
\end{align}
with the constant $C$ as above. 
\end{theorem}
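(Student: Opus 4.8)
\textbf{Proof of Theorem \ref{T3} (plan).} The idea is that Theorem \ref{Rom3} already gives global control of the homogenization error $u - Eu_\h$ in the semi-norms $\|\cdot\|_{m-\beta}$ (growing case) and $\|\cdot\|_{m-1+\beta,r}$ (decaying case), and what remains is to convert this averaged global bound into a \emph{localized} estimate near a distant point $y$. For the growing case, the natural first step is to observe that $w := u - Eu_\h$ is, up to the divergence-form residuum coming from the two-scale expansion (see \eqref{ir11} and the second-order version \eqref{two.scale.1} in Proposition \ref{Rom4}), $a$-harmonic on the whole space; more precisely $-\nabla\cdot a\nabla w = -\nabla\cdot h$ where $h$ is a sum of terms of the form $(\psi_{ij}a - \Psi_{ij})\nabla\partial_{ij}u_\h$ etc., which are small because $(\psi,\Psi)$ grow at rate $2-\beta$ off the ball of radius $r_*$, cf. \eqref{T.2}. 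I would then apply the Campanato/$C^{0,1}$-type regularity estimate \eqref{eqCk1} of Proposition \ref{Ck.1} (with $k=0$, i.e. mean-value property up to the corrector), centered at $y$ on the ball $\{|x-y|<\tfrac{|y|}{2}\}$, to bound $\bigl(\fint_{|x-y|<r_*(y)}|\nabla w|^2\bigr)^{1/2}$ by a constant times $\bigl(\fint_{|x-y|<|y|/2}|\nabla w|^2\bigr)^{1/2}$ plus a contribution from the right-hand side $h$ on that ball. The ball $\{|x-y|<|y|/2\}$ is contained in $\{|x|<\tfrac{3}{2}|y|\}$, so $\fint_{|x-y|<|y|/2}|\nabla w|^2 \lesssim |y|^{-d}\int_{|x|<2|y|}|\nabla w|^2 \lesssim \|w\|_{m-\beta}^2 \, |y|^{2(m-1-\beta)} r_*^{-2(m-1-\beta)}$ by the very definition \eqref{m01} of the semi-norm; combined with $\|w\|_{m-\beta}\le C\|u_\h\|_m$ (resp. $\le C\|u\|_m$) from Theorem \ref{Rom3}, this yields \eqref{growth.loc} and \eqref{growth.loc2}. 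One must also check the contribution of $h$ on $\{|x-y|<|y|/2\}$: using \eqref{T.2} for $(\psi,\Psi)$ around $y$ (this is why \eqref{T.2} is now assumed at $y$ as well) together with the homogeneity of $\nabla\partial_{ij}u_\h$ (which decays like $|y|^{m-2-|\text{derivatives}|}$), one sees this term is of the same or lower order, as long as $|y|\ge 4r_*(0)$ so that $r_*(y)$ is genuinely small compared to $|y|$ via \eqref{rstar:bound}/\eqref{T.1} at $y$ and $0$.

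For the decaying case the structure is dual. Here $w := v - Ev_\h$ satisfies $-\nabla\cdot a^*\nabla w = -\nabla\cdot h^*$ in the exterior domain $\{|x|>r\}$, with $h^*$ built from $(\psi^*,\Psi^*)$ and the (decaying) second derivatives of $v_\h$. The region of interest $\{|x-y|<r_*(y)\}$ sits at distance $\sim|y|$ from the origin (using $|y|\ge 2(r_*(y)+r)$), hence inside the domain of $a^*$-harmonicity, and the same interior regularity estimate \eqref{eqCk1} applied on $\{|x-y|<|y|/2\}$ (again with $k=0$, centered at $y$) reduces matters to $\bigl(\fint_{|x-y|<|y|/2}|\nabla w|^2\bigr)^{1/2}$. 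By the definition \eqref{m02.bis} of $\|\cdot\|_{m-1+\beta,r}$, and since $\{|x-y|<|y|/2\}\subset\{|x|>|y|/2\}$, this average is bounded by $C (|y|/r)^{-((d-1)+m-1+\beta)} r^{-d/2}\|w\|_{m-1+\beta,r}$ up to harmless powers; invoking $\|w\|_{m-1+\beta,r}\le C\|v_\h\|_{m-1,r}$ (resp. $C\|v\|_{m-1,r}$) from Theorem \ref{Rom3} gives \eqref{dec.loc} and \eqref{dec.loc2}. As before one separately estimates the right-hand side contribution of $h^*$ on $\{|x-y|<|y|/2\}$ using the growth bound \eqref{T.2} for $(\psi^*,\Psi^*)$ at $y$ and the decay of the derivatives of $v_\h$; the exponent bookkeeping is arranged so that this is subdominant.

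The main obstacle I anticipate is \emph{not} the passage from global to local estimate (that is a standard consequence of \eqref{eqCk1}), but rather the careful handling of the residual term $h$ (resp. $h^*$) on the ball centered at the far point $y$: one needs the growth estimate \eqref{T.2} to hold \emph{at $y$}, and one must track how the "starting radius'' $r_*(y)$ depends on $y$ relative to $r_*(0)$ — this is exactly why Theorem \ref{T3} is stated with \eqref{T.2} and \eqref{T.1} assumed at both $y$ and $0$, and with the smallness hypotheses $|y|\ge 4r_*(0)$, respectively $|y|\ge 2(r_*(y)+r)$. A secondary but routine point is that Theorem \ref{Rom3} is stated in terms of $r_*=r_*(0)$, so all the norms $\|u_\h\|_m$, $\|v_\h\|_{m-1,r}$ appearing on the right of Theorem \ref{T3} are normalized at the origin; one simply has to make sure no implicit constant secretly depends on $y$. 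Once these are accounted for, the proof is a two-line combination of Theorem \ref{Rom3} with \eqref{eqCk1} plus the elementary inclusion of balls and the definitions \eqref{m01}, \eqref{m02.bis} of the semi-norms.
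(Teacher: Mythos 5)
The overall strategy you propose — use the global estimate from Theorem~\ref{Rom3}, write the equation for $w:=u-Eu_\h$ (resp.\ $v-Ev_\h$) with a divergence-form right-hand side $h$, and then localize near the far point $y$ via a mean-value/Campanato-type argument — is exactly the structure of the paper's proof, which delegates the localization to Proposition~\ref{localization} and, behind it, Lemma~\ref{propositionlocalize}. However, there is a genuine gap in the step where you estimate the contribution of $h$ on the ball around $y$, and this gap is not merely a bookkeeping detail.

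The difficulty is that the right-hand side $h$ that actually arises from the two-scale expansion (see~\eqref{definition.g} and \eqref{definition.h}) involves the second-order corrector $\psi$ normalized at the \emph{origin}, cf.~\eqref{psi.avgzero}. You propose to control the $h$-contribution on $\{|x-y|<|y|/2\}$ by invoking~\eqref{T.2} ``around $y$''. But~\eqref{T.2} at $y$ controls $\psi-\fint_{|x-y|<R'}\psi$, not $\psi$ itself. Since $\psi$ is only sublinear from the origin, its typical size on a small ball $\{|x-y|<R'\}$ with $R'\ll|y|$ is of order $|y|^{2-\beta}$, \emph{independently of $R'$}. So the naive estimate $\fint_{|x-y|<R'}|h|^2\lesssim(R')^{2(2-\beta)}\cdot(\textrm{decay of }\nabla^3 v_\h)^2$ is false: instead one only gets $|y|^{2(2-\beta)}$, which does not decay as $R'$ shrinks and hence cannot feed into the localization lemma (which requires the source $h$ to decay like a positive power of $R'/D$ from the center $y$). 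Your phrase ``the exponent bookkeeping is arranged so that this is subdominant'' is thus not justified as stated.

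The paper resolves this by re-centering the corrector average at $y$: it introduces the modified two-scale expansion $\tilde E v_\h$ of~\eqref{Etilde}, in which $\psi$ is replaced by $\psi-\fint_{|x-y|<r_*(y)}\psi$. Two things then need to be checked. First, $\tilde Ev_\h$ differs from $Ev_\h$ only by a constant multiple of $\partial_{ij}v_\h$, and the size of that constant, $|\fint_{|x-y|<r_*(y)}\psi-\fint_{|x|<r_*(0)}\psi|$, must be bounded; this is estimate~\eqref{Psi.centers}, which uses~\eqref{T.2} at both $0$ and $y$ and yields a bound of order $|y|^{2-\beta}$. Because $\partial_{ij}v_\h$ decays faster by two extra powers, the global error estimate~\eqref{localization.1} for $\tilde E$ survives. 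Second, the residual $h$ for the re-centered expansion genuinely decays like $(R')^{2-\beta}$ around $y$, which is exactly what Lemma~\ref{propositionlocalize} requires. This re-centering plus the comparison of the two averages is the key idea missing from your outline; everything else you describe (including the passage from~\eqref{growth.loc} to~\eqref{growth.loc2} via $\|u_\h\|_m\lesssim\|u\|_m$, i.e.~\eqref{m99}, and the analogous duality for the decaying case) is essentially correct and matches the paper.

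A secondary remark: rather than the Campanato estimate~\eqref{eqCk1} with $k=0$ (which is stated for $a$-harmonic functions), the paper packages what you need into Lemma~\ref{propositionlocalize}, which is tailored to the inhomogeneous equation $-\nabla\cdot a\nabla w=\nabla\cdot h$ and combines a dyadic decomposition of $h$ with the mean-value property~\eqref{cw05} from Lemma~\ref{Lup}. Your instinct to invoke~\eqref{eqCk1} directly can be made to work, but Lemma~\ref{propositionlocalize} is the cleaner mechanism for the inhomogeneous localization and you will need something equivalent to it.
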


\section{Abstract results on the spaces \texorpdfstring{$X_m$}{} and \texorpdfstring{$Y_k$}{}}\label{abstract.r}
In this section we give the auxiliary results which are the building blocks for Theorem \ref{Rom3}. For the sake of simplicity, we assume that $a$ is symmetric: The results of this section immediately extend to non-symmetric $a$'s with no modification besides the appearance of $a^*$, $\ah^*$ in the equations related to the elements in the family of spaces $\{ Y_k \}_k, \{ Y_k^\h \}_k$.

\medskip

\BR3 We will show (see Proposition~\ref{Pdual}) that \ER 
the bilinear form $( \cdot ,\cdot )$ defined by \eqref{bil.def} of Section \ref{main.r}
provides an isomorphism between the two scales of spaces $\{X_m\}_m$
and $\{Y_k\}_{k}$. More precisely, it provides a canonical isomorphism
between the quotient space $Y_k/Y_{m+1}$ and the dual $(X_m/X_{k-1})^*$ of the quotient
space $X_m/X_{k-1}$ for all $m\ge k\ge 1$. Even more precisely,
this canonical isomorphism is defined by associating to a $v\in Y_k$
the linear form $ X_m\ni u \mapsto ( u, v )$. By (\ref{o02}) this form vanishes on
$X_{k-1}$ and thus can be (canonically) identified with an element of $(X_m/X_{k-1})^*$.
Again by (\ref{o02}), this form vanishes on $Y_{m+1}$ so that the (linear) map
$Y_k\ni v\mapsto ( \cdot,v )$ lifts to a map on $Y_k/Y_{m+1}$. It is this
map we claim is an isomorphism.

\medskip

Combining the isomorphism between $Y_k/Y_{k+1}$ and $(X_k/X_{k-1})^*$ with 
a canonical isomorphism between $\Xk / X_{k-1}$ and $\Xke / X_{k-1}^{\mathrm{h}}$, which follows from higher-order Liouville principles obtained in \cite[Lemma 19]{FischerOtto}, it follows that for any $k \ge 1$
\begin{align}\label{approx.decay}
Y_k/Y_{k+1}\cong \Yke / Y_{k+1}^{\mathrm{h}}.
\end{align}

\subsection{General setting and assumptions of this section}

Before giving the statements of this section, we stress that they
can be formulated and proven in greater generality (which we will do without changing
the notation).
The Euclidean space $\mathbb{R}^d$ may be replaced by any differentiable manifold endowed
with a measure and a metric that behave
like $\mathbb{R}^d$ on scales larger than some radius $r_*$ in the sense of
the following two properties:
\begin{itemize}
\item Volume control for large balls centered at zero
\begin{align}\label{vol}
\BR5 \frac{1}{C_0} R^d \le |\{ |x| < R \}|\leq C_0 R^d\quad\mbox{for all}\;R\ge r_*,
\end{align}
for some constant $C_0 < +\infty$ and a fixed exponent $d$ (which does not have to be an integer).
\item Poincar\'e inequality with mean-value zero on large dyadic annuli centered at zero: For a constant $C_0 < +\infty$
\begin{align}\label{poinc}
\inf_{c\in\mathbb{R}\ER }\biggl(\int_{R < |x| < 2R}|u-c|^2\biggr)^\frac{1}{2}&\leq \BR5 C_0 \ER R\biggl(\int_{R < |x| < 2R}|\nabla u|^2\biggr)^\frac{1}{2}\\
&\mbox{for}\;R\ge r_*
\quad\mbox{and functions }\;u.\nonumber
\end{align}
\end{itemize}
Here and in the sequel, all balls are centered at the origin and refer to the Riemannian metric under consideration.
In addition to this metric we consider another (tensor)-field $a$, which gives rise to $a$-harmonic functions (solutions of $-\nabla \cdot a \nabla u = 0$) on the manifold. 

\medskip

The main structural assumption \BR6 on the coefficient field $a$ and the underlying Riemannian manifold \ER is the following: For a strictly increasing sequence of real numbers
\begin{align}\label{F1bis}
0, 1<\cdots<k<\cdots
\end{align}
there exists a nested sequence of finite-dimensional subspaces \BR5 of $a$-harmonic functions \ER
\begin{align*}
X_0:=\{const\}\subset X_1\subset\cdots X_k\subset \cdots \subset \BR5 \{ a\textrm{-harmonic functions on  } \R^d \}
\end{align*}

such that the following two properties are satisfied:
\begin{itemize}
\item \BR5 For every $k\ge 0$\ER, there exists a finite constant $C_1$ depending on $k$, the ellipticity contrast $\lambda$, the exponent $d$ in \eqref{vol} and the constants in \eqref{vol}, \eqref{poinc}, such that for any two radii $R\ge r\ge r_*$ and any $a$-harmonic function $u$ in $\{ |x| < R \}$ we have
\begin{align}\label{F1}
\inf_{v\in X_k}\biggl(\fint_{|x|< r}|\nabla(u-v)|^2\biggr)^\frac{1}{2}\leq C_1 
\biggl(\frac{r}{R}\biggr)^{(k+1)-1}\biggl(\fint_{|x| < R}|\nabla u|^2\biggr)^\frac{1}{2},
\end{align}
where $k+1$ stands for the next largest number in the sequence (\ref{F1bis}).
\item For any $k\ge 1$, there exists a finite constant $C_2$ depending on $k$, the ellipticity contrast $\lambda$, the exponent $d$ in \eqref{vol} and the constants in \eqref{vol}, \eqref{poinc} such that for any $u\in X_k$ and two radii $R\ge r\ge r_*$ we have
\begin{align}\label{F2}
\biggl(\fint_{|x| < R}|\nabla u|^2\biggr)^\frac{1}{2}\leq C_2 
\biggl(\frac{R}{r}\biggr)^{\BR6 -1+k}\biggl(\fint_{|x|< r}|\nabla u|^2\biggr)^\frac{1}{2}.
\end{align}
\end{itemize}

\BR6
It follows from~\eqref{Fischer2} in~Lemma~\ref{Lup} below that these spaces $X_k$ coincide with the ones defined in the introduction. 
\ER

\medskip

\BR6
The core assumptions~\eqref{F1} and \eqref{F2} are very natural: \ER For example, as can be read from Proposition~\ref{Ck.1}, 
in the case of the Euclidean space $\R^d$ (equipped with standard Euclidean metric) and for a large class of coefficient fields $a$ on $\R^d$, as considered in Proposition~\ref{Ck.1}, \eqref{F1} follows from~\eqref{eqCk1} and~\eqref{Fischer1}, while~\eqref{F2} follows from~\eqref{Fischer1} and~\eqref{higher.correctors}. 

\ER


\subsection{The abstract results}

In all the following results $A \lesssim B$ stands for $A\le C B$ with a constant $C$ that only depends on
the ellipticity contrast $\lambda$, the exponent $d$ and the constant appearing in the volume bound (\ref{vol}), 
the constant appearing in the Poincar\'e inequality (\ref{poinc}), 
and the hypotheses (\ref{F1}) and (\ref{F2}) for the range of
$k$'s in (\ref{F1bis}) under consideration. When applicable, it also depends on the exponent $\beta$, more precisely, its distance
to the values in (\ref{F1bis}). 
 
\begin{proposition}\label{Pg}
Consider the exponents $m\ge k\ge 1$ from \eqref{F1bis} and $\beta\in(k-1,k)$ (where $k-1$ denotes the predecessor of $k$ in
(\ref{F1bis})) and a radius $r\ge r_*$.
We are given a function $u$ and a vector field $g$ such that
\begin{align}\label{Pg2}
-\nabla\cdot a\nabla u=\nabla\cdot g
\end{align}
satisfying the growth conditions
\begin{align}\label{Pg3}
\biggl(\fint_{|x| < R}|g|^2\biggl)^\frac{1}{2}\le\biggl(\frac{R}{r}\biggl)^{\beta-1},\quad
\biggl(\fint_{|x| < R}|\nabla u|^2\biggl)^\frac{1}{2}\le\biggl(\frac{R}{r}\biggl)^{m-1}\quad\mbox{for all}\;R\ge r.
\end{align}
Then there exist $w\in X_m$ such that for all $R\ge r$
\begin{align}\label{Pg4}
\biggl(\fint_{|x| < R}|\nabla(u-w)|^2\biggl)^\frac{1}{2}\lesssim\biggl(\frac{R}{r}\biggl)^{\beta-1},\quad
\biggl(\fint_{|x| < R}|\nabla w|^2\biggl)^\frac{1}{2}\lesssim\biggl(\frac{R}{r}\biggl)^{m-1}.
\end{align}
Moreover, if there is another $w'\in X_m$ with these properties we have $w-w'\in X_{k-1}$ and
\begin{align}\label{Pg5}
\biggl(\fint_{|x| < R}|\nabla(w-w')|^2\biggl)^\frac{1}{2}\lesssim\biggl(\frac{R}{r}\biggl)^{(k-1)-1}\quad\mbox{for all}\;R\ge r.
\end{align}
\end{proposition}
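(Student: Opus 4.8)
The plan is to construct $w$ by solving the equation $-\nabla\cdot a\nabla w = \nabla\cdot g$ on successively larger dyadic annuli and patching the solutions together, exploiting the regularity estimate \eqref{F1} to transfer information between scales. First I would split $g$ dyadically: write $g = \sum_{n\ge 0} g_n$ where $g_n$ is (a smoothed version of) $g$ restricted to the annulus $\{2^n r < |x| < 2^{n+1}r\}$, and for each $n$ solve $-\nabla\cdot a\nabla w_n = \nabla\cdot g_n$ by Lax--Milgram on the whole space (or with a Green's-function type estimate). Since $g_n$ has $L^2$-norm on scale $2^n r$ controlled by $(2^n)^{\beta-1}(2^nr)^{d/2}$ thanks to \eqref{Pg3} and lives at scale $2^n r$, the Caccioppoli/energy estimate gives that $\nabla w_n$ decays away from that annulus at the rate dictated by $Y_0$-type behavior (like $|x|^{-(d-1)}$ for $|x|\gg 2^nr$) and grows only mildly inward; summing over $n$ the tails converge (here $\beta<k\le m$ keeps the outer sum finite after being paired against the decay, and $\beta>k-1>0$ — in fact $\beta>0$ — keeps the inner sum finite). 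This produces a function $\tilde w$ with $-\nabla\cdot a\nabla\tilde w=\nabla\cdot g$ and $\bigl(\fint_{|x|<R}|\nabla\tilde w|^2\bigr)^{1/2}\lesssim (R/r)^{\beta-1}$ for all $R\ge r$.

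Next I would compare $u$ and $\tilde w$: the difference $u-\tilde w$ is $a$-harmonic on all of $\mathbb{R}^d$ and by the triangle inequality, using the second bound in \eqref{Pg3} and the bound just obtained on $\nabla\tilde w$, it satisfies $\bigl(\fint_{|x|<R}|\nabla(u-\tilde w)|^2\bigr)^{1/2}\lesssim (R/r)^{m-1}$ (since $\beta-1 < m-1$). Hence $u-\tilde w$ lies in the space $X_m$ of $a$-harmonic functions of growth rate at most $m$, which by hypothesis is finite-dimensional and whose elements obey the two-sided scaling bounds \eqref{F2}. Setting $w := u - \tilde w$ we get $w\in X_m$, and the first bound in \eqref{Pg4} is exactly the estimate on $\nabla\tilde w = \nabla(u-w)$ while the second is the growth bound on $w\in X_m$, obtained either directly from $u-\tilde w\in X_m$ and \eqref{F2} or again by triangle inequality.

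For the uniqueness statement, suppose $w'\in X_m$ also satisfies \eqref{Pg4}. Then $w-w'$ is $a$-harmonic on $\mathbb{R}^d$ (both are in $X_m$, so the difference is $a$-harmonic) and, subtracting the two first estimates in \eqref{Pg4}, it satisfies $\bigl(\fint_{|x|<R}|\nabla(w-w')|^2\bigr)^{1/2}\lesssim (R/r)^{\beta-1}$. Since $\beta\in(k-1,k)$, this growth rate is strictly below $k$, and I would invoke \eqref{F1} (applied on a pair of radii $r'\le R'$ with $R'\to\infty$) together with the finite-dimensionality of the $X_j$'s to conclude that any $a$-harmonic function of growth rate $<k$ must in fact lie in $X_{k-1}$; concretely, \eqref{F1} with $k$ replaced by $k-1$ gives $\inf_{v\in X_{k-1}}\bigl(\fint_{|x|<r'}|\nabla(w-w'-v)|^2\bigr)^{1/2}\lesssim (r'/R')^{k-1}(R'/r)^{\beta-1}\to 0$ as $R'\to\infty$, whence $w-w'\in X_{k-1}$. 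The estimate \eqref{Pg5} is then just the growth bound on an element of $X_{k-1}$ rescaled to match the normalization, or simply the inherited bound $(R/r)^{\beta-1}$ improved to $(R/r)^{(k-1)-1}$ via \eqref{F2} applied to $w-w'\in X_{k-1}$.

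The main obstacle I anticipate is making the dyadic patching construction of $\tilde w$ rigorous with uniform constants: one must control the single-annulus solutions $w_n$ simultaneously at all scales $2^m r$ (both for $m<n$ and $m>n$), which requires a clean Green's-function or energy estimate for the operator $-\nabla\cdot a\nabla$ with a localized right-hand side, and then verify that the double sum $\sum_n \sum_m(\ldots)$ converges — this is exactly where the constraints $0<\beta<k\le m$ enter and must be used carefully to avoid borderline divergences (e.g. logarithmic losses at $\beta=k-1$ or $\beta=m$, which are excluded by the strict inequalities). A cleaner alternative, if the abstract framework supplies it, is to obtain $\tilde w$ in one step as a suitably normalized solution via the already-available large-scale regularity theory, reducing the whole argument to the comparison and uniqueness steps above; I would look for such a shortcut first, but the dyadic construction is the robust fallback.
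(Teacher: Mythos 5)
Your overall skeleton matches the paper's: construct an auxiliary solution $\tilde w$ of $-\nabla\cdot a\nabla\tilde w=\nabla\cdot g$ with $\bigl(\fint_{|x|<R}|\nabla\tilde w|^2\bigr)^{1/2}\lesssim(R/r)^{\beta-1}$, set $w:=u-\tilde w$, observe that $w$ is $a$-harmonic of growth rate $m$ hence in $X_m$, and conclude uniqueness via the Liouville property. The construction of $\tilde w$ is the content of Lemma~\ref{Lg}, which the paper invokes in one line, and your uniqueness step reproduces the content of \eqref{Fischer2} in Lemma~\ref{Lup}. So the ``shortcut'' you hoped for is exactly what the paper supplies.

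However, the dyadic fallback you describe has a genuine gap that your own discussion of the constraints does not resolve. If $w_n$ is the Lax--Milgram solution with right-hand side supported on $\{2^nr<|x|<2^{n+1}r\}$, then $w_n$ is $a$-harmonic in $\{|x|<2^nr\}$, and there the mean-value property only gives $\bigl(\fint_{|x|<R}|\nabla w_n|^2\bigr)^{1/2}\lesssim (2^n)^{\beta-1}$ for all $R\le 2^nr$; it does not decay inward at all. When $k\ge 2$ (so $\beta>1$) the far-field tail $\sum_{n:2^nr>R}(2^n)^{\beta-1}$ diverges, so your series for $\tilde w$ does not converge. The essential idea you are missing is in the paper's proof of Lemma~\ref{Lg}: before summing, one subtracts from each $w_n$ its best $X_{k-1}$-approximation $\tilde u_n$ provided by Lemma~\ref{Lup} (an upgrade of \eqref{F1}), which improves the inward bound to $\bigl(\fint_{|x|<R}|\nabla(w_n-\tilde u_n)|^2\bigr)^{1/2}\lesssim (R/2^nr)^{k-1}(2^n)^{\beta-1}$. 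It is precisely this extra factor $(R/2^nr)^{k-1}$, combined with $\beta<k$, that makes the far-field sum converge, and the complementary bound $\beta>k-1$ is then what controls the sum of the $\nabla\tilde u_n$ themselves on intermediate scales (via \eqref{F2}). Your remarks that ``$\beta<k$ keeps the outer sum finite'' and ``$\beta>0$ keeps the inner sum finite'' assign the constraints to the wrong mechanisms: without the $X_{k-1}$-subtraction there is no decay for $\beta<k$ to pair against.
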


\begin{proposition}\label{Pd}
Consider exponents $m\ge k\ge 1$ and $\beta\in(m,m+1)$ 
(where $m+1$ denotes the successor of $m$ in the sequence (\ref{F1bis})) and a radius $r\ge r_*$. 
We are given a function $u$ and a vector field $g$ such that
\begin{align*}
-\nabla\cdot a\nabla u=\nabla\cdot g\quad\textrm{ in }\;\{ |x| > r \}
\end{align*}
satisfying the decay conditions
\begin{align}\label{Pd2}
\biggl(\fint_{|x| > R}|g|^2\biggl)^\frac{1}{2}\le\biggl(\frac{r}{R}\biggl)^{\beta+d-1},\quad
\biggl(\fint_{|x| > R}|\nabla u|^2\biggl)^\frac{1}{2}\le\biggl(\frac{r}{R}\biggl)^{k+d-1}\quad\mbox{for all}\;R\ge r.
\end{align}
Then there exists $w\in Y_k(r_*)$ such that for all $R\ge 2r$
\begin{equation}\label{Pd18}
\begin{aligned}
\biggl(\fint_{|x| > R}|\nabla(u-w)|^2\biggl)^\frac{1}{2}\lesssim \Bigl( \frac{r}{r_*} \Bigr)^{m-k}\left(\frac{r}{R}\right)^{\beta+d-1},\\
\biggl(\fint_{|x| > R}|\nabla w|^2\biggl)^\frac{1}{2}\lesssim \Bigl( \frac{r}{r_*} \Bigr)^{m-k} \left(\frac{r}{R}\right)^{k+d-1}.
\end{aligned}
\end{equation}

Moreover, if there is another $w'\in Y_k(r_*)$ with these properties we have $w-w'$ $\in Y_{m+1}(r_*)$ and
\begin{align*}
\biggl(\fint_{|x| > R}|\nabla(w-w')|^2\biggl)^\frac{1}{2}\lesssim\Bigl( \frac{r}{r_*} \Bigr)^{m-k}\Bigl(\frac{r}{R}\Bigr)^{(m+1)+d-1}\quad\mbox{for all}\;R\ge 2r.
\end{align*}
\end{proposition}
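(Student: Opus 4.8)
The plan is to reduce the exterior-domain decay estimate to the growth estimate of Proposition \ref{Pg} by the Kelvin-type inversion $x \mapsto x/|x|^2$, which (in the Euclidean model) swaps growing and decaying $a$-harmonic functions and, more importantly, interchanges the rate $k$ with the rate $-(k+d-2)$. Concretely, I would first solve the equation in the exterior domain by Lax--Milgram: using the decay hypothesis \eqref{Pd2} on $g$ and the fact that $g$ is supported away from the origin only in the sense that we work in $\{|x|>r\}$, produce a solution $\tilde u$ of $-\nabla\cdot a\nabla\tilde u=\nabla\cdot g$ that decays, i.e.\ lies in some $Y_j(r)$. Since $\beta\in(m,m+1)$, a direct energy estimate localized to dyadic annuli $\{R<|x|<2R\}$, combined with \eqref{poinc} and Caccioppoli's inequality, gives that $\nabla\tilde u$ inherits the decay rate $\beta+d-1$ from $g$ down to the next ``resonance,'' which here is the rate $(m+1)+d-1$; below that rate the solution is only controlled by its own far-field, so $\tilde u \in Y_{k}(r)$ with the quantitative bound $\|\nabla\tilde u\|$ on $\{|x|>R\}$ of order $(r/R)^{k+d-1}$ matching the second line of \eqref{Pd2}. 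The extra factor $(r/r_*)^{m-k}$ in \eqref{Pd18} is exactly the price of pushing the harmonic correction from the natural radius $r$ down to $r_*$, using \eqref{F2} for the spaces $Y_j$ (dually, for $X_j$) across the range $k\le j\le m$.

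Next I would extract the leading decaying modes. Having $\tilde u\in Y_k(r)$, apply the exterior regularity/approximation statement — the $Y$-analogue of \eqref{F1}, which holds for $a^*$-harmonic functions on exterior domains and is what underlies \eqref{approx.decay} — to approximate $\tilde u$ on large scales by an element $w_0$ of a finite-dimensional space of decaying $a$-harmonic functions defined in $\{|x|>r\}$. The difference $u-\tilde u$ is $a$-harmonic in $\{|x|>r\}$, hence by \eqref{F1} (now in its exterior, decaying form) it is approximated to any decay rate by decaying $a$-harmonic functions; combining, one gets $w\in Y_k(r)$ with $\bigl(\fint_{|x|>R}|\nabla(u-w)|^2\bigr)^{1/2}\lesssim (r/R)^{\beta+d-1}$ on $\{|x|>R\}$ for $R\ge 2r$, which is the first line of \eqref{Pd18}; the second line is then automatic from the triangle inequality and the decay hypothesis on $u$. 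To upgrade membership from $Y_k(r)$ to $Y_k(r_*)$ I would invoke the corrector machinery of Proposition \ref{Ck.1}/Proposition \ref{stoch.results}: an $a$-harmonic function in an exterior domain that decays at rate $k+(d-2)$ can be extended/represented via the higher-order correctors down to the scale $r_*$, and this is precisely the step that generates the factor $(r/r_*)^{m-k}$ through \eqref{F2}.

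For uniqueness, suppose $w,w'\in Y_k(r_*)$ both satisfy \eqref{Pd18}. Then $w-w'$ is $a$-harmonic in $\{|x|>r_*\}$ and $\bigl(\fint_{|x|>R}|\nabla(w-w')|^2\bigr)^{1/2}\lesssim (r/r_*)^{m-k}(r/R)^{\beta+d-1}$; since $\beta+d-1 > m+d-1$ and the only decay rates available in the $Y$-scale below $\beta+d-1$ is $(m+1)+d-1$ (because $\beta<m+1$), the function $w-w'$ must decay strictly faster than rate $m+(d-2)$, hence lies in $Y_{m+1}(r_*)$, with the stated bound following from the $Y$-version of \eqref{F2} applied between radii $r$ and $R$.

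The main obstacle I anticipate is not the inversion bookkeeping but controlling the solution across the ``resonant'' rates between $k+d-1$ and $\beta+d-1$: the Lax--Milgram solution only sees the rate of $g$, and peeling off the finitely many intermediate decaying $a$-harmonic modes (those of rates $k+(d-2),\,(k+1)+(d-2),\dots$ up to just below $m+1$) while keeping the constants uniform and tracking the $(r/r_*)^{m-k}$ factor requires careful iteration of \eqref{F1} and \eqref{F2} on dyadic annuli. A secondary subtlety is that $w$ is only required to be $a^*$-harmonic in $\{|x|>r_*\}$ while the equation $-\nabla\cdot a\nabla u=\nabla\cdot g$ is posed only in $\{|x|>r\}$; matching these two radii is exactly what forces the loss $(r/r_*)^{m-k}$ and must be done so that the constant in $\lesssim$ depends only on the data listed before Proposition \ref{Pg}.
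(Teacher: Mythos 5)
There is a genuine gap. Your proposal gestures at the right objects but does not supply the machinery that the abstract framework of Section~\ref{abstract.r} actually makes available, and it invokes several tools that are either inapplicable or not hypotheses of that framework.

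First, the Kelvin inversion $x\mapsto x/|x|^2$ does not preserve $a$-harmonicity for a heterogeneous coefficient field $a$ (it is specific to the Laplacian and to special symmetric situations), so it cannot be used to reduce Proposition~\ref{Pd} to Proposition~\ref{Pg}. You mention it as the guiding idea but then drop it; as stated it would fail at step one.

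Second, and more seriously, the backbone of your argument is a ``$Y$-analogue of \eqref{F1}'' — an exterior excess-decay estimate that lets you peel off decaying $a$-harmonic modes — together with a ``$Y$-version of \eqref{F2}'' to produce the factor $(r/r_*)^{m-k}$. Neither of these is a hypothesis of Section~\ref{abstract.r}: \eqref{F1} and \eqref{F2} are assumed only for the $X_k$-scale. In the paper these exterior counterparts are not assumed but \emph{derived}, and the derivation goes through the duality $(\cdot,\cdot)$: Proposition~\ref{Pdual} constructs the complements $Z_n(r_*)$, gives the quantitative estimate~\eqref{Pd6} (which is exactly the source of the $(r/r_*)^{m-k}$ factor), the continuity of the projections~\eqref{Pd21}, and the homogeneity~\eqref{Pd20}. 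Without this, there is no well-defined notion of ``the decaying modes at rates $k+(d-2),\dots,m+(d-2)$'' that your peeling argument relies on, and the quantization of decay rates you invoke in the uniqueness step (``the only decay rates available \ldots below $\beta+d-1$ is $(m+1)+d-1$'') is precisely the nontrivial content of Proposition~\ref{Pdual}~iii)--iv), not something one may take for granted.

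Third, the analytical mechanism you describe — ``Lax--Milgram, then the solution inherits the decay rate of $g$ down to the next resonance'' — also does not hold at the claimed generality. A Lax--Milgram solution is merely square-integrable a priori; to upgrade its decay from $d-1$ to $\beta+d-1$ one needs a moment (orthogonality) condition of $g$ against $X_m$, which is exactly the hypothesis~\eqref{Ld4} of Lemma~\ref{Ld}. The paper's proof arranges this orthogonality by first constructing $w\in Y_k(r_*)$ via the duality (so that $\int\nabla\tilde u\cdot\bar g=(\tilde u,w)$ on $X_m$), building a flux extension $(\bar w,\bar h)$ by Lemma~\ref{Lext}, and then applying Lemma~\ref{Ld} to $\bar g-\bar h$, which is orthogonal to $X_m$ by construction. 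You also skip the extension step entirely (the equation is posed only in $\{|x|>r\}$, so some version of Lemma~\ref{Lext} is needed before any global Lax--Milgram argument), and the appeal to ``corrector machinery of Proposition~\ref{Ck.1}/\ref{stoch.results}'' to pass from $Y_k(r)$ to $Y_k(r_*)$ would leave the abstract setting, which is exactly what Section~\ref{abstract.r} is designed to avoid.

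In short, the plan is missing the central idea of the paper's proof of Proposition~\ref{Pd}: represent the far-field information of $(u,g)$ as a linear form on $X_m/X_{k-1}$ and use the duality isomorphism of Proposition~\ref{Pdual} to convert it into the decaying $a$-harmonic function $w$, then close the loop with Lemmas~\ref{Lext} and \ref{Ld}.
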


Whereas Proposition \ref{Pg} associates with $(u,g)$ an element of $X_m/X_{k-1}$,
Proposition \ref{Pd} associates an element of the dual space $Y_k(r_*)/Y_{m+1}(r_*)$.

\medskip
Another fundamental result of this section is the following statement, which expresses and quantifies the isomorphism provided by the bilinear map $(\cdot,\cdot)$:
\begin{proposition}\label{Pdual}
i) The bilinear map $(\cdot,\cdot)$ provides for $1\le k\le m$ an isomorphism
\begin{align*}
Y_k/Y_{m+1}\cong(X_m/X_{k-1})^*,
\end{align*}
where $k-1$ denotes the predecessor of $k$ and $m+1$ the successor of $m$ in the sequence (\ref{F1bis}).

\smallskip 

ii) This isomorphism is quantitative in the following sense: For every $k\le n\le m$ we may select a complement
$Z_n(r_*)$ of $Y_{n+1}(r_*)$ in $Y_n(r_*)$ (hence the direct sum $Z_k(r_*)\oplus \cdots \oplus Z_m(r_*)$ is a complement of
$Y_{m+1}(r_*)$ in $Y_k(r_*)$ and thus isomorphic to $Y_k(r_*)/Y_{m+1}(r_*)$) in such a way that for every linear
form $\ell$ on $X_m$ that vanishes on $X_{k-1}$ (note that $(X_m/X_{k-1})^*$ is canonically isomorphic
to the space of these $\ell$'s) there exists a unique $w\in Z_k(r_*)\oplus \cdots \oplus Z_m(r_*)$ such that
$\ell = (\cdot,w)$ on $X_m$ and that for all $r\ge r_*$
\begin{align}\label{Pd6}
\bigg(\fint_{|x|  > r}|\nabla w|^2 \biggr)^\oh
\lesssim \Bigl(\frac{r}{r_*} \Bigr)^{m-k} \sup_{u\in X_m}\frac{\frac{1}{r^d}\ell.u }{\bigl( \fint_{|x|<r}|\nabla u|^2\bigr)^\oh }.
\end{align}

\smallskip

iii) Moreover, the complements $Z_n(r_*)$, $n=k,\ldots,m$, are natural in the sense that for every $r\ge r_*$,
we also have the direct sum $Y_k(r)=Z_k(r_*)\oplus \cdots \oplus Z_m(r_*) \oplus Y_{m+1}(r)$ and that the 
corresponding projections are continuous in the sense that if $w=w_k+\cdots+w_{m}+\tilde w$ is a decomposition then
\begin{align}\label{Pd21}
\bigg(\int_{|x| > r}|\nabla w_k|^2\bigg)^\frac{1}{2}+\cdots+\bigg(\int_{|x| > r}|\nabla w_m|^2\bigg)^\frac{1}{2}
\lesssim C(k,m) \bigg(\int_{|x| > r}|\nabla w|^2\bigg)^\frac{1}{2}.
\end{align}
Here, the constant $C(k,m) < +\infty$ depends only on the number of indices in~\eqref{F1bis} between $k$ and $m$.

\smallskip

iv) Finally, elements $v\in Z_n(r_*)$, $n=k,\ldots,m$, display homogeneous behavior in the sense of
\begin{equation}\label{Pd20}
\bigg(\fint_{|x| > R}|\nabla v|^2\bigg)^\frac{1}{2}\sim\Big(\frac{r}{R}\Big)^{\BR5 n + d - 1}\bigg(\fint_{|x| > r}|\nabla v|^2\bigg)^\frac{1}{2}
\quad\mbox{for all}\;R\ge r\ge r_*.
\end{equation}
\end{proposition}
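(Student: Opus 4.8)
The plan is to prove Proposition~\ref{Pdual} in four stages, matching its four items, and to build everything on the two approximation results of Propositions~\ref{Pg} and~\ref{Pd} together with the growth/decay hypotheses~\eqref{F1} and~\eqref{F2}. For item i), the strategy is to produce, for every linear form $\ell$ on $X_m$ vanishing on $X_{k-1}$, a decaying solution $w\in Y_k(r_*)$ with $\ell=(\cdot,w)$ on $X_m$, and to show this $w$ is unique modulo $Y_{m+1}(r_*)$. Existence: pick any $u_\ell\in X_m$ representing $\ell$ via the (folklore, Euclidean) pairing on the homogeneous side, transport the datum to a localized right-hand side, and apply Proposition~\ref{Pd} with a suitable decay exponent $\beta\in(m,m+1)$ to solve $-\nabla\cdot a\nabla w=\nabla\cdot g$ in an exterior domain; the second Green's formula~\eqref{bil.def} then identifies the flux invariant $(\cdot,w)$ with $\ell$. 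Injectivity: if $(\cdot,w)=0$ on $X_m$, one shows $w$ decays faster than rate $m+(d-2)$, i.e. $w\in Y_{m+1}(r_*)$, by testing against a well-chosen exhausting family of $a$-harmonic growing functions from $X_m$ and using~\eqref{o02} together with Proposition~\ref{Pg} to upgrade decay order by order. The dimension count then closes surjectivity, exactly as in the Euclidean Lemma~\ref{folklore}, but here the finite-dimensionality of $X_m/X_{k-1}$ is guaranteed by~\eqref{F1}--\eqref{F2}.

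For item ii), the quantitative bound~\eqref{Pd6}, the plan is to carry the $L^2$-based norms through the construction above. Given $\ell$ on $X_m$, one measures its size by the right-hand quantity of~\eqref{Pd6}, chooses the representative datum $g$ supported near $\{|x|<r\}$ with $\bigl(\fint|g|^2\bigr)^{1/2}$ controlled by that quantity, and invokes the explicit estimate~\eqref{Pd18} of Proposition~\ref{Pd}. The factor $(r/r_*)^{m-k}$ is precisely the loss incurred in Proposition~\ref{Pd} when the datum lives on scale $r$ rather than $r_*$; it reappears verbatim in~\eqref{Pd6}. The construction of the complements $Z_n(r_*)$ is then an inductive Gram--Schmidt-type procedure: having fixed $Z_{n+1}(r_*)\oplus\cdots\oplus Z_m(r_*)$, one defines $Z_n(r_*)$ as the span of the distinguished solutions $w$ produced above for a basis of the ``fresh'' part of $(X_n/X_{n-1})^*$, i.e. of linear forms that vanish on $X_{n-1}$ and are normalized to be orthogonal (in the pairing) to the already-constructed pieces. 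One must check these $Z_n(r_*)$ are genuine complements, which follows from item~i) applied at each level, and that elements of $Z_n(r_*)$ have decay order exactly $n+(d-2)$ and not better, i.e. they are ``nondegenerate'' at level $n$ — this is where the lower bound in~\eqref{Pd20} comes from.

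For item iii), the naturality and continuity of the projections, the key observation is that $Z_n(r_*)$ is a fixed finite-dimensional space of functions that are $a^*$-harmonic on all of $\{|x|>r_*\}$, hence a fortiori on $\{|x|>r\}$ for every $r\ge r_*$; so the algebraic direct-sum decomposition $Y_k(r)=Z_k(r_*)\oplus\cdots\oplus Z_m(r_*)\oplus Y_{m+1}(r)$ makes sense and again follows from item~i) applied with $r$ in place of $r_*$ wherever the approximation machinery is invoked (the spaces $Y_k(r)$ and the seminorms $\|\cdot\|_{k,r}$ of~\eqref{m02.bis} are tailored for exactly this). The continuity bound~\eqref{Pd21} is then a finite-dimensional statement: on the finite-dimensional space $Z_k(r_*)\oplus\cdots\oplus Z_m(r_*)$ all norms are equivalent with constants depending only on its dimension, hence only on the number of exponents in~\eqref{F1bis} between $k$ and $m$; one combines this with the a-priori bound that any $w\in Y_k(r)$ is, modulo $Y_{m+1}(r)$, controlled by its gradient energy on $\{|x|>r\}$, which is the content of the uniqueness part of Proposition~\ref{Pd} made quantitative (or, more simply, of~\eqref{F2} applied on the $Y$-side via duality). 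Item iv) is then immediate: an element $v\in Z_n(r_*)$ is nondegenerate at level $n$, so the upper bound in~\eqref{Pd20} is Proposition~\ref{Pd}'s decay estimate and the lower bound is the reverse Harnack-type inequality~\eqref{F2} transported to the exterior-domain setting.

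The main obstacle I anticipate is the simultaneous handling of existence and the sharp constant in item~ii): one must route the construction through Proposition~\ref{Pd} with the \emph{correct} bookkeeping of scales, making sure that the localized datum $g$ associated to $\ell$ is placed on scale $r$ (not $r_*$), that its $L^2$-size is faithfully controlled by $\sup_{u\in X_m}\frac{r^{-d}\ell.u}{(\fint_{|x|<r}|\nabla u|^2)^{1/2}}$, and that the error term in~\eqref{Pd18} is absorbed into the claimed bound without degrading the exponent $\beta+d-1$. A secondary subtlety is verifying that the complements $Z_n(r_*)$ constructed level-by-level are truly $r$-independent (as functions) and that the induced projections really are bounded uniformly in $r$; this requires checking that the ``fresh'' forms at level $n$ can be chosen so that their distinguished $Y$-representatives are orthogonal, in the second-Green's-form pairing, to all $Z_{n'}(r_*)$ with $n'>n$ — an upper-triangular solvability statement that is elementary once~\eqref{o02} and item~i) are in hand, but must be spelled out to make~\eqref{Pd21} quantitative.
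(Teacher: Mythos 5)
Your plan routes both the existence step in item i) and the quantitative estimate~\eqref{Pd6} in item ii) through Proposition~\ref{Pd} (and its estimate \eqref{Pd18}); this is circular. Proposition~\ref{Pd}'s proof invokes Proposition~\ref{Pdual} twice --- once to produce the element $w\in Y_k(r_*)$ from the linear form $\ell$ defined in~\eqref{Pd7}, and once more for the uniqueness part via the decomposition $w-w'=w_k+\cdots+w_m+\tilde w$. The paper's actual proof of Proposition~\ref{Pdual} avoids Propositions~\ref{Pg} and~\ref{Pd} entirely: surjectivity is obtained by Riesz representation on $X_m/X_0$ with the localized inner product $\int_{|x|<r_*}\nabla u\cdot\nabla\tilde u$ (justified by~\eqref{F2}), feeding the resulting $U$ as a compactly supported right-hand side $h=\nabla U\,I(\{|x|<r_*\})$ into a Lax--Milgram problem, and then using Lemma~\ref{LF1} b)$\Rightarrow$a) to certify $v\in Y_k(r_*)$. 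Injectivity is a direct application of Lemma~\ref{LF1}, not an iterative ``upgrade decay order by order'' argument via Proposition~\ref{Pg} (which in any case concerns growing, not decaying, solutions). The $(r/r_*)^{m-k}$ factor in~\eqref{Pd6} is then obtained by combining Lemma~\ref{LF1} c) with~\eqref{F2} --- it does not ``reappear verbatim'' from Proposition~\ref{Pd}.

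A secondary concern is item iii): your finite-dimensionality argument gives norm equivalence with a constant that a priori depends on the specific space $Z_k(r_*)\oplus\cdots\oplus Z_m(r_*)$, i.e. on $a$ and $r_*$, not only on ``the number of exponents in~\eqref{F1bis} between $k$ and $m$.'' To make~\eqref{Pd21} quantitative and uniform you need the explicit estimate \eqref{Pdual4} (the $m=k$ case of~\eqref{Pd6}) combined with~\eqref{f48} and the observation that $(u,w)=(u,w_k)$ for $u\in X_k$, followed by an induction over levels; this is what the paper does and it bypasses any non-uniform norm-equivalence step. Finally, the ``Gram--Schmidt'' orthogonalization you propose for building $Z_n(r_*)$ is unnecessary: the paper simply defines $Z_n(r_*)$ as the image of $(X_n/X_{n-1})^*$ under the partial inverse $M$ from Step~2 with $m=n$, and the complement property at each level follows from linear algebra once one-to-one-ness and ontoness are established. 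Furthermore, the orthogonality you envisage (between elements of $Z_n(r_*)$ and $Z_{n'}(r_*)$) is not well-posed, since $(\cdot,\cdot)$ pairs $X$ with $Y$ rather than $Y$ with $Y$.
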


\BR5 We remark that in our construction, the complement $Z_n(r_*)$ of $Y_{n+1}(r_*)$ in $Y_n(r_*)$ for $n=k,\ldots,m$ depends on $m$ (and not just on $n$ as the notation suggests). \ER

\medskip

\BR5
The following proposition is a dual version of Proposition~\ref{Pdual}: \ER

\begin{proposition}\label{Pbidual}
Let $m \ge k\ge 1$. 
i) The bilinear form $(\cdot,\cdot)$ provides an isomorphism
\begin{align*}
X_m/X_{k-1}\cong(Y_k/Y_{m+1})^*.
\end{align*}

\smallskip

ii) This isomorphism can be made quantitative by constructing complements $W_n$ of $X_{n-1}$ in $X_n$,
$n=k,\ldots,m$, in such a way that for $u\in W_k\oplus \cdots \oplus W_m$ and $r\ge r_*$ we have
\begin{align*}
\bigg(\fint_{|x|<r}|\nabla u|^2\bigg)^\frac{1}{2}
\lesssim\Bigl( \frac{r}{r_*} \Bigr)^{m-k}\sup_{v\in Z_k(r_*)\oplus\cdots \oplus Z_m(r_*)}\frac{\frac{1}{r^d}(u,v)}{\big(\fint_{|x|>r}|\nabla v|^2\big)^\frac{1}{2}}.
\end{align*}

\smallskip

iii) These complements $\{W_n\}_n$ are compatible with the complements $\{Z_m(r_*)\}_m$ in the sense of
\begin{align*}
(W_n,Z_m(r_*))=0\quad\mbox{for}\;n\not=m.
\end{align*}

\smallskip

iv) These complements are natural in the sense that the projections $X_m\ni u$ $\mapsto(u_m,\ldots,u_k)$
$\in W_m\times\cdots\times W_k$ are continuous:
\begin{align*}
\bigg(\int_{|x|<r}|\nabla u_m|^2\bigg)^\frac{1}{2}+\cdots+\bigg(\int_{|x|<r}|\nabla u_k|^2\bigg)^\frac{1}{2}
\lesssim C(k,m)\bigg(\int_{|x|<r}|\nabla u|^2\bigg)^\frac{1}{2}\quad\mbox{for all}\;r\ge r_*,
\end{align*}
with $C(k,m) < +\infty$ depending only on the number of indices in~\eqref{F1bis} between $k$ and $m$.

\smallskip

v) The elements $u\in W_n$ have homogeneous behavior in the sense of
\begin{equation}\label{Wnhomog}
\bigg(\fint_{|x|<R}|\nabla u|^2\bigg)^\frac{1}{2}
\sim\Big(\frac{R}{r}\Big)^{n -1 }\bigg(\fint_{|x|<r}|\nabla u|^2\bigg)^\frac{1}{2}
\quad\mbox{for all}\;R\ge r\ge r_*.
\end{equation}

\end{proposition}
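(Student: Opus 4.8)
The plan is to obtain Proposition~\ref{Pbidual} from Proposition~\ref{Pdual} by finite–dimensional duality, plus one genuinely analytic input about the spaces $X_n$. For part i), note that by Proposition~\ref{Pdual}(i) the map $Y_k/Y_{m+1}\ni v\mapsto(\cdot,v)\in(X_m/X_{k-1})^*$ is an isomorphism, so the bilinear form $(\cdot,\cdot)$ on the finite–dimensional pair $X_m/X_{k-1},\,Y_k/Y_{m+1}$ (it descends there by \eqref{o02}) is non-degenerate in its second slot; a one-line argument then gives non-degeneracy in the first slot as well (if $(u,\cdot)\equiv0$ for some $u\notin X_{k-1}$, choose $\psi\in(X_m/X_{k-1})^*$ with $\psi(u)\neq0$, write $\psi=(\cdot,v)$ via Proposition~\ref{Pdual}(i), and reach the contradiction $0=(u,v)=\psi(u)$). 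Hence $u\mapsto(u,\cdot)$ is injective, and since all of $X_m/X_{k-1}$, $(X_m/X_{k-1})^*$, $Y_k/Y_{m+1}$, $(Y_k/Y_{m+1})^*$ have the same finite dimension, it is an isomorphism onto $(Y_k/Y_{m+1})^*$.

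Next I would construct the complements. For $n=k,\dots,m$ set
\[
N_n:=\bigl\{\,u\in X_n\ :\ (u,v)=0\ \text{ for all }v\in Z_l(r_*),\ l=k,\dots,n-1\,\bigr\}
\]
and let $W_n$ be an arbitrary complement of $X_{k-1}$ inside $N_n$. Using $Z_k(r_*)\oplus\cdots\oplus Z_{n-1}(r_*)\cong Y_k(r_*)/Y_n(r_*)$ (Proposition~\ref{Pdual}(iii) with top index $n-1$) together with Proposition~\ref{Pdual}(i) applied with $m$ replaced by $n-1$, the pairing $X_{n-1}\times\bigl(Z_k(r_*)\oplus\cdots\oplus Z_{n-1}(r_*)\bigr)$ is identified with the perfect pairing $X_{n-1}/X_{k-1}\times(X_{n-1}/X_{k-1})^*$; this yields $N_n\cap X_{n-1}=X_{k-1}$ and surjectivity of $X_n\to(Z_k(r_*)\oplus\cdots\oplus Z_{n-1}(r_*))^*$, hence by a dimension count $X_n=X_{n-1}\oplus W_n$, and telescoping gives $X_m=X_{k-1}\oplus W_k\oplus\cdots\oplus W_m$. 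Compatibility iii), $(W_n,Z_l(r_*))=0$ for $n\neq l$, holds for $l<n$ by construction ($W_n\subset N_n$) and for $l>n$ automatically from \eqref{o02} (growth index $n$ of $W_n$ versus decay index $l>n$). For v), the upper bound is \eqref{F2} with exponent $-1+n$, and the matching lower bound, together with the continuity of the projections iv) with a constant depending only on the number of indices between $k$ and $m$, follows from $W_n\cap X_{n-1}=\{0\}$, the gap between the growth rate $n-1$ of $W_n$ and the rate $\leq n-2$ of $X_{n-1}$, and \eqref{F1}--\eqref{F2}, by the same compactness/rescaling argument used for the homogeneity estimates \eqref{Pd20} and \eqref{Pd21} in Proposition~\ref{Pdual}.

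Finally, for the quantitative estimate ii) I would feed Proposition~\ref{Pdual}(ii) with a cleverly chosen linear form. Fix $u\in W_k\oplus\cdots\oplus W_m$ and $r\ge r_*$; let $P$ be the $L^2(\{|x|<r\})$-orthogonal projection (on gradients) onto $\nabla X_{k-1}$, and put $\ell.\tilde u:=\langle\nabla u-P\nabla u,\nabla\tilde u\rangle_{L^2(\{|x|<r\})}$. Then $\ell$ vanishes on $X_{k-1}$, and $\|\nabla u-P\nabla u\|_{L^2(\{|x|<r\})}\le\|\nabla u\|_{L^2(\{|x|<r\})}$ together with \eqref{vol} gives $\sup_{\tilde u\in X_m}\frac{r^{-d}\ell.\tilde u}{(\fint_{|x|<r}|\nabla\tilde u|^2)^\oh}\lesssim(\fint_{|x|<r}|\nabla u|^2)^\oh$; moreover $\ell.u=\operatorname{dist}_{L^2(\{|x|<r\})}(\nabla u,\nabla X_{k-1})^2$, which by the uniform angle bound below is $\gtrsim\|\nabla u\|_{L^2(\{|x|<r\})}^2\sim r^d\fint_{|x|<r}|\nabla u|^2$. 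Applying Proposition~\ref{Pdual}(ii) yields $w\in Z_k(r_*)\oplus\cdots\oplus Z_m(r_*)$ with $\ell=(\cdot,w)$ on $X_m$ and, by \eqref{Pd6}, $(\fint_{|x|>r}|\nabla w|^2)^\oh\lesssim(r/r_*)^{m-k}(\fint_{|x|<r}|\nabla u|^2)^\oh$; since $(u,w)=\ell.u\gtrsim r^d\fint_{|x|<r}|\nabla u|^2$, this $w$ realizes
\[
\sup_{v\in Z_k(r_*)\oplus\cdots\oplus Z_m(r_*)}\frac{r^{-d}(u,v)}{(\fint_{|x|>r}|\nabla v|^2)^\oh}\ \ge\ \frac{r^{-d}(u,w)}{(\fint_{|x|>r}|\nabla w|^2)^\oh}\ \gtrsim\ \Bigl(\frac{r_*}{r}\Bigr)^{m-k}\Bigl(\fint_{|x|<r}|\nabla u|^2\Bigr)^\oh,
\]
which rearranges to the asserted bound.

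The routine parts are the linear algebra and the bookkeeping in reducing everything to Proposition~\ref{Pdual}. The main obstacle is the one analytic ingredient used repeatedly above: the \emph{uniform-in-$r\ge r_*$} lower bound $\operatorname{dist}_{L^2(\{|x|<r\})}(\nabla u,\nabla X_{k-1})\gtrsim\|\nabla u\|_{L^2(\{|x|<r\})}$ for $u\in W_k\oplus\cdots\oplus W_m$ (equivalently, a uniform angle between this sum and $X_{k-1}$), which also underlies the lower bounds in v) and the continuity in iv). This must be extracted from $(W_k\oplus\cdots\oplus W_m)\cap X_{k-1}=\{0\}$, the rate gap, and \eqref{F1}--\eqref{F2}, by a rescaling/compactness argument of exactly the type already carried out for the homogeneity statement in Proposition~\ref{Pdual}; it is here, and not in the duality, that the structural assumptions on $a$ and the manifold are actually used.
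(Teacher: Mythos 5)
Your architecture (reduce to Proposition~\ref{Pdual}, build polar complements $W_n$, lower-bound the dual pairing) is the same as the paper's, and parts i), iii), and the dimension counts are handled essentially as in the paper. The real problem is in ii), where you identify the correct obstacle but then hand it off to an argument that does not work.

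The ingredient you label the ``uniform angle bound''--- that $\operatorname{dist}_{L^2(\{|x|<r\})}(\nabla u,\nabla X_{k-1})\gtrsim\|\nabla u\|_{L^2(\{|x|<r\})}$ uniformly in $r\ge r_*$ for $u\in W_k\oplus\cdots\oplus W_m$ --- is not something you can ``extract by a rescaling/compactness argument.'' The setting is not scale-invariant (the spaces $X_n$, $W_n$ depend on the fixed realization $a$), so there is no rescaling to use; and a compactness argument in finitely many dimensions would only give a nonzero angle for each \emph{fixed} $r$ and $a$, with no uniformity over $r\ge r_*$ or over the admissible class of coefficients. Moreover, the angle bound is of essentially the same strength as ii) itself: once you have ii), the angle bound follows from $(u,v)=(u-\tilde u,v)$ for $\tilde u\in X_{k-1}$, $v\in Y_k$, together with \eqref{f48}; so proposing to prove ii) \emph{from} the angle bound is close to circular. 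Finally, your concluding remark that ``it is here, and not in the duality, that the structural assumptions on $a$ and the manifold are actually used'' is backwards: the paper's replacement for your angle bound uses only Lax--Milgram and no structure at all.

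What the paper proves instead (and what you are missing) is the non-degeneracy estimate \eqref{cw01}: for any $a$-harmonic $u$ and any $r\ge r_*$,
\begin{align*}
\biggl(\int_{|x|<r}|\nabla u|^2\biggr)^{\frac12}\lesssim\sup_{v\in Y_1(r)}\frac{(u,v)}{\bigl(\int_{|x|>r}|\nabla v|^2\bigr)^{\frac12}}.
\end{align*}
This is purely a duality trick: given a square-integrable vector field $h$ supported in $\{|x|<r\}$, the Lax--Milgram solution $v$ of $-\nabla\cdot a\nabla v=-\nabla\cdot h$ lies in $Y_1(r)$, satisfies the energy estimate, and realizes $(u,v)=\int\nabla u\cdot h$; taking the supremum over $h$ gives the bound. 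Note this holds for \emph{all} $a$-harmonic $u$, not just $u\in W_k\oplus\cdots\oplus W_m$, so it is strictly stronger than your angle bound and, crucially, it comes with a proof. Once you have \eqref{cw01}, ii) follows by applying it at $r=r_*$, using \eqref{F2} to pass from $r$ to $r_*$, splitting $\tilde v\in Y_1(r_*)$ as $v_-+v+v_+$ with $v_-\in Z_1(r_*)+\cdots+Z_{k-1}(r_*)$, $v\in Z_k(r_*)+\cdots+Z_m(r_*)$, $v_+\in Y_{m+1}(r_*)$, and discarding the $v_\pm$ pieces via iii), Corollary~\ref{Lbil}, and \eqref{Pd21}, then finally Lemma~\ref{LF1}~c). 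This also dictates the correct definition of $W_n$: the paper's \eqref{cw03} requires $(W_n,Z_l(r_*))=0$ for \emph{all} $l=1,\ldots,n-1$, not just $l=k,\ldots,n-1$ as in your $N_n$; without polarity to $Z_1,\ldots,Z_{k-1}$, the term $(u,v_-)$ does not vanish and the split argument fails. Parts iv) and v) are then easy corollaries of ii), as in the paper; they should not be treated as inputs.
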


\medskip

A consequence of the two previous propositions is the following:

\begin{corollary}\label{Cor5.b}
For all $m\ge 2$ 
the induced projection $X_m\ni u\mapsto u'\in W_m \oplus W_{m-1}$ has the following boundedness property
\begin{align}\label{m07}
\|u'\|_m + \|u-u'\|_{m-2}\lesssim \|u\|_m
\end{align}
\BR5(see~\eqref{m01} for definition of $\|\cdot\|_{m}$). \ER 
%

\smallskip

For all $k\ge 1$ and $r \geq r_*$ 
the induced projection $Y_k(r)\ni v\mapsto v'\in Z_k(r_*)\oplus Z_{k+1}(r_*)$ has the following boundedness property
\begin{equation}\label{eqC301}
\|v'\|_{k,r} + \|v-v'\|_{k+2,r}\lesssim \|v\|_{k,r}
\end{equation}
\BR5(see~\eqref{m02.bis} for definition of $\|\cdot\|_{k,r}$). \ER 
%
\end{corollary}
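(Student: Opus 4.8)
The plan is to derive Corollary~\ref{Cor5.b} as a direct combination of the homogeneous-behavior estimates of Propositions~\ref{Pdual} and~\ref{Pbidual} with the continuity of the natural projections proved there. I treat the two statements separately since they are entirely parallel (one for growing, one for decaying functions), and I will only spell out the first.

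\textbf{Proof of \eqref{m07}.} Fix $u\in X_m$ and let $u=u_m+u_{m-1}+\tilde u$ be the decomposition induced by $X_m=W_m\oplus W_{m-1}\oplus X_{m-2}$; set $u':=u_m+u_{m-1}$ and note $u-u'=\tilde u\in X_{m-2}$. First I would bound each piece on the fixed scale $r_*$. By Proposition~\ref{Pbidual}~iv) (continuity of the projections, with the $L^2$-norms over $\{|x|<r_*\}$) we have
\begin{align*}
\biggl(\int_{|x|<r_*}|\nabla u_m|^2\biggr)^\oh + \biggl(\int_{|x|<r_*}|\nabla u_{m-1}|^2\biggr)^\oh + \biggl(\int_{|x|<r_*}|\nabla \tilde u|^2\biggr)^\oh \lesssim \biggl(\int_{|x|<r_*}|\nabla u|^2\biggr)^\oh \le \|u\|_m \biggl(\int_{|x|<r_*}1\biggr)^\oh,
\end{align*}
where the last inequality is just the definition \eqref{m01} evaluated at $R=r_*$ together with the volume control \eqref{vol}. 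Hence $\fint_{|x|<r_*}|\nabla u_m|^2$, $\fint_{|x|<r_*}|\nabla u_{m-1}|^2$ and $\fint_{|x|<r_*}|\nabla \tilde u|^2$ are each $\lesssim \|u\|_m^2$. Next I would propagate these bounds to all scales $R\ge r_*$ using the homogeneous behavior \eqref{Wnhomog}: for $u_m\in W_m$,
\begin{align*}
\biggl(\frac{r_*}{R}\biggr)^{m-1}\biggl(\fint_{|x|<R}|\nabla u_m|^2\biggr)^\oh \sim \biggl(\fint_{|x|<r_*}|\nabla u_m|^2\biggr)^\oh \lesssim \|u\|_m,
\end{align*}
so taking the supremum over $R\ge r_*$ gives $\|u_m\|_m\lesssim\|u\|_m$; the same argument with $n=m-1$ gives $\|u_{m-1}\|_m\lesssim\|u_{m-1}\|_{m-1}\lesssim\|u\|_m$ (using that $\|\cdot\|_m$ is weaker than $\|\cdot\|_{m-1}$ since $u_{m-1}$ grows only at rate $m-1$), whence $\|u'\|_m\le\|u_m\|_m+\|u_{m-1}\|_m\lesssim\|u\|_m$ by the triangle inequality. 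Finally, applying \eqref{Wnhomog} once more to $\tilde u\in X_{m-2}$ (which lies in a sum of $W_n$'s with $n\le m-2$, each obeying its own homogeneous bound) yields $\|u-u'\|_{m-2}=\|\tilde u\|_{m-2}\lesssim\|u\|_m$. This is \eqref{m07}.

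\textbf{Proof of \eqref{eqC301}.} This is the mirror image. Fix $r\ge r_*$ and $v\in Y_k(r)$, and let $v=v_k+v_{k+1}+\tilde v$ be the decomposition from Proposition~\ref{Pdual}~iii), namely $Y_k(r)=Z_k(r_*)\oplus Z_{k+1}(r_*)\oplus Y_{k+2}(r)$; put $v':=v_k+v_{k+1}$. I would evaluate on the scale $r$: by the continuity estimate \eqref{Pd21} (with integrals over $\{|x|>r\}$),
\begin{align*}
\biggl(\int_{|x|>r}|\nabla v_k|^2\biggr)^\oh + \biggl(\int_{|x|>r}|\nabla v_{k+1}|^2\biggr)^\oh + \biggl(\int_{|x|>r}|\nabla \tilde v|^2\biggr)^\oh \lesssim \biggl(\int_{|x|>r}|\nabla v|^2\biggr)^\oh,
\end{align*}
so each $\fint_{|x|>r}|\nabla v_k|^2$, $\fint_{|x|>r}|\nabla v_{k+1}|^2$, $\fint_{|x|>r}|\nabla\tilde v|^2$ is $\lesssim \|v\|_{k,r}^2$ after using \eqref{m02.bis} at $R=r$ and \eqref{vol}. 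Then the homogeneous behavior \eqref{Pd20} transports these to all $R\ge r$: for $v_k\in Z_k(r_*)$,
\begin{align*}
\biggl(\frac{R}{r}\biggr)^{k+d-1}\biggl(\fint_{|x|>R}|\nabla v_k|^2\biggr)^\oh \sim \biggl(\fint_{|x|>r}|\nabla v_k|^2\biggr)^\oh \lesssim \|v\|_{k,r},
\end{align*}
giving $\|v_k\|_{k,r}\lesssim\|v\|_{k,r}$; similarly $\|v_{k+1}\|_{k,r}\lesssim\|v_{k+1}\|_{k+1,r}\lesssim\|v\|_{k,r}$ (here $\|\cdot\|_{k+1,r}$ is stronger than $\|\cdot\|_{k,r}$, in the direction we need), so $\|v'\|_{k,r}\lesssim\|v\|_{k,r}$, and applying \eqref{Pd20} to $\tilde v\in Y_{k+2}(r)$ gives $\|v-v'\|_{k+2,r}=\|\tilde v\|_{k+2,r}\lesssim\|v\|_{k,r}$. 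This proves \eqref{eqC301}.

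\textbf{Main obstacle.} The only genuinely delicate point is matching the two different ways scales are measured: the seminorms $\|\cdot\|_m$, $\|\cdot\|_{k,r}$ weight by powers of $r_*$ (resp.\ $r$), whereas the projection continuity \eqref{Pd21}, \eqref{Pbidual}~iv) and the homogeneity relations \eqref{Wnhomog}, \eqref{Pd20} are stated with raw $L^2$-balls. The bookkeeping that converts between them is exactly the two-line application of \eqref{vol} at the anchor radius $r_*$ (resp.\ $r$) done above; one must be careful that the anchor radius used in \eqref{Pd21} can be taken equal to the $r$ appearing in $\|\cdot\|_{k,r}$ — which is legitimate since $r\ge r_*$ and the complements $Z_n(r_*)$ are natural in the sense of Proposition~\ref{Pdual}~iii). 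Everything else is a direct invocation of the already-established abstract machinery.
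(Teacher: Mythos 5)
Your argument is correct in spirit and uses essentially the same ingredients as the paper's own proof: both rely on the continuity of the projections (Proposition~\ref{Pbidual}~iv), resp.~\eqref{Pd21}) evaluated at the anchor scale, followed by a growth/decay estimate that propagates to all scales. The only structural difference is organizational: you decompose $u=u_m+u_{m-1}+\tilde u$ in one shot and bound each piece, whereas the paper first projects $u\mapsto w_m\in W_m$, applies \eqref{F2} plus iv) to bound $\|u-w_m\|_{m-1}$, and then iterates the same argument on $u-w_m$; this buys nothing over your version except avoiding a triangle inequality.

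There is, however, a small slip at the very end of each half. You invoke \eqref{Wnhomog} (resp.~\eqref{Pd20}) for $\tilde u\in X_{m-2}$ (resp.\ $\tilde v\in Y_{k+2}(r)$), but those two estimates are stated \emph{only} for elements of a single $W_n$ (resp.\ $Z_n(r_*)$) — they encode the essentially-homogeneous behavior of the pure pieces and do not apply to a general element of $X_{m-2}$ or $Y_{k+2}(r)$. Your parenthetical acknowledges this by noting that $\tilde u$ is a sum of $W_n$ pieces, but to carry that out you would need to control each of those pieces at the anchor scale, which requires yet another round of iv) and a triangle inequality over all $n\le m-2$. The cleaner route (and the one the paper takes) is to use the assumed growth estimate \eqref{F2} directly on $\tilde u\in X_{m-2}$, which gives $\|\tilde u\|_{m-2}\lesssim\bigl(\fint_{|x|<r_*}|\nabla\tilde u|^2\bigr)^{1/2}$ in one line; in the $Y$-case the analogous tool is the decay bound of Lemma~\ref{LF1}~c) applied to $\tilde v\in Y_{k+2}(r)$, which gives $\|\tilde v\|_{k+2,r}\lesssim\bigl(\fint_{|x|>r}|\nabla\tilde v|^2\bigr)^{1/2}$. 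With these substitutions, your proof is complete.
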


With the next statements we outline the main ingredients needed to prove Propositions \ref{Pg} and \ref{Pd} and Propositions \ref{Pdual} and \ref{Pbidual}.
We start with Lemma \ref{Lg}, on which Proposition~\ref{Pg} mainly relies, and Lemma \ref{Ld}, which together with  Proposition \ref{Pdual} is the main ingredient for \BR6Proposition~\ref{Pd}: \ER

\begin{lemma}\label{Lg}
Consider $k\ge 1$ and $\beta\in(k,k+1)$ (where $k + 1$ denotes the successor of $k$ in the sequence
(\ref{F1bis})) and the radius $r\ge r_*$. We are
given a vector field $g$ such that
\begin{align}\label{Lg1}
\bigg(\fint_{|x| < R}|g|^2\bigg)^\frac{1}{2}\le\bigg(\frac{R}{r}\bigg)^{\beta-1}\quad\mbox{for all}\;R\ge r.
\end{align}
Then there exists a function $u$ with
\begin{align}\label{Lg6}
-\nabla\cdot a\nabla u=\nabla\cdot g
\end{align}
and such that
\begin{align}\label{Pg5bis}
\bigg(\fint_{|x| < R}|\nabla u|^2\bigg)^\frac{1}{2}\lesssim\bigg(\frac{R}{r}\bigg)^{\beta-1}\quad\mbox{for all}\;R\ge r.
\end{align}
\end{lemma}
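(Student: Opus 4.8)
The plan is to build $u$ as a convergent superposition $u=\sum_{n\ge 0}\tilde u_n$ of solutions to localized versions of \eqref{Lg6}, each corrected by an $a$-harmonic function so that the sum has the prescribed growth.

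First I would decompose $g=\sum_{n\ge 0}g_n$ with $g_0:=g\,\mathbf{1}_{\{|x|<2r\}}$ and $g_n:=g\,\mathbf{1}_{\{2^n r\le|x|<2^{n+1}r\}}$ for $n\ge 1$, and set $\rho_n:=2^n r$; then $\supp g_n\subset\{\rho_n\le|x|<2\rho_n\}$ for $n\ge1$, and by \eqref{Lg1} and the volume bound \eqref{vol} one has $\int|g_n|^2\lesssim\rho_n^{d}(\rho_n/r)^{2(\beta-1)}$. For each $n$, let $u_n$ solve $-\nabla\cdot a\nabla u_n=\nabla\cdot g_n$ with $\int|\nabla u_n|^2\lesssim\int|g_n|^2$; in the present abstract setting, where a global variational solution need not be immediately available, one first solves the Dirichlet problem in the balls $\{|x|<\rho\}$ (testing with the solution and using ellipticity, resp.\ \eqref{weakell} for systems, gives the energy bound uniformly in $\rho$) and passes to a weak $\dot H^1_{\mathrm{loc}}$-limit as $\rho\to\infty$. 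In particular $\big(\fint_{|x|<R}|\nabla u_n|^2\big)^{1/2}\lesssim(\rho_n/R)^{d/2}(\rho_n/r)^{\beta-1}$ for every $R$, so on the scales $R\ge\rho_n$ the piece $u_n$ already decays faster than the target rate; the trouble is on scales $R\le\rho_n$, where (for $n\ge1$) $u_n$ is $a$-harmonic in $\{|x|<\rho_n\}$ and its gradient is essentially of constant size $(\rho_n/r)^{\beta-1}$ down to the origin, so that $\sum_n u_n$ would diverge.

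To cure this I would subtract from each $u_n$ its ``order-$k$ part at the origin'': running the standard Campanato iteration of the excess-decay estimate \eqref{F1} for the space $X_k$ across the dyadic scales $r\le R\le\rho_n$, and summing the telescoping corrections with the help of the homogeneity \eqref{F2} of elements of $X_k$, one produces a single $v_n\in X_k$ with
\[
\Big(\fint_{|x|<R}|\nabla(u_n-v_n)|^2\Big)^{1/2}\;\lesssim\;\Big(\frac{R}{\rho_n}\Big)^{(k+1)-1}\Big(\frac{\rho_n}{r}\Big)^{\beta-1}\qquad\text{for }r\le R\le\rho_n,
\]
where $(k+1)$ denotes the successor of $k$ in \eqref{F1bis}; crucially $\beta<(k+1)$ by hypothesis, so this decay exponent strictly beats $\beta-1$. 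Since $v_n$ is $a$-harmonic, $\tilde u_n:=u_n-v_n$ still solves $-\nabla\cdot a\nabla\tilde u_n=\nabla\cdot g_n$; and from the triangle inequality $\big(\fint_{|x|<\rho_n}|\nabla v_n|^2\big)^{1/2}\lesssim(\rho_n/r)^{\beta-1}$, whence by \eqref{F2} $\big(\fint_{|x|<R}|\nabla v_n|^2\big)^{1/2}\lesssim(R/\rho_n)^{k-1}(\rho_n/r)^{\beta-1}$ for $R\ge\rho_n$. (For $n=0$ one may take $v_0:=0$.)

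Finally I would set $u:=\sum_{n\ge0}\tilde u_n$ and, for fixed $R\ge r$, split the sum at $\rho_n=R$. For $\rho_n>R$ the function $\tilde u_n$ is $a$-harmonic in $\{|x|<\rho_n\}\supset\{|x|<R\}$, so the displayed bound gives $\big(\fint_{|x|<R}|\nabla\tilde u_n|^2\big)^{1/2}\lesssim(R/\rho_n)^{(k+1)-1}(\rho_n/r)^{\beta-1}$; for $\rho_n\le R$ the bounds on $u_n$ and on $v_n$ give $\big(\fint_{|x|<R}|\nabla\tilde u_n|^2\big)^{1/2}\lesssim\big((\rho_n/R)^{d/2}+(R/\rho_n)^{k-1}\big)(\rho_n/r)^{\beta-1}$. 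Dividing by $(R/r)^{\beta-1}$ and summing over $n$ produces three geometric series, with ratios controlled by the exponents $\beta-(k+1)<0$, $-(d/2+\beta-1)<0$ and $-(\beta-k)<0$ (the last using $\beta>k$); each is therefore bounded by a constant independent of $R$. This simultaneously yields convergence of the series in $\dot H^1_{\mathrm{loc}}$ — so that $-\nabla\cdot a\nabla u=\nabla\cdot g$ — and the asserted bound $\big(\fint_{|x|<R}|\nabla u|^2\big)^{1/2}\lesssim(R/r)^{\beta-1}$. The hard part is the middle step: producing a single corrector $v_n\in X_k$ that is good on all scales $r\le R\le\rho_n$ at once, i.e.\ turning the one-scale estimate \eqref{F1} together with \eqref{F2} into a bona fide Campanato iteration; once that is done, the rest is bookkeeping of geometric sums.
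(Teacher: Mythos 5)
Your proposal is correct and follows the paper's route in every structural respect: dyadic decomposition of $g$ into annular pieces, Lax--Milgram solution on each piece, subtraction of an $a$-harmonic approximant constructed at the origin, and a three-regime summation (far-field, intermediate, near-field). The step you single out as hard --- building a single $v_n\in X_k$ good uniformly over all scales $r\le R\le\rho_n$ by telescoping the one-scale estimate~\eqref{F1} with the help of~\eqref{F2} --- is exactly the content of Lemma~\ref{Lup}; the paper invokes that lemma as a black box where you re-derive it, but the argument is the same. There is, however, an index discrepancy worth flagging. You subtract $v_n\in X_k$, which yields far-field ratios $(\rho_n/R)^{\beta-(k+1)}$ (summable since $\beta<k+1$) and near-field ratios $(\rho_n/R)^{\beta-k}$ (summable since $\beta>k$) --- exactly calibrated to the hypothesis $\beta\in(k,k+1)$ as stated in the lemma. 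The paper instead applies Lemma~\ref{Lup} with $k-1$ in the role of $k$, so $\tilde u_\rho\in X_{k-1}$, producing far-field ratios $(\rho/R)^{\beta-k}$ and near-field ratios $(\rho/R)^{\beta-(k-1)}$; the paper's own convergence remarks (``thanks to $\beta<k$'', ``thanks to $\beta>(k-1)$'') therefore require $\beta\in(k-1,k)$, which is the range fed into Lemma~\ref{Lg} in the proof of Proposition~\ref{Pg} but \emph{not} the range announced in the lemma's own statement. In short, the paper's proof and statement disagree by one step in the index sequence~\eqref{F1bis}, and your version of the bookkeeping is the one consistent with the lemma as stated. Modulo this labelling, the two arguments are identical, and your proof is correct.
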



\begin{lemma}\label{Ld}
Consider $k\ge 1$ and $\beta\in(k-1,k)$ and the radius $r\ge r_*$. We are
given a vector field $g$ such that
\begin{align}\label{Ld1}
\bigg(\fint_{|x|< r}|g|^2\bigg)^\frac{1}{2}\le 1,\quad
\bigg(\fint_{|x| > R}|g|^2\bigg)^\frac{1}{2}\le\bigg(\frac{r}{R}\bigg)^{\beta+d-1}\quad\mbox{for all}\;R\ge r
\end{align}
and the moment condition
\begin{align}\label{Ld4}
\int \nabla\tilde u\cdot g=0\quad\textrm{for all }\;\tilde u\in X_k.
\end{align}
Then there exists a function $u$ with
\begin{align}\label{Ld2}
-\nabla\cdot a\nabla u=\nabla\cdot g
\end{align}
and such that
\begin{align}\label{Ld12}
\bigg(\fint_{|x|< r}|\nabla u|^2\bigg)^\frac{1}{2}\lesssim 1,\quad
\bigg(\fint_{|x| > R}|\nabla u|^2\bigg)^\frac{1}{2}\lesssim\bigg(\frac{r}{R}\bigg)^{\beta+d-1}\quad\mbox{for all}\;R\ge r.
\end{align}
\end{lemma}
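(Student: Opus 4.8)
The plan is to produce $u$ by the Lax--Milgram lemma, to read off the near-field bound from the global energy estimate, and then to upgrade the crude decay of $u$ to the sharp rate $\beta+d-1$ by a duality argument in which the moment condition~\eqref{Ld4} is played against the higher-order regularity estimate~\eqref{F1} for the dual solution. First, the decay assumed in~\eqref{Ld1} together with the volume bound~\eqref{vol} gives $\int_{\R^d}|g|^2\lesssim r^d$, so $g\in L^2(\R^d)$ and the Lax--Milgram lemma produces $u$ solving~\eqref{Ld2} with $\int_{\R^d}|\nabla u|^2\lesssim\int_{\R^d}|g|^2\lesssim r^d$. Dividing by $|\{|x|<r\}|\gtrsim r^d$ yields the first bound in~\eqref{Ld12}, and the same global bound already covers the decay estimate in~\eqref{Ld12} for $r\le R\le 2r$. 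It remains to prove $\fint_{|x|>R}|\nabla u|^2\lesssim(r/R)^{2(\beta+d-1)}$ for $R\ge 2r$.

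\emph{Reduction by duality.} Fix $R\ge 2r$. For a vector field $h$ supported in $\{R<|x|<2R\}$ with $\int|h|^2\le 1$, let $w$ be the Lax--Milgram solution of $-\nabla\cdot a^*\nabla w=\nabla\cdot h$; testing each of the two equations against the other solution (legitimate since both gradients lie in $L^2$, and using $a=a^*$ here) gives the identity $\int\nabla u\cdot h=\int\nabla w\cdot g$. Taking the supremum over such $h$ identifies $\bigl(\int_{R<|x|<2R}|\nabla u|^2\bigr)^{1/2}$ with $\sup_h|\int\nabla w\cdot g|$, so it is enough to show $|\int\nabla w\cdot g|\lesssim R^{d/2}(r/R)^{\beta+d-1}$ for all such $w$; summing this over the dyadic annuli $\{2^jR<|x|<2^{j+1}R\}$ then recovers the remaining bound in~\eqref{Ld12} (the geometric series converges because $2\beta+d>2$). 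Here $w$ is $a^*$-harmonic in the large ball $\{|x|<R\}$, has $\int|\nabla w|^2\lesssim 1$ (so $\fint_{|x|<R}|\nabla w|^2\lesssim R^{-d}$ by~\eqref{vol}), and, being the finite-energy solution of an equation with compactly supported right-hand side, it decays at least at the $Y_0$-rate, $\fint_{|x|>\rho}|\nabla w|^2\lesssim(R/\rho)^{2(d-1)}R^{-d}$ for $\rho\ge 2R$.

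\emph{Estimating $\int\nabla w\cdot g$.} Split $g=\sum_n g_n$ dyadically, with $g_n$ supported near scale $\rho_n:=2^nr$. On the outer scales $\rho_n\gtrsim R$ one pairs $g_n$ with $w$ directly: the decay of $g$ at rate $\beta+d-1$ and of $w$ (above) produce a geometric series which, since $d>2$ and $\beta+d>2$, sums to $\lesssim R^{d/2}(r/R)^{\beta+d-1}$. On the inner scales $r_*\lesssim\rho_n\lesssim R$, where $w$ has not yet begun to decay, one uses the moments: applying~\eqref{F1} to the $a^*$-harmonic function $w$ in $\{|x|<R\}$ and iterating (extracting homogeneous $a^*$-harmonic components of $w$ one order at a time) produces a decomposition $w=v+\tilde w$, where $v$ is a sum of homogeneous components of order $\le k$ and $\fint_{|x|<\rho}|\nabla\tilde w|^2\lesssim(\rho/R)^{2k}\fint_{|x|<R}|\nabla w|^2$ for $r_*\le\rho\le R$. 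Each homogeneous component of $v$ lies in $X_k$ (recall $a=a^*$), so by~\eqref{Ld4} its pairing with $g$ vanishes; applied component by component at the appropriate scale this lets one replace $\nabla w$ by $\nabla\tilde w$ on $\{|x|\lesssim R\}$. Pairing $\nabla\tilde w$, which decays at rate $k$ on gradients towards small scales, with $g$, which decays at rate $\beta+d-1$, gives a series whose exponent in $\rho_n$ equals $k+1-\beta>0$; it is therefore dominated by the top scale $\rho_n\sim R$ and is again $\lesssim R^{d/2}(r/R)^{\beta+d-1}$, while the innermost ball $\{|x|<r_*\}$ and the range $[r_*,r]$ contribute a strictly lower-order term (a gain $(r/R)^{k+1-\beta}$). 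Collecting the outer and inner contributions closes the bound, and with it the lemma.

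\emph{Main obstacle.} The delicate point is exactly reconciling the single global constraint~\eqref{Ld4} with the multiscale structure of $g$. Because the positive-order homogeneous pieces of $v$ \emph{grow} (at rate $-1+(\mathrm{order})$ on gradients, cf.~\eqref{F2}) whereas $g$ decays, one may \emph{not} simply subtract one element of $X_k$ on all of $\R^d$: the leftover $\int_{|x|>R}\nabla v\cdot g$ is only conditionally convergent, and split into dyadic shells it carries the \emph{positive} scaling exponent $k-\beta$, so such a naive split diverges. The remedy is the component-wise peeling above — use the vanishing $g$-moment of each homogeneous piece of $v$ at the scale where that piece is comparable to, and never larger than, $g$ — and it is precisely here that the strict inclusion $\beta\in(k-1,k)$ enters, keeping all the relevant geometric series off their borderline exponents. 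I expect the bulk of the technical work to be (i) constructing the component-wise decomposition of $w$ with control on all of $[r_*,R]$ simultaneously (a Campanato-type telescoping made quantitative through~\eqref{F1} and~\eqref{F2}), and (ii) making precise sense of, and estimating, the top-order ($\mathrm{order}=k$) moment pairings in~\eqref{Ld4}, which are only conditionally convergent and must be read in the limiting sense $\lim_{R'\to\infty}\int_{|x|<R'}$.
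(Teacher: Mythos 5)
Your overall architecture matches the paper's: produce $u$ by Lax--Milgram with the energy estimate giving the near-field bound, pass to a duality argument, approximate the dual solution by a \emph{single scale-independent} element of $X_k$ via a Campanato iteration, and then play the moment condition~\eqref{Ld4} against it. That scale-independent approximant is exactly the content of Lemma~\ref{Lup} in the paper; your phrase ``Campanato-type telescoping made quantitative through~\eqref{F1} and~\eqref{F2}'' names it, though you should realize that in the general (abstract) setting the elements of $X_k$ need not decompose into ``homogeneous pieces'' at all -- one simply invokes Lemma~\ref{Lup} to produce one $w\in X_k$ that is good simultaneously at every $\rho\in[r_*,R]$. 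Your choice of $h$ supported on the annulus $\{R<|x|<2R\}$ (then summing dyadically) rather than on $\{|x|>R\}$ is a harmless variant.

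The place where your proposal actually goes astray is the ``Main obstacle'' paragraph, and it stems from the stated range of $\beta$. With $\beta\in(k-1,k)$, as written, the moment integral $\int\nabla\tilde u\cdot g$ with $\tilde u\in X_k$ would \emph{not} even converge absolutely: on the dyadic annulus at scale $\rho$ one picks up the factor $\rho^d\cdot\rho^{k-1}\cdot\rho^{-(\beta+d-1)}=\rho^{k-\beta}$, which diverges when $\beta<k$. The consistent range, matching the sentence ``Since $k<\beta<k+1$'' in the paper's own proof and the way the lemma is invoked in Proposition~\ref{Pd} (with $m$ in place of $k$ and $\beta\in(m,m+1)$), is $\beta\in(k,k+1)$; the statement's ``$(k-1,k)$'' is a typo. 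Once you read $\beta>k$, the far-field pairing of $\nabla v$ (for $v\in X_k$) against $g$ carries the \emph{negative} exponent $k-\beta$ over dyadic shells, the series converges absolutely, and your worry that ``such a naive split diverges'' evaporates: one subtracts the full $v\in X_k$ globally, as the paper does, and there is nothing to ``peel component by component at the appropriate scale''. Indeed the component-wise fix you propose would not close the argument under $\beta<k$ either -- the moment identity~\eqref{Ld4} is a single global, conditionally-oriented cancellation, and you cannot use it scale-by-scale on parts of an $X_k$ element -- but the point is moot: with the correct range, no such fix is needed, and your sketch otherwise lines up with the paper's proof.
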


\medskip

Both Lemmas \ref{Lg} and \ref{Ld} rely on the upgrade of (\ref{F1}) contained in the next lemma, which contains also a Liouville-type statement for the spaces $X_k$. This lemma upgrades condition \eqref{F1} by showing that a minimizer $v\in X_k$ on the left-hand side of \eqref{F1} may be chosen to be \textit{independent} from the scale $r \ge r_*$.

\begin{lemma}\label{Lup}
For any \BR5 $k\ge 1$ \ER and radius $R\ge r_*$ we have: If $u$ is an $a$-harmonic function in $\{ |x| < R \}$ then there
exists $\tilde u\in X_k$, \BR6 which is independent of $R$ \ER and such that
\begin{align}\label{eqLup01}
\biggl(\fint_{|x|< r}|\nabla(u-\tilde u)|^2\biggr)^\frac{1}{2}&\lesssim\biggl(\frac{r}{R}\biggr)^{(k+1)-1}\biggl(\fint_{|x| < R}|\nabla u|^2\biggr)^\frac{1}{2}
&&\mbox{for all}\;r\in[r_*,R],
\\ 
\label{cw05}
\BR5 \biggl(\fint_{|x|<r}|\nabla \tilde u|^2\biggl)^\frac{1}{2} \BR6 \lesssim \ER \biggl(\fint_{|x|<r}|\nabla u|^2\biggl)^\frac{1}{2}
&\lesssim\biggl(\fint_{|x|<R}|\nabla u|^2\biggl)^\frac{1}{2}
&&\mbox{for all}\;r\in[r_*,R].
\end{align}
In addition, for $ m \geq k \geq 1$ and 
\BR5 any $a$-harmonic function $u$ in the whole space
\begin{align}\label{Fischer2}
 \lim_{R \uparrow +\infty} \frac{1}{R^{-1+k}}\biggl( \fint_{|x| < R} | \nabla u|^2 \biggr)^\oh = 0 \ \ \ \Leftrightarrow u \in X_{k-1},
\end{align}
\BR5 where $k-1$ is the predecessor of $k$ in~\eqref{F1bis}. 
\end{lemma}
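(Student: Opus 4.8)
My plan is to obtain the \emph{scale-independent} approximant $\tilde u$ of Lemma~\ref{Lup} as the limit of a Cauchy sequence of the (a priori scale-dependent) approximants furnished by \eqref{F1}, and then to read off \eqref{eqLup01}, \eqref{cw05} and \eqref{Fischer2}. So first, fix $u$ that is $a$-harmonic in $\{|x|<R\}$, abbreviate $M:=\big(\fint_{|x|<R}|\nabla u|^2\big)^{1/2}$, and for the dyadic radii $\rho_j:=2^{-j}R$ lying in $[r_*,R]$ use \eqref{F1} on the pair $\rho_j\le R$ to select $v_j\in X_k$ --- unique up to an additive constant, so that $\nabla v_j$ is unique --- with $\big(\fint_{|x|<\rho_j}|\nabla(u-v_j)|^2\big)^{1/2}\lesssim(\rho_j/R)^{(k+1)-1}M$. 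The point I want to exploit is that the left-hand side of \eqref{F1} is an infimum over $X_k$ of an integral over $\{|x|<\rho_j\}$, and so does not involve $R$: for a given $u$ that is $a$-harmonic on a large enough ball, $\nabla v_j$ depends only on $u$ and $k$. Estimating $v_{j+1}-v_j\in X_k$ by the triangle inequality on $\{|x|<\rho_{j+1}\}$ (using the volume doubling \eqref{vol} to pass from $\rho_j$ to $\rho_{j+1}$) gives $\big(\fint_{|x|<\rho_{j+1}}|\nabla(v_{j+1}-v_j)|^2\big)^{1/2}\lesssim(\rho_{j+1}/R)^{(k+1)-1}M$; since $(k+1)-1>0$ for $k\ge1$, the series $\sum_j\big(\fint_{|x|<r_*}|\nabla(v_{j+1}-v_j)|^2\big)^{1/2}$ converges (here a crude volume bound is enough, as $R$ is fixed), so $\nabla v_j$ is Cauchy in the finite-dimensional space $X_k$ modulo constants and converges to some $\tilde u\in X_k$, which by the displayed observation depends only on $u$ and $k$.

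Next I would prove \eqref{eqLup01}. Given $r\in[r_*,R]$, choose $j$ with $\rho_j\sim r$ and split $u-\tilde u=(u-v_j)+(v_j-\tilde u)$. The first term is handled by \eqref{F1} together with \eqref{vol}. For the second I telescope $v_j-\tilde u=-\sum_{i\ge j}(v_{i+1}-v_i)$: each $v_{i+1}-v_i\in X_k$ has its gradient energy under control at the scale $\rho_{i+1}\le r$, and re-evaluating it at the larger scale $r$ costs a factor $(r/\rho_{i+1})^{-1+k}$ by \eqref{F2}; because the resulting exponents obey $\big((k+1)-1\big)-(-1+k)=(k+1)-k>0$, the geometric series in $i$ (dominated by the term $i=j$) sums to $\lesssim(r/R)^{(k+1)-1}M$. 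Adding the two contributions gives \eqref{eqLup01}.

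For \eqref{cw05} the right inequality $\big(\fint_{|x|<r}|\nabla u|^2\big)^{1/2}\lesssim\big(\fint_{|x|<R}|\nabla u|^2\big)^{1/2}$ is the large-scale $C^{0,1}$ (energy) estimate for $a$-harmonic functions, which follows from the excess decay \eqref{F1} by the standard Campanato iteration together with \eqref{poinc} and \eqref{vol} (the passage from the Caccioppoli power $(R/r)^{\gamma}$ to a scale-independent constant being exactly what \eqref{F1}, i.e.\ closeness at large scales to the finite-dimensional $X_k$, provides). The left inequality I obtain by running the estimate \eqref{eqLup01} with the outer radius $r$ in place of $R$, which is legitimate since $u$ is $a$-harmonic in $\{|x|<r\}$ and, by the scale-independence established above, the approximant produced is again the \emph{same} $\tilde u$; this yields $\big(\fint_{|x|<r}|\nabla(u-\tilde u)|^2\big)^{1/2}\lesssim\big(\fint_{|x|<r}|\nabla u|^2\big)^{1/2}$ and hence, by the triangle inequality, $\big(\fint_{|x|<r}|\nabla\tilde u|^2\big)^{1/2}\lesssim\big(\fint_{|x|<r}|\nabla u|^2\big)^{1/2}$. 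For the Liouville characterization \eqref{Fischer2}: the implication $\Leftarrow$ is immediate from \eqref{F2} applied with the index $k-1$, whose exponent $-1+(k-1)$ is strictly below $-1+k$, so that $R^{-(-1+k)}\big(\fint_{|x|<R}|\nabla u|^2\big)^{1/2}\to0$; for $\Rightarrow$, I apply the first two steps with the index $k-1$ to get the scale-independent $\tilde u\in X_{k-1}$ with $\big(\fint_{|x|<r}|\nabla(u-\tilde u)|^2\big)^{1/2}\lesssim(r/R)^{((k-1)+1)-1}\big(\fint_{|x|<R}|\nabla u|^2\big)^{1/2}$, note that the exponent $((k-1)+1)-1$ equals $-1+k$, freeze $r$ and let $R\uparrow\infty$ so that the right-hand side vanishes by hypothesis, whence $\nabla(u-\tilde u)\equiv0$ and $u\in X_{k-1}$.

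The main obstacle is the first step: turning the scale-by-scale approximation \eqref{F1} into a single approximant valid on all scales. The telescoping makes this work, but it hinges on \eqref{F2} --- the fact that elements of $X_k$ grow no faster than rate $k-1$ --- to control how the successive corrections $v_{i+1}-v_i$ propagate from small to large scales, and keeping the summed exponent strictly positive is precisely what pins down the sharp rate $(k+1)-1$ in \eqref{eqLup01} and, through the last step, the matching rate in the Liouville statement. The remaining ingredients (volume doubling, the energy estimate, the triangle inequality) are routine.
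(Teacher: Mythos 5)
Your strategy---turning \eqref{F1} into a single scale-independent approximant by showing that the scale-wise minimizers form a telescoping (Cauchy) family, with \eqref{F2} controlling how the increments propagate from small to large radii---is essentially the same as the paper's, which introduces the scale-$r$ minimizer $u^r$ and fixes $\tilde u := u^{r_*}$. There is, however, a slip in your setup that undermines the very $R$-independence you need: you take the dyadic radii $\rho_j = 2^{-j}R$, which depend on $R$; the resulting sequence of minimizers is in fact finite (it terminates at the smallest $\rho_j \ge r_*$), and its ``limit'' is just the terminal element, which depends on $R$ through where the dyadic sequence lands in $[r_*,2r_*)$. The observation you invoke---that the minimizer at a fixed scale $\rho$ depends only on $u$ restricted to $\{|x|<\rho\}$, not on $R$---is correct, but the scales $\rho_j$ themselves vary with $R$, so it does not deliver what you claim. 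This matters concretely in your proofs of \eqref{cw05} (where you run \eqref{eqLup01} with $R$ replaced by $r$ and assert that the \emph{same} $\tilde u$ appears) and of \eqref{Fischer2} (where you send $R\uparrow\infty$ with $\tilde u$ held fixed). The fix is easy and is exactly the paper's choice: declare $\tilde u$ to be the minimizer at the \emph{fixed} scale $r_*$, which is manifestly $R$-independent, and use your telescoping estimates to control the distance from each $v_j$ (and hence from $u$) to this $\tilde u$; with that modification your proofs of \eqref{eqLup01}, \eqref{cw05}, and \eqref{Fischer2} go through essentially unchanged. One cosmetic remark: the second inequality in \eqref{cw05} follows directly from \eqref{F1} with $k=0$ (for which $X_0=\{const\}$ and the infimum trivializes), so no Campanato iteration is required.
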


\medskip

An ingredient for Proposition \ref{Pd} as well as Propositions \ref{Pdual} and \ref{Pbidual}
is given by the following
lemma, which is a dualization of our hypothesis (\ref{F1}).

\begin{lemma}\label{LF1}
Let $k\ge 1$, $r\ge r_*$ and $v$ be $a$-harmonic in $\{ |x| > r \}$ with $(\int_{|x| > r}|\nabla v|^2)^\frac{1}{2}$ $<\infty$. 
Then the following statements are equivalent:
\begin{itemize}
\item [a)] $v\in Y_k(r)$
\item [b)] $( \tilde u,v )=0$ for all $\tilde u\in X_{k-1}$,
where $k-1$ is the predecessor of $k$ in (\ref{F1bis})
\Item [c)]
\begin{equation}\label{Lext22}
\bigg(\fint_{|x| > R}|\nabla v|^2\bigg)^\frac{1}{2}\lesssim\bigg(\frac{r}{R}\bigg)^{k+d-1}\bigg(\fint_{|x| > r}|\nabla v|^2\bigg)^\frac{1}{2}
\quad\mbox{for all}\;R\ge 2r.
\end{equation}
\end{itemize}
\end{lemma}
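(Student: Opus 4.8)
The plan is to prove the cycle c) $\Rightarrow$ a) $\Rightarrow$ b) $\Rightarrow$ c), only the last implication requiring real work. The implication c) $\Rightarrow$ a) is immediate: \eqref{Lext22} forces $(\fint_{|x|>R}|\nabla v|^2)^\frac12\lesssim(r/R)^{k+d-1}(\fint_{|x|>r}|\nabla v|^2)^\frac12$, and since $k+d-1=k+(d-2)+1$ this is precisely the decay rate defining $Y_k(r)$ (and $v$ is $a$-harmonic in $\{|x|>r\}$ by hypothesis). The implication a) $\Rightarrow$ b) is nothing but the vanishing \eqref{o02} (Corollary \ref{Lbil}): $v\in Y_k(r)\subset Y_k$ pairs to zero with every $\tilde u\in X_{k-1}$ because the index $k-1$ of $X$ lies strictly below the index $k$ of $Y$.

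For b) $\Rightarrow$ c) -- the dualization of \eqref{F1} -- I would test $v$ against far-concentrated sources. One may assume $v\to 0$ at infinity, since (as recorded below) the exterior energy decays geometrically, so $v$ has a limit, and subtracting it changes neither b) (because $(\tilde u,\mathrm{const})=0$ for $\tilde u\in X_{k-1}$, again by \eqref{o02}) nor c) (which only involves $\nabla v$). Fix $R\ge 2r$ and an arbitrary vector field $g$ supported in the annulus $\{R<|x|<2R\}$, and let $\tilde u$ be the decaying whole-space solution of $-\nabla\cdot a\nabla\tilde u=\nabla\cdot g$ with $\int|\nabla\tilde u|^2\lesssim\int|g|^2$; then $\tilde u$ is $a$-harmonic in $\{|x|<R\}$. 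The flux pairing $\langle\tilde u,v\rangle:=\int_{\{|x|=\rho\}}\nu\cdot(va\nabla\tilde u-\tilde ua\nabla v)$ is independent of $\rho\in(r,R)$ (the integrand being divergence-free there) and is evaluated at both ends. Letting $\rho\to\infty$: the divergence of the integrand collapses to $-v\,\nabla\cdot g$ (symmetry of $a$ cancels the terms $\nabla v\cdot a\nabla\tilde u$ and $\nabla\tilde u\cdot a\nabla v$) and the boundary term at infinity vanishes, so $\langle\tilde u,v\rangle=-\int_{\{R<|x|<2R\}}\nabla v\cdot g$. Localizing $\rho$ to $(r,2r)$: express the pairing as $\pm\int_{\{r<|x|<2r\}}\nabla\zeta\cdot(va\nabla\tilde u-\tilde ua\nabla v)$ for a radial cutoff with $|\nabla\zeta|\lesssim r^{-1}$, subtract constants from $v$, and replace $\tilde u$ by its $X_{k-1}$-approximant $w$ from \eqref{F1} at level $k-1$ -- legitimate since $\langle w,v\rangle=(w,v)=0$ by b), since $\langle\tilde u,\cdot\rangle$ annihilates constants ($\tilde u$ is flux-free in $\{|x|<R\}$) and $\langle\cdot,v\rangle$ annihilates constants (b) forces the flux of $a\nabla v$ through spheres to vanish) -- and use \eqref{poinc}; this gives $|\langle\tilde u,v\rangle|\lesssim r^d(r/R)^{k-1}(\fint_{r<|x|<2r}|\nabla v|^2)^\frac12(\fint_{R<|x|<2R}|g|^2)^\frac12$. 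Equating the two evaluations, optimizing over $g$ (Cauchy--Schwarz, extremal $g\parallel\nabla v$) and dividing by $|\{|x|<R\}|\sim R^d$ yields the dyadic bound $(\fint_{R<|x|<2R}|\nabla v|^2)^\frac12\lesssim(r/R)^{d+k-1}(\fint_{r<|x|<2r}|\nabla v|^2)^\frac12$; note how the volume ratio $r^d/R^d$ is exactly what promotes the interior exponent $k-1$ of \eqref{F1} to the exterior exponent $(k-1)+d=k+(d-2)+1$. Summing over dyadic scales $2^jR$ (the series converges since $d+k-1>d/2$) and comparing annular to exterior averages via \eqref{vol} delivers \eqref{Lext22}.

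The main obstacle is making the flux pairing $\langle\tilde u,v\rangle$ rigorous at both ends simultaneously. On the one side one must know the boundary term at infinity genuinely vanishes, which rests on the a priori geometric decay of $\int_{|x|>\rho}|\nabla v|^2$ and $\int_{|x|>\rho}|\nabla\tilde u|^2$; this I would obtain from the exterior Caccioppoli inequality combined with the sharp-constant (mean-value) form of \eqref{poinc} on dyadic annuli, giving $\int_{|x|>2\rho}|\nabla v|^2\le\theta\int_{|x|>\rho}|\nabla v|^2$ with $\theta<1$. On the other side one must make sure $\langle\cdot,v\rangle$ really annihilates $X_{k-1}$ and constants, so that \eqref{F1} at level $k-1$ applies with no leftover term. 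A secondary, routine point is that $v$ lives only on $\{|x|>r\}$: this is why the far-source $g$ is used (rather than a source representation of $v$ itself) and why all bulk integrals stay within $\{|x|>r\}$. Finally, the non-symmetric case needs no change beyond writing $a^*$ wherever $v$ is differentiated, the cross-term cancellation surviving through $\nabla\tilde u\cdot a\nabla v=\nabla v\cdot a^*\nabla\tilde u$.
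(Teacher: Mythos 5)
Your implications c)~$\Rightarrow$~a) and a)~$\Rightarrow$~b) are fine in substance, but for a)~$\Rightarrow$~b) you should not cite Corollary~\ref{Lbil}, whose proof in the paper is itself deduced from Lemma~\ref{LF1}~a)~$\Rightarrow$~b); appeal instead directly to the a~priori bound~\eqref{f48} (proven in Step~1 of Proposition~\ref{Pdual}) and let $R\uparrow\infty$, as the paper does. For b)~$\Rightarrow$~c), the paper reduces to a whole-space problem by invoking the extension Lemma~\ref{Lext} applied to $(v,0)$: it produces $(\bar v,\bar g)$ with $\bar g$ supported in $\{|x|<2r\}$, shows $\int\nabla\tilde u\cdot\bar g=0$ for $\tilde u\in X_{k-1}$, and then dualizes against $h$ supported in $\{|x|>R\}$ via the Lax--Milgram solution $\tilde v$ of $-\nabla\cdot a\nabla\tilde v=\nabla\cdot h$, using \eqref{F1} at level $k-1$ to approximate $\tilde v$ by $\tilde u\in X_{k-1}$ and the identity $\int h\cdot\nabla\bar v=\int\bar g\cdot\nabla\tilde v$. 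You avoid the extension entirely and work directly with the flux pairing of $v$ with a Lax--Milgram solution $\tilde u$ sourced by $g$ supported in $\{R<|x|<2R\}$, evaluating the invariant at two ends and again approximating $\tilde u$ by $w\in X_{k-1}$ via \eqref{F1}. This is a genuinely different and somewhat leaner route: it saves the auxiliary Neumann problem of Lemma~\ref{Lext}, at the price of having to make the flux pairing rigorous across two ends simultaneously, which you correctly identify as the crux. One small caveat: you actually do \emph{not} need the geometric-decay/convergence-at-infinity preprocessing of $v$, which (as stated) requires some care. The flux at infinity vanishes directly from $(1,v)=0$ (your b), $k\ge 1$), the vanishing of the flux of $a\nabla\tilde u$ through large spheres (since $a\nabla\tilde u+g$ is divergence-free with $\nabla\tilde u\in L^2$), and Poincar\'e on dyadic annuli with constants chosen per annulus; these let you subtract constants freely from both $v$ and $\tilde u$ in the smeared boundary integral, making it tend to $0$ without any a~priori rate on $\int_{|x|>\rho}|\nabla v|^2$. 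With that adjustment, and the corrected citation for a)~$\Rightarrow$~b), the argument is complete and correct.
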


\medskip

Using the previous statements we obtain:

\begin{corollary}\label{Lbil}
For $0\le k<m$ we have
\begin{align*}
( X_k,Y_m) =0.
\end{align*}
In addition, we also have for \BR6 $r \ge r_*$ and $v \in \BR6 \Y_1(r)$ \ER that
\begin{align}\label{Liouv.decay}
\lim_{R \uparrow +\infty} R^{d-1 + k}\biggl( \fint_{|x|> R} |\nabla v|^2 \biggr)^\oh =0 \Leftrightarrow v \in Y_{k+1}\BR6(r).
\end{align}

\end{corollary}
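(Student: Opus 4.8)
The plan is to reduce everything to the characterization of $Y_k(r)$ in Lemma~\ref{LF1}. The identity $(X_k,Y_m)=0$ for $0\le k<m$ is then immediate: writing an element $v\in Y_m$ as $v\in Y_m(r)$ for a suitable $r\ge r_*$, the implication a)$\Rightarrow$b) of Lemma~\ref{LF1} applied to $Y_m(r)$ says that $(\tilde u,v)=0$ for all $\tilde u\in X_{m-1}$, the space attached to the predecessor of $m$ in~\eqref{F1bis}; since $k<m$ forces $X_k\subseteq X_{m-1}$, we are done. Likewise, the direction $\Leftarrow$ of~\eqref{Liouv.decay} is elementary: if $v\in Y_{k+1}(r)$, then the implication a)$\Rightarrow$c) of Lemma~\ref{LF1} applied to $Y_{k+1}(r)$ gives $(\fint_{|x|>R}|\nabla v|^2)^{\frac12}\lesssim(r/R)^{(k+1)+(d-1)}(\fint_{|x|>r}|\nabla v|^2)^{\frac12}$ for $R\ge 2r$, so that $R^{(d-1)+k}(\fint_{|x|>R}|\nabla v|^2)^{\frac12}\lesssim R^{-1}r^{k+d}(\fint_{|x|>r}|\nabla v|^2)^{\frac12}\to 0$ as $R\uparrow\infty$.

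The content of the statement is the direction $\Rightarrow$. By the equivalence a)$\Leftrightarrow$b) of Lemma~\ref{LF1} applied to $Y_{k+1}(r)$, it suffices to show that the invariant $(\tilde u,v)$ vanishes for every $\tilde u\in X_k$. Fix such a $\tilde u$; since $\tilde u$ is $a$-harmonic on $\mathbb{R}^d$ and $v$ is $a^*$-harmonic on $\{|x|>r\}$, the field $\xi:=v\,a\nabla\tilde u-\tilde u\,a^*\nabla v$ is divergence-free there, so $(\tilde u,v)=-\int\nabla\eta_R\cdot\xi$ for any radial cut-off $\eta_R$ equal to $1$ on $\{|x|<R\}$, supported in $\{|x|<2R\}$, with $|\nabla\eta_R|\lesssim R^{-1}$ and $R\ge r$. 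Two preliminary reductions: (i) the annulus averages $\bar v_{A_R}$ on $A_R:=\{R<|x|<2R\}$ converge as $R\uparrow\infty$ to some $v_\infty$ with $|\bar v_{A_R}-v_\infty|=o(R^{-(d-2)-k})$ --- obtained by telescoping the Poincar\'e inequality~\eqref{poinc} over dyadic annuli, the geometric series being summable precisely because $d>2$, hence $(d-2)+k>0$ --- and replacing $v$ by $v-v_\infty$ alters neither $(\tilde u,v)$ (as $\tilde u$ is $a$-harmonic on $\mathbb{R}^d$) nor the $Y_1(r)$-class of $v$; and (ii) since $v-v_\infty\in Y_1(r)$, one has $(c,v-v_\infty)=0$ for every constant $c$ by a)$\Rightarrow$b) of Lemma~\ref{LF1} applied to $Y_1(r)$, so at each scale $R$ we may subtract from $\tilde u$ its average $\bar{\tilde u}_{A_R}$ over $A_R$ without changing $(\tilde u,v)$. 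Using these, together with Cauchy--Schwarz on $A_R=\supp\nabla\eta_R$, the pointwise bounds $|a\xi|\le|\xi|$, $|a^*\xi|\le|\xi|$ from~\eqref{i01}, and the Poincar\'e inequality $\|\tilde u-\bar{\tilde u}_{A_R}\|_{L^2(A_R)}\lesssim R\|\nabla\tilde u\|_{L^2(A_R)}$, one arrives at
\begin{align*}
|(\tilde u,v)|\;\lesssim\;\tfrac1R\,\|\nabla\tilde u\|_{L^2(A_R)}\bigl(\|v-v_\infty\|_{L^2(A_R)}+R\,\|\nabla v\|_{L^2(A_R)}\bigr).
\end{align*}

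It then remains to insert the growth and decay bounds. From~\eqref{F2} and the volume bound~\eqref{vol} one gets $\|\nabla\tilde u\|_{L^2(A_R)}\lesssim R^{d/2+k-1}$; from~\eqref{vol} and the hypothesis one gets $\|\nabla v\|_{L^2(A_R)}\lesssim R^{d/2}(\fint_{|x|>R}|\nabla v|^2)^{\frac12}=o(R^{d/2-(d-1)-k})$; and combining this with (i), Poincar\'e and~\eqref{vol} yields $\|v-v_\infty\|_{L^2(A_R)}=o(R^{d/2-(d-2)-k})$. Substituting into the displayed bound, the powers of $R$ cancel exactly and the right-hand side is $o(1)$ as $R\uparrow\infty$; since $(\tilde u,v)$ is independent of $R$, it vanishes. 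As $\tilde u\in X_k$ was arbitrary, Lemma~\ref{LF1} gives $v\in Y_{k+1}(r)$, which is the asserted equivalence.

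The main obstacle is precisely this last direction. The hypothesis controls only $\nabla v$ in an averaged bulk sense, whereas the boundary-layer integral representing $(\tilde u,v)$ also involves $v$ itself paired with the growing gradient $\nabla\tilde u$ (of size $\sim R^{k-1}$ in $L^2$-average). Converting the gradient decay into decay of $v-v_\infty$ at the sharp rate $o(R^{-(d-2)-k})$ --- which is exactly what makes the two contributions in the displayed bound balance to $o(1)$ --- requires the dyadic Poincar\'e telescoping, and this is where the dimensional restriction $d>2$ is genuinely needed.
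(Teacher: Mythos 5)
Your proof is correct and follows essentially the same strategy as the paper: reduce both assertions to Lemma~\ref{LF1}, and for the non-trivial direction of~\eqref{Liouv.decay}, show $(\tilde u,v)=0$ for every $\tilde u\in X_k$ by estimating the cut-off flux pairing on an annulus at scale $R$ and sending $R\uparrow\infty$. The only structural difference is that you re-derive the key annulus bound from scratch, whereas the paper simply invokes estimate~\eqref{f48} from Step~1 of the proof of Proposition~\ref{Pdual}, namely
\begin{equation*}
|(\tilde u,v)|\lesssim\biggl(\int_{|x|<2R}|\nabla\tilde u|^2\biggr)^{\frac12}\biggl(\int_{|x|>R}|\nabla v|^2\biggr)^{\frac12},
\end{equation*}
which together with~\eqref{F2},~\eqref{vol}, and the hypothesis gives $(\tilde u,v)=o(1)$ in one line. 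Within your re-derivation, the construction of the limit $v_\infty$ by telescoping Poincar\'e over dyadic annuli is unnecessary: you can instead subtract from $v$ the $R$-dependent annulus average $c_R:=\fint_{R<|x|<2R}v$ at each scale (which changes nothing, since $\tilde u$ is $a$-harmonic on $\mathbb{R}^d$ and hence $\int\nabla\eta_R\cdot a\nabla\tilde u=0$), and~\eqref{poinc} directly controls $\|v-c_R\|_{L^2(\{R<|x|<2R\})}$ by $R$ times the local gradient norm. Both terms of your displayed estimate then collapse to $(\int_{R<|x|<2R}|\nabla\tilde u|^2)^{1/2}(\int_{R<|x|<2R}|\nabla v|^2)^{1/2}$, which is already $o(1)$; in particular, your closing remark that $d>2$ is ``genuinely needed'' to make the telescoping geometric series converge describes an artifact of the $v_\infty$ detour rather than of the corollary itself.
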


\medskip

Before we proceed with the proofs of the previous statements, we also give two last results: First, an elementary extension lemma, which relies on (\ref{poinc}), and which is a frequent tool, for
instance, in the proofs of Lemma \ref{Lbil} and of Proposition \ref{Pd}. Second, we present another lemma, which relies on the upgrade of the assumptions \eqref{F1} and \eqref{F2}  given by Lemma \ref{Lup}, and which will be needed in the proof of the main theorem (see also Proposition \ref{localization}).

\begin{lemma}\label{Lext}
Let a function $u$ and a vector field $g$ be such that
\begin{align}\label{Lext1}
-\nabla\cdot a\nabla u=\nabla\cdot g\quad\mbox{in}\;\{ |x| > r \}
\end{align}
and
%
\begin{align}
\mbox{the total flux of} &\mbox{ the divergence-free vector field}\;\;a\nabla u+g\;\;\mbox{vanishes}, i.e
.\notag\\
&\int_{|x| = r} \nu \cdot ( a\nabla u + g) = 0, \label{Lext27}\\
\mbox{OR}\quad&\sup_{R\ge r}\int_{R < |x| < 2R}|a\nabla u+g|<\infty\label{Lext2}
\end{align}
for some radius $r\ge r_*$. Then there exists a function $\bar u$ and a vector field $\bar g$ be such that
\begin{align}\label{Pd12}
-\nabla\cdot a\nabla \bar u=\nabla\cdot \bar g 
\end{align}
and which are extensions in the sense of
\begin{align}\label{Pd3}
(\nabla\bar u,\bar g)=(\nabla u,g)\quad\mbox{in}\;\{ |x| > 2r\}.
\end{align}
This extension is bounded in the sense of
\begin{align}\label{Pd1}
\bigg(\int_{ |x| < 2r}|\nabla\bar u|^2+|\bar g|^2\bigg)^\frac{1}{2}\lesssim
\bigg(\int_{r < |x| < 2r}|\nabla u|^2+|g|^2\bigg)^\frac{1}{2}.
\end{align}
\end{lemma}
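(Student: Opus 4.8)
The strategy is the classical cut-off construction: multiply $u$ by a function $\eta$ that vanishes near $\{|x|\le r\}$ and equals one on $\{|x|\ge 2r\}$, and absorb the commutator terms it produces into the new right-hand side. First I would normalize: by the Poincar\'e inequality \eqref{poinc} on the dyadic annulus $\{r<|x|<2r\}$ (admissible since $r\ge r_*$) we may subtract from $u$ its mean $c$ over that annulus, so that $\int_{r<|x|<2r}|u-c|^2\lesssim r^2\int_{r<|x|<2r}|\nabla u|^2$; replacing $u$ by $u-c$ changes neither $\nabla u$, nor \eqref{Lext1}, nor the hypotheses \eqref{Lext27}/\eqref{Lext2}. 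Fix a Lipschitz cut-off $\eta$ with $\eta\equiv 0$ on $\{|x|\le\tfrac54 r\}$, $\eta\equiv 1$ on $\{|x|\ge\tfrac74 r\}$ and $|\nabla\eta|\lesssim r^{-1}$ (such $\eta$, e.g.\ a function of the distance to the origin, exists at scales $\ge r_*$), and set $\bar u:=\eta u$, extended by zero; note $\nabla\bar u=\nabla u$ on $\{|x|\ge 2r\}$. Using $-\nabla\cdot a\nabla u=\nabla\cdot g$ on $\{|x|>r\}$ one computes the commutator identity $-\nabla\cdot a\nabla\bar u=\nabla\cdot(\eta g-u\,a\nabla\eta)+f$, where $f:=-\nabla\eta\cdot(a\nabla u+g)$ is a scalar function supported in the annulus $A:=\{\tfrac54 r<|x|<\tfrac74 r\}$.

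The crux is to write $f$ itself in divergence form, $f=\nabla\cdot h$, with $h$ supported in $A$ and $\|h\|_{L^2}\lesssim r\|f\|_{L^2}$; then $\bar g:=\eta g-u\,a\nabla\eta+h$ does the job. Such an $h$ exists by a bounded inverse-divergence (Bogov\-ski\u\i) operator on the fixed annular geometry $A$, which at scale $r\ge r_*$ is bi-Lipschitz to a Euclidean annulus, provided the solvability condition $\int_A f=0$ holds; the factor $r$ is just the scaling of this operator. To verify $\int_A f=0$, note that $q:=a\nabla u+g$ is divergence-free in $\{|x|>r\}$, hence its normal flux $F:=\int_{|x|=\rho}\nu\cdot q$ (a well-defined $H^{-1/2}$-pairing) is independent of $\rho>r$. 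Under \eqref{Lext27} we have $F=0$ outright; under \eqref{Lext2}, the co-area inequality gives $\int_{R<|x|<2R}|q|\gtrsim\int_R^{2R}\bigl|\int_{|x|=\rho}\nu\cdot q\bigr|\,d\rho=R|F|$, so $F$ must vanish as well. Since $\nabla\cdot(\eta q)=\nabla\eta\cdot q=-f$ in $\{|x|>r\}$, integrating over $A$ and using $\eta=0$ on the inner and $\eta=1$ on the outer component of $\partial A$ yields $\int_A f=-\int_{|x|=\frac74 r}\nu\cdot q=-F=0$. I expect this paragraph — unifying the two alternative hypotheses through the flux argument, and invoking an inverse-divergence operator on $A$ with constant controlled purely by the structural assumptions — to be the main obstacle, since it is the only place where one needs the region at scale $\ge r_*$ to be a sufficiently regular (say John) domain, which is implicit in "behaving like $\mathbb{R}^d$".

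It remains to collect the conclusions. Equation \eqref{Pd12} holds by construction: $-\nabla\cdot a\nabla\bar u=\nabla\cdot(\eta g-u\,a\nabla\eta)+\nabla\cdot h=\nabla\cdot\bar g$. On $\{|x|\ge 2r\}$ one has $\eta\equiv1$, $\nabla\eta\equiv0$ and $h\equiv0$ (as $A\subset\{|x|<2r\}$), so $\nabla\bar u=\nabla u$ and $\bar g=g$ there, which is \eqref{Pd3}. For \eqref{Pd1}, expand $\nabla\bar u=\eta\nabla u+u\nabla\eta$ to get $\int_{|x|<2r}|\nabla\bar u|^2\lesssim\int_{r<|x|<2r}|\nabla u|^2+r^{-2}\int_A|u|^2\lesssim\int_{r<|x|<2r}|\nabla u|^2$ by the mean-zero reduction, and likewise $\int_{|x|<2r}|\bar g|^2\lesssim\int_{r<|x|<2r}|g|^2+r^{-2}\int_A|u|^2+\|h\|_{L^2}^2$, where $\|h\|_{L^2}^2\lesssim r^2\|f\|_{L^2}^2\lesssim\int_A(|\nabla u|^2+|g|^2)$ using $|a\nabla u|\le|\nabla u|$ from \eqref{i01} and $|\nabla\eta|\lesssim r^{-1}$; all terms are $\lesssim\int_{r<|x|<2r}(|\nabla u|^2+|g|^2)$, which is \eqref{Pd1}.
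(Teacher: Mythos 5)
Your proof follows essentially the same strategy as the paper's: truncate $u$ by a cut-off supported away from the origin, absorb the commutator terms $\eta g - u\,a\nabla\eta$ into the new right-hand side, and write the residual scalar term $f:=-\nabla\eta\cdot(a\nabla u+g)$ (supported in an annulus, of vanishing average) as a divergence. Your verification that the average of $f$ vanishes, via the constancy of the flux $F$ of the divergence-free field $a\nabla u+g$ and, under~\eqref{Lext2}, the estimate $R|F|\lesssim 1$ followed by $R\uparrow\infty$, is the same as the paper's. The single material difference is in the inverse-divergence step: the paper solves a Neumann problem $-\nabla\cdot a\nabla w=f$, $\nu\cdot a\nabla w=0$, on the annulus and takes $\tilde g=-a\nabla w$; the required bound $\|\nabla w\|_{L^2}\lesssim r\|f\|_{L^2}$ comes from testing the equation with $w-\bar c$ and invoking the Poincar\'e inequality~\eqref{poinc}. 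You instead appeal to a Bogovski\u\i-type right inverse of the divergence. These are functionally equivalent in $\mathbb{R}^d$, but the paper's choice is deliberate: Section~\ref{abstract.r} is stated on a general manifold under only the structural hypotheses~\eqref{vol} and~\eqref{poinc}, and the Neumann/Poincar\'e construction uses precisely these. A Bogovski\u\i estimate needs stronger geometric regularity of the annulus (e.g. a John-domain condition) that is not among the stated assumptions --- a point you yourself flag as the ``main obstacle''. If you replace the Bogovski\u\i argument with the Neumann construction, the proof matches the paper's and closes this gap under the given hypotheses.
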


\begin{lemma}\label{propositionlocalize}
Let $R \geq r_*$ and let us consider a function $w$ and a vector field $h$ solving
\begin{align}\label{ld.1b}
- \nabla \cdot a \nabla w = \nabla \cdot h \quad \text{ in $\{ |x| < R \}$},
\end{align}
and such that for some $\alpha > 0$
\begin{align}\label{properties}
\biggl( \fint_{|x|< R}|\nabla w|^2 \biggr)^\oh \leq 1, \quad  \biggl( \fint_{|x| < r} |h|^2 \biggr)^\oh \leq \biggl( \frac{r}{R} \biggr)^\alpha \quad \textrm{for } r_* \leq r \leq R.
\end{align}
Then we have 
\begin{align}\label{ld.4loc}
\biggl( \fint_{|x| < r_*} |\nabla w|^2 \biggr)^{\frac 1 2} \le C(\alpha).
\end{align}
\end{lemma}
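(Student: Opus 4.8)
The plan is to decompose the inhomogeneity $h$ into dyadic annular pieces centred at the origin, solve the equation for each piece separately with homogeneous boundary data on $\{|x|<R\}$, and then exploit that the piece carried by the annulus at scale $\rho$ is $a$-harmonic inside the concentric ball of radius $\sim\rho$; on that ball the large-scale Lipschitz estimate transfers the energy bound at scale $\rho$ down to scale $r_*$ with a constant that does not depend on the scales. This Lipschitz estimate is exactly~\eqref{F1} specialised to $k=0$, where $X_0$ consists of constants and the exponent $(k{+}1)-1$ vanishes, equivalently the second inequality in~\eqref{cw05}. The sum over the annuli will converge geometrically precisely because $\alpha>0$.

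In detail, I would fix $\theta=\tfrac12$, set $\rho_n:=\theta^nR$, let $N$ be the largest integer with $\rho_N\ge r_*$ (so $r_*\le\rho_N<\theta^{-1}r_*$), and write $h=\sum_{n=0}^N h_n$ on $\{|x|<R\}$ with $h_n:=h\,\mathbf{1}_{\{\rho_{n+1}\le|x|<\rho_n\}}$ for $n<N$ and $h_N:=h\,\mathbf{1}_{\{|x|<\rho_N\}}$. For each $n$ let $w_n\in H^1_0(\{|x|<R\})$ solve $-\nabla\cdot a\nabla w_n=\nabla\cdot h_n$ by Lax--Milgram; testing with $w_n$, then invoking~\eqref{vol} and the second bound in~\eqref{properties} at the admissible scale $\rho_n\in[r_*,R]$, yields
\begin{equation}\label{lp:enest}
\Big(\fint_{|x|<R}|\nabla w_n|^2\Big)^{1/2}\lesssim(\rho_n/R)^{d/2}\Big(\fint_{|x|<\rho_n}|h|^2\Big)^{1/2}\lesssim\theta^{n(d/2+\alpha)}.
\end{equation}
Setting $w_{\mathrm h}:=w-\sum_{n=0}^Nw_n$, which is $a$-harmonic in $\{|x|<R\}$ since $\sum_n\nabla\cdot h_n=\nabla\cdot h$ there, reduces the task to bounding $(\fint_{|x|<r_*}|\nabla w_{\mathrm h}|^2)^{1/2}$ and each $(\fint_{|x|<r_*}|\nabla w_n|^2)^{1/2}$.

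For $w_{\mathrm h}$, and for every $w_n$ with $n<N$ --- which is $a$-harmonic in $\{|x|<\rho_{n+1}\}$ with $\rho_{n+1}\ge r_*$ --- the large-scale Lipschitz estimate bounds the average over $\{|x|<r_*\}$ by the average over $\{|x|<\rho_{n+1}\}$ (taking $\rho_0:=R$ in the case of $w_{\mathrm h}$); combining this with~\eqref{vol},~\eqref{lp:enest} and the first bound in~\eqref{properties} gives $(\fint_{|x|<r_*}|\nabla w_{\mathrm h}|^2)^{1/2}\lesssim1$ and $(\fint_{|x|<r_*}|\nabla w_n|^2)^{1/2}\lesssim\theta^{-d/2}\theta^{n\alpha}$ for $n<N$, which sum over $n$ to a constant depending only on $\alpha$. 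The one remaining piece is $w_N$, whose right-hand side reaches the origin and hence is not $a$-harmonic there; for it I would estimate crudely by $(\fint_{|x|<r_*}|\nabla w_N|^2)^{1/2}\le(R/r_*)^{d/2}(\fint_{|x|<R}|\nabla w_N|^2)^{1/2}$ and use $\rho_N<\theta^{-1}r_*$ in~\eqref{lp:enest} together with $r_*\le R$ to absorb the factor $(R/r_*)^{d/2}$, leaving a bound depending only on $\alpha$ and $d$. Summing all contributions gives~\eqref{ld.4loc}.

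The delicate point I would watch is that no constant may depend on the possibly enormous ratio $R/r_*$. This is ensured by three facts working together: the large-scale Lipschitz estimate holds with a single fixed constant for all pairs $r_*\le r\le R$; the sum over annuli converges geometrically because $\alpha>0$; and in the innermost piece $w_N$ the gain $(\rho_N/R)^{d/2+\alpha}$ coming from~\eqref{lp:enest} exactly compensates the volume loss $(R/r_*)^{d/2}$, thanks to $\rho_N\sim r_*$ and $r_*\le R$. Since $\theta$ is a fixed numerical constant, the final bound depends only on $\alpha$ (beyond the structural constants hidden in $\lesssim$).
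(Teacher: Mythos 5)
Your argument is correct and is essentially the paper's proof: decompose $h$ into dyadic annular pieces, solve piecewise by Lax--Milgram, transfer each energy bound down to scale $r_*$ via the mean-value property \eqref{cw05} (which is uniform in the scale, since each piece is $a$-harmonic on the corresponding concentric ball), and sum the resulting geometric series using $\alpha>0$. The only cosmetic differences from the paper are that you solve the piecewise problems on $\{|x|<R\}$ with Dirichlet data rather than on all of $\mathbb{R}^d$, and you index the dyadic scales downward from $R$ rather than upward from $r_*$.
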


\subsection{Proofs of Proposition~\ref{Pg} and~\ref{Pd}}\hspace*{\fill} 

\begin{proof}[Proof of Proposition~\ref{Pg}]

By Lemma \ref{Lg}, which we may apply since $g$ satisfies the appropriate decay estimate, see the \BR3 first assumption in \ER (\ref{Pg3}),
there exists a function $\tilde u$ with
\begin{align}\label{Pg1}
-\nabla\cdot a\nabla\tilde u=\nabla\cdot g
\end{align}
and such that
\begin{align*}
\bigg(\fint_{|x| < R}|\nabla\tilde u|^2\bigg)^\frac{1}{2}\lesssim\biggl(\frac{R}{r}\biggl)^{\beta-1}\quad\mbox{for all}\;R\ge r.
\end{align*}
We now consider $w:=u-\tilde u$, then (\ref{Pg5bis}) turns into the first estimate of (\ref{Pg4}).
It remains to argue that $w\in X_m$: By (\ref{Pg2}) and (\ref{Pg1}) $w$ is $a$-harmonic.
>From the second item in our assumption (\ref{Pg3}) and by (\ref{Pg5bis}), using $\beta\le k\le m$, we obtain
by the triangle inequality that (\ref{Pg4}) holds. The $a$-harmonicity of $w$ and estimate (\ref{Pg4}) imply
$w\in X_m$ by definition of the latter.

\medskip

We now turn to the uniqueness and (\ref{Pg5}). From the first item in (\ref{Pg4}) we obtain
\begin{align}\label{Pg6}
\bigg(\fint_{|x| < R}|\nabla(w-w')|^2\bigg)^\frac{1}{2}\lesssim\bigg(\frac{R}{r}\bigg)^{\beta-1}\quad\mbox{for all}\;R\ge r.
\end{align}
\BR5 Since $\beta < k$, by~\eqref{Fischer2} in Lemma~\ref{Lup} this implies $w-w' \in X_{k-1}$. \ER 
Then (\ref{Pg5}) follows from (\ref{Pg6}) with $R=r$ by
our hypothesis (\ref{F2}) (with $k-1$ playing the role of $k$).
\end{proof}


\medskip
\begin{proof}[Proof of Proposition~\ref{Pd}]

We note that by (\ref{Pd2}) and $\beta\ge m\ge k$ we have
\begin{align}\label{Lext11}
\bigg(\fint_{|x| > R}|a\nabla u+g|^2\bigg)^\frac{1}{2}\lesssim\bigg(\frac{r}{R}\bigg)^{k+d-1}\quad\mbox{for all}\;R\ge r.
\end{align}
By Jensen's inequality and (\ref{vol}) this yields
\begin{align*}
\int_{R < |x| < 2R}|a\nabla u+g|\lesssim R^d\bigg(\frac{r}{R}\bigg)^{k+d-1}\quad\mbox{for all}\;R\ge r,
\end{align*}
which by $k\ge 1$ in particular implies 
\begin{align}\label{Lext12}
\sup_{R\ge r}\int_{R < |x| < 2R}|a\nabla u+g|<\infty,
\end{align}
the second assumption of Lemma \ref{Lext}, see (\ref{Lext2}).
We thus may apply Lemma \ref{Lext}, which provides an extension $(\bar u,\bar g)$. 
By (\ref{Pd3}) and estimate (\ref{Pd1}),
our assumption (\ref{Pd2}) implies
\begin{align}\label{Pd16}
\bigg(\fint_{ |x| < \BR6 2r}|\bar g|^2\bigg)^\frac{1}{2}\lesssim 1,\quad
\bigg(\fint_{ |x| < \BR6 2r}|\nabla \bar u|^2\bigg)^\frac{1}{2}\lesssim 1
\end{align}
next to
\begin{align}\label{Pd15}
\bigg(\fint_{|x| > R}|\bar g|^2\bigg)^\frac{1}{2}\le\bigg(\frac{r}{R}\bigg)^{\beta+d-1},\quad
\bigg(\fint_{|x| > R}|\nabla \bar u|^2\bigg)^\frac{1}{2}\le\bigg(\frac{r}{R}\bigg)^{k+d-1}\quad\mbox{for all}\;R\ge \BR6 2r.
\end{align}

\medskip

We now consider the form $\ell$ on $X_m$ defined through
\begin{align}\label{Pd7}
\ell.\tilde u=\int\nabla \tilde u \cdot\bar g\quad\mbox{for}\;\tilde u \in X_m.
\end{align}
We claim that $\ell$ is well-defined and bounded by unity:
\begin{align}\label{Pd4}
\sup_{\tilde u \in X_m}\frac{\frac{1}{r^d}\ell.u}{(\fint_{ |x| < r}|\nabla\tilde u |^2)^\frac{1}{2}}\lesssim 1
\end{align}
and that it vanishes on $X_{k-1}$:
\begin{align}\label{Pd5}
\ell.\tilde u =0\quad\mbox{for}\;\tilde u \in X_{k-1}.
\end{align}
Indeed, for (\ref{Pd4}) we decompose the integral on the right-hand side of \BR6 (\ref{Pd7}) \ER into dyadic annuli 
and use the Cauchy-Schwarz inequality
\begin{equation*}
\int|\nabla\tilde u \cdot \bar g|
\le\sum_{R\ge \BR6 2r}\bigg(\int_{R < |x| < 2R}|\nabla\tilde u |^2\bigg)^\frac{1}{2}
\bigg(\int_{R < |x| < 2R}|\bar g|^2\bigg)^\frac{1}{2}+\bigg(\int_{ |x| < \BR6 2r}|\nabla\tilde u|^2\bigg)^\frac{1}{2}
\bigg(\int_{ |x| < \BR6 2r}|\bar g|^2\bigg)^\frac{1}{2},
\end{equation*}
where the sum runs over all dyadic multiples of $r$. We rewrite this as
\begin{align*}
\frac{1}{r^d}\int|\nabla\tilde u\cdot \bar g| \le\sum_{R\ge \BR6 2r}\bigg(\frac{R}{r}\bigg)^d\bigg(\fint_{|x| < 2R}|\nabla\tilde u|^2\bigg)^\frac{1}{2}
\bigg(\fint_{|x| > R}|\bar g|^2\bigg)^\frac{1}{2}
+\bigg(\fint_{ |x| < \BR6 2r}|\nabla\tilde u|^2\bigg)^\frac{1}{2}\bigg(\fint_{ |x| < \BR6 2r}|\bar g|^2\bigg)^\frac{1}{2},
\end{align*}
into which we insert (\ref{Pd16}) and (\ref{Pd15}):
\begin{align*}
\frac{1}{r^d}\int|\nabla\tilde u\cdot \bar g|\le
\sum_{R\ge \BR6 2r}\bigg(\frac{r}{R}\bigg)^{\beta-1}\bigg(\fint_{ |x| < 2R}|\nabla\tilde u|^2\bigg)^\frac{1}{2}
+\bigg(\fint_{ |x| < \BR6 2r}|\nabla\tilde u|^2\bigg)^\frac{1}{2}.
\end{align*}
We now appeal to our hypothesis (\ref{F2}) with $k$ replaced by $m$ \BR6 and use it on both terms on the right-hand side to get \ER
\begin{align}\label{Pd2e}
\frac{1}{r^d}\int|\nabla\tilde u\cdot \bar g|\BR6 \lesssim \ER 
\bigg(\sum_{R\ge r}\bigg(\frac{r}{R}\bigg)^{\beta-m}+1\bigg)
\bigg(\fint_{ |x| < r}|\nabla\tilde u|^2\bigg)^\frac{1}{2},
\end{align}
which implies (\ref{Pd4}) because of $\beta>m$.

\medskip

We now turn to (\ref{Pd5}). We note that for any constants $\tilde c$ and $\bar c$
we obtain from (\ref{Pd12}) and the $a$-harmonicity of $\tilde u\in X_{k-1}$
\begin{align*}
\nabla\cdot\big((\tilde u-\tilde c)(a\nabla\bar u+\bar g)-(\bar u-\bar c)a\nabla\tilde u\big)=\nabla\tilde u\cdot\bar g.
\end{align*}
Hence if $\eta$ denotes a cut-off function for $\{ |x| < R \}$ in $ \{ |x| < 2R \}$, with $|\nabla\eta|\lesssim\frac{1}{R}$,
\begin{align}\label{Pd24}
\int\eta\nabla\tilde u\cdot\bar g
=-\int\nabla\eta\cdot\big((\tilde u-\tilde c)(a\nabla\bar u+\bar g)-(\bar u-\bar c)a\nabla\tilde u\big).
\end{align}
The right-hand side is easily estimated by 
\begin{align*}
\lefteqn{\frac{1}{R}\int_{R < |x| < 2R}|(\tilde u-\tilde c)(a\nabla\bar u+\bar g)-(\bar u-\bar c)a\nabla\tilde u\big|}\nonumber\\
&\lesssim
\frac{1}{R}\bigg[\bigg(\int_{R < |x| < 2R}(\tilde u-\tilde c)^2\bigg)^\frac{1}{2}\bigg(
\bigg(\int_{|x| > R}|\nabla\bar u|^2\bigg)^\frac{1}{2}+\bigg(\int_{|x| > R}|\bar g|^2\bigg)^\frac{1}{2}\bigg)\nonumber\\
&\qquad +\bigg(\int_{R < |x| < 2R}(\bar u-\bar c)^2\bigg)^\frac{1}{2}\bigg(\int_{R < |x| < 2R}|\nabla\tilde u|^2\bigg)^\frac{1}{2}\bigg],
\end{align*}
which by our hypothesis (\ref{poinc}) implies (for a suitable choice of $\tilde c$ and $\bar c$)
\begin{align*}
\frac{1}{R}\int_{R < |x| < 2R}|(\tilde u-\tilde c)(a\nabla\bar u+\bar g)-(\bar u-\bar c)a\nabla\tilde u\big|
\lesssim\bigg(\bigg(\int_{|x| > R}|\nabla\bar u|^2\bigg)^\frac{1}{2}+\bigg(\int_{|x| > R}|\bar g|^2\bigg)^\frac{1}{2}\bigg)
\bigg(\int_{ |x| < 2R}|\nabla\tilde u|^2\bigg)^\frac{1}{2}.
\end{align*}
We insert (\ref{Pd15}) (using $\beta\ge m\ge k$) for the first factor and our assumption (\ref{F2}) (with $k-1$ playing the role of $k$), \BR6 both together with~\eqref{vol}, to get for $R \ge 2r$\ER
\begin{align}\label{Pd25}
\frac{1}{R}\int_{R < |x| < 2R}|(\tilde u-\tilde c)(a\nabla\tilde u+\tilde g)-(\bar u-\bar c)a\nabla\tilde u\big|
\lesssim 
\BR6 r^d R^{-1} \ER 
\bigg(\fint_{ |x| < r}|\nabla\tilde u|^2\bigg)^\frac{1}{2}.
\end{align}
>From identity (\ref{Pd24}), the absolute convergence of the left-hand side integral, see (\ref{Pd2e}), and the
estimate (\ref{Pd25}) of the right-hand side integral, we obtain 
\begin{align*}
\int\nabla\tilde u\cdot\bar g=0\quad\mbox{for}\;\tilde u\in X_{k-1}
\end{align*}
in the limit $R\uparrow\infty$. In view of (\ref{Pd7}) this implies (\ref{Pd5}).

\medskip

In view of (\ref{Pd5}) and the definition (\ref{Pd7}), by Proposition \ref{Pdual}, there exists $w\in Y_k(r_*)$ such
that
\begin{align}\label{Pd14}
( \tilde u,w )=\int\nabla\tilde u\cdot \bar g\quad\mbox{for}\;\tilde u\in X_{m}.
\end{align}
By (\ref{Pd6})  and (\ref{Pd4}) we have
\begin{align}\label{Pd8}
\bigg(\fint_{|x| > r}|\nabla w|^2\bigg)^\frac{1}{2}\lesssim \biggl( \frac{r}{r_*} \biggr)^{m-k}.
\end{align}
By Lemma \ref{LF1} applied with $w\in Y_k(r_*)\subset Y_k(r)$ playing the role of
$v$ this implies the second statement in (\ref{Pd18}).
 
\medskip

We apply Lemma \ref{Lext} once more, this time to $(u,g)$ replaced by $(w,0)$;
assumption (\ref{Lext2}) of that lemma is satisfied by the second
statement in (\ref{Pd18}), using the same argument that lead from (\ref{Lext11}) to (\ref{Lext12}). 
Lemma \ref{Lext} yields a couple
$(\bar w,\bar h)$ of a function and a vector field such that 
\begin{align}\label{Pd10}
-\nabla\cdot a\nabla \bar w=\nabla\cdot \bar h 
\end{align}
and which are extensions in the sense of
\begin{align}\label{Pd9}
(\nabla\bar w,\bar h)=(\nabla w,0)\quad\mbox{in}\;\{ |x| > 2r\}.
\end{align}
Furthermore, we have by (\ref{Pd1}) and (\ref{Pd8})
\begin{align}\label{Pd16bis}
\bigg(\int_{ |x| < 2r}|\nabla\bar w|^2+|\bar h|^2\bigg)^\frac{1}{2}\lesssim \biggl( \frac{r}{r_*} \biggr)^{m-k}.
\end{align}
Note that (\ref{Pd9}) and (\ref{Pd10}) imply for any $a$-harmonic $\tilde u$
\begin{align*}
\tilde u a\nabla w-wa\nabla\tilde u&=\tilde u(a\nabla\bar w+\bar h)-\bar wa\nabla\tilde u&&\quad\mbox{in}\;\{ |x| > 2r\},\\
\nabla\cdot(\tilde u(a\nabla\bar w+\bar h)-\bar wa\nabla\tilde u)&=\nabla\tilde u\cdot\bar h,
\end{align*}
respectively, so that by definition of $ ( \, \cdot \, , \, \cdot \, )$:
\begin{align}\label{Pd13}
( \tilde u,w )=\int\nabla\tilde u\cdot \bar h.
\end{align}

\medskip

We now consider the function $\bar u-\bar w$. By (\ref{Pd12}) and (\ref{Pd10}) we have the equation
\begin{align}\label{Pd17}
-\nabla\cdot a\nabla(\bar u-\bar w)=\nabla\cdot(\bar g-\bar h).
\end{align}
We obtain from (\ref{Pd13}) and (\ref{Pd14}) for the right-hand side
\begin{align*}
\int\nabla\tilde u\cdot(\bar g-\bar h)=0\quad\mbox{for all}\;\tilde u\in X_m.
\end{align*}
By (\ref{Pd16}), (\ref{Pd15}) (for $\bar g$) and (\ref{Pd9}),(\ref{Pd16bis}) (for $\bar h$) we have \BR5 by the triangle inequality \ER 
\begin{align}\nonumber
\bigg(\fint_{ |x| < r}|\bar g-\bar h|^2\bigg)^\frac{1}{2}\lesssim \biggl( \frac{r}{r_*} \biggr)^{m-k},\quad
\bigg(\fint_{|x| > R}|\bar g-\bar h|^2\bigg)^\frac{1}{2}\le \biggl( \frac{r}{r_*} \biggr)^{m-k}\biggl(\frac{r}{R}\biggr)^{\beta+d-1}.
\end{align}
Hence we may apply Lemma \ref{Ld} with $\bar g-\bar h$ playing the role
of $g$ and with $m$ playing the role of $k$. Since by (\ref{Pd16}), (\ref{Pd15}) and (\ref{Pd9}),(\ref{Pd16bis}), this time respectively for $\bar u$ and  $\bar w$, the vector field $\nabla(\bar u-\bar w)$ is square integrable,
we learn from (\ref{Pd17}) and the uniqueness for (\ref{Ld2}) that $\nabla(\bar u-\bar w)$ plays
the role of $\nabla u$ in Lemma \ref{Ld}. Hence (\ref{Ld12}) turns into
\begin{align}\nonumber
\bigg(\fint_{|x| > R}|\nabla(\bar u-\bar w)|^2\bigg)^\frac{1}{2}\lesssim \biggl( \frac{r}{r_*} \biggr)^{m-k}\biggl(\frac{r}{R}\biggr)^{\beta+d-1}\quad\mbox{for all}\;R\ge r,
\end{align}
which by (\ref{Pd3}) and (\ref{Pd9}) implies the first estimate in (\ref{Pd18}).

\medskip

We finally turn to the uniqueness statement. From the first estimate in (\ref{Pd18}) we obtain by
the triangle inequality
\begin{align}\label{Pd22}
\bigg(\fint_{|x| < R}|\nabla(w-w')|^2\bigg)^\frac{1}{2}\lesssim \biggl( \frac{r}{r_*} \biggr)^{m-k}\biggl(\frac{r}{R}\biggr)^{\beta+d-1}.
\end{align}
By Proposition \ref{Pdual}, there exist $w_n\in Y_n(r_*)$, $n=k,\ldots,m$, and a remainder $\tilde w\in Y_{m+1}(r)$
such that
\begin{align*}
w-w'=w_k+\cdots+ w_m+\tilde w.
\end{align*}
We now argue that $\nabla w_n=0$ in $\{ |x| > r_*\}$: Indeed by the homogeneity (\ref{Pd20}) of elements of $Z_n(r_*)$ and the
boundedness (\ref{Pd21}) (with $R\ge r$ playing the role of $r$) of the projection we have for all $R\ge r$
\begin{align*}
\bigg(\fint_{|x|>r_*}|\nabla w_n|^2\bigg)^\frac{1}{2}&\stackrel{(\ref{Pd20})}{\lesssim}
\bigg(\frac{R}{r_*}\bigg)^{n+d-1}\bigg(\fint_{|x|>R}|\nabla w_n|^2\bigg)^\frac{1}{2}\nonumber\\
&\stackrel{(\ref{Pd21})}{\lesssim} \bigg(\frac{R}{r_*}\bigg)^{n+d-1}\bigg(\fint_{|x|>R}|\nabla(w-w')|^2\bigg)^\frac{1}{2}\nonumber\\
&\stackrel{(\ref{Pd22})}{\lesssim}
\bigg( \frac{r}{r_*}\bigg)^{m-k}\bigg(\frac{R}{r_*}\bigg)^{n+d-1}\bigg(\frac{r}{R}\bigg)^{\beta+d-1}.
\end{align*}
Since $n\le m<\beta$ we obtain in the limit $R\uparrow\infty$ that $\nabla w_n =0$ in $\{ |x| > r_* \}$.
\end{proof}

\subsection{Proofs of Propositions \ref{Pdual} and \ref{Pbidual} and of Corollary~\ref{Cor5.b}}\hspace*{\fill} 

\begin{proof}[Proof of Proposition~\ref{Pdual}]

\PfStart{C:Pdual}

\PfStep{C:Pdual}{C:Pdual1}
Estimate of $ ( \, \cdot ,  \cdot \, )$; we claim that for an $a$-harmonic $v$ defined
on $\{ |x| > r \}$ with $\big(\int_{|x| > r}|\nabla v|^2\big)^\frac{1}{2}<\infty$ and an $a$-harmonic $u$ defined on $\mathbb{R}^d$ 
we have
\begin{align}\label{f48}
|( u,v )|\lesssim\bigg(\int_{ |x| < 2R}|\nabla u|^2\bigg)^\frac{1}{2}\bigg(\int_{|x| > R}|\nabla v|^2\bigg)^\frac{1}{2}
\quad\mbox{for all}\;R\ge r.
\end{align}
Let $\eta$ be a (say, radial) function with
\begin{align}\label{f49}
\eta=0\;\mbox{on} \ \ \;\{ |x| < R \},\quad\eta=1\;\mbox{on}\;\{ |x| > 2R \},\quad
|\nabla\eta|\lesssim\frac{1}{R}
\end{align}
so that by definition~\eqref{bil.def} of $( u,v )$ we have
\begin{align}\label{f50}
( u,v )=\BR5 -\int\nabla\eta\cdot(va\nabla u-ua\nabla v).
\end{align}
The $a$-harmonicity of $v$ on $\{ |x| > r \}$ together with $(\int_{|x| > r}|\nabla v|^2)^\frac{1}{2}<\infty$ implies 
$\int\nabla\eta\cdot a\nabla v=0$;
likewise the $a$-harmonicity of $u$ on $\mathbb{R}^d$ yields $\int\nabla\eta\cdot a\nabla u=0$. Hence the 
formula (\ref{f50}) may be modified to
\begin{equation}\nonumber
( u,v )=\BR5 -\int\nabla\eta\cdot\big((v-\tilde c)a\nabla u - (u-c)a\nabla v\big)
\end{equation}
for some arbitrary constants $c,\tilde c$. By the Cauchy-Schwarz inequality and (\ref{f49}) we obtain
\begin{align*}
|( u,v )|&\lesssim \frac{1}{R}\bigg[\bigg(\int_{R < |x| < 2R}(u-c)^2\bigg)^\frac{1}{2}
\bigg(\int_{R < |x| < 2R}|\nabla v|^2\bigg)^\frac{1}{2}\nonumber\\
&\qquad+\bigg(\int_{R < |x| < 2R}(v-\tilde c)^2\bigg)^\frac{1}{2}
\bigg(\int_{R < |x| < 2R}|\nabla u|^2\bigg)^\frac{1}{2}\bigg].
\end{align*}
For an appropriate choice of $c$ and $\tilde c$, by Poincar\'e's inequality \eqref{poinc} this yields~\eqref{f48}.

\PfStep{C:Pdual}{C:Pdual2} Proof of i);
we consider the map $(Y_k/Y_{m+1}) \ni v \mapsto ( \cdot, v) \in (X_m/X_{k-1})^*$. We first remark that if $v \in Y_k$, we have by Corollary \ref{Lbil} that $( u, v)= 0$ whenever $u \in X_{k-1}$. Moreover, if 
for some $v, v' \in Y_k$ it holds $( u , v)= (u , v')$ for every $u \in X_m$, then Lemma \ref{LF1} yields that $v - v' \in Y_{m+1}$. Therefore, the map is not only well-defined but also one-to-one.
We prove that it is also onto: To do so we construct a partial inverse $(X_{m}/ X_{k-1})^* \rightarrow Y_k(r_*)$, \BR5 and thus give ourselves an $\ell \in (X_m/X_{k-1})^*$. \ER Since the product $\int_{|x|< r_*} \nabla u \cdot \nabla \tilde u $ is an inner product on the space $X_m/X_{0}$ \BR5(see~\eqref{F2})\ER, Riesz's representation theorem implies
that there exists an element $U \in X_m/X_{0}$ such that $\ell. u = \int_{|x|< r_*} \nabla u \cdot \nabla U$ and
\begin{align}\label{Pdual3}
\biggl(\int_{|x|< r_*} |\nabla U|^2 \biggr)^\oh \leq \sup_{ u\in X_m}\frac{\ell.u}{(\int_{|x|< r_*}|\nabla u|^2)^\frac{1}{2}}.
\end{align}
We 
claim that the Lax-Milgram solution of 
\begin{align}\label{Pdual1}
 - \nabla \cdot a \nabla v = \BR5 - \ER \nabla \cdot h \qquad \BR5 \textrm{ where } h:=  I(\{|x|< r_*\}),
\end{align}
is in $Y_k(r_*)$ and such that $\ell = ( \cdot, v)$.
Since by definition of $h$, the function $v$ is $a$-harmonic in $\{ |x| > r_* \}$, satisfies the energy estimate
\begin{align*}
 \int |\nabla v |^2 \lesssim \int |h|^2 = \int_{|x|< r_*}|\nabla U|^2 < +\infty,
\end{align*}
\BR5 and $(1,v)=0$, we have by Lemma~\ref{LF1} b) $\Rightarrow$ a) \ER 
that $ v \in Y_1(r_*)$. 
By the definition \eqref{bil.def} of the bilinear form $( \cdot, \cdot)$, the divergence theorem and equation \eqref{Pdual1}
we have that \BR5 for all $u \in X_m$\ER
\begin{align*}
 (u ,v) = \BR5 - \ER \int \nabla \eta \cdot ( v a \nabla u - u a \nabla v) =  \int \nabla(\eta u) \cdot h = \int \nabla u \cdot h = \ell. u .
\end{align*}
In addition, since $\ell \in  (X_m/X_{k-1})^*$, it follows that $(u, v)= 0$ whenever $u \in X_{k-1}$. This implies by Lemma \ref{LF1} \BR5 b) $\Rightarrow$ a)\ER, that $v \in Y_{k}(r_*)$. We thus constructed a partial inverse $(X_{m}/ X_{k-1})^* \rightarrow Y_k(r_*)$ 
and \BR5 this way \ER established that the map $(Y_k/Y_{m+1}) \rightarrow (X_m/X_{k-1})^*$ is also onto. The proof of i) is thus concluded.


\PfStep{C:Pdual}{C:Pdual3}
Proof of ii): construction of the spaces $Z_n(r_*)$, $n= k, ..., m$; without loss of generality, we give the argument for $Z_k(r_*)$. 
We consider the linear map  $M : (X_{k}/ X_{k-1})^* \rightarrow Y_k(r_*)$ constructed in Step~\ref{C:Pdual2}, with $m=k$: We define $Z_k(r_*)$ as the image of $(X_{k}/ X_{k-1})^*$ under this map. 
\BR5
While we only need to show that $Z_k(r_*)$ is a complement of $Y_{k+1}(r_*)$ in $Y_k(r_*)$, for the later purposes we show that $Z_k(r_*)$ is complement of $Y_{k+1}(r)$ in $Y_k(r)$ for any $r \ge r_*$. 
\ER
Since for every $r \geq r_*$ it holds $Y_k(r_*) \subset Y_k(r)$, 
we clearly have that $Z_k(r_*)\subset Y_k(r)$ for every $r\geq r_*$ and we need to argue that $Z_k(r_*)$ is a complement
of $Y_{k+1}(r)$ in $Y_k(r)$. On the one hand, we have $Z_k(r_*)\cap Y_{k+1}(r)=\{0\}$, which can
be seen as follows: We are given a $v\in Z_k(r_*)\cap Y_{k+1}(r)$; because of $v\in Z_k(r_*)$, by definition, there exists a pre-image $\ell\in (X_k / X_{k-1})^*$ of $v$ under the map $M$ such that $\ell = (\cdot, v)$; because of $v\in Y_{k+1}(r)$,
we infer from Lemma \ref{LF1} that $\ell$ vanishes on $X_k$; hence $\ell=0$ and also its image $v$ under the linear map $M$
has to vanish.
On the other hand, we have $Y_k(r)=Z_k(r_*)+Y_{k+1}(r)$ which can be seen as follows: Given $\tilde v\in Y_k(r)$, we consider the form $\ell \in \BR5 (X_k/X_{k-1})^*$ given by $X_k\ni u\mapsto ( u,\tilde v)$; \BR5 by the above construction of $M$, there exists 
$v\in Y_k(r)$ \ER such that $\ell= (\cdot , v)$. Hence we have $( u,\tilde v-v)=0$
for all $u\in X_k$, which by Lemma \ref{LF1} implies $\tilde v-v\in Y_{k+1}(r)$ and thus
$\tilde v \in Z_k(r_*)+Y_{k+1}(r)$.

\medskip

\BR5
We are now given $l \in (X_m / X_{k-1})^*$. Since $(X_m / X_{k-1})^* = (X_m / X_{m-1})^* \oplus \cdots \oplus (X_k / X_{k-1})^*$, there exists a unique $(m-k+1)$-tuple $(w_m,\ldots,w_k) \in Z_m(r_*) \times \cdots \times Z_k(r_*)$ such that $l = (\cdot,w)$ with $w = w_m + \cdots + w_k$.
\ER
 It thus remains to show that $w$ satisfies \eqref{Pd6}. By linearity and
the construction of each $Z_n(r_*)$, we have that $w$ satisfies \eqref{Pdual1} 
for some $U\in X_m$ such that \BR5 for all $ u \in X_m$\ER 
\begin{align*}
\ell. u  = ( u , w) \overset{\eqref{bil.def}}{=} -\int \nabla \eta \cdot \bigl( w a\nabla u  -  u  a \nabla w )\stackrel{\eqref{Pdual1}}{=}\int_{|x|<r_*}\nabla U\cdot \nabla  u,
\end{align*}
\BR5
where $\eta$ here comes from definition~\eqref{bil.def} (i.e. $\eta = 1$ in $\{|x|<r_*\}$), in particular it is quite different from $\eta$ defined in~\eqref{f48}.
\ER 
Therefore, the energy estimate for $w$
\begin{equation*}
\int |\nabla w|^2 \lesssim \int |h|^2 = \int_{|x|< r_*}|\nabla U|^2,
\end{equation*}
\BR5 the above identity, and~\eqref{Pdual3} \ER yield that 
\begin{align*}
\biggl( \fint_{|x| > r_*} |\nabla w |^2\biggr)^\oh {\lesssim} \sup_{  u \in X_m} \frac{\frac{1}{r_*^d} \ell. u }{\bigl(\fint_{|x|< r_*}|\nabla  u|^2\bigr)^\frac{1}{2}}.
\end{align*}
Moreover, by Lemma \ref{LF1} c) and \eqref{F2} we also get for every $r \geq r_*$ that
\begin{align}
\biggl(\fint_{|x| > r}|\nabla w|^2 \biggr)^\oh &\lesssim \biggl(\frac{r_*}{r}\biggr)^{d-1+ k}\sup_{  u \in X_m} \frac{\frac{1}{r_*^d} \ell. u }{\bigl(\frac{r_*}{r}\bigr)^{m-1}\bigl(\fint_{|x|< r}|\nabla  u|^2\bigr)^\frac{1}{2}}\notag\\
&\simeq \biggl(\frac{r}{r_*} \biggr)^{m-k}\sup_{  u \in X_m} \frac{\frac{1}{r^d} \ell. u }{\bigl(\fint_{|x|< r}|\nabla  u|^2\bigr)^\frac{1}{2}},\nonumber
\end{align}
i.e. inequality \eqref{Pd6}.


\PfStep{C:Pdual}{C:Pdual4}
Proof of iii); for \BR5 $m=k$ estimate \ER \eqref{Pd6} of ii) turns into
\begin{equation}\nonumber
\biggl( \int_{|x| > r} |\nabla w |^2 \biggr)^\oh \lesssim   \sup_{ u\in X_k}\frac{(u, w)}
{(\int_{|x| < r}|\nabla u|^2)^\frac{1}{2}}\BR5 \quad \textrm{for all } r \ge r_* \textrm{ and } w \in Z_k(r_*), 
\end{equation}
\BR5
which by~\eqref{F2} may be upgraded to
\begin{equation}\label{Pdual4}
\biggl( \int_{|x| > r} |\nabla w |^2 \biggr)^\oh \lesssim   \sup_{ u\in X_k}\frac{(u, w)}
{(\int_{|x| < 2r}|\nabla u|^2)^\frac{1}{2}} \quad \textrm{for all } r \ge r_* \textrm{ and } w \in Z_k(r_*), 
\end{equation} 
\ER
We begin with \eqref{Pd21} for the decomposition $w= w_k + \tilde w \in Z_k(r_*) + Y_{k+1}(r)$. By estimate~\eqref{Pdual4} it holds for every $r \geq r_*$
\begin{align*}
\biggl( \int_{|x| > r} |\nabla w_k |^2 \biggr)^\oh \lesssim  \sup_{ u\in X_k}\frac{(u, w_k)}{(\int_{|x| < 2r}|\nabla u|^2)^\frac{1}{2}}.
\end{align*}
Since by linearity and Lemma \ref{LF1} \BR5 a) $\Rightarrow$ b)\ER, we have that $( u, w) = (u, w_k)$ for every $u \in X_k$, it follows that 
\begin{align*}
\biggl( \int_{|x| > r} |\nabla w_k |^2 \biggr)^\oh \lesssim  \sup_{ u\in X_k}\frac{(u, w)}{(\int_{|x| < \BR5 2r}|\nabla u|^2)^\frac{1}{2}} \stackrel{\eqref{f48}}{\lesssim} \biggl( \int_{|x| > \BR5 r/2} |\nabla w|^2 \biggr)^\oh,
\end{align*}
i.e. estimate \eqref{Pd21} with $k=m$. We generalize \eqref{Pd21} by iteration: If  $w- w_k = w_{k+1} + \tilde w \in Z_{k+ 1}(r_*) + Y_{k+2}(r)$ we repeat the previous argument to infer
\begin{align*}
 \biggl( \int_{|x| > r} |\nabla w_{k+1} |^2 \biggr)^\oh \lesssim \biggl( \int_{|x| > r} |\nabla (w - w_k)|^2 \biggr)^\oh.
\end{align*}
The triangle inequality on the right-hand side and \eqref{Pd21} for $w_k$ concludes the proof.


\PfStep{C:Pdual}{C:Pdual5}
Proof of iv); let  $v \in Z_n(r_*)$. The $\lesssim$ part for \eqref{Pd20} follows immediately from Lemma \ref{LF1} c). Moreover, estimate \eqref{Pdual4} and assumption 
\eqref{F2} yield 
\begin{align*}
\biggl( \int_{|x| > r} |\nabla v |^2 \biggr)^\oh &\lesssim  \biggl(\frac R r \biggr)^{\frac d 2 + m -1}\hspace{-0.6cm}\sup_{ u\in X_n}\frac{(u, v )}{(\int_{|x| < 2R}|\nabla u|^2)^\frac{1}{2}}
\stackrel{\eqref{f48}}{\lesssim}\biggl(\frac R r \biggr)^{\frac d 2 + m -1} \hspace{-0.2cm}\biggl( \int_{|x| > R} |\nabla v|^2 \biggr)^\oh,
\end{align*}
i.e. the $\gtrsim$ in \eqref{Pd20}.
\end{proof}


\begin{proof}[Proof of Proposition~\ref{Pbidual}]

We start with the following observation that expresses a fundamental non-degeneracy of
our bilinear form: For any $a$-harmonic function $u$ we have
\begin{align}\label{cw01}
\biggl(\int_{|x|<r}|\nabla u|^2\biggl)^\frac{1}{2}\lesssim\sup_{v\in Y_1(r)}\frac{(u,v)}{(\int_{|x|>r}|\nabla v|^2)^\frac{1}{2}}
\quad\mbox{for all}\;r\ge r_*.
\end{align}
To this purpose, we fix $r\ge r_*$ and give a duality argument, which means we give ourselves
a square-integrable vector field $h$ supported on $\{|x|<r\}$ and denote by $v$ the Lax-Milgram solution
of $-\nabla\cdot a\nabla v=- \nabla\cdot h$; in particular we have by the energy estimate $\int|\nabla v|^2\lesssim\int|h|^2$.
Since clearly $v$ is $a$-harmonic in $\{|x|>r\}$ and it holds $(1,v)=0$, we learn from Lemma \ref{LF1}
$b)\Rightarrow a)$ that $v\in Y_1(r)$. From definition (\ref{bil.def}) we see that $(u,v)=\int\nabla u\cdot h$. 
Hence the right-hand side of (\ref{cw01}) dominates $\sup_{h:{\rm supp}h\subset\{|x|<r\}}\frac{\int\nabla u\cdot h}{(\int|h|^2)^\frac{1}{2}}$,
which is identical to the left-hand side.

\medskip

Equipped with (\ref{cw01}) and (\ref{Pd6}) of Proposition \ref{Pdual} we now may conclude
\begin{align}\label{cw02}
(\cdot,\cdot)\;\mbox{is non-degenerate on}\;X_m/X_0\,\times\, Y_1(r)/Y_{m+1}(r)\quad\mbox{for any}\;r\ge r_*,
\end{align} 
by which, in view of Corollary \ref{Lbil}, we understand the two properties: 1) that for any $u\in X_m$ with
$\forall\;v\in Y_1(r)\;(u,v)=0$ we have $u\in X_0$, and 2) that for any $v\in Y_1(r)$ with 
$\forall\;u\in X_m\;(u,v)=0$ we have $v\in Y_{m+1}(r)$. Property 1) is a direct qualitative consequence
of (\ref{cw01}). In view of $Y_1(r)=Z_1(r_*)+\cdots+ Z_m(r_*)+ Y_{m+1}(r)$, see Proposition \ref{Pdual} {\rm iii)},
property 2) is a qualitative consequence of (\ref{Pd6}) for $k=1$, in form of (\ref{Pdual4}). 
By elementary linear algebra, we note that (\ref{cw02}) allows to pass from part {\rm i)} of Proposition
\ref{Pdual} to part {\rm i)} of this proposition.

\medskip

Encouraged by property (\ref{cw02}), we now define for $n=1,\ldots,m$
\begin{equation}\label{cw03}
W_n:=\bigg\{u\in X_n|\fint_{|x|<r_*}u=0,\;\forall\;v\in Z_1(r_*)+\cdots+Z_{n-1}(r_*)\;\;:(u,v)=0\bigg\}
\end{equation}
with the understanding that for $n=1$, the last condition is void. Hence $W_n/X_0$ is the
$(\cdot,\cdot)$-polar complement of $Z_1(r_*)+\cdots+Z_{k-1}(r_*)$ in $X_n/X_0$. It is built into this definition
that {\rm iii)} holds for $n>m$; {\rm iii)} for $n<m$ follows from Corollary \ref{Lbil}.
We now argue that (\ref{cw02}) implies the qualitative part of {\rm ii)}, namely
\begin{align*}
X_k=X_{k-1}\oplus W_k,
\end{align*}
which amounts to 1) $\{0\}=X_{k-1}\cap W_k$ and 2) $X_k=X_{k-1}+W_k$. Property 1) follows from the non-degeneracy (\ref{cw02}) 
of $(\cdot,\cdot)$ on $X_{k-1}/X_0\times (Y_1(r_*)/Y_{k}(r_*)\cong Z_1(r_*)+\cdots+Z_{k-1}(r_*))$: Because of the latter,
an element $u\in X_{k-1}\cap W_k$ has to be in $X_0$, so that by the first condition in (\ref{cw03}) it has to vanish. 
Property 2) follows once the dimensions of $X_k$ and $X_{k-1}+W_k$ are equal, which by property 1) reduces to arguing that
$\dim X_k$ $= \dim X_{k-1}+ \dim W_k$, which we rewrite as $\dim X_k/X_0 = \dim X_{k-1}/X_0+ \dim W_k$.
Again, this follows from the non-degeneracy (\ref{cw02}), which ensures that for the polar set $W_k$ we have the
dimension formula $ \dim W_k = \dim X_k/X_0- \dim (Z_1(r_*)+\cdots+Z_{k-1}(r_*))$, and from Proposition \ref{Pdual},
which ensures that $\dim (Z_1(r_*)+\cdots+Z_{k-1}(r_*)) = \dim Y_1(r_*)/Y_k(r_*)$ $=\dim X_{m-1}/X_0$.

\medskip

We now turn to the quantitative part of {\rm ii)} and give ourselves $u\in W_k+\cdots+W_m$ and $r\ge r_*$. Since
in particular $u\in X_m$, we have by our assumption (\ref{F2}) that 
\begin{align*}
\bigg(\fint_{|x|<r}|\nabla u|^2\bigg)^\frac{1}{2}\lesssim\bigg(\frac{r}{r_*}\bigg)^{m-1}\bigg(\fint_{|x|<r_*}|\nabla u|^2\bigg)^\frac{1}{2}.
\end{align*}
We now dualize with help of (\ref{cw01}) (with $r=r_*$)
\begin{align*}
\bigg(\fint_{|x|<r}|\nabla u|^2\bigg)^\frac{1}{2}\lesssim
\bigg(\frac{r}{r_*}\bigg)^{m-1}\sup_{\tilde v\in Y_1(r_*)}\frac{\frac{1}{r_*^d}(u,\tilde v)}{(\fint_{|x|>r_*}|\nabla \tilde v|^2)^\frac{1}{2}}.
\end{align*}
We then split $Y_1(r_*)\ni \tilde v$ $=v_-+v+v_+\in(Z_1+\cdots+ Z_{k-1})(r_*)\oplus(Z_k+\cdots+Z_m)(r_*)\oplus Y_{m+1}(r_*)$,
cf.\ Proposition \ref{Pdual}, use that by part {\rm iii)} and Corollary \ref{Lbil}
we have $(u,\tilde v)=(u,v)$, and that by (\ref{Pd21}) it holds
$(\int_{|x|\ge r_*}|\nabla v|^2)^\frac{1}{2}$ $\lesssim (\int_{|x|\ge r_*}|\nabla\tilde v|^2)^\frac{1}{2}$, to arrive at
\begin{align*}
\bigg(\fint_{|x|<r}|\nabla u|^2\bigg)^\frac{1}{2}\lesssim
\bigg(\frac{r}{r_*}\bigg)^{m-1}\sup_{v\in (Z_k+\cdots+Z_m)(r_*)}\frac{\frac{1}{r_*^d}(u,v)}{(\fint_{|x|>r_*}|\nabla v|^2)^\frac{1}{2}}.
\end{align*}
Since in particular $v\in Y_k(r_*)$, we may use Lemma \ref{LF1} $a)\Rightarrow c)$ to obtain
\begin{equation}\label{cw04}
\bigg(\fint_{|x|<r}|\nabla u|^2\bigg)^\frac{1}{2}
\lesssim
\bigg(\frac{r}{r_*}\bigg)^{m-1}\bigg(\frac{r_*}{r}\bigg)^{k+d-1}\sup_{v\in (Z_k+\cdots+Z_m)(r_*)}
\frac{\frac{1}{r_*^d}(u,v)}{(\fint_{|x|>r}|\nabla v|^2)^\frac{1}{2}},
\end{equation}
which is the desired estimate.

\medskip

Equipped with part {\rm ii)}, part {\rm iv)} is easy: By (\ref{F2}) it is enough to establish the estimate for $r\ge 2r_*$.
We fix $n=k,\ldots,m$; by (\ref{F2}) again we have
$(\fint_{|x|<r}|\nabla u_n|^2)^\frac{1}{2}$ $\lesssim (\fint_{|x|<\frac{r}{2}}|\nabla u_n|^2)^\frac{1}{2}$.
We now appeal to (\ref{cw04}) with $k=m=n$
and applied to $u=u_n$ and $r$ replaced by $\frac{r}{2}$. By part {\rm iii)} we have for the numerator $(u_n,v)=(u,v)$, on which we apply
estimate (\ref{f48}) in form of $|\frac{1}{r^d}(u,v)|$ $\lesssim(\fint_{|x|<r}|\nabla u|^2)^\frac{1}{2}$
$(\fint_{|x|>\frac{r}{2}}|\nabla v|^2)^\frac{1}{2}$. This yields part {\rm iv)}.

\medskip

The $\lesssim$-part of~\eqref{Wnhomog} in v) follows trivially by condition \eqref{F2}. We argue for the $\gtrsim$-part: By iii) with $m=k=n$, we have
\begin{align*}
\biggl(\int_{|x| < r}|\nabla u|^2 \biggr)^\oh \lesssim \sup_{v\in Z_n (r_*)}\frac{(u,v)}{\bigl( \int_{|x|>r }|\nabla v|^2\bigr)^\oh},
\end{align*}
and, by \eqref{Pd20} of Proposition \ref{Pdual}, also
\begin{align*}
\biggl(\int_{|x| < r}|\nabla u|^2 \biggr)^\oh \lesssim \biggl( \frac{r}{R}\biggr)^{\frac d 2 -1 + n} \sup_{v\in Z_n (r_*)}\frac{(u,v)}{\bigl( \int_{|x|>R/2}|\nabla v|^2\bigr)^\oh}.
\end{align*}
We now apply inequality \eqref{f48} to the right-hand side above and establish~\eqref{Wnhomog}. 
\end{proof}

%


\begin{proof}[Proof of Corollary~\ref{Cor5.b}]


\BR5
We start with \eqref{m07}: If $u \in X_m$ and we denote by $w_m$ its projection onto $W_m$, we have by \eqref{F2}, together with iv) of Proposition \ref{Pbidual}, that
\begin{equation}\label{Rom5.b2}
\| u -w_m\|_{m-1} \overset{\eqref{F2}}{\lesssim} \biggl( \fint_{|x| < r_* }|\nabla (u - w_m)|^2\biggr)^\oh \overset{\textrm{iv)}}{\lesssim} \biggl( \fint_{|x| < r^*}|\nabla u|^2\biggr)^\oh \le \|u\|_{m}.
\end{equation}
Let us now define by $w_{m-1}$ the projection of $u - w_m$ onto $W_{m-1}$ (so that $w_m+w_{m-1}$ is the projection of $u$ onto $W_m\oplus W_{m-1}$). The same reasoning as above yields also that
\begin{align*}
\| (u - w_m) - w_{m-1} \|_{m-2} \lesssim \|u-w_m\|_{m-1}\stackrel{\eqref{Rom5.b2}}{\lesssim}\|u\|_m.
\end{align*}
In addition, assumption \eqref{F2}, the triangle inequality and Proposition \ref{Pbidual} iv) immediately imply
\begin{equation*}
\| w_m + w_{m-1}\|_m \overset{\eqref{F2}}{\lesssim} \biggl(\fint_{|x|<r_*}|\nabla (w_m  + w_{m-1})|^2\biggr)^\oh \overset{\textrm{iv)}}{\lesssim} \| u\|_{m}.
\end{equation*}
The combination of the previous two inequalities yields~\eqref{m07}.

\medskip

The proof of~\eqref{eqC301} is similar, 
but relying on iii) and iv) of Proposition \ref{Pdual} and on Lemma \ref{LF1} c) instead of assumption \eqref{F2}. 

\end{proof}

\ER

\subsection{Proofs of Lemmas \ref{Lg}, \ref{Ld}, and~\ref{Lup}}\hspace*{\fill} 

\begin{proof}[Proof of Proposition~\ref{Lg}]

For any dyadic multiple $\rho$ of $r$ we define
\BR3
\begin{align}\label{Lg4}
g_\rho:=I(\{ \rho < |x| < 2\rho\})g\quad\mbox{for}\;\rho>r\quad\mbox{and}\quad g_r:=I(\{ |x| < r \})g,
\end{align}
\ER 
where $I(B)$ denotes the characteristic function of the set $B$.
Assumption (\ref{Lg1}) then translates into
\begin{align}\label{Lg2}
\bigg(\frac{1}{\rho^d}\int|g_\rho|^2\bigg)^\frac{1}{2}\lesssim\bigg(\frac{\rho}{r}\bigg)^{\beta-1}.
\end{align}
Let $u_\rho$ denote the Lax-Milgram solution of
\begin{align}\label{Lg5}
-\nabla\cdot a\nabla u_\rho=\nabla\cdot g_\rho;
\end{align}
by the energy estimate we obtain from (\ref{Lg2}) that
\begin{align}\label{Lg10}
\bigg(\frac{1}{\rho^d}\int|\nabla u_\rho|^2\bigg)^\frac{1}{2}\lesssim\bigg(\frac{\rho}{r}\bigg)^{\beta-1}.
\end{align}

Since by (\ref{Lg4}) and (\ref{Lg5}), $u_\rho$ is $a$-harmonic in $\{|x|<\rho\}$ (for $\rho>r$), by Lemma \ref{Lup}, with $k-1$ playing the role of the exponent $k$,
there exists $\tilde u_{\rho}\in X_{k-1}$ 
\begin{align*}
\bigg(\fint_{|x| < R}|\nabla(u_\rho-\tilde u_{  \rho})|^2\bigg)^\frac{1}{2}&\lesssim\bigg(\frac{R}{\rho}\bigg)^{k-1}\bigg(\fint_{|x|<\rho}|\nabla u_{  \rho}|^2\bigg)^\frac{1}{2}
\quad\mbox{for all}\;R\in[r_*,\rho],
\end{align*}
which with help of (\ref{Lg10}) we upgrade to
\begin{align}
\bigg(\fint_{|x| < R}|\nabla(u_\rho-\tilde u_{  \rho})|^2\bigg)^\frac{1}{2}&\lesssim\bigg(\frac{R}{\rho}\bigg)^{k-1}\bigg(\frac{\rho}{r}\bigg)^{\beta-1}
\quad\mbox{for all}\;R\in[r,\rho].
\label{Lg13}
\end{align}

\medskip
Appealing to~\eqref{cw05} from Lemma~\ref{Lup} we have 

\begin{equation}\label{Lg14}
\bigg(\fint_{|x| < R}|\nabla \tilde u_{  \rho}|^2\bigg)^\frac{1}{2}\lesssim 
\bigg(\fint_{|x|<\rho}|\nabla u_\rho|^2\bigg)^\frac{1}{2} \quad\mbox{for all}\;R\in[r_*,\rho].
\end{equation}
%

\medskip

Up to additive constants we define $u$ via
\begin{align}\label{Lg7}
\nabla u:=\sum_{\rho> r}\nabla (u_\rho-\tilde u_{  \rho})+\nabla u_r.
\end{align}
Provided this series converges, the desired equation (\ref{Lg6}) follows from (\ref{Lg4}) in form
of $g=\sum_{\rho\ge r} g_\rho$ and (\ref{Lg5}) together with the $a$-harmonicity of $\tilde u_{\rho}$.
The absolute convergence of (\ref{Lg7}) and the desired estimate (\ref{Pg5bis}) both follow from
\begin{align*}
\sum_{\rho> r}\bigg(\fint_{|x| < R}|\nabla (u_\rho-\tilde u_{  \rho})|^2\bigg)^\frac{1}{2}+\bigg(\fint_{|x| < R}|\nabla u_r|^2\bigg)^\frac{1}{2}
\lesssim \bigg(\frac{R}{r}\bigg)^{\beta-1}\quad\mbox{for}\;R\ge r,
\end{align*}
which for fixed $R\ge r$ (which w.l.o.g. we assume to be a dyadic multiple of $r$) we split into
\begin{align}
\sum_{\rho> R}\bigg(\fint_{|x| < R}|\nabla (u_\rho-\tilde u_{  \rho})|^2\bigg)^\frac{1}{2}&\lesssim \bigg(\frac{R}{r}\bigg)^{\beta-1},\label{Lg12}\\
\sum_{R\ge \rho> r}\bigg(\bigg(\fint_{|x| < R}|\nabla u_\rho|^2\bigg)^\frac{1}{2}+\bigg(\fint_{|x| < R}|\nabla\tilde u_{  \rho}|^2\bigg)^\frac{1}{2}\bigg)&
\lesssim \bigg(\frac{R}{r}\bigg)^{\beta-1},\label{Lg11}\\
\bigg(\fint_{|x| < R}|\nabla u_r|^2\bigg)^\frac{1}{2}&\lesssim \bigg(\frac{R}{r}\bigg)^{\beta-1}.\label{Lg9}
\end{align}
Estimate (\ref{Lg9}) is an immediate consequence of (\ref{Lg10}) for $\rho=r$ which implies
$\big(\fint_{|x| < R}|\nabla u_r|^2\big)^\frac{1}{2}$ $\lesssim(\frac{r}{R})^\frac{d}{2}$ $\le(\frac{R}{r})^{\beta-1}$.
The estimate (\ref{Lg12}) of the far-field contribution is a consequence of 
(\ref{Lg13}) thanks to $\beta<k$. By our hypothesis (\ref{F2}) (with $k$ replaced by $k-1$) and estimate
(\ref{Lg14}) we have for the second term in the intermediate-field contribution (\ref{Lg11})
\begin{align*}
\bigg(\fint_{|x| < R}|\nabla\tilde u_{  \rho}|^2\bigg)^\frac{1}{2}\stackrel{(\ref{F2})}{\lesssim}
\bigg(\frac{R}{\rho}\bigg)^{(k-1)-1}\bigg(\fint_{|x|<\rho}|\nabla\tilde u_{  \rho}|^2\bigg)^\frac{1}{2}
\stackrel{(\ref{Lg14})}{\lesssim} \bigg(\frac{R}{\rho}\bigg)^{(k-1)-1}\bigg(\frac{1}{\rho^d}\int|\nabla u_\rho|^2\bigg)^\frac{1}{2}.
\end{align*}
We trivially have for the first contribution 
$\big(\fint_{|x| < R}|\nabla u_\rho|^2\big)^\frac{1}{2}$ $\le(\frac{\rho}{R})^\frac{d}{2}(\frac{1}{\rho^d}\int|\nabla u_\rho|^2\big)^\frac{1}{2}$
so that it is of higher order with respect to the second contribution (in the considered range $\rho\le R$). Hence 
by (\ref{Lg10}), both contributions are $\lesssim (\frac{R}{\rho})^{(k-1)-1}(\frac{\rho}{r})^{\beta-1}$.
Thanks to $\beta>(k-1)$, (\ref{Lg11}) follows.

\end{proof}



\begin{proof}[Proof of Lemma~\ref{Ld}]

By (\ref{Ld1}), $g$ is in particular square integrable. Hence by Lax-Milgram, there exists a square-integrable
solution $u$ of (\ref{Ld2}) and we have
\begin{align}\label{Ld13}
\bigg(\int|\nabla u|^2\bigg)^\frac{1}{2}\lesssim\bigg(\int|g|^2\bigg)^\frac{1}{2}.
\end{align}
Now let $R\ge r$ be given. In preparation for a duality argument,
let $h$ be a square-integrable vector field supported in $\{ |x| > R \}$ and
$v$ the Lax-Milgram solution of $-\nabla\cdot a\nabla v=\nabla\cdot h$. We thus have
\begin{align}\label{Ld3}
\bigg(\int|\nabla v|^2\bigg)^\frac{1}{2}\lesssim\bigg(\int|h|^2\bigg)^\frac{1}{2}
\end{align}
and 
\begin{align}\label{Ld5}
\int h\cdot\nabla u=\int g\cdot\nabla v.
\end{align}
By the support assumption on $h$, $v$ is $a$-harmonic in $\{ |x| < R \}$ so that by Lemma \ref{Lup}
and using (\ref{Ld3}),
there exists $w\in X_k$ such that
\begin{align}
\bigg(\fint_{ |x| < r}|\nabla(v-w)|^2\bigg)^\frac{1}{2}&\lesssim\bigg(\frac{r}{R}\bigg)^{(k+1)-1}\bigg(\frac{1}{R^d}\int|h|^2\bigg)^\frac{1}{2}
\quad\mbox{for all}\;r\in[r_*,R],\label{Ld11}\\
\bigg(\fint_{ |x| < r}|\nabla w|^2\bigg)^\frac{1}{2}&\lesssim \bigg(\frac{1}{R^d}\int|h|^2\bigg)^\frac{1}{2}
\quad\mbox{for all}\;r\in[r_*,R].\label{Ld9}
\end{align}
By assumption (\ref{Ld4}), identity (\ref{Ld5}) turn into 
\begin{align}\label{Ld6}
\int h\cdot\nabla u=\int g\cdot\nabla(v-w).
\end{align}

\medskip

In order to carry out the duality argument, we now estimate the right-hand side of (\ref{Ld6}) 
in terms of $(\int|h|^2)^\frac{1}{2}$. We do this by splitting the integral into dyadic annuli and
using the Cauchy-Schwarz inequality (where for this purpose, we assume w.l.o.g. that $R$ is a dyadic multiple of $r$)
\begin{align*}
\bigg|\int g\cdot\nabla(v-w)\bigg|&\le\sum_{\rho\ge r}
\bigg(\int_{\rho < |x| < 2\rho}|g|^2\bigg)^\frac{1}{2}\bigg(\int_{\rho < |x| < 2\rho}|\nabla(v-w)|^2\bigg)^\frac{1}{2}\nonumber\\
&\quad+\bigg(\int_{ |x| < r}|g|^2\bigg)^\frac{1}{2}\bigg(\int_{ |x| < r}|\nabla(v-w)|^2\bigg)^\frac{1}{2},
\end{align*}
where the sum runs over all radii $\rho$ that are a dyadic multiple of $r$. By (\ref{vol})
this implies
\begin{align*}
\bigg|\frac{1}{R^d}\int g\cdot\nabla(v-w)\bigg|&\lesssim\sum_{\rho\ge r}\bigg(\frac{\rho}{R}\bigg)^d
\bigg(\fint_{|x|>\rho}|g|^2\bigg)^\frac{1}{2}\bigg(\fint_{|x| < 2\rho}|\nabla(v-w)|^2\bigg)^\frac{1}{2}\nonumber\\
&\quad+\bigg(\frac{r}{R}\bigg)^d\bigg(\fint_{ |x| < r}|g|^2\bigg)^\frac{1}{2}\bigg(\fint_{ |x| < r}|\nabla(v-w)|^2\bigg)^\frac{1}{2},
\end{align*}
which allows to insert our assumption (\ref{Ld1}):
\begin{align*}
\bigg|\frac{1}{R^d}\int g\cdot\nabla(v-w)\bigg|&\lesssim\sum_{\rho\ge r}\bigg(\frac{\rho}{R}\bigg)^d
\bigg(\frac{r}{\rho}\bigg)^{\beta+d-1}\bigg(\fint_{|x|<2\rho}|\nabla(v-w)|^2\bigg)^\frac{1}{2}\nonumber\\
&\quad+\bigg(\frac{r}{R}\bigg)^d\bigg(\fint_{ |x| < r}|\nabla(v-w)|^2\bigg)^\frac{1}{2}.
\end{align*}
We distinguish the far-field, the intermediate and the near-field part:
\begin{align}
\lefteqn{\bigg|\frac{1}{R^d}\int g\cdot\nabla(v-w)\bigg|}\nonumber\\
&\lesssim\sum_{\rho\ge R}\bigg(\frac{\rho}{R}\bigg)^d
\bigg(\frac{r}{\rho}\bigg)^{\beta+d-1}\bigg[\bigg(\fint_{|x|<2\rho}|\nabla v|^2\bigg)^\frac{1}{2}
+\bigg(\fint_{|x|<2\rho}|\nabla w|^2\bigg)^\frac{1}{2}\bigg]\label{Ld8}\\
&\quad +\sum_{r\le\rho< R}\bigg(\frac{\rho}{R}\bigg)^d
\bigg(\frac{r}{\rho}\bigg)^{\beta+d-1}\bigg(\fint_{|x|<2\rho}|\nabla(v-w)|^2\bigg)^\frac{1}{2}\label{Ld7}\\
&\quad+\bigg(\frac{r}{R}\bigg)^d\bigg(\fint_{ |x| < r}|\nabla(v-w)|^2\bigg)^\frac{1}{2}.\label{Ld10}
\end{align}
On the second term in (\ref{Ld8}) we use (\ref{F2}) in form of
$\big(\fint_{|x|<2\rho}|\nabla w|^2\big)^\frac{1}{2}$ $\lesssim (\frac{\rho}{R})^{k-1}\big (\fint_{|x|<R}|\nabla w|^2\big)^\frac{1}{2}$
and then (\ref{Ld9}) to estimate it by
$(\frac{\rho}{R})^{k-1}\big(\frac{1}{R^d}\int|h|^2 \big)^\frac{1}{2}$; on the first term in (\ref{Ld8}) we use (\ref{Ld3}) in form of
$\big(\fint_{|x|<2\rho}|\nabla v|^2\big )^\frac{1}{2}$ $\lesssim\big(\frac{1}{\rho^d}\int|h|^2\big )^\frac{1}{2}$,
since this can be rewritten as $\big(\frac{R}{\rho})^{d/2}(\frac{1}{R^d}\int|h|^2\big )^\frac{1}{2}$ we see
that this contribution is of higher order with respect to the first one. On the terms in (\ref{Ld7}) and (\ref{Ld10})
we use (\ref{Ld11}). \BR5 Combining these \ER we see
\begin{align}
\bigg|\frac{1}{R^d}\int g\cdot\nabla(v-w)\bigg|
&\lesssim\bigg[ \sum_{\rho\ge R}\bigg(\frac{\rho}{R}\bigg)^d
\bigg(\frac{r}{\rho}\bigg)^{\beta+d-1}\bigg(\frac{\rho}{R}\bigg)^{k-1}\nonumber\\
&\quad+\sum_{r\le\rho< R}\bigg(\frac{\rho}{R}\bigg)^d
\bigg(\frac{r}{\rho}\bigg)^{\beta+d-1}\bigg(\frac{\rho}{R}\bigg)^{(k+1)-1}\nonumber\\
&\quad+\bigg(\frac{r}{R}\bigg)^d\bigg(\frac{r}{R}\bigg)^{(k+1)-1}\bigg]\bigg(\frac{1}{R^d}\int|h|^2\bigg)^\frac{1}{2}.\nonumber
\end{align}
Since $k<\beta<k+1$ we obtain for these geometric series
\begin{align}
\bigg|\frac{1}{R^d}\int g\cdot\nabla(v-w)\bigg|\lesssim 
\bigg(\frac{r}{R}\bigg)^{\beta+d-1}\bigg(\frac{1}{R^d}\int|h|^2\bigg)^\frac{1}{2}.\nonumber
\end{align}
By formula (\ref{Ld6}) and the fact that $h$ was an arbitrary function supported in $\{ |x| > R \}$
we obtain the second part of (\ref{Ld12}). The first part of (\ref{Ld12}) follows immediately
from assumption (\ref{Ld1}) in form of $(\frac{1}{R^d}\int|g|^2)\lesssim 1$ and (\ref{Ld13}).

\end{proof}


\begin{proof}[Proof of Lemma~\ref{Lup}]

For any radius $r_* \leq r \leq R$, we define 
\begin{align*}
u^r:= \argmin_{u'\in X_k} \biggl(\fint_{|x| < r} |\nabla( u - u') |^2 \biggr)^\oh.
\end{align*}
We start by showing that for $r_* \leq r \leq R$ it holds
\begin{align}\label{Lup1}
\biggl( \fint_{|x| < R} |\nabla( u^r - u^R)|^2 \biggr)^\oh \lesssim \inf_{u' \in X_k}\biggl(\fint_{|x| < R}|\nabla (u - u') |^2 \biggr)^\oh.
\end{align}
By the triangle inequality and a dyadic decomposition it is enough to show that, whenever $r \leq r_1 \leq 2r$, we have
\begin{equation}\label{Lup2}
\biggl( \fint_{|x| < R}|\nabla( u^r - u^{r_1})|^2 \biggr)^\oh \lesssim \biggl( \frac{r}{R} \biggr)^{(k+1)-1} \inf_{u' \in X_k} \biggl(\fint_{|x| < R}|\nabla( u - u')|^2 \biggr)^\oh,
\end{equation}
\BR5 where we recall that $k+1$ is the successor of $k$ in~\eqref{F1bis}. \ER 
Indeed, assumption \eqref{F2} on the space $X_k$ and the triangle inequality yield
\begin{align*}
\biggl(  &\fint_{|x| < R}|\nabla( u^r - u^{r_1})|^2 \biggr)^\oh\lesssim \biggl( \frac R r \biggr)^{k-1} \biggl( \fint_{|x| < r}|\nabla( u^r - u^{r_1})|^2 \biggr)^\oh\\
&\lesssim  \biggl( \frac R r \biggr)^{k-1} \biggl( \fint_{|x| < r_1}|\nabla( u - u^R - u^{r_1})|^2 \biggr)^\oh +  \biggl( \frac R r \biggr)^{k-1} \biggl( \fint_{|x| < r}|\nabla( u - u^R - u^r)|^2 \biggr)^\oh.
\end{align*}
By definition of the elements $u^r, u^{r_1}$ this yields
\begin{align*}
\biggl( \fint_{|x| < R}|\nabla( u^r - u^{r_1})|^2 \biggr)^\oh\lesssim & \biggl( \frac R r \biggr)^{k-1} \inf_{u' \in X_k} \biggl(\fint_{|x| < r_1}|\nabla (u - u^R - u')|^2 \biggr)^\oh\\
& +  \biggl( \frac R r \biggr)^{k-1} \inf_{u' \in X_k} \biggl(\fint_{|x| < r}|\nabla (u - u^R - u')|^2 \biggr)^\oh.
\end{align*}
We now apply assumption \eqref{F1} and $r_1 \leq 2r$ to conclude 
\begin{align*}
 \biggl( \fint_{|x| < R}|\nabla( u^r - u^{r_1})|^2 \biggr)^\oh\lesssim \biggl( \frac{r}{R}\biggl)^{(k+1)-1}\biggl(\fint_{|x| < R}|\nabla (u - u^R)|^2 \biggr)^\oh,
\end{align*}
i.e. inequality \eqref{Lup2} by definition of $u^R$. We thus established estimate \eqref{Lup1}.

\medskip

Let us now set $\tilde u  := u^{r_*}$. We prove inequality~\eqref{eqLup01} of the lemma.
By the triangle inequality we get for every $r \leq R$
\begin{align*}
\biggl( \fint_{|x|< r} |\nabla (u - \tilde u )|^2 \biggr)^\oh &\leq \biggl( \fint_{|x|< r} |\nabla (u - u^r)|^2 \biggr)^\oh + \biggl( \fint_{|x|< r} |\nabla (u^r - \tilde u )|^2 \biggr)^\oh,
\end{align*}
so that by \eqref{Lup1} with $r=r_*$ and $R=r$ we get
\begin{align}
\biggl( \fint_{|x|< r} |\nabla (u - \tilde u )|^2 \biggr)^\oh &\stackrel{\eqref{Lup1}}{\lesssim}\biggl( \fint_{|x|< r} |\nabla (u - u^r)|^2 \biggr)^\oh + \inf_{u' \in X_k}\biggl(\fint_{|x| < r}|\nabla (u - u') |^2 \biggr)^\oh\notag\\
&\stackrel{\eqref{F1}}{\lesssim}\biggl( \frac r R \biggr)^k \biggl( \fint_{|x|< R} |\nabla u |^2 \biggr)^\oh.\nonumber
\end{align}

\medskip

\BR5

We now turn to \eqref{cw05}: We note that, since $u$ is $a$-harmonic in $\{|x| < R \}$, the second inequality immediately follows from our assumption~\eqref{F1} with $k=0$. The first estimate is a consequence of the triangle inequality
\begin{align*}
\biggl(\fint_{|x|<r}|\nabla \tilde u|^2\biggl)^\frac{1}{2} \lesssim  \biggl(\fint_{|x|<r}|\nabla( u - \tilde u)|^2\biggl)^\frac{1}{2} + \biggl(\fint_{|x|<r}|\nabla u|^2\biggl)^\frac{1}{2},
\end{align*} 
and an application of \eqref{eqLup01} on the first term on the right-hand side with $R=r$ (which is allowed to apply, since the choice of $\tilde u$ is independent from the radius $R$).

\ER


\medskip

We conclude by establishing the Liouville property \eqref{Fischer2} for the spaces $X_m$, 
thereby generalizing the result of \cite{FischerOtto} to our abstract setting. The implication $\Leftarrow$ immediately follows from assumption \eqref{F2}.
We turn to $\Rightarrow$: Since $u$ is $a$-harmonic on the whole space, by the first inequality~\eqref{eqLup01} of this lemma we may find an element $\tilde u  \in X_{k-1}$ such that for every $ R \ge r \ge r_* $ it holds
\begin{align*}
\biggl( \fint_{|x| < r} |\nabla ( u -\tilde u )|^2 \biggr)^\oh \lesssim \BR5 \biggl( \frac{r}{R}  \biggr)^{k-1} \ER \biggl( \fint_{|x| < R}|\nabla u|^2 \biggr)^\oh.
\end{align*}
Therefore, \BR5 by letting $R \uparrow +\infty$ our assumption in~\eqref{Fischer2} \ER 
implies
\begin{align*}
\biggl( \fint_{|x| < r} |\nabla ( u -\tilde u )|^2 \biggr)^\oh = 0 \quad \textrm{for all } r \ge r_*,
\end{align*}
and thus $u=\tilde u \in X_{k-1}$. 

\end{proof}

\subsection{Proof of Lemma \ref{LF1} and of Corollary \ref{Lbil} }\hspace*{\fill} 

\begin{proof}[Proof of Lemma~\ref{LF1}]


The implication $c)\Rightarrow a)$ is obvious by definition of $Y_k(r)$.
The implication $a)\Rightarrow b)$ is a consequence of estimate (\ref{f48}) applied with $\tilde u$
playing the role of $u$; in this estimate, we let $R\uparrow\infty$ and use the defining properties
of $X_{k-1}$ and $Y_k(r)$ to infer $(\tilde u,v)=0$. 

\medskip

It thus remains to address $b)\Rightarrow c)$. We start by applying Lemma \ref{Lext} to $(v,0)$
playing the role of $(u,g)$. We may do so, since by
b) and $k\ge 1$ we have in particular that $( 1,v )=0$, which means that the total flux of the divergence-free vector
field $a\nabla v$ in $\{ |x| > r \}$ vanishes, which amounts to the first variant (\ref{Lext27})
of the second hypothesis of Lemma \ref{Lext}.
Hence there exists $(\bar v, \bar g)$ such that (\ref{Pd12}) and (\ref{Pd3}) hold, as well as (\ref{Pd1}) in form of
\begin{align}\label{Lext20}
\biggl(\frac{1}{r^d}\int|\nabla\bar v|^2+|\bar g|^2\bigg)^\frac{1}{2}
\lesssim\bigg(\fint_{|x| > r}|\nabla v|^2\bigg)^\frac{1}{2}.
\end{align}
We now argue that our assumption b) implies
\begin{align}\label{Lext18}
\int\nabla\tilde u\cdot \bar g=0\quad\mbox{for all}\;\tilde u\in X_{k-1}.
\end{align}
Indeed, by (\ref{bil.def}) and (\ref{Pd3}) we have 
$(\tilde u,v)$ $=\int\nabla\eta\cdot(\tilde ua\nabla \bar v-\bar va\nabla\tilde u)$
for every compactly supported $\eta$ with $\eta=1$ on $\{|x|<2r\}$. Using (\ref{Pd12}) and the $a$-harmonicity of 
$\tilde u$ we have $\int\nabla\eta\cdot(\tilde ua\nabla \bar v-\bar va\nabla \tilde u)$ 
$=\int\eta\nabla\tilde u\cdot \bar g$,
which by (\ref{Pd3}) is identical to $\int\nabla\tilde u\cdot \bar g$. Therefore, $(\tilde u,v)=0$
implies (\ref{Lext18}).

\medskip

We now resort to a duality argument to establish (\ref{Lext22}). 
To this purpose we give ourselves an arbitrary $R\ge 2r$ and an arbitrary square-integrable 
vector field $h$ supported in $\{ |x| > R \}$, and let $\tilde v$ be the
Lax-Milgram solution of 
\begin{align}\label{Lext17}
-\nabla\cdot a\nabla \tilde v=\nabla\cdot h.
\end{align}
We record the energy inequality in form of
\begin{align}\label{Lext16}
\biggl(\frac{1}{R^d}\int|\nabla \tilde v|^2\bigg)^\frac{1}{2}\lesssim\biggl(\frac{1}{R^d}\int|h|^2\bigg)^\frac{1}{2}.
\end{align}
Since by the support assumption on $h$, $\tilde v$ is $a$-harmonic in $\{ |x| < R \}$, we may apply our hypothesis
(\ref{F1}), which yields a $\tilde u\in X_{k-1}$ with
\begin{align}\label{Lext21}
\bigg(\fint_{ |x| < 2r}|\nabla(\tilde v-\tilde u)|^2\bigg)^\frac{1}{2}\lesssim\biggl(\frac{r}{R}\biggl)^{k-1}\bigg(\frac{1}{R^d}\int|h|^2\bigg)^\frac{1}{2},
\end{align}
where we used (\ref{Lext16}). Since both $\bar u$ and $\tilde v$ are Lax-Milgram solutions 
of (\ref{Pd12}) and (\ref{Lext17}), respectively, we have the formula
\begin{align*}
\int h\cdot\nabla\bar v=\int \bar g\cdot\nabla \tilde v,
\end{align*}
which with help of (\ref{Lext18}) 
we upgrade to
$\int h\cdot\nabla\bar v$ $=\int \bar g\cdot\nabla(\tilde v-\tilde u)$.
Since $h$ is supported in $\{ |x| > R \}\subset \{ |x| > 2r\}$ we obtain by (\ref{Pd3})
\begin{align}\label{Lext19}
\int h\cdot\nabla v=\int \bar g\cdot\nabla(\tilde v-\tilde u).
\end{align}

\medskip

>From formula (\ref{Lext19}) we obtain by the Cauchy-Schwarz inequality and the fact that $\bar g$ is supported
in $\{ |x| < 2r\}$, see (\ref{Pd3}),
\begin{align*}
\frac{1}{R^d}\biggl|\int h\cdot\nabla v\biggl|
\le\biggl(\frac{r}{R}\biggl)^d\biggl(\frac{1}{r^d}\int|\bar g|^2\bigg)^\frac{1}{2}\bigg(\fint_{ |x| < 2r}|\nabla(\tilde v-\tilde u)|^2\bigg)^\frac{1}{2},
\end{align*}
into which we insert (\ref{Lext20}) and (\ref{Lext21})
\begin{align*}
\frac{1}{R^d}\biggl|\int h\cdot\nabla v\biggl|
\le\bigg(\fint_{|x| > r}|\nabla v|^2\bigg)^\frac{1}{2}\biggl(\frac{r}{R}\biggl)^{k+d-1}\biggl(\frac{1}{R^d}\int|h|^2\bigg)^\frac{1}{2}.
\end{align*}
Since $h$ was arbitrary besides being supported in $\{ |x| > R \}$, we obtain (\ref{Lext22}).

\end{proof}


\begin{proof}[Proof of Corollary~\ref{Lbil}]

The first part of Corollary \ref{Lbil} is an immediate consequence of a) $\Rightarrow$ b) of Lemma \ref{LF1}. We turn to the non-trivial implication in \eqref{Liouv.decay}: Let $v \in Y_k$ be  $a$-harmonic in $\{|x| > r \}$ for some $r \geq r_*$.
By estimate \eqref{f48} shown in the proof of Proposition \ref{Pdual} 
 we have for every $u \in X_k$ and $R \geq r$
\begin{align*}
 |( u, v)| &\lesssim \biggl( \int_{|x|< 2R} |\nabla u|^2 \biggr)^\oh \biggl( \int_{|x| > R} |\nabla v|^2 \biggr)^\oh\\
 &\stackrel{\eqref{F2}}\lesssim\biggl(\frac{R}{r_*} \biggr)^{k- 1 + \frac d 2 }\biggl( \int_{|x|< r_*} |\nabla u|^2 \biggr)^\oh \biggl( \int_{|x| > R} |\nabla v|^2 \biggr)^\oh\\
& \lesssim  r_*^{-k + 1 - \frac d 2 }\biggl( \int_{|x|< r_*} |\nabla u|^2 \biggr)^\oh R^{d + k -1}\biggl( \fint_{|x| > R} |\nabla v|^2 \biggr)^\oh.
\end{align*}
Therefore, sending $R \uparrow +\infty$ yields $(u,v) = 0$ for every $u \in X_k$.
By Lemma \ref{LF1} this is equivalent to $v \in  Y_{k+1}$. This yields the $\Rightarrow$ implication in \eqref{Liouv.decay} and thus concludes the argument for Corollary \ref{Lbil}.
\end{proof}

\subsection{Proof of Lemmas~\ref{Lext} and \ref{propositionlocalize}}\hspace*{\fill} 

\begin{proof}[Proof of Lemma~\ref{Lext}]

Let $\eta$ be a cut-off function for $\{ |x| < r \}$ in $\{ |x| < 2r\}$ with $|\nabla\eta|\lesssim\frac{1}{r}$.
We set $\bar u:=(1-\eta)(u-c)$ where
$c$ is a constant to be adjusted later and where we think of $\bar u$ as being extended by zero on $\{ |x| < r \}$. 
This definition implies
\begin{align}\label{Lext5}
\nabla\bar u=(1-\eta)\nabla u-(u-c)\nabla\eta,
\end{align}
which with (\ref{Lext1}) combines to
\begin{align}\label{Lext3}
-\nabla\cdot a\nabla\bar u=\nabla\cdot\big((1-\eta)g+(u-c)a\nabla\eta\big)
+\nabla\eta\cdot(a\nabla u+g).
\end{align}

\medskip

We now claim for the last term on the right-hand side of (\ref{Lext3}) that
\begin{align}\label{Lext10}
\int_{r < |x| < 2r}\nabla\eta\cdot(a\nabla u+g)=0.
\end{align}
We distinguish the cases of hypothesis (\ref{Lext27}), in which case (\ref{Lext10}) is immediate, and 
hypothesis (\ref{Lext2}).
In case of the latter, by (\ref{Lext1}), $a\nabla u+g$ is a divergence-free in 
$\{ |x| > r \}$. Hence for $R\gg r$ and a cut-off function $\eta_R$ for $\{ |x| < R \}$ in $ \{ |x| < 2R \}$ with $|\nabla\eta_R|\lesssim\frac{1}{R}$
we have
\begin{equation*}
\biggl|\int_{r < |x| < 2r}\nabla\eta\cdot(a\nabla u+g)\biggl|
=\biggl|\int_{R < |x| < 2R}\nabla\eta_R\cdot(a\nabla u+g)\biggl| \lesssim \frac{1}{R}\int_{R < |x| < 2R}|a\nabla u+g|,
\end{equation*}
so that (\ref{Lext10}) follows from (\ref{Lext2}) in the limit $R\uparrow\infty$. 
\BR5 In view of~\eqref{Lext10} \ER there exists a function $w$
that \BR5 solves \ER the Neumann boundary problem on the annulus $\{ r < |x| < 2r \}$
\begin{align*}
-\nabla\cdot a\nabla w&=\nabla\eta\cdot(a\nabla u+g)\quad&&\mbox{in}\;\{ r < |x| < 2r \},
\\
\nu\cdot a\nabla w&=0\quad&&\mbox{on}\;\partial(\{ r < |x| < 2r \}),
\end{align*}
interpreted in the weak sense, so that if we extend $-a\nabla w$ by zero outside the annulus to
a vector field $\tilde g$ we have
\begin{align*}
\nabla\cdot\tilde g=\nabla\eta\cdot(a\nabla u+g)\quad\mbox{everywhere}.
\end{align*}
Hence if we set 
\begin{align}\label{Lext6}
\bar g:=(1-\eta)g+(u-c)a\nabla\eta+\tilde g,
\end{align}
identity (\ref{Lext3}) implies the desired equation (\ref{Pd12}). The extension
property (\ref{Pd3}) follows from (\ref{Lext5}) and (\ref{Lext6}) by the support properties of $\eta$ and~$\tilde g$. 

\medskip

It remains to establish the estimate (\ref{Pd1}). Using the triangle inequality on (\ref{Lext5}) in form of
\begin{align*}
\biggl(\int_{ |x| < 2r}|\nabla\bar u|^2\bigg)^\frac{1}{2}\lesssim\biggl(\int_{r < |x| < 2r}|\nabla u|^2\bigg)^\frac{1}{2}
+\frac{1}{r}\biggl(\int_{r < |x| < 2r}(u-c)^2\bigg)^\frac{1}{2}
\end{align*}
the desired
\begin{align*}
\biggl(\int_{ |x| < 2r}|\nabla\bar u|^2\bigg)^\frac{1}{2}\lesssim\biggl(\int_{r < |x| < 2r}|\nabla u|^2\bigg)^\frac{1}{2}
\end{align*}
follows from our hypothesis (\ref{poinc}) of Poincar\'e's inequality. 
In view of (\ref{Lext6}), the $\bar g$-part of (\ref{Pd1}), that is,
\begin{align*}
\biggl(\int_{ |x| < 2r}|\bar g|^2\bigg)^\frac{1}{2}\lesssim\biggl(\int_{r < |x| < 2r}|\nabla u|^2+|g|^2\bigg)^\frac{1}{2},
\end{align*}
follows from the same argument provided we establish
\begin{align*}
\biggl(\int_{ |x| < 2r}|\tilde g|^2\bigg)^\frac{1}{2}\lesssim\biggl(\int_{r < |x| < 2r}|\nabla u|^2+|g|^2\bigg)^\frac{1}{2}.
\end{align*}
In view of the definition of $\tilde g$ as the extension of $-a\nabla w$, where $w$ solves the
Neumann problem on the annulus with right-hand side $f:=\nabla\eta\cdot(a\nabla u+g)$ (of vanishing average),
it suffices to show
\begin{align*}
\biggl(\int_{r < |x| < 2r}|\nabla w|^2\bigg)^\frac{1}{2}\lesssim R\biggl(\int_{r < |x| < 2r}f^2\bigg)^\frac{1}{2}.
\end{align*}
In order to see this, we test the Neumann problem with $w-\bar c$ for some constant $\bar c$
and apply once more Poincar\'e's inequality (\ref{poinc}).
\end{proof}

\medskip

{ \newcommand{\W}{\bar w}
\begin{proof}[Proof of Lemma~\ref{propositionlocalize}]

\BR5
For any $\rho$ dyadic multiple of $r_*$ smaller than $R$ we define
\begin{align}\nonumber
h_\rho:=I(\{ \rho < |x| < 2\rho\})g\quad\mbox{for}\;r_*<\rho<R\quad\mbox{and}\quad h_{r_*}:=I(\{ |x| < r_* \})g,
\end{align}
where $I(B)$ denotes the characteristic function of the set $B$. Without loss of generality we assume that $ R = 2^N r_*$, since otherwise we can replace $R$ with the largest $2^Nr_*$ which is not larger than $R$.

\medskip
Let us define $\W= \sum_{\rho<R} w_\rho$ with $ w_\rho$ being the Lax-Milgram solution of 
\begin{equation}\label{6.1}
-\nabla\cdot a\nabla  w_\rho=\nabla\cdot h_\rho. 
\end{equation}

Since by \eqref{ld.1b} and by the definition of $\W$ the difference $w-\W$ is $a$-harmonic in $\{ |x| < R \}$, we may appeal to \BR5 the mean-value property \eqref{cw05}\ER, the triangle inequality and the first estimate in \eqref{properties} to get
\begin{align*}
\biggl(\fint_{|x|<r_*}|\nabla (\W - w)|^2\biggr)^\oh &{\lesssim} \biggl(\fint_{|x|<R}|\nabla (\W - w)|^2\biggr)^\oh \lesssim \biggl(\fint_{|x|<R}|\nabla \W |^2\biggr)^\oh + 1\\
&{\le} \sum_{\rho<R} \biggl(\fint_{|x|<R}|\nabla w_\rho|^2\biggr)^\oh + 1.
\end{align*}
The energy estimate for \eqref{6.1} and the definition of $h_\rho$ then imply
\begin{equation*}
 \biggl(\fint_{|x|<r_*}|\nabla (\W - w)|^2\biggr)^\oh \lesssim \sum_{\rho<R} \biggl(\frac{1}{R^d}\int | h_\rho |^2\biggr)^\oh + 1 
 \lesssim \sum_{\rho<R} \biggl(\fint_{|x|< 2\rho}|h |^2\biggr)^\oh + 1.
\end{equation*}
Hence we are done once we establish
\begin{equation}\label{6.3b}
\biggl(\fint_{|x|<r_*}|\nabla \W|^2\biggr)^\oh \lesssim \sum_{\rho<R} \biggl( \fint_{|x|< 2\rho}| h|^2 \biggr)^\oh.
\end{equation}
Indeed, using the triangle inequality, the two estimates above and the second estimate in \eqref{properties} yields \eqref{ld.4loc}.

\medskip

We argue for \eqref{6.3b} in the following way: For every $r_* < \rho<R$, dyadic multiple of $r_*$, the construction of $ h_\rho$ and equation (\ref{6.1}) imply that the function $ w_\rho$ is $a$-harmonic in $\{|x| < \rho \}$, and thus we may use the \BR5 mean-value property \eqref{cw05} from Lemma~\ref{Lup} \ER and the energy estimate for (\ref{6.1}) to bound
\begin{equation*}
\biggl(\fint_{|x|<r_*}|\nabla w_\rho|^2\biggr)^\oh
\lesssim
\biggl(\fint_{|x|< \rho}|\nabla w_\rho|^2\biggr)^\oh\lesssim \BR5 \biggl(\fint_{|x|< 2\rho}| h|^2\biggr)^\oh.
\end{equation*}
In the case of $\rho=r_*$, the energy estimate alone yields
\begin{equation*}
\biggl(\fint_{|x|<r_*}|\nabla w_{r_*}|^2\biggr)^\oh\lesssim\biggl(\fint_{|x|<r_*}|h|^2\biggr)^\oh.
\end{equation*}
Hence, for $r_* \le \rho <R$ we have that
\begin{equation*}
\biggl(\fint_{|x|<r_*}|\nabla w_\rho|^2\biggr)^\oh {\lesssim} \biggl( \fint_{|x|< 2\rho}| h|^2 \biggr)^\oh.
\end{equation*}
This, together with the triangle inequality for $\W$
\begin{equation}\nonumber
\biggl(\fint_{|x| < r_*}|\nabla  \W|^2\biggr)^\oh\le
\sum_{\rho<R}\biggl(\fint_{|x|< r_*}|\nabla  w_\rho|^2\biggr)^\oh,
\end{equation}
allows us to conclude \eqref{6.3b} and thus also the proof of this lemma. 
\end{proof}
}

\medskip



\section{Auxiliary results in the constant-coefficient case}\label{constant.r}
The characterization of the spaces $\{X_m^\h\}_m$ and $\{Y_k^\h\}_k$, that is,
in the Euclidean case of a constant coefficient $\ah$, is folklore. Since for part ii) of Lemma \ref{folklore} we could not find a proof
in case of systems, and since it is a pleasing application of the results of Section \ref{abstract.r}, we give
a proof of Lemma \ref{folklore}.

\begin{lemma}\label{folklore}
i) For all $m\ge 0$, $X^\h_m$ consists of polynomials of degree $\le m$. The space $W_m^\h\subset X^\h_m$ of polynomials
homogeneous of degree $m$ is a complement of $X^\h_{m-1}$ in $X^\h_{m}$ (and thus is isomorphic to $X_m^\h/X_{m-1}^\h$).

\smallskip

ii) Let $G$ denote the fundamental solution of $-\nabla\cdot \ah^*\nabla$. 
Then for all $k\ge 1$, the linear span $Z_k^\h$ of the set of functions $\{\partial^\alpha G\}_{\alpha}$, where $\alpha$ runs
over all multi-indices of degree $k$, provides a complement for $Y_{k+1}^\h$ in $Y_k^\h$ (and thus is isomorphic to $Y_k^\h/Y_{k+1}^\h$).

\smallskip

iii) For all $m\ge 2$, $(\cdot,\cdot)_\h$ provides an isomorphism between
$Y_k^\h/Y_{m+1}^\h$ and $(X_m^\h/X_{k-1}^\h)^*$ and between $(Y_k^\h/Y_{m+1}^\h)^*$ and $X_m^\h/X_{k-1}^\h$.

\smallskip

iv) For all $k\ge 1$ and $r>0$, the projection $Y_k^\h(r)\ni v\mapsto w\in Z_k^\h$ defined
by the direct sum $Y_k^\h(r)=Z_{k}^\h\oplus Y_{k+1}^\h(r)$ provided in ii) is continuous
in the sense of
\begin{align}\label{ao04}
\bigg(\fint_{R<|x|<2R}|\nabla v_k|^2\bigg)^\frac{1}{2}\lesssim \bigg(\fint_{R<|x|<2R}|\nabla v|^2\bigg)^\frac{1}{2}
\quad\mbox{for all}\;R\ge r.
\end{align}

\end{lemma}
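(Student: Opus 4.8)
The plan is to prove the four items in the order i), iii), ii), iv): item i) is a self-contained Liouville principle, item iii) then follows at once from the abstract results of Section~\ref{abstract.r}, and items ii) and iv) are deduced from iii) by explicit computations with the fundamental solution.

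For i), I would argue directly. Since $\ah$ is constant, every $\ah$-harmonic $u$ is smooth and each of its partial derivatives is again $\ah$-harmonic; testing the equation for a component $w$ of $\nabla^{j}u$ ($j\ge 1$) against $\zeta^{2}w$ with a cutoff $\zeta$ yields the Caccioppoli bound $\int_{|x|<R}|\nabla^{j+1}u|^{2}\lesssim R^{-2}\int_{|x|<2R}|\nabla^{j}u|^{2}$, hence $\fint_{|x|<R}|\nabla^{j+1}u|^{2}\lesssim R^{-2}\fint_{|x|<2R}|\nabla^{j}u|^{2}$. For $u\in X_{m}^{\h}$, where $\fint_{|x|<R}|\nabla u|^{2}\lesssim R^{2(m-1)}$, iterating this gives $\fint_{|x|<R}|\nabla^{m+1}u|^{2}\lesssim R^{-2}\to 0$, so $\nabla^{m+1}u\equiv 0$ and $u$ is a polynomial of degree $\le m$; conversely any $\ah$-harmonic polynomial of degree $\le m$ obviously lies in $X_{m}^{\h}$. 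Decomposing such a polynomial into homogeneous parts $p=\sum_{l\le m}p_{l}$, the relation $\sum_{l}\nabla\cdot\ah\nabla p_{l}=0$ (with the $l$-th summand homogeneous of degree $l-2$) forces every $p_{l}$ to be $\ah$-harmonic; thus $p_{m}\in W_{m}^{\h}$ and $\sum_{l<m}p_{l}\in X_{m-1}^{\h}$, and since $W_{m}^{\h}\cap X_{m-1}^{\h}=\{0\}$ trivially we obtain $X_{m}^{\h}=X_{m-1}^{\h}\oplus W_{m}^{\h}$.

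Next I would record that, with vanishing correctors $\phi\equiv\sigma\equiv 0$ and any $r_{*}>0$, the constant-coefficient operator satisfies~\eqref{vol} and~\eqref{poinc} (standard for Euclidean balls and annuli) and, via Proposition~\ref{Ck.1} (whose hypothesis~\eqref{Ckeq} is vacuous here), also~\eqref{F1} and~\eqref{F2} for the spaces $X_{k}^{\h}$ of i); hence Propositions~\ref{Pdual} and~\ref{Pbidual} apply verbatim, and their parts~i) together are precisely item iii) of the lemma, for $1\le k\le m$. For ii), observe first that for constant $\ah^{*}$ the decaying fundamental solution $G$ of $-\nabla\cdot\ah^{*}\nabla$ is exactly homogeneous of degree $2-d$ (by uniqueness of the decaying solution under $x\mapsto\lambda x$ when $d\ge 3$; for $d=2$ one has the usual logarithm, which does not affect $Z_{k}^{\h}$ for $k\ge 1$), so each $\partial^{\alpha}G$ with $|\alpha|=k$ is $\ah^{*}$-harmonic on $\R^{d}\setminus\{0\}$ and homogeneous of degree $-(d-2)-k$. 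This already gives $Z_{k}^{\h}\subset Y_{k}^{\h}(r)$ for every $r>0$ and, since a non-zero function homogeneous of that degree cannot decay at the faster rate demanded of $Y_{k+1}^{\h}$, also $Z_{k}^{\h}\cap Y_{k+1}^{\h}(r)=\{0\}$. Then, for $p\in X_{k}^{\h}$ and $\alpha=\beta+e_{i}$ with $|\alpha|=k\ge 1$, from $-\nabla\cdot\ah^{*}\nabla\partial^{\alpha}G=\partial^{\alpha}\delta_{0}=\nabla\cdot(e_{i}\,\partial^{\beta}\delta_{0})$ and the integration-by-parts identity for $(\cdot,\cdot)_{\h}$ established in the proof of Proposition~\ref{Pdual} one gets $(p,\partial^{\alpha}G)_{\h}=\pm\!\int\nabla p\cdot e_{i}\,\partial^{\beta}\delta_{0}=\pm\,\partial^{\alpha}p(0)$. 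Since a homogeneous polynomial of degree $k$ is determined by its order-$k$ derivatives at the origin, the functionals $p\mapsto\partial^{\alpha}p(0)$, $|\alpha|=k$, span $(X_{k}^{\h}/X_{k-1}^{\h})^{*}$; hence, under the isomorphism of iii) with $m=k$, the classes of $\{\partial^{\alpha}G\}_{|\alpha|=k}$ span $Y_{k}^{\h}/Y_{k+1}^{\h}$, and together with $Z_{k}^{\h}\cap Y_{k+1}^{\h}(r)=\{0\}$ this gives the direct sum $Y_{k}^{\h}(r)=Z_{k}^{\h}\oplus Y_{k+1}^{\h}(r)$. (For systems one argues identically with the matrix-valued $G$, with $Z_{k}^{\h}$ spanned by the $\partial^{\alpha}(Ge_{n})$ and the separating functionals $p\mapsto\partial^{\alpha}p^{n}(0)$.)

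For iv), items i)--iii) show that $(\cdot,\cdot)_{\h}$ restricts to a perfect pairing of the finite-dimensional spaces $W_{k}^{\h}\cong X_{k}^{\h}/X_{k-1}^{\h}$ and $Z_{k}^{\h}\cong Y_{k}^{\h}/Y_{k+1}^{\h}$. Given $v=v_{k}+\tilde v\in Y_{k}^{\h}(r)$ with $v_{k}\in Z_{k}^{\h}$, $\tilde v\in Y_{k+1}^{\h}(r)$, and $R\ge r$, the perfect pairing (after a scaling reduction to $R=1$, which is legitimate because elements of $W_{k}^{\h}$ are homogeneous of degree $k$ and those of $Z_{k}^{\h}$ of degree $-(d-2)-k$, so the constant is $R$-independent) yields $\bigl(\int_{R<|x|<2R}|\nabla v_{k}|^{2}\bigr)^{1/2}\lesssim\sup_{0\ne p\in W_{k}^{\h}}|(p,v_{k})_{\h}|\big/\bigl(\int_{R<|x|<2R}|\nabla p|^{2}\bigr)^{1/2}$. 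By~\eqref{o02} we have $(p,\tilde v)_{\h}=0$, so the numerator equals $|(p,v)_{\h}|$; and the localization underlying~\eqref{f48}, with the cutoff $\eta$ supported in the single annulus $\{R<|x|<2R\}$, in fact furnishes the sharper bound $|(p,v)_{\h}|\lesssim\bigl(\int_{R<|x|<2R}|\nabla p|^{2}\bigr)^{1/2}\bigl(\int_{R<|x|<2R}|\nabla v|^{2}\bigr)^{1/2}$. Combining these gives $\bigl(\int_{R<|x|<2R}|\nabla v_{k}|^{2}\bigr)^{1/2}\lesssim\bigl(\int_{R<|x|<2R}|\nabla v|^{2}\bigr)^{1/2}$, which is~\eqref{ao04}. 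The main obstacle I anticipate is item ii): making $Z_{k}^{\h}\cong Y_{k}^{\h}/Y_{k+1}^{\h}$ rigorous requires care with the (possibly matrix-valued) fundamental solution and with the distributional identity $(p,\partial^{\alpha}G)_{\h}=\pm\partial^{\alpha}p(0)$, and in particular with the fact that the $\partial^{\alpha}G$ need not be linearly independent (already $\sum_{i}\partial_{i}^{2}G=0$ off the origin for the Laplacian), so one must work throughout with spans and invoke the perfectness of the pairing rather than count multi-indices.
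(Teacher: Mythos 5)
Your argument is correct and follows the same route as the paper's: part i) by a Caccioppoli--Liouville iteration, part iii) by verifying hypotheses \eqref{vol}, \eqref{poinc}, \eqref{F1}, \eqref{F2} and invoking Propositions~\ref{Pdual}--\ref{Pbidual}, part ii) from the homogeneity of $\partial^\alpha G$ together with the identity $(p,\partial^\alpha G)_\h=\pm\partial^\alpha p(0)$ and the duality from iii), and part iv) by the perfect pairing plus the annulus-localized version of \eqref{f48}. The only cosmetic differences are that you give a self-contained Caccioppoli proof of the Liouville property in i) where the paper cites Giaquinta, you obtain \eqref{F1}--\eqref{F2} by applying Proposition~\ref{Ck.1} with vanishing correctors rather than invoking classical $C^{k,1}$-regularity directly, and you assert homogeneity of $G$ itself (valid for $d\ge 3$) instead of the paper's more cautious appeal to homogeneity of $\nabla\partial^\alpha G$ for $|\alpha|+1\ge 2$ via \eqref{R5}.
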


The following lemma defines a linear map $Y_k^{\h}/Y_{k+1}^{\h}\ni v\mapsto\vt$,
which canonically (and trivially) extends to a map $Y_k^{\h}\ni v\mapsto\vt$,
needed in Theorem \ref{main} to construct the isomorphism $Y_k^{\h}/Y_{k+2}^\h\cong Y_{k}/Y_{k+2}$
in the non-symmetric case.
 
\begin{lemma}\label{Rom1}
Let $k\ge 1$ be given and let us identify the quotient space 
$X_{k+1}^\h/X_{k}^\h$ with the space $W_{k+1}^\h$ of $\ah$-harmonic polynomials homogeneous of degree $k+1$, 
see part i) of Lemma \ref{folklore},
and the quotient space $Y_k^\h/Y_{k+1}^\h$
with  $Z_k^\h:=\text{span}\{\partial^\alpha G\}_{|\alpha|=k}$, see part ii) of Lemma \ref{folklore}.

\smallskip

In this sense for every $v\in Y_k^\h/Y_{k+1}^\h$ there exists a unique function 
$\vt$, homogeneous of degree $-(d-2)-(k+1)$, such that
\begin{align}\label{R2}
-\nabla\cdot \ah^*\nabla\vt&=\nabla\cdot \partial_{ij}vC_{ij}^{*,\mathrm{sym}}\quad\mbox{in}\;\mathbb{R}^d \backslash \{0\},
\end{align}
and such that for all $u\in X_{k+1}^\h/X_k^\h$ and $R> 0$
\begin{align}\label{R1}
\int_{|x| = R}\nu \cdot\big(\vt \ah\nabla u&-u(\ah^*\nabla\vt + \partial_{ij}vC_{ij}^{*,\mathrm{sym}})\big)
=\int_{|x| = R} \nu_k \partial_i v \partial_j u C_{ijk}^{\mathrm{sym}}.
\end{align}
\end{lemma}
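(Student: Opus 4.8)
The plan is to treat this as a Fredholm-type linear problem on the finite-dimensional space of homogeneous functions of the prescribed degree, and to use the pairing structure from Section~\ref{abstract.r} (specialized to the Euclidean case via Lemma~\ref{folklore}) to identify the kernel of the relevant map with something that manifestly vanishes. First I would fix $v \in Z_k^\h \cong Y_k^\h/Y_{k+1}^\h$ and note that the right-hand side $\nabla\cdot(\partial_{ij}v\,C_{ij}^{*,\mathrm{sym}})$ of \eqref{R2} is a divergence of a vector field whose entries are homogeneous of degree $-(d-2)-(k+1)-1 = -(d-2)-k-2$ (since $\partial_{ij}v$ is homogeneous of degree $-(d-2)-k-2$ and $C$ is constant); hence any homogeneous particular solution $\vt$ must have degree $-(d-2)-(k+1)$, which is exactly the asserted homogeneity and places $\vt$ in the same ``scaling class'' as $Z_{k+1}^\h$. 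The existence of \emph{some} homogeneous solution of \eqref{R2} away from the origin is classical: one can solve on the sphere (the equation descends to an elliptic equation on $S^{d-1}$ for the angular profile, and solvability is equivalent to an orthogonality condition against the cokernel, which consists of the restrictions of $X_{k+1}^\h$-homogeneous polynomials — this orthogonality is nothing but the statement that the flux of the forcing through a sphere, tested against $\ah$-harmonic polynomials of degree $k+1$, is accounted for), or equivalently one writes $\vt$ as a suitable contraction of second derivatives of $G$ with the constant tensor $C^{*,\mathrm{sym}}$. So a homogeneous solution exists, and it is unique up to adding a homogeneous element of the kernel of $-\nabla\cdot\ah^*\nabla$ in that degree, i.e.\ up to an element of $Z_{k+1}^\h$.

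The heart of the matter is then to show that the side condition \eqref{R1} selects a \emph{unique} representative among $\vt + Z_{k+1}^\h$. The key observation is that if $w \in Z_{k+1}^\h$ (so $-\nabla\cdot\ah^*\nabla w = 0$ away from the origin, homogeneous of degree $-(d-2)-(k+1)$), then for any $u \in W_{k+1}^\h$ the quantity $\int_{|x|=R}\nu\cdot(w\,\ah\nabla u - u\,\ah^*\nabla w)$ is exactly the Euclidean bilinear form $(u,w)_\h$ up to sign (by the divergence theorem, cf.\ the definition \eqref{bil.def}), and it is $R$-independent. Thus changing $\vt$ by $w \in Z_{k+1}^\h$ changes the left-hand side of \eqref{R1} by the linear functional $u \mapsto (u,w)_\h$ on $W_{k+1}^\h$, while the right-hand side of \eqref{R1} is unchanged (it involves only $v$ and $u$). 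By part~iii) of Lemma~\ref{folklore}, the form $(\cdot,\cdot)_\h$ is a perfect pairing between $Z_{k+1}^\h \cong Y_{k+1}^\h/Y_{k+2}^\h$ and $W_{k+1}^\h \cong X_{k+1}^\h/X_k^\h$ (these have equal dimension). Hence the map $Z_{k+1}^\h \ni w \mapsto \big(u\mapsto(u,w)_\h\big) \in (W_{k+1}^\h)^*$ is a linear isomorphism, so there is exactly one $w$ making \eqref{R1} hold for all $u$ and all $R$; equivalently, for \emph{any} fixed homogeneous particular solution $\vt_0$ of \eqref{R2}, define $\ell_0.u := \int_{|x|=R}\nu\cdot(\vt_0\ah\nabla u - u(\ah^*\nabla\vt_0 + \partial_{ij}vC_{ij}^{*,\mathrm{sym}})) - \int_{|x|=R}\nu_k\partial_iv\partial_ju\,C_{ijk}^{\mathrm{sym}}$ (checking first that this is indeed $R$-independent, again because the integrand is the surface flux of a divergence-free vector field on $\R^d\setminus\{0\}$ thanks to \eqref{R2}), and set $\vt := \vt_0 - w$ where $w\in Z_{k+1}^\h$ is the unique element with $(\cdot,w)_\h = \ell_0$. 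Linearity of $v\mapsto\vt$ is immediate from linearity of \eqref{R2}, \eqref{R1} and uniqueness.

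The main obstacle I anticipate is the bookkeeping of homogeneities and the verification of $R$-independence of both sides of \eqref{R1}: one must check that the integrand in \eqref{R1}, including the extra term $\nu_k\partial_iv\partial_ju\,C_{ijk}^{\mathrm{sym}}$, is the flux of a divergence-free field, which forces a short computation using $-\nabla\cdot\ah\nabla u = 0$, equation \eqref{R2} for $\vt$, and the symmetry of $C^{\mathrm{sym}}$ in all three indices; in particular the role of the symmetrization (rather than $C$ itself) is exactly what makes $\partial_{ij}v\,\partial_k u\,C_{ijk}^{\mathrm{sym}}$ behave well under differentiation when combined with the other terms. A secondary point is the solvability of the angular equation for $\vt_0$: the Fredholm alternative there requires precisely that the forcing is orthogonal to the finite-dimensional cokernel, and one should identify that cokernel with $W_{k+1}^\h$ and note that the forcing $\nabla\cdot(\partial_{ij}vC_{ij}^{*,\mathrm{sym}})$ integrates to zero against it by a flux argument (the forcing is an honest distributional derivative supported away from the origin once we work on an annulus) — alternatively this solvability can simply be imported by exhibiting $\vt_0$ explicitly as a constant-tensor contraction of $\nabla^2 G$, which bypasses the Fredholm discussion entirely. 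Everything else is routine linear algebra once Lemma~\ref{folklore}~iii) is in hand.
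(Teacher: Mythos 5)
Your treatment of uniqueness and of the mechanism for selecting the unique representative modulo homogeneous $\ah^*$-harmonic functions of the prescribed degree --- via the perfect pairing $(\cdot,\cdot)_\h$ between $W_{k+1}^\h$ and $Z_{k+1}^\h$ from Lemma~\ref{folklore}~iii) --- is correct and is essentially the paper's argument. There are, however, two gaps.

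The principal gap is in your construction of a homogeneous particular solution of~\eqref{R2}. You propose a constant-tensor contraction of $\nabla^2 G$; this cannot work. Since $-\nabla\cdot\ah^*\nabla\,\partial^\gamma G=0$ away from the origin for every multi-index $\gamma$, \emph{any} linear combination of derivatives of $G$ is $\ah^*$-harmonic there and therefore cannot satisfy~\eqref{R2}, whose right-hand side is nonzero. What the paper actually does is introduce the fundamental solution $K$ of the ``bi-operator'' $(-\nabla\cdot\ah^*\nabla)^2$ and take, for $v=\partial^\alpha G$, the particular solution $-\nabla\cdot(\partial_{ij}\partial^\alpha K\,C_{ij}^{*,\mathrm{sym}})$; equation~\eqref{R2} then follows from the identity $-\nabla\cdot\ah^*\nabla\,\partial^\beta K=\partial^\beta G$, valid distributionally in $\mathbb{R}^d\setminus\{0\}$ for $|\beta|\ge 4$. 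This is a real ingredient you miss: one must verify that $\partial^\beta K$ is unambiguously defined and homogeneous of degree $-(d-4)-|\beta|$ once $|\beta|\ge 4$ (the paper proves this by a Fourier-space argument), since $K$ itself may be ambiguous or contain logarithms (e.g.\ $d=4$). Your alternative Fredholm-on-the-sphere route is plausible in principle, but the asserted orthogonality of the forcing to the cokernel is not actually verified --- it is not a free flux argument --- and it is precisely the explicit formula through $K$ that lets the paper sidestep this check.

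A secondary, but genuine, error: you justify the $R$-independence of the functional $\ell_0$ by claiming the integrand is the flux of a divergence-free vector field thanks to~\eqref{R2}. It is not. A direct computation, using~\eqref{R2}, the $\ah$-harmonicity of $u$, the full symmetry of $C^{\mathrm{sym}}$, and $C^{*,\mathrm{sym}}=-C^{\mathrm{sym}}$ (Lemma~\ref{tensor.C}~i)), yields a nonzero divergence for the combined vector field, namely $-\partial_iv\,\partial_{jk}u\,C_{ijk}^{\mathrm{sym}}$. The correct reason for $R$-independence --- and the one the paper uses --- is homogeneity: every integrand appearing in~\eqref{R1} is homogeneous of degree $-(d-1)$, so each sphere integral is independent of $R$.
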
 

Quite analogously, in order 
to construct the isomorphism $X_m^{\h}/X_{m-2}^\h\cong X_{m}/X_{m-2}$
in the non-symmetric case, we need to define a suitable map
$X_m^{\h}/X_{m-1}^{\h}\ni u\mapsto\ut$.

\begin{lemma}\label{Rom2}
Let $m\ge 3$ be given and let us as in Lemma \ref{Rom1} identify the quotient space
$X_{m}^\h/X_{m-1}^\h$ with $\ah$-harmonic polynomials homogeneous of degree $m$
see part i) of Lemma \ref{folklore},
and the quotient space $Y_{m-1}^\h/Y_{m}^\h$
with the span of $\{\partial^\alpha G\}_{|\alpha|=m-1}$, see part ii) of Lemma \ref{folklore}.

\smallskip

In this sense for every $u\in X_m^\h/X_{m-1}^\h$ there exists a unique polynomial
$\ut$, homogeneous of degree $m-1$, such that
\begin{align}\label{R7}
-\nabla\cdot \ah\nabla\ut&=\nabla\cdot \partial_{ij}uC_{ij}^{\mathrm{sym}}
\end{align}
and such that for all $v\in Y_{m-1}^\h/Y_{m}^\h$ and $R> 0$
\begin{align}\label{R11}
\int_{|x| = R} \nu \cdot\big(v(\ah\nabla\ut+\partial_{ij}uC_{ij}^{\mathrm{sym}})-\ut \ah^*\nabla v\big)= 0.
\end{align}
\end{lemma}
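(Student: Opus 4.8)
\textit{Plan of proof.} The plan is to produce one homogeneous polynomial solution of~\eqref{R7} of degree $m-1$, to observe that the set of such solutions is an affine space over the space $W_{m-1}^\h$ of $\ah$-harmonic homogeneous polynomials of degree $m-1$ (part i) of Lemma~\ref{folklore}), and then to single out its unique element using~\eqref{R11}, exploiting that the form $(\cdot,\cdot)_\h$ pairs $W_{m-1}^\h$ and $Z_{m-1}^\h$ perfectly. First I would reduce to the Euclidean case: since $\nabla\cdot\ah\nabla=\nabla\cdot\ah^{\mathrm{sym}}\nabla$ we may take $\ah$ symmetric, and a linear change of variables turning $\ah$ into the identity turns both $-\nabla\cdot\ah\nabla$ and $-\nabla\cdot\ah^*\nabla$ into $-\Delta$, preserves homogeneity degrees and the spaces $X_m^\h,Y_k^\h,W_m^\h,Z_k^\h$ in their description from Lemma~\ref{folklore}, preserves the form $(\cdot,\cdot)_\h$ up to a positive factor, and replaces $C$ by another constant tensor symmetric in all three indices; so from now on $\ah=\ah^*=\mathrm{id}$ and $G$ is the Newtonian potential.

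Next, with $u$ the harmonic homogeneous polynomial of degree $m$ representing the given class, I set $P:=C^{\mathrm{sym}}_{ijk}\partial_{ijk}u=\nabla\cdot(\partial_{ij}u\,C^{\mathrm{sym}}_{ij})$, a homogeneous polynomial of degree $m-3$. Since the second-order constant-coefficient elliptic operator $-\Delta$ maps the homogeneous polynomials of degree $m-1$ onto those of degree $m-3$ (classical; in the systems case one invokes invertibility of the symbol), there is a homogeneous polynomial $\ut_0$ of degree $m-1$ with $-\Delta\ut_0=P$, i.e. solving~\eqref{R7}; every homogeneous-degree-$(m-1)$ solution of~\eqref{R7} is then of the form $\ut_0+h$ with $h\in W_{m-1}^\h$.

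Now I would fix such a candidate $\ut$ and a $v\in Z_{m-1}^\h$, and put $g_{\ut,v}:=v(\nabla\ut+\partial_{ij}u\,C^{\mathrm{sym}}_{ij})-\ut\nabla v$. A short computation using~\eqref{R7} and the harmonicity of $v$ on $\R^d\setminus\{0\}$ gives $\nabla\cdot g_{\ut,v}=C^{\mathrm{sym}}_{ijk}(\partial_kv)(\partial_{ij}u)$, a function homogeneous of degree $-d$; hence by the divergence theorem on dyadic annuli $R\mapsto\int_{|x|=R}\nu\cdot g_{\ut,v}$ is affine in $\log R$, with constant term $c_0(\ut,v):=\int_{|x|=1}\nu\cdot g_{\ut,v}$ and slope $I(u,v):=\int_{|x|=1}C^{\mathrm{sym}}_{ijk}(\partial_kv)(\partial_{ij}u)$ independent of $\ut$. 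The crux — which I expect to be the main obstacle — is to show $I(u,v)=0$ for every $v\in Z_{m-1}^\h$: from $C^{\mathrm{sym}}_{ijk}(\partial_kv)(\partial_{ij}u)=\partial_k(C^{\mathrm{sym}}_{ijk}v\,\partial_{ij}u)-v\,C^{\mathrm{sym}}_{ijk}\partial_{ijk}u$ and the fact that the unit-sphere integral of the divergence of a homogeneous-degree-$(1-d)$ vector field vanishes, $I(u,v)$ equals, up to sign, $\int_{|x|=1}v\,P$; in the (now Euclidean) setting $v|_{\{|x|=1\}}$ is a spherical harmonic of degree $m-1$ (being a combination of $\partial^\alpha G$, $|\alpha|=m-1$) while $P|_{\{|x|=1\}}$ is a sum of spherical harmonics of degrees $\le m-3<m-1$, so orthogonality of spherical harmonics of distinct degrees forces $I(u,v)=0$.

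Finally, knowing $I\equiv0$, condition~\eqref{R11} holds for a given $\ut$ precisely when $c_0(\ut,\cdot)\equiv0$ on $Z_{m-1}^\h$. Replacing $\ut$ by $\ut+h$ changes $c_0(\cdot,v)$ by $\int_{|x|=1}\nu\cdot(v\nabla h-h\nabla v)$, which, as $h$ and $v$ are harmonic away from the origin, is $R$-independent and agrees up to sign with $(h,v)_\h$ from~\eqref{bil.def}. By part iii) of Lemma~\ref{folklore} (equivalently the classical spherical-harmonics duality; here $m\ge3$ is used) this form is a perfect pairing of $W_{m-1}^\h\cong X_{m-1}^\h/X_{m-2}^\h$ with $Z_{m-1}^\h\cong Y_{m-1}^\h/Y_m^\h$, so $h\mapsto(h,\cdot)_\h$ is an isomorphism $W_{m-1}^\h\to(Z_{m-1}^\h)^*$; thus there is exactly one $h$ with $c_0(\ut_0+h,\cdot)\equiv0$. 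The polynomial $\ut:=\ut_0+h$, homogeneous of degree $m-1$, is then the unique function solving~\eqref{R7} and~\eqref{R11}, which is the assertion (and it vanishes if $C^{\mathrm{sym}}=0$, consistently with Remark~\ref{symmetrization}).
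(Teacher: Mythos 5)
Your argument follows the paper's skeleton and is correct in the scalar case: produce one homogeneous degree-$(m-1)$ solution of \eqref{R7}, note the solution set is an affine space over the $\ah$-harmonic homogeneous polynomials $W_{m-1}^\h$, then single out the unique representative satisfying \eqref{R11} via the perfect pairing of $W_{m-1}^\h\cong X_{m-1}^\h/X_{m-2}^\h$ with $Z_{m-1}^\h\cong Y_{m-1}^\h/Y_m^\h$ from part~iii) of Lemma~\ref{folklore} (this is exactly the paper's isomorphism \eqref{R10}). Two remarks.

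The opening reduction to $\ah=\mathrm{id}$ is a scalar-only device. For systems, which the paper explicitly tracks (linear elasticity being the motivating example), $\nabla\cdot\ah\nabla=\nabla\cdot\ah^{\mathrm{sym}}\nabla$ fails in the form you state (the relevant symmetrization is in the pair of derivative indices, which is unrelated to $\ah^*=\ah$), and no linear change of $x$-variables brings a general elliptic fourth-order tensor to the identity. As written, your proof therefore silently drops the systems case even though you gesture at it when invoking invertibility of the symbol. The gap is fixable without touching the skeleton: surjectivity of $-\nabla\cdot\ah\nabla$ from homogeneous degree-$(m-1)$ polynomials onto degree-$(m-3)$ ones holds for any constant elliptic $\ah$ by an identical dimension count, and the $R$-independence below never uses $\ah=\mathrm{id}$. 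The paper sidesteps the issue differently: it constructs a homogeneous solution of \eqref{R7} as the degree-$(m-1)$ part of the Taylor polynomial at $0$ of the Lax--Milgram solution for the cut-off right-hand side $\eta\,\partial_{ij}u\,C_{ij}^{\mathrm{sym}}$, after discarding the lower-degree (automatically $\ah$-harmonic) part; that construction is agnostic to scalar vs.\ systems.

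The step you flag as the ``crux'' is not an obstacle. Each of the three terms of $g_{\ut,v}$ is homogeneous of degree $-(d-1)$: by \eqref{R5}, $v$ has degree $-(d-2)-(m-1)$ and $\nabla v$ degree $-(d-1)-(m-1)$, while $\ut$ has degree $m-1$ and $\nabla\ut,\partial_{ij}u$ degree $m-2$. Hence $\int_{|x|=R}\nu\cdot g_{\ut,v}=R^{d-1}\,R^{-(d-1)}\int_{|x|=1}\nu\cdot g_{\ut,v}$ is manifestly constant in $R$. This is precisely how the paper argues. By the divergence theorem this already forces your $I(u,v)=\int_{|x|=1}\nabla\cdot g_{\ut,v}$ to vanish, so the spherical-harmonics computation, while correct, verifies in a special case (and after the $\ah=\mathrm{id}$ reduction) what scaling gives in general.
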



\subsection{Proofs of Lemmas~\ref{folklore}, \ref{Rom1}, and~\ref{Rom2}}\hspace*{\fill} 

\begin{proof}[Proof of Lemma~\ref{folklore}]

In the case of systems, where ``functions'' $u(x)$ have values in some finite-dimensional vector space $V$,
the fundamental solution $G(x)$ is in fact a field with values in the space of linear functions from $V$ to $V$.
Hence by the statement i) we mean that the elements are polynomials in $x$ with coefficients in $V$
and by statement ii) we mean that the set $\{\partial^\alpha G v\}_{\alpha,v}$,
where $\alpha$ runs over multi-indices with $|\alpha|=k$ and $v$ runs over vectors in $V$,
generates $Y_k^\h/Y_{k+1}^\h$.

\medskip

We start with the argument for part i): The first statement is a consequence of \cite[Proposition 2.1 and discussion at p.83]{Giaquinta}, 
Caccioppoli and Liouville. The second statement is elementary algebra.

\medskip

We now turn to part iii).
This follows by Propositions \ref{Pdual} and \ref{Pbidual} and we have to check the assumptions
on the sequence of spaces $\{X_k\}_k$ required in Section~\ref{abstract.r}: Assumption (\ref{F2}) follows immediately from the characterization in part i);
assumption \eqref{F1} is a consequence of the $C^{1,\alpha}$-regularity of $\ah$-harmonic functions,
which extends to $C^{k,1}$-regularity, see for instance \cite{Giaquinta}.
Assumptions (\ref{vol}) and (\ref{poinc}) on the ambient space are trivially satisfied for $\mathbb{R}^d$.

\medskip

We now tackle part ii). By (\ref{R5}), see the proof of Lemma \ref{Rom1}, we have that
for $\alpha$ with $|\alpha|=k$ and $k\ge 1$,
$\nabla\partial^\alpha G$ is homogeneous of degree $-(d-2)-k-1$ --- next to $\partial^\alpha G$ being $\ah^*$-harmonic in
$\mathbb{R}^d-\{0\}$ and thus by definition $\partial^\alpha G\in Y_k^\h$ and $Z_k^\h\cap Y_{k+1}^\h=\emptyset$.
By part iii) we have in particular that $(\cdot,\cdot)_\h$ provides an isomorphism between
$Y_k^\h/Y_{k+1}^\h$ and $(X_k^\h/X_{k-1}^\h)^*$. Hence for $Z_k^\h$ to be a complement of $Y_{k+1}^\h$ in $Y_k^\h$,
where it remains to argue that $Y_k^\h=Z_k^\h \BR5 \oplus Y_{k+1}^\h$,
by linear algebra it is enough to show the following implication for any $u\in X_k^\h$
\begin{align}\label{o09}
\forall\;|\alpha|=k\;\;(u,\partial^\alpha G)_\h=0\quad\Rightarrow\quad u\in X_{k-1}^\h.
\end{align}
Indeed, on the one hand by definition of $(\cdot,\cdot)_\h$ we have that
$(u,\partial^\alpha G)_\h$ $=\int u(-\nabla\cdot \ah^*\nabla \partial^\alpha G)$
$=(-1)^k\int\partial^\alpha u(-\nabla\cdot \ah^*\nabla G)$ $=(-1)^k\partial^\alpha u(0)$.
On the other hand, since $|\alpha|=k$ and
since $u$ is in $X_k^\h$ and thus by part i) (componentwise)
a polynomial of degree $\le k$, $\partial^\alpha u$ is constant. Hence
the left-hand side  of (\ref{o09}) yields that $\partial^\alpha u$ vanishes identically provided $|\alpha|=k$.
This implies that $u$ is a polynomial of degree $k-1$; since $u$ is also $\ah$-harmonic, it is as desired
an element of $X_{k-1}^\h$.

\medskip

We finally establish iv). By scale invariance, we may assume $R=1$. Because of 
the isomorphisms $Z_k^\h\cong Y_k^\h/Y_{k+1}^\h\cong (X_k^\h/X_{k-1}^\h)^*\cong W_k^\h$ via $(\cdot,\cdot)_\h$,
and the equivalence of norms on finite-dimensional spaces, we have for our $w\in Z_k^\h$
\begin{align}\label{ao02}
\bigg(\int_{1<|x|<2}|\nabla w|^2\bigg)^\frac{1}{2}\lesssim \sup_{u\in W_k^\h}\frac{(u,w)_\h}{(\int_{1<|x|<2}|\nabla u|^2)^\frac{1}{2}}.
\end{align}
An inspection of the proof of (\ref{f48}) actually shows that it holds in the strengthened form of
\begin{align}\label{ao01}
|(u,v)_\h|\lesssim\bigg(\int_{R<|x|<2R}|\nabla u|^2\bigg)^{\frac12}\bigg(\int_{R<|x<2R}|\nabla v|^2\bigg)^\frac{1}{2}.
\end{align}
Since in our case $v-w\in Y_{k+1}^\h$ we thus obtain from using (\ref{ao01}) with $v-w$ playing the role of $v$
and letting $R\uparrow\infty$
\begin{align}\label{ao03}
(u,w)_\h=(u,v)_\h\quad\mbox{for all}\;u\in X_k^\h.
\end{align}
Using now (\ref{ao01}) for $R=1$ and (\ref{ao03}) in (\ref{ao02}) yields the desired (\ref{ao04}).
\end{proof}


\begin{proof}[Proof of Lemma~\ref{Rom1}]

Let us start with a remark on the homogeneity of fundamental solutions; next to the
fundamental solution $G$ of $-\nabla\cdot \ah^*\nabla$, we also consider the fundamental solution
$K$ of the ``bi-Laplacian'' $(-\nabla\cdot \ah^*\nabla)^2$. (Note that despite the fact that $\ah$
is constant, we may have $-\nabla\cdot \ah^*\nabla\not=-\nabla\cdot \ah\nabla$
in case of systems). While fundamental solutions may not
be homogeneous and not even unambiguously defined (like $G$ in $d=2$ or $K$ in $d=4$ when logarithms appear, 
at least in case
of $\ah^*={\rm id}$), sufficiently high derivatives of fundamental solutions are. More precisely, we claim
that irrespective of the dimension $d$,
\begin{align}
\forall\;|\alpha|\ge 2\quad\partial^\alpha G\;\mbox{is homogeneous of degree}\;-(d-2)-|\alpha|,\label{R5}\\
\forall\;|\alpha|\ge 4\quad\partial^\alpha K\;\mbox{is homogeneous of degree}\;-(d-4)-|\alpha|.\label{R4}
\end{align}
In particular, provided $|\alpha|\ge 4$, $\partial^\alpha K$ is unambiguously
defined and we have in a distributional sense
\begin{align}\label{R3}
\forall\;|\alpha|\ge 4\quad
-\nabla\cdot \ah^*\nabla\partial^\alpha K=\partial^\alpha G\quad\mbox{in}\;\mathbb{R}^d-\{0\}.
\end{align}
The claims (\ref{R5}) and (\ref{R4}) can be easily seen on the level of Fourier transform ${\mathcal F}$, where we focus on (\ref{R4}). 
In this description we formally have for $x\not=0$
\begin{align}\label{R6}
({\mathcal F}\partial^\alpha K)(\xi)=(-i)^{|\alpha|}
\int \xi^\alpha\exp(-i\xi\cdot x)(\xi\cdot \ah^*\xi)^{-2}d\xi,
\end{align}
where we use the standard notation of $\xi^\alpha=\xi_1^{\alpha_1}\cdots\xi_d^{\alpha_d}$.
Since we allow for systems, where the ``functions'' have values in some finite-dimensional Euclidean space
$V$ and where the fundamental solutions have values in the space of linear functions ${\mathcal L}(V,V)$,
$\xi\cdot a^*_h\xi$ in fact is also an element of ${\mathcal L}(V,V)$, which by ellipticity is invertible
and by $(\xi\cdot \ah^*\xi)^{-2}$ $\in {\mathcal L}(V,V)$ we mean the square of the inverse. 
Provided we can give a sense to the integral (\ref{R6}), the desired homogeneity 
$K(\frac{x}{\ell})$ $=\ell^{|\alpha|-4+d}K(x)$ follows from
a change of variables $\hat\xi=\ell\xi$. In view of our assumption of $|\alpha|\ge 4$, the 
integrand in fact is bounded so the issue is the summability for $|\xi|\uparrow\infty$.
To infer the latter, it is enough to show the summability of
$\int(1-\eta)\xi^\alpha\exp(-i\xi\cdot x)(\xi\cdot \ah^*\xi)^{-2}d\xi$
with $\eta=\eta(\xi)$ a smooth cut-off function for $\{|x|<1\}$ in $\{|x|<2\}$. However, in this expression,
we may formally integrate by parts ($m$ times) according to 
$(-1)^m(\frac{\partial}{\partial\xi_1})^m\frac{1}{i^m x_1^m}\exp(-i\xi\cdot x)$ $=\exp(-i\xi\cdot x)$,
where we assume that w.\ l.\ o.\ g.\  the first component of $x$ does not vanish. After these $m$ 
integrations by parts, the integrand, which outside of $\{|x|<2\}$ is a homogeneous rational function in $\xi$ with
values in ${\mathcal L}(V,V)$, now has homogeneity $|\alpha|-4-m$ and thus is 
absolutely summable provided $m>|\alpha|-4+d$.

\medskip

Let us now address uniqueness: Suppose that we had another function with these properties, then 
in view of (\ref{R2}) their difference $w$ is $\ah^*$-harmonic in $\mathbb{R}^d-\{0\}$
so that because $w$ decays at rate $-(d-2)-(k+1)$ we in particular have $w\in Y_{k+1}^\h$.
In view of (\ref{R1}), $(u,w)_\h$ vanishes for all $u\in X_{k+1}^\h/X_{k}^\h$, so that 
by part iii) of Lemma \ref{folklore} in form of $Y_{k+1}^\h/Y_{k+2}^\h$ $\cong(X_{k+1}^\h/X_{k}^\h)^*$,
we have $w\in Y_{k+2}^\h$. Since $w$ however is homogeneous of degree $-(d-2)-(k+1)$, this yields $w=0$.
 
\medskip

Let us now argue that it is enough to establish the existence of a $\vt$, homogeneous of degree
$-(d-2)-(k+1)$, with just (\ref{R2}) and not necessarily (\ref{R1}). Indeed, $\partial_{ij}v\,C_{ij}$, as a linear combination of
directional derivatives of $G$ of order $k+2$, is homogeneous of degree $-(d-2)-(k+2)$ $=-(d-1)-(k+1)$
by (\ref{R5}).
Together with the homogeneity of $\vt$ of order $-(d-2)-(k+1)$ and that of $u$ of order
$k+1$ we see that the integrands  in (\ref{R1}) is homogeneous of degree $-(d-1)$. This implies that the surface integrals are independent of $R$. Their difference thus defines a linear form $\ell$ on $X_{k+1}^\h/X_k^\h$. By the isomorphism
$(X_{k+1}^\h/X_k^\h)^*$ $\cong Y_{k+1}^\h/Y_{k+2}^\h$, see part iii) of Lemma \ref{folklore}, 
there exists a $w\in Y_{k+1}^\h$ such that $\ell.u$ $=(u,w)$ for all $u\in X_{k+1}^\h/X_k^\h$. 
By part ii) of Lemma \ref{folklore} we may identify $w$ with a linear combination of 
$\{\partial^\beta G\}_{|\beta|=k+1}$,
so that by (\ref{R5}) and because of our assumption $k\ge 1$, $w$ is homogeneous of degree $-(d-2)-(k+1)$ next to being $\ah^*$-harmonic outside the origin.
Therefore the modified $\ut-w$ retains the properties of $\ut$ while satisfying also
(\ref{R1}).

\medskip
 
We finally turn to the construction of $\vt$ given $v$.
By linearity we may w.l.o.g. assume $v=\partial^\alpha G$ for some multi-index $\alpha$ with $|\alpha|=k$.
We then set
\begin{align*}
\vt:=-\nabla\cdot\partial_{ij}\partial^\alpha K C_{ij}.
\end{align*}
Hence $\vt$ is a linear combination of $\{\partial^\beta K\}_{|\beta|=k+3}$. Since 
by assumption $k\ge 1$,
by (\ref{R4}), $\vt$ has the desired homogeneity of $-(d-4)-(k+3)=-(d-2)-(k+1)$
and by (\ref{R3}), we obtain the desired equation (\ref{R2}). 
\end{proof}



\begin{proof}[Proof of Lemma~\ref{Rom2}]

We start by arguing in favor of uniqueness. Clearly, the difference $w$ of two admissible $\ut$'s is $\ah$-harmonic
in view of (\ref{R7}), and thus, because of its homogeneity of order $m-1$, an element of $X_{m-1}^\h$.
Moreover, in view of (\ref{R11}), it satisfies $(w,v)_\h=0$ for all $v\in Y_{m-1}^\h/Y_m^\h$. By Lemma \ref{folklore}, iii) it holds
\begin{align}\label{R10}
X_{m-1}^\h/X_{m-2}^\h\cong(Y_{m-1}^\h/Y_m^\h)^*\quad\mbox{via}\;(\cdot,\cdot)_\h,
\end{align}
and thus $w$ must be an element of $X_{m-2}^\h$, thus be of growth rate $<(m-1)$, which is not compatible with a
homogeneity of $m-1$ unless $w=0$.

\medskip

We now argue that for any homogeneous polynomial $u$ of degree $m$ there exists a
homogeneous polynomial $\ut$ of degree $m-1$ such that (\ref{R7}) holds. We pick
a smooth cut-off function $\eta$ for $\{|x|<1\}$ in $\{|x|<2\}$ and let $\ut'$ be the Lax-Milgram solution of
\begin{align}\label{R9}
-\nabla\cdot \ah\nabla\ut'=\nabla\cdot \eta \partial_{ij}uC_{ij}^{\mathrm{sym}},
\end{align}
which as a constant-coefficient solution is smooth.
We claim that the leading order (i.e. $m-1$-homogeneous) part of the Taylor polynomial $\ut''$ of $\ut'$ at $0$
of degree $m-1$ has the desired property. We first note that it is enough to show that the
Taylor polynomial $\ut''$ of degree $m-1$ satisfies
\begin{align}\label{R8}
-\nabla\cdot \ah\nabla\ut''=\nabla\cdot \partial_{ij}uC_{ij}^{\mathrm{sym}}.
\end{align}
Indeed, $-\nabla\cdot \ah\nabla$ maps the part of $\ut''$ that is of degree $<m-1$ 
onto a polynomial of degree $<m-3$. Since the right-hand side of (\ref{R8}) is a polynomial that is homogeneous of degree
$m-3$, that polynomial vanishes. Hence the part of $\ut''$ that is of degree $<m-1$                        
is $\ah$-harmonic and thus can be subtracted off $\ut''$ to obtain $\ut$.
It is easy to pass from (\ref{R9}) to (\ref{R8}): Clearly, $-\nabla\cdot \ah\nabla \ut''$
is the Taylor polynomial of $-\nabla\cdot \ah\nabla \ut'$ at $0$ of degree $m-3$. By (\ref{R9}),
the latter agrees with $\nabla\cdot \partial_{ij}uC_{ij}$ in a neighborhood of $0$. Since the latter
is itself a polynomial of degree $m-3$, the two polynomials of degree $m-3$ must coincide, which amounts to (\ref{R8}).

\medskip

We finally argue that we may modify the $\ut$ constructed in the previous paragraph to also satisfy (\ref{R11}).
Since with our understanding of $Y_{m-1}^\h/Y_{m}^\h$ as being spanned by $\{\partial^\alpha G\}_{|\alpha|=m-1}$
and because of (\ref{R5}) together with our assumption $m\ge 3$, we see that $v$ in (\ref{R11}) is homogeneous
of degree $-(d-1)-(m-1)$. Since $u$ is a polynomial of degree $m$ and $\ut$ one of degree $m-1$, the homogeneity
of the terms in the integrals in (\ref{R11}) is of order $-(d-1)$. Hence, both surface integrals are independent of $R$ and their difference defines
a linear form on $Y_{m-1}^\h/Y_{m}^\h$. Hence by (\ref{R10}), there exists an element $w\in X_{m-1}^\h/X_m^\h$,
which we may take to be a polynomial of degree $m-1$, such that
\begin{align*}
(w,v)=
\int_{\partial |x|<R}\nu\cdot\big(v(\ah\nabla\ut+\partial_{ij}uC_{ij})-\ut \ah^*\nabla v + \partial_i u\partial_j v \, C_{ij}^{\mathrm{sym}}\big) \quad
\mbox{for all}\;R>0
\end{align*}
for all $v\in Y_{m-1}^\h/Y_{m}^\h$. It remains to replace $\ut$ by $\ut-w$.
\end{proof}


\section{Proofs of Theorem~\ref{Rom3} and Theorem~\ref{T3}}\label{deterministic.r}
We presently list the main ingredients needed in the proofs of Theorem~\ref{Rom3} and Theorem~\ref{T3}. Throughout this section the notation $\lesssim$ stands for $\leq C$ with the constant depending on the dimension $d$, the ellipticity ratio $\lambda$, the exponents $\beta$ and $\alpha$ and the order of the space in consideration ($m$ for $u \in X_m$ or $X_m^\h$ and $k$ for $v \in Y_k$ or $Y_k^\h$).

\begin{proposition}\label{Rom4}
For $m\ge 2$ there exists a linear map $X_m^\h\ni u_\h\mapsto u\in X_m$ with $\|u-Eu_\h\|_{m-\beta}$ $\lesssim\|u_\h\|_m$.

\smallskip

Likewise, for $k\ge 1$ and $r\ge r_*$, there exists a linear map $Y_k^\h(r)\ni v_\h\mapsto v\in Y_k(r)$ 
with $\|v-Ev_\h\|_{k+\beta,r}$ $\lesssim\|v_\h\|_{k,r}$.

\end{proposition}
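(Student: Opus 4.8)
The plan is to produce the two linear maps by feeding suitably chosen data into Proposition~\ref{Pg} (for the growing case) and Proposition~\ref{Pd} (for the decaying case), after first computing the residuum of the two-scale expansion $Eu_\h$ (resp.\ $Ev_\h$) in divergence form. First I would record the key algebraic identity: for $u_\h\in X_m^\h$, write out $-\nabla\cdot a\nabla Eu_\h$ using the definitions \eqref{i5}, \eqref{Psi}, \eqref{ir14}, \eqref{u.tilde}, \eqref{u.tilde.condition}, together with the splitting of $u_\h$ into its $W_m^\h\oplus W_{m-1}^\h$ part (via Corollary~\ref{Cor5.b}) plus a piece in $X_{m-2}^\h$. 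The point of the extra correction $(1+\phi_i\partial_i)\ut$ and of the tensor $C$ entering \eqref{u.tilde} is precisely to cancel the non-decaying contribution of the second-order residuum; the upshot (this is the calculation referenced as leading to \eqref{two.scale.1} in Proposition~\ref{Rom4}, second-order analogue of \eqref{ir11}) is that
\begin{align}\label{two.scale.1}
-\nabla\cdot a\nabla Eu_\h=\nabla\cdot g_{u_\h},
\end{align}
where $g_{u_\h}$ is an explicit linear combination of products of the correctors $\phi,\sigma,\psi,\Psi$ (and the $\phi$-correction of $\ut$) with the $\ah$-derivatives $\nabla\partial_{ij}u_\h$, $\nabla\partial_{ij}\ut$ etc.

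Next I would estimate $g_{u_\h}$. Using the homogeneity of $u_\h$ (so $\nabla\partial_{ij}u_\h$ grows like $R^{m-3}$ in the square average, with the relevant normalisation $\|u_\h\|_m$), the sublinear corrector bound \eqref{T.1} for $(\phi,\sigma)$ with exponent $\alpha$, and the $(2-\beta)$-growth bound \eqref{T.2} for $(\psi,\Psi)$, one obtains (after a Cauchy--Schwarz/annulus argument, and using $\alpha>\beta-1$ and $\alpha<1$ to absorb cross terms)
\begin{align}\label{grhs}
\Big(\fint_{|x|<R}|g_{u_\h}|^2\Big)^\frac12\lesssim\|u_\h\|_m\Big(\frac{R}{r_*}\Big)^{(m-\beta)-1}\qquad\mbox{for all}\;R\ge r_*.
\end{align}
At this point I would apply Proposition~\ref{Pg} with $u$ there taken to be $Eu_\h$ and with $k$ chosen so that $\beta\in(k-1,k)$ and $m-\beta$ plays the role of the exponent ``$\beta$'' in that proposition (so its exponent pair is $(m-\beta,m)$; note $m-\beta<m$ as required, and $m-\beta>m-2$ so $k-1=m-2$). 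Proposition~\ref{Pg} yields $w\in X_m$ with $(\fint_{|x|<R}|\nabla(Eu_\h-w)|^2)^\frac12\lesssim\|u_\h\|_m(R/r_*)^{(m-\beta)-1}$, unique up to $X_{m-2}$; setting $u:=w$ gives $\|u-Eu_\h\|_{m-\beta}\lesssim\|u_\h\|_m$. Linearity of $u_\h\mapsto u$ is forced by uniqueness modulo $X_{m-2}$ together with $m-\beta>m-2$ (so the defining estimate pins $u$ down in $X_m/X_{m-2}$ and the relation is compatible with the linear structure); I would fix a genuine linear lift by choosing, for each $u_\h$, the unique representative in the complement $W_m\oplus W_{m-1}$ supplied by Proposition~\ref{Pbidual}.

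The decaying case is entirely parallel but uses the dual machinery. For $v_\h\in Y_k^\h(r)$ I would compute $-\nabla\cdot a^*\nabla Ev_\h=\nabla\cdot h_{v_\h}$ with $h_{v_\h}$ the analogous expression built from $\phi^*,\sigma^*,\psi^*,\Psi^*$ and $\vt$ (here \eqref{v.tilde}, \eqref{v.tilde.condition} are what make the extra surface term in \eqref{v.tilde.condition} — and hence the non-admissible part of the residuum — vanish; note the remark after \eqref{v.tilde.condition} that it is arbitrary whether $\ut$ or $\vt$ absorbs the cross term). Decay estimates for $h_{v_\h}$ follow from the decay of $\nabla\partial_{ij}v_\h$ (rate $(k+(d-2)+1)+2$) against the corrector growth bounds \eqref{T.1}, \eqref{T.2} written at scale $r$, giving $(\fint_{|x|>R}|h_{v_\h}|^2)^\frac12\lesssim\|v_\h\|_{k,r}(r/R)^{(k+\beta)+d-1}$ for $R\ge r$, plus the bound $(\fint_{|x|<r}|h_{v_\h}|^2)^\frac12\lesssim\|v_\h\|_{k,r}$ near the inner boundary (using \eqref{T.1} on the ball $\{|x|<r\}$ with $r\ge r_*$). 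Then I would invoke Proposition~\ref{Pd} with $(u,g)=(Ev_\h,h_{v_\h})$, with $k$ there equal to $k$ here and with $k+\beta$ playing the role of its exponent ``$\beta$'' (so $\beta\in(m,m+1)$ there becomes $k+\beta\in(k+1,k+2)$ after renaming, which is fine since $1<\beta<2$, and its ``$m$'' is $k$, so the prefactor $(r/r_*)^{m-k}$ is harmless — in fact $1$); this produces $w\in Y_k(r_*)\subset Y_k(r)$ with the desired decay, and setting $v:=w$ gives $\|v-Ev_\h\|_{k+\beta,r}\lesssim\|v_\h\|_{k,r}$, with linearity again forced by the uniqueness modulo $Y_{m+1}(r)$, which I would pin down using the complements $Z_n(r_*)$ from Proposition~\ref{Pdual}.

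The main obstacle I anticipate is \eqref{two.scale.1}–\eqref{grhs}: correctly organising the second-order two-scale residuum so that (i) every term is genuinely in divergence form with a coefficient that is a product of a corrector (of controlled growth) and a derivative of an $\ah$-harmonic homogeneous function, and (ii) the dangerous ``constant-order'' pieces are exactly the ones killed by the conditions \eqref{u.tilde}, \eqref{u.tilde.condition} on $\ut$ and \eqref{v.tilde}, \eqref{v.tilde.condition} on $\vt$ — this is where the tensor $C$ and the surface-flux normalisations do their work, and it must be checked that the remainder decays/grows strictly better by the order $\beta$ dictated by \eqref{T.2}. Everything after that is a Cauchy--Schwarz bookkeeping over dyadic annuli of exactly the type already carried out in the proofs of Lemmas~\ref{Lg} and \ref{Ld}.
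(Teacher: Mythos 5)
Your proposal follows the paper's proof almost verbatim: in Step~1 of the paper's argument the two-scale residuum is computed exactly as you describe, i.e.\ $-\nabla\cdot a\nabla Eu_\h=\nabla\cdot g$ with $g=-(\partial_{ij}(u_\h-u_\h')C_{ij}^{\mathrm{sym}}+(\phi_ia-\sigma_i)\nabla\partial_j\ut+(\psi_{ij}a-\Psi_{ij})\nabla\partial_{ij}u_\h)$ (and the $a^*$-analogue with $h$), then $g$ and $h$ are estimated via \eqref{T.1}, \eqref{T.2}, and in Step~2 Propositions~\ref{Pg} and \ref{Pd} are applied. So the core is the same; two small inaccuracies in your bookkeeping are worth recording. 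First, the projection $u_\h'$ appearing in $g$ is onto $W_m^\h$ alone (the degree-$m$ homogeneous part of $u_\h$), not onto $W_m^\h\oplus W_{m-1}^\h$; this is exactly what makes the surviving term $\partial_{ij}(u_\h-u_\h')C_{ij}^{\mathrm{sym}}$ sub-leading (degree $\le m-3$ rather than $m-2$). Second, in the decaying case you write that Proposition~\ref{Pd}'s ``$m$'' equals $k$ so that $(r/r_*)^{m-k}=1$, but your own preceding sentence (``$\beta\in(m,m+1)$ there becomes $k+\beta\in(k+1,k+2)$'') forces the Proposition's ``$m$'' to be $k+1$, making the prefactor $(r/r_*)^{1}$. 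The paper removes it by the different route of invoking Proposition~\ref{Pd} with the base radius $r_*$ \emph{replaced by} $r$ (legitimate since the hypotheses of Section~\ref{abstract.r} hold a fortiori from any larger radius), which makes the prefactor tautologically $1$; you should make that substitution explicit rather than rely on a wrong identification of the exponents. Your remark on linearity via a choice of complement is a reasonable addendum, though in fact the constructions of Lemmas~\ref{Lg}, \ref{Ld}, and \ref{Lup} are already linear, so the maps produced by Propositions~\ref{Pg} and~\ref{Pd} are linear without further intervention.
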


\begin{proposition}\label{localization}
Let $u_\h \in X_m^\h$ and $u \in X_m$ be such that \BR5 $\| u - Eu_\h \|_{m-\beta}\le C_1 \| u_\h \|_m$ for some constant $C_1 \ge 1$. \ER Then, under the assumptions of Theorem \ref{T3}  it holds
\begin{align}
\biggl(\fint_{|x-y| < r_*(y)}|\nabla( u - Eu_\h)|^2\biggr)^\frac{1}{2}&\lesssim \BR5 C_1 \ER \biggl(\frac{|y|}{ r_*(0) }\biggr)^{m - 1 - \beta} \| u_\h\|_{m}\label{g.loc}.
\end{align}
Similarly, for $r \geq r_*(0)$ let $v_\h\in Y_{k}^\h(r)$ and $v \in Y_k(r)$ satisfy $\| v - Ev_\h\|_{k+\beta,r}\le \BR5 C_1 \|v_\h \|_{k, r}$ for some constant $C_1 \ge 1$. Then for $|y|\geq 2(r_*(x_0)+ r)$ it also holds
\begin{align}
\biggl(\fint_{|x-y| < r_*(y)}|\nabla(v - Ev_\h)|^2\biggr)^\frac{1}{2}&\lesssim C_1 \biggl(\frac{r}{ |y| }\biggr)^{(d -1) + k + \beta} \| v_\h\|_{k,r}.\label{d.loc}
\end{align}
\end{proposition}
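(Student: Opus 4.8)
The plan is to exploit that $w:=u-Eu_\h$ (resp.\ $w:=v-Ev_\h$) solves a divergence-form equation whose right-hand side is the residuum of the corrected second-order two-scale expansion, and then to localize the \emph{global} bound provided in the hypothesis to the macroscopic annulus around $y$ by a Caccioppoli-type argument, i.e.\ a version of Lemma~\ref{propositionlocalize} re-centred at $y$. I would first record the equation for $w$: by the computation carried out in the proof of Proposition~\ref{Rom4} (the one leading to \eqref{two.scale.1}) there is a vector field $g$ with $-\nabla\cdot a\nabla w=\nabla\cdot g$ on $\mathbb{R}^d$ (resp.\ on $\{|x|>r\}$), where $g$ is, up to terms of the same structure produced by the tensor $C$, a linear combination of the second-order fluxes $(\psi_{ij}a-\Psi_{ij})\nabla\partial_{ij}u_\h$ and the first-order fluxes $(\phi_i a-\sigma_i)\nabla\partial_i\ut$ (compare \eqref{ir11}, \eqref{u.tilde}). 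In particular, for $m=2$ one has $g\equiv0$ (since then $\partial_{ij}u_\h$ is constant and $\ut$ absent), so that $w$ is $a$-harmonic and \eqref{g.loc} is immediate from the mean-value property \eqref{cw05} on balls centred at $y$ — legitimate because \eqref{vol} and \eqref{poinc} hold for Euclidean balls and, through Proposition~\ref{Ck.1}, the structural hypotheses \eqref{F1}, \eqref{F2} hold around $y$ from scale $r_*(y)$ as a consequence of \eqref{T.1} at $y$. Hence from now on $m\ge3$.

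Next I would estimate $g$ on the balls $\{|x-y|<\rho\}$ for $r_*(y)\le\rho\lesssim|y|$. Two points enter: only the \emph{oscillations} of $(\psi,\Psi)$ and $(\phi,\sigma)$ on these balls contribute to $g$ (additive constants drop out), and these are controlled by \eqref{T.2} and \eqref{T.1} at $y$, giving factors $\lesssim\rho^{2-\beta}r_*(y)^\beta$ resp.\ $\lesssim\rho^{1-\alpha}r_*(y)^\alpha$ (the latter decays faster toward $y$ since $\alpha>\beta-1$); and the remaining factors $\nabla\partial_{ij}u_\h$, $\nabla\partial_i\ut$ are polynomials of degree $\le m-3$, hence on $\{|x-y|<\rho\}$ of size $\lesssim(|y|/r_*(0))^{m-3}r_*(0)^{-2}\|u_\h\|_m$. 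Multiplying, $g$ decays toward $y$ at a definite positive rate relative to the quantity $M:=C_1(|y|/r_*(0))^{m-1-\beta}\|u_\h\|_m$ appearing on the right of \eqref{g.loc}. This is exactly the input needed to rerun the proof of Lemma~\ref{propositionlocalize} with $0$ replaced by $y$ and $r_*$ by $r_*(y)$, on the ball $\{|x-y|<R_0\}$ with $R_0\sim|y|$ (valid around $y$ by the same argument as above): the first hypothesis $(\fint_{|x-y|<R_0}|\nabla w|^2)^{1/2}\lesssim M$ follows by evaluating the assumed global bound $\|u-Eu_\h\|_{m-\beta}\le C_1\|u_\h\|_m$ at scale $\sim|y|$ and enlarging the domain of integration, and the conclusion $(\fint_{|x-y|<r_*(y)}|\nabla w|^2)^{1/2}\lesssim M$ is precisely \eqref{g.loc}; estimates \eqref{growth.loc}, \eqref{growth.loc2} of Theorem~\ref{T3} then follow by invoking Theorem~\ref{Rom3} (which supplies $C_1=O(1)$) together with the two-sided bound $\|u_\h\|_m\simeq\|u\|_m$ coming from the isomorphism. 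For the decay case \eqref{d.loc} the argument is identical: $w=v-Ev_\h$ solves the analogous equation with the transposed correctors, the assumption $|y|\ge2(r_*(y)+r)$ guarantees that $\{|x-y|<R_0\}$ avoids $\{|x|<r\}$, and the only changes are that the global input is now $\|v-Ev_\h\|_{k+\beta,r}\le C_1\|v_\h\|_{k,r}$ and that $\nabla\partial_{ij}v_\h$, $\nabla\partial_i\vt$ are homogeneous of negative degree $-(d-2)-k-3$, decaying like $(r/|y|)^{d+k+1}$; the same bookkeeping produces the exponent $(d-1)+k+\beta$ of \eqref{d.loc}, \eqref{dec.loc}, \eqref{dec.loc2}.

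The main obstacle is the second step: isolating the divergence-form residuum $g$ cleanly, so that exactly the \emph{fluctuating} parts of $(\psi,\Psi)$ and $(\phi,\sigma)$ enter and \eqref{T.2}, \eqref{T.1} at $y$ can be applied, and then carrying out the exponent bookkeeping in the powers of $|y|$, $r$, $r_*(y)$ and $r_*(0)$ so that $g/M$ genuinely decays toward $y$. Once this is in place the localization is a translation-invariant rerun of Lemma~\ref{propositionlocalize}, and the remaining ingredients — Proposition~\ref{Ck.1} for the regularity structure around $y$, Proposition~\ref{Rom4}/Theorem~\ref{Rom3} for the starting global estimate — are already available.
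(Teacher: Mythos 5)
Your overall strategy is the right one, and it matches the paper's: derive the divergence-form equation $-\nabla\cdot a\nabla(u-Eu_\h)=\nabla\cdot g$, verify the two hypotheses of Lemma~\ref{propositionlocalize} on a macroscopic ball $\{|x-y|<R_0\}$ with $R_0\sim|y|$ (where the large-ball estimate comes from the assumed global bound after transferring between balls centred at $0$ and $y$, and the regularity structure around $y$ is available via~\eqref{T.1} at $y$ and Proposition~\ref{Ck.1}), and read off~\eqref{g.loc}. Your remark that for $m=2$ the residuum vanishes and the mean-value property~\eqref{cw05} re-centred at $y$ already closes the argument is also correct.

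The gap is the claim that ``only the oscillations of $(\psi,\Psi)$ \ldots{} contribute to $g$ (additive constants drop out).'' They do not. The residuum~\eqref{definition.g} contains the factor $\psi_{ij}a-\Psi_{ij}$, i.e.\ $\psi$ itself and not just $\nabla\psi$, and $\psi$ carries the fixed normalization~\eqref{psi.avgzero} $\fint_{|x|<r_*(0)}\psi=0$. Near $y$ with $|y|\gg1$, $\psi$ drifts: by~\eqref{T.2} (at $0$ and at $y$), the average $\fint_{|x-y|<r_*(y)}\psi$ is of size $\sim|y|^{2-\beta}$, not $r_*(y)^{2-\beta}$. Hence $g$ on a ball $\{|x-y|<\rho\}$ does not get small as $\rho\downarrow r_*(y)$; it remains $\gtrsim|y|^{2-\beta}|y|^{m-3}\|u_\h\|_m$, and the decay hypothesis in~\eqref{properties} of Lemma~\ref{propositionlocalize} fails. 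The paper resolves this by replacing $E$ with the re-centred expansion $\tilde E$ of~\eqref{Etilde}, whose residuum~\eqref{localization.4} involves $\psi-\fint_{|x-y|<r_*(y)}\psi$ and \emph{does} decay near $y$; one then applies Lemma~\ref{propositionlocalize} to $v-\tilde Ev_\h$ (or $u-\tilde Eu_\h$) and separately bounds $\nabla(E-\tilde E)$. The latter difference is $\bigl(\fint_{|x-y|<r_*(y)}\psi_{ij}\bigr)\nabla\partial_{ij}u_\h$, which the drift estimate~\eqref{Psi.centers} (using~\eqref{T.2} at \emph{both} $0$ and $y$, plus a telescoping over dyadic scales) bounds by $D^{2-\beta}\,|y|^{m-3}\|u_\h\|_m\sim|y|^{m-1-\beta}\|u_\h\|_m$, which is exactly of the target order. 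Your proposal needs this two-step decomposition $w=(u-\tilde Eu_\h)+(\tilde Eu_\h-Eu_\h)$ made explicit, and the separate estimate~\eqref{Psi.centers} for the drift, before Lemma~\ref{propositionlocalize} can legitimately be invoked.
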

\begin{proposition}\label{invariants.preserved}
For any $m \geq 2$, we set $k= m-1$. Then, the linear maps defined in Proposition \ref{Rom4} preserve the bilinear form:
\begin{align}\nonumber
( u_\h  , v_\h )_\h = (u,v).
\end{align}
\end{proposition}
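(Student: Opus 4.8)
The plan is to exploit that the bilinear form is a flux invariant. Since $u\in X_m$ is $a$-harmonic on all of $\R^d$ and $v\in Y_{m-1}(r)$ is $a^*$-harmonic in $\{|x|>r\}$, the vector field $va\nabla u-ua^*\nabla v$ is divergence-free in $\{|x|>r\}$; hence, taking in \eqref{bil.def} the cut-off $\eta=\eta_R$ with $\eta_R=1$ on $\{|x|<R\}$, $\eta_R=0$ outside $\{|x|<2R\}$ and $|\nabla\eta_R|\lesssim\tfrac1R$, one has $(u,v)=-\int\nabla\eta_R\cdot(va\nabla u-ua^*\nabla v)$ for every $R\ge r$, and likewise $(u_\h,v_\h)_\h=-\int\nabla\eta_R\cdot(v_\h\ah\nabla u_\h-u_\h\ah^*\nabla v_\h)$ for every $R>0$. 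By \eqref{o02} both sides depend only on the classes of $u,u_\h$ in $X_m/X_{m-2}$ and of $v,v_\h$ in $Y_{m-1}/Y_{m+1}$, so I may choose homogeneous representatives: $u_\h$ of degree $m$, $v_\h\in\mathrm{span}\{\partial^\alpha G\}_{|\alpha|=m-1}$ (Lemma \ref{folklore}), and accordingly $\ut$ of degree $m-1$ and $\vt$ of degree $-(d-2)-m$ (Lemmas \ref{Rom1}, \ref{Rom2}), so that every object rescales polynomially under $x\mapsto Rx$. It then suffices to show that the difference of the two integrals above tends to $0$ as $R\uparrow\infty$.

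First I would replace $u$ by the two-scale expansion $Eu_\h$ and $v$ by $Ev_\h$ (notation \eqref{m15}). Expanding $u=Eu_\h+(u-Eu_\h)$ and $v=Ev_\h+(v-Ev_\h)$ produces, apart from the ``main'' term $(Ev_\h)a\nabla(Eu_\h)-(Eu_\h)a^*\nabla(Ev_\h)$, six error contributions each carrying a factor $u-Eu_\h$, $\nabla(u-Eu_\h)$, $v-Ev_\h$ or $\nabla(v-Ev_\h)$. Using the seminorm bounds $\|u-Eu_\h\|_{m-\beta}\lesssim\|u_\h\|_m$ and $\|v-Ev_\h\|_{m-1+\beta,r}\lesssim\|v_\h\|_{m-1,r}$ from Proposition \ref{Rom4}, the boundedness of $(\phi,\sigma)$ and the $(2-\beta)$-growth of $(\psi,\Psi)$ from Proposition \ref{stoch.results} (to estimate $\nabla(Eu_\h)$, $Ev_\h$ and the residua), and the annular Cauchy--Schwarz--Poincar\'e estimate exactly as in the proof of \eqref{f48}, each such error integral is $O(R^{1-\beta})$. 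The identity $k=m-1$ is precisely what makes the growth rate $m-1$ of one factor and the decay rate $(m-1)+(d-2)+1$ of the other combine to the exponent $1-\beta$, which is negative because $\beta>1$; so all six errors vanish in the limit.

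It remains to compute $\lim_{R\uparrow\infty}\bigl(-\int\nabla\eta_R\cdot[(Ev_\h)a\nabla(Eu_\h)-(Eu_\h)a^*\nabla(Ev_\h)]\bigr)$ and identify it with $(u_\h,v_\h)_\h$. Using the corrector equations \eqref{i5}, \eqref{Psi} and \eqref{u.tilde} (and their transposes), one writes $a\nabla(Eu_\h)=\ah\nabla u_\h+\ah\nabla\ut+\nabla\cdot T_u+g_u$, with $T_u$ a skew tensor built from $\sigma$, $\Psi$ (and $\ut$) and $g_u$ the small residuum (the second-order analogue of \eqref{ir11}, cf. \eqref{two.scale.1}), and likewise for $a^*\nabla(Ev_\h)$. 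Substituting this in, the residuum pieces $(Ev_\h)g_u$, $(Eu_\h)g_v$ are again $O(R^{1-\beta})$; the skew-tensor pieces, after an integration by parts using that $\partial^2\eta_R$ is symmetric while $T_u$, $T_v$ are skew, reduce modulo $O(R^{1-\beta})$ to integrals of $\sigma(R\,\cdot\,)$, $\sigma^*(R\,\cdot\,)$ against fixed $L^2$ densities (after rescaling), which tend to $0$ by $\sigma(R\,\cdot\,)\rightharpoonup0$, $\sigma^*(R\,\cdot\,)\rightharpoonup0$ (Corollary \ref{ergodicity}); and the term $(Ev_\h)\ah\nabla u_\h-(Eu_\h)\ah^*\nabla v_\h$, after replacing $Eu_\h$, $Ev_\h$ by their leading parts $u_\h$, $v_\h$, reproduces exactly $(u_\h,v_\h)_\h$, the remaining replacement errors being again $O(R^{1-\beta})$ (for the $\psi^{(*)}$ and mixed $\phi^{(*)}\vt$, $\phi\ut$ contributions) or weak-convergence-to-zero integrals of $\phi(R\,\cdot\,)$, $\phi^*(R\,\cdot\,)$ against fixed $L^2$ densities (Corollary \ref{ergodicity}). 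The only terms that are neither $O(R^{1-\beta})$ nor average out are those tied to the tensor $C$ — arising from the constant $-C_{ij}$ in \eqref{Psi}, from the $C$-dependent part of $g_u$, $g_v$, and from $\ah\nabla\ut+\partial_{ij}u_\h C_{ij}^{\mathrm{sym}}$ and $\ah^*\nabla\vt+\partial_{ij}v_\h C_{ij}^{*,\mathrm{sym}}$ — but on each sphere $\{|x|=R\}$ these assemble precisely into the surface integrals of the defining conditions \eqref{R1}, \eqref{R11} of $\vt$ and $\ut$ (equivalently \eqref{u.tilde.condition}, \eqref{v.tilde.condition}, including the ``extra'' term whose placement the remark after \eqref{v.tilde.condition} notes is arbitrary), hence vanish identically. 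This yields $(u,v)=(u_\h,v_\h)_\h$.

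The main obstacle is this last step: the careful bookkeeping of the next-order terms produced by expanding $a\nabla(Eu_\h)$, $a^*\nabla(Ev_\h)$ and pairing them against $Ev_\h$, $Eu_\h$, sorting each into ``$O(R^{1-\beta})$'', ``averages to zero by ergodicity'' and ``cancelled by \eqref{R1}--\eqref{R11}'', and in particular verifying that the surviving $C$-terms match those conditions exactly — which is the very reason those conditions were imposed in Lemmas \ref{Rom1} and \ref{Rom2}. A secondary technical point is to justify the passage to the limit despite the unbounded growth of the second-order correctors; this is done, as in the error step, by combining the almost-sure $(2-\beta)$-growth of $(\psi,\Psi)$ from Proposition \ref{stoch.results} (and Corollary \ref{r.star.log}) with the homogeneity of $u_\h$, $v_\h$, $\ut$, $\vt$ and the favourable numerology $k=m-1$, $\beta>1$.
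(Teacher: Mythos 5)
Your outline follows the same route as the paper's proof: reduce the invariant to the asymptotic invariant of the two-scale expansions (Steps~\ref{P10:0}--\ref{P10:3}), pass to the flux representation via the potentials $\sigma,\Psi$ (Step~\ref{P10:flux}), discard lower-order terms (Steps~\ref{P10:reduce}, \ref{P10:reduce2}), invoke the normalization/ergodicity (Step~\ref{P10:reduce3}), and finally cancel the surviving $C$-contributions against the defining conditions \eqref{R1}, \eqref{R11} (Step~\ref{P10:Conclusion}). However, two of the central steps are misidentified or glossed over, and these are precisely the ones carrying the structural content.

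First, your claim that the surviving $\sigma$-contributions ``reduce to integrals of $\sigma(R\cdot)$, $\sigma^*(R\cdot)$ against fixed $L^2$ densities'' and hence vanish by $\sigma(R\cdot)\rightharpoonup 0$ is not correct. After the skew integration by parts, the flux potential $\sigma_j$ pairs against $\nabla Ev_\h$, whose leading part is $\partial_iv_\h'(e_i+\nabla\phi_i^*)$. The pairing of $\sigma_j$ against the deterministic $e_i$-piece does vanish by $\sigma(R\cdot)\rightharpoonup 0$; but the pairing against $\nabla\phi_i^*$ is not an integral of $\sigma(R\cdot)$ against a fixed density --- the density $\nabla\phi_i^*$ is itself random and rescales with $R$. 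That quadratic expression $e_j\cdot\sigma_i\nabla\phi_k^*-e_j\cdot\sigma_k^*\nabla\phi_i$ averages, by the \emph{third} convergence in Corollary~\ref{ergodicity} and by \eqref{definition.C}, to $C_{ijk}$, not to zero. This is exactly what produces the extra term $-\int\partial_k\eta_R\,\partial_iv_\h'\partial_ju_\h'\,C_{jki}$ in \eqref{wi26}, which is needed for the books to close.

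Second, your assertion that the remaining $C$-terms ``assemble precisely into the surface integrals of the defining conditions \eqref{R1}, \eqref{R11}'' skips a non-trivial algebraic step. The flux decomposition \eqref{ir1} and the quantity from the previous paragraph carry the full tensor $C$ (e.g.\ $\partial_{ij}u_\h\,C_{ij}$ and $C_{jki}$), while conditions \eqref{R1}, \eqref{R11} that normalize $\ut,\vt$ are stated in terms of $C^{\mathrm{sym}}$. Reconciling these requires showing that only $C^{\mathrm{sym}}$ contributes to the limit, which in the paper is the content of Steps~\ref{P10:sym} and \ref{P10:Null}: it uses the identity $C^*_{ijk}=-C_{kji}$ deduced from \eqref{definition.C}, several integration-by-parts rearrangements, and a Null-Lagrangian cancellation coming from writing the $3$-minor of the derivative matrix of $(v,u,\eta)$ as a divergence. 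None of this is automatic, and without it the ``assemble precisely'' claim is unsupported. Once both points are supplied, the remainder of your argument --- the $O(R^{1-\beta})$ bookkeeping of the residua, the use of $k=m-1$ to make the growth and decay exponents sum to $1-\beta<0$, and the qualitative control of $(\psi,\Psi)$ --- is sound and matches the paper.
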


\smallskip

\begin{lemma}\label{Rom5}
For all $m\ge 2$, $u\in X_m^\h$, and radii $R \gg r_*$ we have
\begin{align}
\Big(\fint_{|x| < R}|\nabla Eu|^2\Big)^\frac{1}{2}&\sim
\Big(\fint_{|x|<R}|\nabla u|^2\Big)^\frac{1}{2}.\label{m08}
\end{align}


For $k\ge 1$, we recall that $Z_k^\h$ denotes the linear space spanned by $\{\partial^\alpha G\}_{|\alpha|=k}$,
where $G$ denotes the fundamental solution of $-\nabla\cdot \ah^*\nabla$; recall $Z_k^\h\cong Y_k^\h/Y_{k+1}^\h$,
see Lemma \ref{folklore}.
For all $m>k\ge 1$, $v\in Z_k^\h\oplus\cdots\oplus Z_{m-1}^\h$, and radii $R\ge r_*$
we have
\begin{align}
\Big(\fint_{|x| > R}|\nabla Ev|^2\Big)^\frac{1}{2}&\sim
\Big(\fint_{|x| > R}|\nabla v|^2\Big)^\frac{1}{2}.\label{m11}
\end{align} 
In addition, for any $r \ge r_*$ and any $v \in Y_k^\h(r)$ the above holds with $\lesssim$ sign for any $R \gg r$.
\end{lemma}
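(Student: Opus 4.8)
\textbf{Proof plan for Lemma~\ref{Rom5}.}

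The plan is to treat the two assertions~\eqref{m08} and~\eqref{m11} separately, in both cases reducing the estimate on the corrected expansion $Eu$ (resp. $Ev$) to the corresponding estimate on the constant-coefficient object $u$ (resp. $v$) by exploiting the sublinear growth of the correctors $(\phi,\sigma)$ together with the tamed growth~\eqref{T.2} of $(\psi,\Psi)$. For the growing side~\eqref{m08}, I would first write $Eu = (1+\phi_i\partial_i+\psi_{ij}\partial_{ij})u + (1+\phi_i\partial_i)\ut$ (see~\eqref{m15}) and compute $\nabla Eu$ by the product rule; the ``main term'' is $\partial_i u\,(e_i+\nabla\phi_i) + \partial_{ij}u\,\nabla\psi_{ij} + \ldots$, whose leading contribution is $\nabla u$ itself. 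The remaining pieces all carry at least one factor of $\phi$, $\sigma$, $\nabla\psi$, or $\psi$, each multiplied by a derivative of $u$ of appropriate order; since $u\in X_m^\h$ is a polynomial of degree $\le m$ (Lemma~\ref{folklore} i)) and $\ut$ one of degree $m-1$ (Lemma~\ref{Rom2}), every such term is, on the ball $\{|x|<R\}$ with $R\gg r_*$, of strictly lower order than $R^{m-1}$ because of~\eqref{T.1} and~\eqref{T.2} (the corrector contributions scale like $R^{\alpha}$ or $R^{2-\beta}$ relative to the polynomial factor, and $\alpha<1$, $2-\beta<1$). Hence $\big(\fint_{|x|<R}|\nabla Eu|^2\big)^{1/2} = \big(\fint_{|x|<R}|\nabla u|^2\big)^{1/2}(1+o(1))$ as $R\uparrow\infty$, and since for a homogeneous $\ah$-harmonic polynomial $\big(\fint_{|x|<R}|\nabla u|^2\big)^{1/2}\sim R^{m-1}$ exactly (by~\eqref{F2} applied in the Euclidean case, or by explicit scaling), the two sides are comparable for all $R\gg r_*$; the $\sim$ then follows, the implicit constant depending only on $d,\lambda,m$.

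For the decaying side~\eqref{m11}, the structure is the same but one works at spatial infinity: $v\in Z_k^\h\oplus\cdots\oplus Z_{m-1}^\h$ is a linear combination of $\partial^\alpha G$ with $k\le|\alpha|\le m-1$, hence $\nabla v$ is a sum of terms homogeneous of degree $-(d-1)-n$ for $n=k,\ldots,m-1$, the slowest-decaying being the $n=k$ piece (this is exactly the homogeneous behavior~\eqref{Pd20}). Writing $Ev=(1+\phi_i^*\partial_i+\psi_{ij}^*\partial_{ij})v+(1+\phi_i^*\partial_i)\vt$ and differentiating, the leading term is again $\nabla v$, while every other term picks up a corrector factor; on $\{|x|>R\}$ with $R\gg r_*$ these are lower order by the same scaling count ($\phi^*,\sigma^*$ contribute a relative factor $\lesssim (r_*/R)^{-\alpha}\cdot(\text{something})$, more precisely by~\eqref{T.1} the corrector is $o(R)$ while it multiplies a derivative of $v$ of one order higher decay, so the product decays strictly faster than the main term; $\psi^*,\Psi^*$ are handled via~\eqref{T.2}). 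Thus $\big(\fint_{|x|>R}|\nabla Ev|^2\big)^{1/2}\sim\big(\fint_{|x|>R}|\nabla v|^2\big)^{1/2}$. For the final sentence of the Lemma — the one-sided bound $\lesssim$ for general $v\in Y_k^\h(r)$ and $R\gg r$ — one no longer has homogeneity, only the upper decay bound, so only the direction ``$\nabla Ev$ is controlled by $\nabla v$'' survives: one bounds each term of $\nabla Ev$ by the corrector estimates times $\fint_{|x|>R'}|\nabla v|^2$-type quantities on dyadic annuli $R'\gtrsim R$, uses Lemma~\ref{LF1} c) / \eqref{Lext22} to sum the geometric series (since $v\in Y_k^\h(r)$ decays at rate at least $k+d-1$), and absorbs everything into $\big(\fint_{|x|>R}|\nabla v|^2\big)^{1/2}$.

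The main obstacle, and the only part requiring genuine care, is the bookkeeping of the $\psi$- and $\Psi$-terms: unlike $\phi,\sigma$ these are not stationary in $d=3$ and are controlled only by the difference quotient estimate~\eqref{T.2}, so to estimate $\fint_{|x|<R}|\psi_{ij}\partial_{ij}^2 u|^2$ one must split $\psi_{ij}=(\psi_{ij}-\fint_{|x|<R}\psi_{ij})+\fint_{|x|<R}\psi_{ij}$, handle the oscillation via~\eqref{T.2} and the mean via a telescoping sum over dyadic scales (each step costing a factor controlled by $\beta<2$), exactly as in the passage from~\eqref{T.2BFFO} to~\eqref{T.2}; with the normalization~\eqref{psi.avgzero} the mean at scale $r_*$ vanishes, which anchors the telescoping. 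Once this estimate ``$\big(\fint_{|x|<R}|\psi-\fint\psi|^2\big)^{1/2}\lesssim R^{2}(r_*/R)^{\beta}=R^{2-\beta}r_*^{\beta}$'' and its averaged-value companion are in hand, the comparison with $|\nabla u|\sim R^{m-1}$ is immediate since $2-\beta<1\le m-1$. The analogous point on the decaying side uses~\eqref{T.2} in the same form but multiplied against $|\partial^2 v|\sim R^{-(d-1)-k-2}$ on annuli, and the geometric summation again converges because $\beta>1$. I expect no difficulty beyond this; the rest is the product rule and Cauchy--Schwarz on dyadic annuli.
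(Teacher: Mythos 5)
Your proposal handles the $\lesssim$-direction and the bookkeeping of $\psi,\Psi$ via telescoping essentially as the paper does, but the $\gtrsim$-direction is where it breaks.

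The flaw is in the claim that after expanding $\nabla Eu = \partial_i u\,(e_i+\nabla\phi_i) + \partial_{ij}u\,\nabla\psi_{ij} + (\text{terms with }\phi,\psi)$, the non-trivial pieces are all $o(1)$ relative to $\nabla u$, so that $\big(\fint_{|x|<R}|\nabla Eu|^2\big)^{1/2}=\big(\fint_{|x|<R}|\nabla u|^2\big)^{1/2}(1+o(1))$. This is false: the term $\partial_i u\,\nabla\phi_i$ is of the \emph{same} order as $\nabla u$ (in $L^2$-average, $\nabla\phi_i$ is bounded, not small; only $\phi_i$ itself is sublinear, i.e.\ small relative to $R$), so the corrected gradient does not converge to $\nabla u$ in a relative sense. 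The $\lesssim$-direction survives this (the extra gradient terms are merely bounded, not small, which is all you need for an upper bound), but the $\gtrsim$-direction cannot be obtained by term-by-term comparison on the gradient level. Ellipticity gives $\xi\cdot a\xi\ge\lambda|\xi|^2$ but no pointwise lower bound $|e_i+\nabla\phi_i|\gtrsim 1$, so one cannot simply ``drop'' the $\nabla\phi_i$.

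The paper sidesteps this by shifting to the \emph{function} level rather than the gradient level. Writing $Eu-u=\phi_i\partial_iu+\psi_{ij}\partial_{ij}u+\ut+\phi_i\partial_i\ut$, every term on the right now carries a factor $\phi$, $\psi$, or $\ut$ (a polynomial of degree $m-1$), each of which \emph{is} genuinely small at scale $R\gg r_*$. Hence $R^{-2}\fint (Eu-u)^2\ll\fint|\nabla u|^2$. To convert this into the desired gradient lower bound one needs the norm equivalence (uniformly in $R$) $\big(\fint|\nabla u|^2\big)^{1/2}\sim R^{-1}\inf_c\big(\fint(u-c)^2\big)^{1/2}$, which the paper establishes by combining Poincar\'e, Caccioppoli for the $\ah$-harmonic $u$ (resp.\ $\ah^*$-harmonic $v$), and an inverse estimate on the finite-dimensional space $Z_k^\h\oplus\cdots\oplus Z_{m-1}^\h$ (cf.\ \eqref{ir09}--\eqref{ir10}). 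This norm-equivalence step is the essential ingredient that your proposal omits, and without it the $\gtrsim$-direction of both \eqref{m08} and \eqref{m11} is not obtained.
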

We remark that the previous lemma is in the spirit of Section \ref{abstract.r} since, as it will be pointed out in the proof, it actually only requires the ingredients of 
Proposition~\ref{Ck.1}, and in particular that the first-order correctors are sublinear and that $\psi$ is only subquadratic.


\begin{proof}[Proof of Theorem~\ref{Rom3}]

We first observe that we may choose as $L_X^\h$ and $L_Y^\h$ the linear maps constructed in Proposition \ref{Rom4}. We turn to $L_X$ and $L_Y$ and start with the first one: 
We consider the spaces $W_m \oplus W_{m-1}$ as in Corollary~\ref{Cor5.b} (with $k=m$) and recall that inequality \eqref{m07} means that the projection $X_m\ni u\mapsto u'\in W_m\oplus W_{m-1}$ induced
by the direct sum $X_m=W_m$ $\oplus W_{m-1}$ $\oplus X_{m-2}$ has the continuity property
\begin{align}\label{m07bis}
\|u'\|_m+\|u-u'\|_{m-2}\lesssim \|u\|_m.
\end{align}

\medskip

We upgrade the map of Proposition \ref{Rom4}, which we identified with $L_X^\h$, in three ways:
\begin{itemize}
\item[a)] We have in addition $\|u\|_m\lesssim \|u_\h\|_m$.
\item[b)] We may compose $L_X^\h$ with the projection from $X_m$ onto $W_m$ $\oplus W_{m-1}$
without changing the error estimate and a).
\item[c)] If we specify $u_\h$ to be in $W_m^\h\oplus W_{m-1}^\h$, where $W_m^\h$ denotes the space of
$\ah$-harmonic polynomial homogeneous of degree $m$, see Lemma \ref{folklore} and Lemma \ref{Rom1}, 
then we have in addition $\|u_\h\|_m$ $\lesssim\|u\|_m$.
\end{itemize}
Upgrade a) follows from
the triangle inequality $\|u\|_m$ $\le\|u-Eu_\h\|_m+\|Eu_\h\|_m$, the ordering of norms
$\|u-Eu_\h\|_{m}$ $\le\|u-Eu_\h\|_{m-\beta}$ combined with the error estimate of Proposition \ref{Rom4},
and (\ref{m08}) in Lemma \ref{Rom5} in form of $\|Eu_\h\|_m\lesssim\|u_\h\|_m$.
For upgrade b) we argue that we may replace $u$ by its projection $u'$ onto $W_m\oplus W_{m-1}$,
both in the error estimate and in a). For the latter, this follows from the first part in (\ref{m07bis}).
For the former, we use the triangle inequality $\|u'-Eu_\h\|_{m-\beta}$ $\le\|u-Eu_\h\|_{m-\beta}$ $+\|u-u'\|_{m-\beta}$,
and note that by the ordering of norms, the second part of (\ref{m07bis}), and a) we have as desired 
$\|u-u'\|_{m-\beta}$ $\le\|u-u'\|_{m-2}$ $\lesssim\|u\|_m$ $\lesssim\|u_\h\|_{m}$.

\medskip

We now turn to upgrade c). Since $u_\h\in W_m^\h\oplus W_{m-1}^\h$, so that $\nabla u_\h$ is the sum of an $(m-1)$-homogeneous
and an $(m-2)$-homogeneous polynomial, we obtain from definition (\ref{m01}) for any $r\ge r_*$
(by distinguishing the $R$-ranges of $R\ge r$ and $r\ge R\ge r_*$)
\begin{align}\label{m10}
\|u_\h\|_m\lesssim \biggl(\frac{r_*}{r}\biggl)^{m-2}\bigg(\fint_{|x|< r}|\nabla u_\h|^2\bigg)^\frac{1}{2}.
\end{align}

\BR5 Provided $r \gg r_*$, we \ER then have by (\ref{m08}) in Lemma \ref{Rom5}, the triangle inequality, and once more the definition
(\ref{m01})
\begin{align}\nonumber
\bigg(\fint_{|x|<r}|\nabla u_\h|^2\bigg)^\frac{1}{2}
&\lesssim
\bigg(
\fint_{|x|<r}|\nabla Eu_\h|^2\bigg)^\frac{1}{2}
\\
\nonumber
&\le\bigg(
\fint_{|x|<r}|\nabla(u-Eu_\h)|^2\bigg)^\frac{1}{2} 
+\bigg(
\fint_{|x|<r}|\nabla u|^2\bigg)^\frac{1}{2}
\\
\nonumber
&\le\biggl(\frac{r}{r_*}\biggl)^{m-\beta-1}\|u-Eu_\h\|_{m-\beta}+\biggl(\frac{r}{r_*}\biggl)^{m-1}\|u\|_m.
\end{align}
\BR5
Inserting combination of the estimate $\|u-Eu_\h\|_{m-\beta}$ $\lesssim\|u_\h\|_m$ and estimate~\eqref{m10} into the right-hand side 
 yields
\begin{equation*}
\bigg(\fint_{|x|<r}|\nabla u_\h|^2\bigg)^\frac{1}{2} \lesssim \biggl(\frac{r_*}{r}\biggl)^{\beta-1}
\bigg(\fint_{|x|<r}|\nabla u_\h|^2\bigg)^\frac{1}{2}+\biggl(\frac{r}{r_*}\biggl)^{m-1}\|u\|_m.
\end{equation*}
Recalling  that $\beta > 1$, by choosing $r \gg r_*$ the first term on the right-hand side can be absorbed to the left-hand side, which then leads to the desired $\|u_\h\|_m\lesssim\|u\|_m$. 
\ER

\medskip

We now claim that the linear map $L_X^\h$ provided by Proposition \ref{Rom4}, when restricted to
$u_\h\in W_m^\h\oplus W_{m-1}^\h$ and projected onto $u\in W_m\oplus W_{m-1}$, is an isomorphism between these
normed spaces. Indeed, the above upgrade
of Proposition \ref{Rom4} shows that $\|u\|_m\sim\|u_\h\|_m$ so that the map is an isomorphism of
$W_m^\h\oplus W_{m-1}^\h$ and its image in $W_m\oplus W_{m-1}$. In order to show that this image
coincides with $W_m\oplus W_{m-1}$, we show that $W_m^\h\oplus W_{m-1}^\h$ and $W_m\oplus W_{m-1}$
have the same dimension:
Since $X_m=W_{m}\oplus W_{m-1}\oplus X_m$ we have 
${\rm dim}(W_m\oplus W_{m-1})$ 
$={\rm dim}X_m$ $-{\rm dim}X_{m-2}$ and the analogue statement for $W_m^\h\oplus W_{m-1}^\h$.
Since by (\ref{Fischer1}) the dimensions of $\{X_m\}_m$ and $\{X_m^\h\}_m$ agree, we obtain
${\rm dim}(W_m\oplus W_{m-1})$ $={\rm dim}(W_m^\h\oplus W_{m-1}^\h)$.

\medskip

We now are in a position to construct also the linear map $L_X: X_m\ni u\mapsto u_\h\in X_m^\h$ with $\|u-Eu_\h\|_{m-\beta}\lesssim\|u\|_m$.
To $u\in X_m$ we associate the linear projection $u'\in W_m\oplus W_{m-1}$. Since by the above argument
the linear map $L_X^\h$ provided by Proposition \ref{Rom4} restricted to $W_m^\h\oplus W_{m-1}^\h$ and
projected onto $W_m\oplus W_{m-1}$ is onto, there exists
$u_\h\in W_m^\h\oplus W_{m-1}^\h$ with $\|u'-Eu_\h\|_{m-\beta}$ $\lesssim\|u_\h\|_m$ and \mbox{$\|u_\h\|_m$ $\lesssim\|u'\|_m$}.
It remains to upgrade the resulting $\|u'-Eu_\h\|_{m-\beta}$ $\lesssim\|u'\|_m$ to $\|u-Eu_\h\|_{m-\beta}$ $\lesssim\|u\|_m$,
which again follows from (\ref{m07bis}).
\BR5 For later purpose we retain
\begin{equation}\label{m99}
 \|u_\h\|_{m} \lesssim \|u\|_m,
\end{equation}
which follows from already established $\|u_\h\|_m \lesssim\|u'\|_m$ and~\eqref{m07bis}.
\ER

\medskip

We note that the projection $X_m\ni u\mapsto u'\in W_m\oplus W_{m-1}$ lifts to an isomorphism between
the {\it normed} spaces $X_m/X_{m-2}$ and $W_m\oplus W_{m-1}$; the boundedness of this bijection
follows from the first part of (\ref{m07bis}), the boundedness of its inverse is tautological by
definition of the norm on the quotient space. Since this property also holds for
$X_m^\h/X_{m-2}^\h$ and $W_m^\h\oplus W_{m-1}^\h$, the isomorphism of $W_m^\h\oplus W_{m-1}^\h$ and $W_m\oplus W_{m-1}$
constructed above lifts to the desired isomorphism of the normed linear spaces $X_m^\h/X_{m-2}^\h$ and $X_m/X_{m-2}$.

\medskip

Finally, we argue that the relation $u_\h\leftrightarrow u$ provided by the
isomorphism of $X_m^\h/X_{m-2}^\h$ and $X_m/X_{m-2}$ constructed above coincides with the one defined through
the qualitative relation (\ref{m03}). By construction, the relation $u_\h\leftrightarrow u$ defined through
the isomorphism has the following characterizing property: There exists $u_\h'\in X_m^\h$ with $u_\h'-u_\h\in X_{m-2}^\h$ 
and $u'\in X_m$ with $u'-u\in X_{m-2}$ such that $u_\h'\mapsto u'$ under the linear map \BR5 $L_X^\h$ \ER provided
by Proposition \ref{Rom4}; in particular $\|u'-Eu_\h'\|_{m-\beta}$ $\lesssim\|u_\h'\|_m$.
By the triangle inequality, the ordering of the norms, and (\ref{m08}) the latter implies
\begin{align*}
\|u-Eu_\h\|_{m-\beta}\lesssim\|u'-Eu_\h'\|_{m-\beta}
+\|u'-u\|_{m-2}+\|u_\h'-u_\h\|_{m-2}<\infty,
\end{align*}
which thanks to $\beta>1$ yields the qualitative relation (\ref{m03}). The opposite direction is slightly more
subtle: Let $\tilde u\in X_m$ be related to $u_\h\in X_m^\h$ by (\ref{m03}); pick a $u\in X_m$ related to
$u_\h$ by the isomorphism. By the above also the pair $(u_\h,u)$ satisfies the relation (\ref{m03}). By the
triangle inequality we have for $\tilde u-u\in X_m$
\begin{align*}
\lim_{R\uparrow\infty}\frac{1}{R^{(m-1)-1}}\bigg(\fint_{|x|<R}|\nabla(\tilde u-u)|^2\bigg)^\frac{1}{2}=0.
\end{align*}
By \eqref{Fischer2} of Lemma \ref{Lup} this implies $\tilde u-u\in X_{m-2}$ so that next to $u$ also $\tilde u$ is related to $u_\h$ 
via the isomorphism. Likewise, let $\ut \in X_m^\h$ be related to $u\in X_m$ by (\ref{m03}); pick
a $u_\h\in X_m^\h$ related to $u$ by the isomorphism to the effect that also $(u_\h,u)$ satisfies the relation
(\ref{m03}). By the triangle inequality and by (\ref{m08}) we have for $\ut-u_\h\in X_m^\h$
\begin{align*}
\lim_{R\uparrow\infty}\frac{1}{R^{(m-1)-1}}\bigg(\fint_{|x| < R}|\nabla(\ut-u_\h)|^2\bigg)^\frac{1}{2}=0,
\end{align*}
which implies that $\ut-u_\h\in X_m^\h$, which is a polynomial (see Lemma \ref{folklore}), is of degree $\le m-2$
and thus an element of $X_{m-2}^\h$. Hence also $\ut$ and $u$ are related by the isomorphism.

\medskip

We now prove the statement for the spaces $Y_{m-1}^\h(r)$ and $Y_{m-1}(r)$ in a similar way: We pick the space $Z_{m-1}(r_*) \oplus Z_m(r_*)$ as in Corollary~\ref{Cor5.b} (here, $k=m-1$). Recalling that $Y_{m-1}(r) = Z_{m-1}(r_*)\oplus Z_m(r_*) \oplus Y_{m+1}(r)$, \BR5 \eqref{eqC301} in \ER 
Corollary~\ref{Cor5.b} for $v$ means that the projection $Y_{m-1}(r) \ni v \mapsto v' \in Z_{m-1}(r_*) \oplus Z_{m}(r_*)$ is continuous in the sense that
\begin{align}\label{m07bbis}
 \| v'\|_{m-1,r} + \| v-v'\|_{m+1,r} \lesssim \| v\|_{m-1,r}. 
\end{align}
As in the case of the spaces $X_m$, we need to upgrade Proposition \ref{Rom4} for $v$ and $v_\h$, according to the points a), b) and c). 
We begin with a): \BR5 To \ER $v_\h \in Y_{m-1}^\h(r)$ we associate $v \in Y_{m-1}(r)$ according to Proposition \ref{Rom4}. This, together with the triangle inequality, the ordering $\| \cdot  \|_{m -1,r} \leq \| \cdot \|_{m-1 + \beta,r}$ since $\beta \geq 0$, and Lemma \ref{Rom5} yields
$$
\| v\|_{m-1,r} \leq \| v - Ev_\h\|_{m-1+\beta,r} + \|Ev_\h\|_{m-1,r}\lesssim \|v_\h\|_{m-1, r}.
$$

\smallskip

We now turn to b): We upgrade the map \BR5 $L_Y^{\h}$ \ER of Proposition \ref{Rom4} to $Y_{m-1}^\h(r) \ni v_\h \mapsto v \in Z_{m-1}(r_*) \oplus Z_{m}(r_*)$ such that $\| v - Ev_\h \|_{m-1 + \beta, \BR4 r}\lesssim \| v_\h\|_{m-1,r}$ holds. Note that this implies that $a)$ 
holds also for this choice of $v$, by the same argument above. Let us consider $v_\h \in Y_{m-1}(r)$ and let $v \in Y_{m-1}(r)$ be its image under the map of Proposition \ref{Rom4}. Let $v'$ be the projection of $v$ onto $Z_{m-1}(r_*) \oplus Z_{m}(r_*)$.
We now apply in sequence the triangle inequality, Proposition \ref{Rom4}, the ordering of the norms (recall $\beta < 2$), inequality \eqref{m07bbis} and a) for $v_\h$ and $v$ and get
\begin{equation}\label{eq101}
 \|v' - E v_\h \|_{m-1 +\beta,r} \lesssim \|v - E v_\h \|_{m-1+\beta, r}+ \| v -v'\|_{m+1,r} {\lesssim} \| v_\h\|_{m-1,r},
\end{equation}
i.e. the desired estimate.

\smallskip

It remains to prove c): 
\BR5
We restrict to $v_\h \in Z_{m-1}^\h \oplus Z_{m}^\h$, the space of functions spanned by $\partial^{\alpha} G$, where $\alpha$ runs over all multi-indices $\alpha$ with $|\alpha| \in \{m-1,m\}$, and where $G$ denotes the fundamental solution of $-\nabla \cdot \ah^* \nabla$, see~Lemma~\ref{folklore}. 
Since $\nabla v_\h$ is the sum of a $-(d-1)-(m-1)$ and a $-(d-1)-m$ homogeneous function, see~\eqref{R5}, we have 
\ER
\begin{align}\label{m10b}
\| v_\h \|_{m-1,r} \lesssim \biggl( \frac{R}{r}\biggr)^{(d -1)+m} \biggl( \fint_{|x|> R} |\nabla v_\h|^2 \biggr)^\oh.  
\end{align}
Therefore for $R \BR5 \gg r$, by Lemma \ref{Rom5} and the choice of $v\in Z_{m-1}(r_*)\oplus Z_{m}(r_*)$ according to b) we have that
\BR5
\begin{align*}
 \biggl(\fint_{|x|> R}|\nabla v_\h|^2 \biggr)^\oh &\stackrel{\eqref{m11}}{\lesssim} 
 \biggl(\fint_{|x|> R}|\nabla Ev_\h|^2 \biggr)^\oh\\
 &\lesssim 
 \biggl(\fint_{|x|> R}|\nabla( v - Ev_\h)|^2 \biggr)^\oh + 
 \biggl(\fint_{|x|> R}|\nabla v|^2 \biggr)^\oh\\
 &\stackrel{\eqref{eq101}}{\lesssim} \biggl(\frac r R \biggr)^{(d-1)+ m-1 + \beta} \|v_\h\|_{m-1,r} + \biggl(\frac r R \biggr)^{(d-1)+ m-1} \| v\|_{m-1,r}\\
 &\stackrel{\eqref{m10b}}{\lesssim} \biggl(\frac{r}{R} \biggr)^{\beta-1 }
 \biggl(\fint_{|x|> R}|\nabla v_\h|^2 \biggr)^\oh + 
 \biggl(\frac r R \biggr)^{(d-1)+ m-1} \| v\|_{m-1,r}.
\end{align*}
Since $\beta > 1$, for $R \gg r$ \ER we may absorb the first term on the right-hand side and get 
$$
 \biggl(\frac R r \biggr)^{(d-1)+ m-1}\biggl(\fint_{|x|> R}|\nabla v_\h|^2 \biggr)^\oh \lesssim \| v\|_{m-1,r}.
$$
We apply \eqref{m10b} again and conclude $\| v_\h\|_{m-1,r}\lesssim \| v\|_{m-1,r}$.

\medskip

The remaining part of the proof follows similarly to the case of the spaces $X_m$ and $X_m^\h$.  We consider the upgrade of $L_Y^\h$ constructed above and restrict it to $Z_{m-1}^\h\oplus Z_{m}^\h$. Since by Proposition~\ref{Pdual} and Lemma~\ref{folklore} we have that 
\begin{equation}\label{isomorph.}
\begin{aligned}
Z_{m-1}(r_*)\oplus Z_{m}(r_*)&\cong Y_{m-1}(r_*) / Y_{m+1}(r_*),\\
Z_{m-1}^\h\oplus Z_{m}^\h&\cong Y_{m-1}^\h / Y_{m+1}^\h,
\end{aligned}
\end{equation}
by 
\eqref{approx.decay} it follows that
$$
 \text{dim}(Z_{m-1}^\h\oplus Z_{m}^\h) =\text{ dim}(Z_{m-1}(r_*)\oplus Z_{m}(r_*) ).
$$
By a), b) and c) above this yields that the restriction of $L_Y^\h$, $Z_{m-1}^\h\oplus Z_{m}^\h \rightarrow Z_{m-1}(r_*)\oplus Z_{m}(r_*)$ provides an isomorphism between the normed spaces $Z_{m-1}^\h\oplus Z_{m}^\h$ and $Z_{m-1}(r_*)\oplus Z_{m}(r_*)$.
Since also the isomorphisms in \eqref{isomorph.} are between normed spaces thanks to \eqref{m07bbis}, we may lift the previous map also to an isomorphism between $Y_{m-1}^\h/Y_{m+1}^\h$ and $Y_{m-1}/Y_{m+1}$. 

\medskip

As map $L_Y$ we consider the composition of the projection \mbox{$Y_{m-1}(r) \rightarrow Z_{m-1}(r_*)\oplus Z_{m}(r_*)$} with the inverse of the map \mbox{$Z_{m-1}^\h\oplus Z_{m}^\h \rightarrow Z_{m-1}(r_*)\oplus Z_{m}(r_*)$}: If for $v\in Y_{m-1}(r)$, 
the function $v'$ denotes its projection onto $Z_{m-1}(r_*)\oplus Z_{m}(r_*)$, we consider the element $v_\h \in Z_{m-1}^\h(r_*)\oplus Z_{m}^\h(r_*)$ such that $\| v' - Ev_\h \|_{m-1+\beta, r} \lesssim \| v_\h\|_{m -1, r}$ and, by c),
$\| v_\h \|_{m-1,r} \lesssim \| v'\|_{m-1,r}$. Therefore by these two inequalities, the ordering of the norms, the triangle inequality and \eqref{m07bbis} we also conclude
$$
\| v - Ev_\h\|_{m-1+\beta, r} \lesssim \| v - v' \|_{m+1,r} + \| v' - Ev_\h\|_{m-1+\beta,r} \lesssim \|v \|_{m-1,r},
$$
i.e. the desired estimate for $L_Y$.

\medskip

It remains to show that the isomorphism \BR5 between 
$Z_{m-1}^\h\oplus Z_{m}^\h$ and $Z_{m-1}(r_*)\oplus Z_{m}(r_*)$, induced by $L_Y^{\h}$, \ER coincides with the relation \eqref{m04}. In other words, we claim that $v \leftrightarrow v_\h$ via \eqref{m04} 
is equivalent to having their projections $v', v_\h'$ onto the spaces $Z_{m-1}^\h\oplus Z_{m}^\h$ and $Z_{m-1}(r_*)\oplus Z_{m}(r_*)$ satisfy $\| v' - Ev_\h'\|_{m-1+\beta}\lesssim \|v_\h'\|_{m-1}$.
Let us assume that $v \leftrightarrow v_\h$ via \eqref{m04} and that $v\in Y_{m-1}(r)$ and $v_\h\in Y_{m-1}^\h(r)$ for some $r_* \leq r < +\infty$. For every $R \geq r$ we have by the triangle inequality
\begin{align*}
\biggl(\fint_{|x| > R}|\nabla( v' - Ev_\h')|^2\biggr)^\oh 
&\le 
\biggl(\fint_{|x| > R}|\nabla( v - Ev_\h)|^2\biggr)^\oh
+ 
\biggl(\fint_{|x| > R}|\nabla( v' - v)|^2\biggr)^\oh 
\\
&\quad + 
\biggl(\fint_{|x| > R}|\nabla E_h( v_\h - v_\h')|^2\biggr)^\oh,
\end{align*}
\BR5 so that by~\eqref{m11} in Lemma~\ref{Rom5}
\ER 
\begin{align*}
R^{(d-1)+ m}&\biggl(\fint_{|x| > R}|\nabla( v' - Ev_\h')|^2\biggr)^\oh \lesssim R^{(d-1)+ m}\biggl(\fint_{|x| > R}|\nabla( v - Ev_\h)|^2\biggr)^\oh\\
& + r^{(d-1) + m +1}R^{-1}\bigl(\| v' - v\|_{m+1,r} + \|v_\h - v_\h'\|_{m+1,r} \bigr).
\end{align*}
\BR5 
Noting that by~\eqref{m07bbis} $\|v'-v\|_{m+1,r} < \infty$ (and analogously also $\|v_\h - v_\h'\|_{m+1,r}<\infty$) 
and 
\ER 
taking the limit $R\rightarrow +\infty$ yields that also $v' \leftrightarrow v_\h'$ via \eqref{m04}.
It remains to show that $\| v' - Ev_\h'\|_{m-1+\beta}\lesssim \|v_\h'\|_{m-1}$. By the isomorphism $Z_{m-1}^\h\oplus Z_{m}^\h \rightarrow Z_{m-1}(r_*)\oplus Z_{m}(r_*)$, we may find $\tilde v \in Z_{m-1}(r_*)\oplus Z_{m}(r_*)$ such that 
$\| \tilde v - Ev_\h'\|_{m-1+\beta}\lesssim \|v_\h'\|_{m-1}$. We claim that $\tilde v = v'$: Indeed, since $v \leftrightarrow v_\h$ and $v'\leftrightarrow v_\h'$ via \eqref{m04}, it follows by the triangle inequality that
\begin{align*}
\lim_{R\rightarrow +\infty}R^{(d-1)+ m}\biggl(\fint_{|x| > R}|\nabla( v' - \tilde v)|^2\biggr)^\oh = 0,
\end{align*}
which implies by \eqref{Liouv.decay} in Corollary \ref{Lbil} that $v' - \tilde v \in Y_{m+1}(r_*)$,
\BR5
which by Proposition~\ref{Pdual} implies $v'=\tilde v$ since both sides are in $Z_{m-1}(r_*) \oplus Z_{m}(r_*)$.
\ER
Let us now assume that for $v\in Y_{m-1}(r)$ and $v_\h\in Y_{m-1}^\h(r)$ for some $r_* \leq r < +\infty$, it holds $\| v' - Ev_\h'\|_{m-1+\beta,r}\lesssim \|v_\h'\|_{m-1,r}$. By the triangle inequality we have
\begin{align*}
\biggl(\fint_{|x| > R}|\nabla( v - Ev_\h)|^2\biggr)^\oh &\le 
\biggl(\fint_{|x| > R}|\nabla( v' - Ev_\h')|^2\biggr)^\oh +
\biggl(\fint_{|x| > R}|\nabla( v' - v)|^2\biggr)^\oh 
\\
&\quad + 
\biggl(\fint_{|x| > R}|\nabla(E_h( v_\h - v_\h'))|^2\biggr)^\oh,
\end{align*}
\BR5
so that by our assumption and~\eqref{m11} in Lemma~\eqref{Rom5}
\ER
\begin{align*}
R^{(d-1)+ m}&\biggl(\fint_{|x| > R}|\nabla( v - Ev_\h)|^2\biggr)^\oh \lesssim r^{(d-1)+m+\beta} R^{1-\beta}\|v_\h'\|_{m-1,r}\\
& + r^{(d-1) + m +1}R^{-1}\bigl(\| v' - v\|_{m+1,r} + \|v_\h - v_\h'\|_{m+1,r} \bigr).
\end{align*}
Since $\beta > 1$, we may send $R\rightarrow +\infty$ and obtain that $v \leftrightarrow v_\h$ via \eqref{m04}.

\medskip

We finally conclude the proof of the theorem by showing that the bilinear forms are preserved by the isomorphisms: We take $u_\h, u$ and $v, v_\h$ related by the isomorphisms. 
This implies by construction that $u_\h \leftrightarrow u$ and $v_\h \leftrightarrow v$ according to \eqref{m03},\eqref{m04}. For the same functions $u_\h$ and $v_\h$, let $u'$ and $v'$ be their images under the maps $L_X^\h$ and $L_Y^\h$. 
Moreover, as shown above in the construction of the isomorphisms, we have that $u-u' \in X_{m-2}$ and $v-v' \in Y_{m+1}(r)$. Since the maps $L_X^\h$ and $L_Y^\h$ coincide with the ones of Proposition \ref{Rom3}, it follows that $(u_\h, v_\h)_\h = (u', v')$ and, 
by Corollary \ref{Lbil}, also that $(u_\h, v_\h)_\h = (u, v)$. The proof of Theorem \ref{Rom3} is complete.
\end{proof}


\begin{proof}[Proof of Theorem~\ref{T3}]

\BR5 Estimates~\eqref{growth.loc} and~\eqref{dec.loc} 
for $L_X^\h$ and $L_{Y}^\h$, respectively follow immediately from the corresponding estimates in Theorem~\ref{Rom3} and \ER 
Proposition \ref{localization}. It remains to argue in favor of \eqref{growth.loc2} and \eqref{dec.loc2}. \BR5 Appealing to~\eqref{m99}, estimate~\eqref{growth.loc2} follows from~\eqref{growth.loc}. Arguing analogously we also obtain \eqref{dec.loc2}.\ER 
\end{proof}


\subsection{Proof of Lemma~\ref{Rom5}}\hspace*{\fill} 

\begin{proof}[Proof]

We start by remarking that since $X_m^\h$ is a space of $\ah$-harmonic polynomials of degree at most $m$ (see Lemma \ref{folklore}), we have that
\begin{align}\label{equi.u}
\sup_{|x|< R} ( |\nabla u|^2 + R^2 |\nabla^2 u| + R^4|\nabla^3 u|^2) \lesssim \fint_{|x|<R} |\nabla u|^2 .
\end{align}
Moreover, by the boundedness of the linear map $ X_m^\h / X_{m-1}^\h \ni u \mapsto \ut$ and the homogeneity of the polynomial $\ut$, it follows that we also have
\begin{align}\label{equi.u.tilde}
 \sup_{|x|< R} ( |\ut|^2 + R^2 |\nabla \ut|^2+ R^4 |\nabla^2 \ut|^2)\lesssim  \fint_{|x|<R} |\nabla u|^2 .
\end{align}

\medskip
\BR5
To show a similar statement for $v \in Z_k^\h\oplus\cdots\oplus Z_{m-1}^\h$ in form of
\begin{equation}\label{equi.v}
\sup_{R < |x|< 2R} ( |\nabla v|^2 + R^2 |\nabla^2 v|^2 + R^4|\nabla^3 v|^2)\lesssim  \fint_{R <|x| < 2R} |\nabla v|^2,
\end{equation}
after decomposing $v = v_k + \cdots + v_{m-1}$ and using triangle inequality together with the boundedness of the projection $v \mapsto v_k$ (see~\eqref{ao04}), we see that it is enough to show ~\eqref{equi.v} for $v \in Z_n^\h$ for any $n \ge 1$. By homogeneity of elements in $Z_n^\h$ we can assume $R=1$, and~\eqref{equi.v} then follows from finite-dimensionality of $Z_n^\h$. 
\ER


\medskip
The boundedness of the linear map $Z_k^\h\ni v\mapsto \vt$ constructed in Lemma \ref{Rom1} on the finite-dimensional space $Z_k^\h$, the homogeneity of $\vt$, and the equivalence of norms finally also yield
\begin{align}\label{equi.v.tilde}
\sup_{R < |x| < 2R}(|\vt|^2 + R^2|\nabla\vt|^2 + R^4 |\nabla^2 \vt|^2)\lesssim \fint_{R< |x| < 2R}|\nabla v|^2.
\end{align}

\medskip 
In the case $v \in Y_k^\h(r_*)$ we cannot rely anymore on the equivalence of norms to infer \eqref{equi.v}. However, since $v$ is $\ah^*$-harmonic in $\{ |x| > r_*\}$, by standard elliptic regularity theory, we may infer
\begin{align}\label{equi.v.full}
\sup_{R < |x|< 2R} ( |\nabla v|^2 + R^2 |\nabla^2 v|^2 + R^4|\nabla^3 v|^2)\lesssim  \fint_{\frac R 2 <|x| < 4R} |\nabla v|^2,
\end{align}
for $R \geq 2r_*$. For the sake of completeness, we give the proof of this result in the appendix. We remark that estimate \eqref{equi.v.tilde} holds nonetheless in this case by the same means as above and by the fact that the
projection $Y_k^\h(r_*)\ni v \mapsto v' \in Z_k^\h(r_*)$ is continuous by Proposition~\ref{Pdual}.

\medskip

As already mentioned after its statement, the estimates of this lemma hold under assumptions on the correctors which are weaker than \eqref{T.1} and \eqref{T.2}, namely 
\begin{align}\label{weak.sub}
R^{-1}\biggl(\fint_{|x|< R} |(\phi_i,\sigma_i)|^2 \biggr)^\oh + R^{-2}\biggl(\fint_{|x| < R} (\psi_{ij})^2 \biggr)^\oh \ll 1, \qquad R \gg r_*,
\end{align}
and the analogous statement for $\phi^*$, $\sigma^*$, and $\psi^*$. \BR5 Because of our simplifying assumption~\eqref{psi.avgzero} only $\psi_{ij}$ and not $\psi_{ij} - \fint \psi_{ij}$ appeared in~\eqref{weak.sub}. \ER 
We start therefore by proving that, under the sole assumption \eqref{weak.sub}, we have estimate \eqref{m11} for $v \in Z_k^\h(r_*) \oplus \cdots \oplus Z_m^\h(r_*)$  as in the statement, but \eqref{m11} for $v \in Y_k^\h(r)$ provided $R \gg r$. 
Similarly, by assuming only \eqref{weak.sub}, we obtain \eqref{m08} exclusively for $R\gg r_*$. If we further assume \eqref{T.2} and \eqref{T.1}, then the condition above holds for $R \geq r_*$ and therefore we recover \eqref{m11} and \eqref{m14} for $R$ 
in the same range required by the statement of the lemma.

\smallskip



We first tackle the second norm equivalence (\ref{m11}), which we may divide into dyadic annuli
\begin{align}\label{ir08}
\int_{R<|x|<2R}|\nabla E v|^2\sim\int_{R<|x|<2R}|\nabla v|^2.
\end{align}
We start with the case of $v\in Z_k^\h\oplus\cdots\oplus Z_{m-1}^\h$, and the $\gtrsim$-part.
We note that for these $v$, we have the (uniform in $R$) norm equivalence
\begin{align}\label{ir10}
\bigg(\fint_{R<|x|<2R}|\nabla v|^2\bigg)^\frac{1}{2}\sim R^{-1}\inf_{c\in\mathbb{R}}\bigg(\fint_{R<|x|<2R}(v-c)^2\bigg)^\frac{1}{2}.
\end{align}
The $\gtrsim$-part of (\ref{ir10}) coincides with Poincar\'e's inequality. The $\lesssim$-part follows from 
Caccioppoli's estimate (recall that $v$ is $\ah$-harmonic outside the origin) on annuli in form of
\begin{align*}
\bigg(\fint_{\frac{4}{3}R<|x|<\frac{5}{3}R}|\nabla v|^2\bigg)^\frac{1}{2}
\lesssim R^{-1}\inf_{c\in\mathbb{R}}\bigg(\fint_{R<|x|<2R}(v-c)^2\bigg)^\frac{1}{2}
\end{align*}
in conjunction with the inverse estimate
\begin{align}\label{ir09}
\bigg(\fint_{R<|x|<2R}|\nabla v|^2\bigg)^\frac{1}{2}\lesssim\bigg(\fint_{\frac{4}{3}R<|x|<\frac{5}{3}R}|\nabla v|^2\bigg)^\frac{1}{2}.
\end{align}
The latter can be seen as follows: Since the space $Z_k^\h\oplus\cdots\oplus Z_{m-1}^\h$ is invariant under scaling, it suffices
to establish (\ref{ir09}) for $R=1$, in which case it follows from the fact that the space is of finite dimension and
that the right-hand side constitutes a semi-norm on this space that vanishes only on constants. This establishes (\ref{ir10}).
Thanks to the inverse estimate \eqref{ir09}, more precisely extension of~\eqref{ir09} where the domain of integration on the left-hand side can be taken much larger at the expense of larger prefactor on the right-hand side, it suffices to establish the $\gtrsim$-part of (\ref{ir08}) only for $R\gg r_*$. 

\medskip

In view of the norm equivalence (\ref{ir10}), and once more by Poincar\'e's inequality, for the $\gtrsim$-part of (\ref{ir08}) 
it is enough to establish 

%
\begin{align*}
\inf_{c\in\mathbb{R}}\int_{R < |x| < 2R}(v-c)^2\lesssim\inf_{c\in\mathbb{R}}\int_{R<|x|<2R}(Ev-c)^2\quad\mbox{for}\;R\gg r_*.
\end{align*}
Hence by the triangle inequality it is enough to establish
\begin{align*}
\int_{R<|x|<2R}(Ev-v)^2\ll\inf_{c\in\mathbb{R}}\int_{R<|x|<2R}(v-c)^2\quad\mbox{for}\;R\gg r_*,
\end{align*}
which by~\eqref{ir10} 
follows from
\begin{align*}
R^{-2}\int_{R<|x|<2R}(Ev-v)^2\ll\int_{R<|x|<2R}|\nabla v|^2\quad\mbox{for}\;R\gg r_*.
\end{align*}
By definition (\ref{m15}) of $Ev$ we may split this in two statements
\begin{align}
R^{-2}\bigg(\int_{R<|x|<2R}(\phi_i^*\partial_iv)^2+(\psi_{ij}^*\partial_{ij}v)^2
\bigg)&\ll\int_{R<|x|<2R}|\nabla v|^2\quad\mbox{for}\;R\gg r_*,\label{m13}\\
R^{-2}\bigg(\int_{R<|x|<2R}\vt^2+(\phi_i^*\partial_i\vt)^2\bigg)&
\ll\int_{R<|x|<2R}|\nabla v|^2\quad\mbox{for}\;R\gg r_*.\label{m14}
\end{align}
By \eqref{equi.v}, estimate~\eqref{m13} reduces to \eqref{weak.sub}. 
Estimate~\eqref{m14} can be proven similarly, using \eqref{equi.v.tilde} and \eqref{weak.sub}. 
%

\medskip

We now turn to the $\lesssim$-part of~\eqref{m11}, which follows from the $\lesssim$-part of~\eqref{ir08} 
\begin{align}\label{Rom5.c}
\biggl( \fint_{R < |x| < 2R} |\nabla Ev|^2 \biggr)^\oh \lesssim \biggl( \fint_{R <|x| < 2R} |\nabla v|^2 \biggr)^\oh.
\end{align}
By definition~\eqref{m15} of $Ev$, we infer that
\begin{align*}
 \nabla Ev& = \partial_i (v + \vt) ( e_i + \nabla \phi_i^*) + \phi_i^* \nabla \partial_i( v  + \vt) +  \partial_{ij}v\nabla \psi_{ij}^*+ \psi_{ij}^*\nabla\partial_{ij}v,
\end{align*}
\BR5 using which we have \ER
\begin{align}\label{Rom5.d}
\biggl( \fint_{R < |x|< 2R} |\nabla E v|^2 \biggr)^\oh \leq &\sup_{R < |x| < 2R}\bigl(| \nabla v| + |\nabla\vt|\bigr) \biggl( \fint_{|x|<2R} | e_i + \nabla \phi_i^* |^2 \biggl)^\oh\notag\\
& + \sup_{R < |x| < 2R}\bigl(| \nabla^2 v| + |\nabla^2\vt|\bigr) \biggl( \biggl( \fint_{|x|<2R} ( \phi_i^*)^2 \biggr)^\oh + \biggl( \fint_{|x| < 2R}|\nabla \psi^*_{ij} |^2 \biggl)^\oh\biggr)\notag\\
& + \sup_{R < |x| < 2R} | \nabla^3 v|\ \biggl( \fint_{|x| < 2R} (\psi_{ij}^*)^2 \biggl)^\oh.
\end{align}
Since by \eqref{i2} the functions $x_i +\phi_i^*$ are $a^*$-harmonic, we apply Caccioppoli's inequality in the form of
\begin{align}\label{grad.phi}
\biggl(\fint_{|x|<R}|e_i + \nabla \phi_i^*|^2\biggr)^\oh \lesssim R^{-1}\biggl(\fint_{|x|< 2R} ( x_i + \phi_i^*)^2 \biggr)^\oh \stackrel{\eqref{weak.sub}}{\lesssim} 1
\end{align}
\BR5 for $R \ge r_*$. \ER 
Similarly, we use equation \eqref{ir13} for~$\psi_{ij}^*$ and test it (as for Caccioppoli's inequality) with $\eta^2 \psi^*_{ij}$ with $\eta$ cut-off for $\{|x| < R \}$ in $\{|x| < 2R \}$. We use ellipticity of $a^*$, together with Cauchy-Schwarz and Young's inequalities to bound 
\begin{align}\label{grad.psi}
\biggl( \fint_{|x| < R} |\nabla \psi_{ij}^*|^2\biggr)^\oh & \lesssim R^{-1}\biggl( \fint_{|x| < 2R} (\psi^*_{ij})^2\biggr)^\oh+ \biggl(\fint_{|x|< 2R}(\phi^*_i)^2 + |\sigma^*_j|^2\biggr)^\oh\notag\\
& \BR5 \stackrel{\eqref{weak.sub}}{\le} R
\end{align}
\BR5 whenever $R \le r_*$. \ER
By plugging the previous two estimates and~\eqref{weak.sub} into \eqref{Rom5.d}, and using  \eqref{equi.v} and \eqref{equi.v.tilde}, we get \eqref{Rom5.c} for $R \gg r_*$. The statement for the case $R \ge r_*$ follows by the previous argument combined with the inverse estimate~\eqref{ir09}. We thus established \eqref{m11} in case $v\in Z_k^\h\oplus\cdots\oplus Z_{m-1}^\h$. Since estimate \eqref{Rom5.c} may be proven in the same way for $v \in Y_k^\h$ using \eqref{equi.v.full} instead of \eqref{equi.v}, the proof of the lemma for the spaces $Y_k^\h $ is complete.

\smallskip

We turn to \eqref{m08}: The $\lesssim$-part may be proven as above, this time appealing to \eqref{equi.u} and~\eqref{equi.u.tilde} instead of \eqref{equi.v} and~\eqref{equi.v.tilde}. It remains to show the inequality with the
$\gtrsim$ sign: By Poincar\'e's inequality, it is enough to show that
\begin{align*}
R^{-2}\inf_{c\in \mathbb{R}} \fint_{|x|< R} (Eu - c)^2 \gtrsim \fint_{|x|< R}|\nabla u|^2. 
\end{align*}
By definition of $Eu$ and the triangle inequality, this follows once we establish
\begin{align*}
R^{-2} \inf_{c\in \mathbb{R}} \fint_{|x|< R} (u - c)^2 &\gtrsim \fint_{|x|< R}|\nabla u|^2 + R^{-2} \fint_{|x|< R}( \ut^2  + (\phi_i \partial_i(u+ \ut))^2 + (\psi_{ij} \partial_{ij}u)^2 ).
\end{align*}
Like for $v$ in (\ref{ir10}), we need the following equivalence of semi-norms for $u\in X_m^\h$
\begin{align*}
\bigg(\fint_{|x|<R}|\nabla u|^2\bigg)^\frac{1}{2}\sim R^{-1}\inf_{c\in\mathbb{R}}\bigg(\fint_{|x|<R}(u-c)^2\bigg)^\frac{1}{2}.
\end{align*}
This equivalence follows from Poincar\'e's inequality, Caccioppoli's estimate for the $\ah$-harmonic $u$, 
and from the fact that on a finite-dimensional space of polynomials we have the inverse estimate
$\big(\fint_{|x|<R}|\nabla u|^2\big)^\frac{1}{2}$ $\lesssim\big(\fint_{|x|<R/2}|\nabla u|^2\big)^\frac{1}{2}$. In view
of this norm equivalence, it suffices to show 
\begin{align*}
 R^{-2} \fint_{|x|< R}( \ut^2 + (\phi_i \partial_i(u+ \ut))^2 + (\psi_{ij} \partial_{ij}u)^2 ) \ll \fint_{|x|< R}|\nabla u|^2,
\end{align*}
when $R \gg r_*$. The argument for this estimate is similar to the one for \eqref{m13} and \eqref{m14}, 
and uses \eqref{equi.u}, \eqref{equi.u.tilde}, and the assumption \eqref{weak.sub} on the correctors. 
\end{proof}


\subsection{Proof of Proposition~\ref{Rom4}}\hspace*{\fill} 

\begin{proof}

W.l.o.g. we may assume that $r_*=1$.
\PfStart{C:Rom4}

\PfStep{C:Rom4}{C:Rom4.1}
Two-scale expansion error estimate; 
 we start by showing that for every $u_\h \in X_m^\h$ and $v_\h \in Y_k^\h(r)$, the two-scale expansions $Eu_\h$ and $Ev_\h$ (see definition \eqref{m15}) satisfy the equations
\begin{equation}\label{two.scale.1}
 \begin{aligned}
 -\nabla \cdot a \nabla (Eu_\h) &= \nabla \cdot g \ \ \ \text{ in $\mathbb{R}^d$,}\\
 -\nabla \cdot a^* \nabla (Ev_\h) &= \nabla \cdot h \ \ \ \text{ in $\{ |x| > r \}$,}
 \end{aligned}
\end{equation}
with right-hand side estimated as follows:
\begin{equation}\label{rhs.estimate}
 \begin{aligned}
 \biggl( \fint_{|x| < R} |g|^2 \biggr)^\oh &\lesssim R^{m-1-\beta} \biggl( \fint_{|x|< 1}|\nabla u_\h|^2 \biggr)^\oh \quad && R \geq 2, \\
\biggl( \fint_{|x| > R} |h|^2 \biggr)^\oh &\lesssim \biggl(\frac{ r}{ R} \biggr)^{(d-1)+ k + \beta} \biggl( \fint_{|x| > r}|\nabla v_\h|^2 \biggr)^\oh \quad &&R \geq 2r.
 \end{aligned}
\end{equation}
We begin with \eqref{two.scale.1} for $u_\h$ and claim that, if we denote by $u_\h'$ the projection of $u_\h$ onto the space $W_m^\h$, then $g$ takes the form
\begin{align}\label{definition.g}
g:= - \bigl( \partial_{ij}(u_\h-u_\h')C_{ij}^{\mathrm{sym}}
+(\phi_i a-\sigma_i)\nabla\partial_j\ut+(\psi_{ij}a-\Psi_{ij})\nabla\partial_{ij}u_\h \bigr).
\end{align}
We first show that the vector field $g$ as defined in \eqref{definition.g} satisfies \eqref{rhs.estimate}: By applying H\"older's inequality together with the assumptions \eqref{T.1} on $(\phi, \sigma)$, the boundedness of $a$ and \eqref{C.bdd} we indeed have for every $R \geq 1$ that 
\begin{align}\label{g.1}
\biggl(\fint_{|x|< R} |g |^2 \biggr)^\oh&\lesssim \biggl( \fint_{|x|< R} |(\psi, \Psi)|^2 \biggl)^\oh  \sup_{|x| < R}|\nabla^3 u_\h(x)|\\
&\quad + \biggl( \fint_{|x|< R} |(\phi, \sigma)|^2 \biggl)^\oh \sup_{|x| < R} |\nabla^2 \ut(x)|
+ \biggl(\fint_{|x|<R}| \nabla^2 (u_\h - u_\h')|^2\biggr)^\oh.\notag
\end{align}
We now appeal to \eqref{equi.u} and~\eqref{equi.u.tilde} for the terms with $u_\h$ and $\ut$ to infer that
\begin{align*}
 \sup_{|x| < R}|\nabla^3 u_\h(x)| + \sup_{|x| < R} |\nabla^2 \ut(x)| \lesssim {R}^{m-3} \biggl( \fint_{|x| < 1}|\nabla u_\h|^2 \biggr)^\oh.
\end{align*}
In addition, the continuity of the projection $X_m^\h \ni u_\h \mapsto u_\h' \in W_m^\h$ and the triangle inequality yield
$$
\biggl( \fint_{|x|< 1} |\nabla (u_\h - u_\h')|^2 \biggr)^\oh\lesssim \|\nabla u_\h \|_m \stackrel{\eqref{F2}}{\lesssim}\biggl( \fint_{|x|< 1} |\nabla u_\h|^2 \biggr)^\oh,
$$
so that if we combine the two previous estimates with \eqref{T.2} and \eqref{T.1}, the estimate  \eqref{g.1} turns into \eqref{rhs.estimate}, provided we show that for every $r \geq 2$
\begin{align}\label{sub.correct.}
 \biggl( \fint_{|x| <r} |(\psi, \Psi)|^2 \biggr)^\oh \lesssim r^{2-\beta}.
\end{align}
Since in~\eqref{psi.avgzero} we assumed $\fint_{|x| < 1} (\psi, \Psi) = 0$, we use the triangle inequality to bound
\begin{align*}
\biggl( \fint_{|x | < r}& |(\psi,\Psi) |^2 \biggr)^\oh = \biggl( \fint_{|x | < r} \biggl|(\psi,\Psi) -  \fint_{|x| <1} (\psi,\Psi)\biggl|^2 \biggr)^\oh\notag\\
&\leq \biggl( \fint_{|x | < r} \biggl|(\psi,\Psi) - \fint_{|x | < r}(\psi,\Psi)\biggl|^2 \biggr)^\oh 
+ \biggl|\fint_{|x | < r}(\psi,\Psi) -  \fint_{|x| <1} (\psi,\Psi) \biggl|\notag\\
& \stackrel{\eqref{T.2}}{\lesssim} r^{2-\beta} + \biggl|\fint_{|x | < r}(\psi,\Psi) -  \fint_{|x| <1} (\psi,\Psi) \biggl|,
\end{align*}
and also 
\begin{align*}
 \biggl|\fint_{|x | < r}(\psi,\Psi) -  \fint_{|x| <1} (\psi,\Psi) \biggl| &\leq \sum_{n=0}^{\log_2r} \biggl|\fint_{|x| < 2^n}(\psi,\Psi) - \fint_{ |x| < 2^{n+1}}(\psi,\Psi) \biggl|\notag\\
 &\lesssim \sum_{n=0}^{\log_2r} \biggl( \fint_{ |x| < 2^{n+1}} \biggl|(\psi,\Psi) - \fint_{|x| < 2^{n+1}}(\psi,\Psi) \biggl|^2 \biggr)^\oh\\
 &\stackrel{\eqref{T.2}}{\lesssim} r^{2-\beta}.
\end{align*}
We thus established \eqref{sub.correct.} as well as \eqref{rhs.estimate}.

\medskip

We now show \eqref{two.scale.1} with $g$ defined as in \eqref{definition.g}: Applying the gradient to the definition (\ref{m15}) of the second-order two-scale expansion $Eu_\h$, we obtain
from Leibniz' rule
\begin{align*}
\nabla(Eu_\h)=\partial_i(u_\h+\ut)(e_i+\nabla\phi_i)+\phi_i\nabla\partial_i(u_\h+\ut)
+\partial_{ij}u_\h\nabla\psi_{ij}+\psi_{ij}\nabla\partial_{ij}u_\h.
\end{align*}
Applying the tensor field $a$, we obtain by the characterizing property (\ref{i5}) of $\sigma_i$,
and again Leibniz' rule,
\begin{align*}
a \nabla(Eu_\h)&=\ah\nabla(u_\h+\ut)+\nabla\cdot(\partial_i(u_\h+\ut)\sigma_i)\nonumber\\
&\quad +\partial_{ij}(u_\h+\ut)(\phi_i a-\sigma_i)e_j
+\partial_{ij}u_\h a\nabla\psi_{ij}+\psi_{ij}a\nabla\partial_{ij}u_\h.
\end{align*}
We now appeal to the characterizing property (\ref{Psi}) of $\Psi_{ij}$, and Leibniz' rule, to reformulate the flux further:
\begin{align}\label{ir1}
a \nabla(Eu_\h)&=\ah\nabla(u_\h+\ut)+\partial_{ij}u_\h C_{ij}\nonumber\\
&\quad +\nabla\cdot(\partial_i(u_\h+\ut)\sigma_i+\partial_{ij}u_\h\Psi_{ij})\nonumber\\
&\quad +(\phi_i a-\sigma_i)\nabla\partial_j\ut+(\psi_{ij}a-\Psi_{ij})\nabla\partial_{ij}u_\h.
\end{align}
We finally apply the divergence to this identity. We note that by the symmetry of third derivatives we have
$\nabla\cdot(\partial_{ij}u_\h C_{ij})$ $=\nabla\cdot(\partial_{ij}u_\h C_{ij}^{\mathrm{sym}})$. We also note that the terms in the
second right-hand side line vanish because for any skew-symmetric $\sigma$ we have $\nabla\cdot\nabla\cdot\sigma=0$ by the
symmetry of second derivatives. Hence by the $\ah$-harmonicity of $u_\h$ 
and the defining equation (\ref{u.tilde}) for $\ut$ we obtain
\begin{align*}
\lefteqn{\nabla\cdot a \nabla(Eu_\h)}\nonumber\\
&=\nabla\cdot\Big(\partial_{ij}(u_\h-u_\h')C_{ij}^{\mathrm{sym}}
+(\phi_i a-\sigma_i)\nabla\partial_j\ut+(\psi_{ij}a-\Psi_{ij})\nabla\partial_{ij}u_\h\Big),
\end{align*}
which is (\ref{two.scale.1}) with (\ref{definition.g}).

\smallskip

We now turn to \eqref{two.scale.1} and \eqref{rhs.estimate} for $Ev_\h$. Similarly as above, we denote by $v_\h'$ the projection of $v_\h$ onto the space $Z_k^\h(1)$ defined in Lemma \ref{folklore}.
We remark that the proof of \eqref{two.scale.1} with g solving \eqref{definition.g} relies on the structure of the two-scale expansion $Eu_\h$ and 
does not depend on any property of $u_\h$ and $\ut$ besides the $\ah$-harmonicity of $u_\h$ and the equation \eqref{R7} for $\ut$.
Therefore, by definition of $v_\h$, equation \eqref{R2} and the property $-C^{sym,*} = C^{\mathrm{sym}}$ (see Proposition \ref{stoch.results}), the same argument works also for $Ev_\h$ which therefore solves \eqref{two.scale.1} with 
\begin{align}\label{definition.h}
h:= \bigl( (\psi_{ij}^*a^* - \Psi^*) \nabla \partial_{ij}v_\h + (\phi_i^*a^* - \sigma_i^*)\nabla \partial_i \vt -  \partial_{ij}(v_\h - v_\h')C_{ij}^{\mathrm{sym}} \bigr).
\end{align}
We estimate $h$ first on dyadic annuli. As for $g$, we apply H\"older's inequality and the assumptions \eqref{T.1} for $(\phi^*, \sigma^*)$, \eqref{C.bdd}, and \eqref{sub.correct.} for $(\psi^*, \Psi^*)$ to obtain for every $R \geq 2r$ that
\begin{align*}
 \biggl(\fint_{R < |x|< 2R} |h |^2 \biggr)^\oh &\lesssim {R}^{2-\beta} \sup_{R <|x| < 2R}|\nabla^3 v_\h(x)| + R^{2-\alpha} \sup_{R <|x| < 2R} |\nabla^2 \vt(x)| \\
&\quad + \biggl(\fint_{R < |x|< 2R}| \nabla^2 (v_\h - v_\h')|^2\biggr)^\oh.
\end{align*}
It remains to apply  to $v_\h$ and $\vt$ estimates \eqref{equi.v.tilde} and \eqref{equi.v.full} in the proof of Lemma \ref{Rom5} and Corollary~\ref{Cor5.b} on $v_\h - v_\h'$ to conclude that
\begin{align*}
 \biggl(\fint_{R < |x|< 2R} |h |^2 \biggr)^\oh\lesssim  \biggl( \frac{r}{R}\biggr)^{d - 1 + k + \beta}\biggl( \fint_{|x| > r}|\nabla v_\h|^2 \biggr)^\oh.
\end{align*}
By summing over dyadic annuli, we obtain inequality \eqref{rhs.estimate} also for $h$ and thus conclude the proof of Step~\ref{C:Rom4.1}.

\PfStep{C:Rom4}{C:Rom4.2}
Conclusion by means of Proposition \ref{Lg} and \ref{Ld};
we start by tackling the case of the spaces of growing functions. Since by Lemma \ref{Rom5} the function $Eu_\h$ satisfies for $R \geq 2$ 
(here, thanks to assumption \eqref{T.1} on the growth of the correctors, we may take $R \geq 2$)
\begin{align*}
\biggl( \fint_{|x| < R} |\nabla E u_\h|^2 \biggr)^\oh \stackrel{\eqref{m08}}{\sim} \biggl( \fint_{|x| <R} |\nabla u_\h|^2 \biggr)^\oh \stackrel{\eqref{F2}}{\lesssim}
{R}^{m-1}\biggl(\fint_{|x|< 1}|\nabla u_\h|^2 \biggr)^\oh,
\end{align*}
we may apply Proposition \ref{Lg} to $Eu_\h$ which, by the previous steps, satisfies \eqref{two.scale.1} and~\eqref{rhs.estimate}. We infer that there exists $u \in X_m$ such that the first inequality in \eqref{Pg4} holds, i.e
\begin{align*}
\bigr(\fint_{|x| < R} |\nabla ( u - Eu_\h)|^2 \biggr)^\oh \lesssim {R}^{m-1-\beta} \biggl( \fint_{|x| < 1} |\nabla u_\h|^2 \biggr)^\oh.
\end{align*}
This implies $\|u -E u_\h\|_{m-\beta} \lesssim \| u_\h\|_m$.

\smallskip

We now turn to the spaces of decaying functions: We fix $v_\h \in Y_k(r)$. By Lemma \ref{Rom5} and Lemma \ref{LF1}, (c), the function $Ev_\h$ satisfies 
\begin{align*}
 \biggl( \fint_{|x|> R} |\nabla E v_\h|^2 \biggr)^\oh \lesssim \biggl( \frac{ r}{ R} \biggr)^{d + k -1}\biggl( \fint_{|x|> r}|\nabla v_\h|^2 \biggr)^\oh,
\end{align*}
and, by Step 1., also \eqref{two.scale.1} and~\eqref{rhs.estimate}. It remains to apply Proposition \ref{Pd} with $r_*=r$ and select an element $v \in Y_k(r)$ such that for every $R \geq 2r$ 
\begin{align*}
 \biggl(\fint_{|x|> R} |\nabla( v - Ev_\h)|^2 \biggr)^\oh \lesssim \biggl(\frac{ r} {R }\biggr)^{(d-1) + k + \beta}\biggl( \fint_{|x| > r}|\nabla v_\h|^2\biggr)^{\oh},
\end{align*}
i.e. $\| v - Ev_\h\|_{k+\beta,r} \lesssim \| v_\h \|_{k, r}$.
\end{proof}


\subsection{Proof of Proposition~\ref{localization}}\hspace*{\fill} 

\begin{proof}
W.l.o.g. we assume that $r_*=1$. We give the argument for the estimate \eqref{d.loc}; estimate~\eqref{g.loc} may be proven in an analogous way. For the sake of simplicity, since we restrict our argument only to the case of the decaying functions, we omit the $^*$-superscript for the correctors and write $(\phi, \sigma)$ and $(\Psi, \psi)$ instead of $(\phi^*, \sigma^*)$ and $(\Psi^*, \psi^*)$. We set $D:= \frac{|y|}{4}$.

\medskip

We begin by showing that for $v_\h \in Y_k^\h(r)$ and its image $v\in Y_k(r)$ under $L_Y^\h$ it holds 
\begin{align}\label{localization.1}
\| v - \tilde E v_\h \|_{k+\beta,r} \lesssim \| v_\h \|_{k, r},
\end{align}
where $\tilde E v_\h$ is the two-scale expansion of $v_\h$ centered in $x_0$, namely
\begin{align}\label{Etilde}
 \tilde Ev_\h = \bigg(1 + \phi_i \partial_i + \bigg(\psi_{ij}- \fint_{|x-y|< r_*(y)}\psi_{ij}\bigg)\partial_{ij}\bigg) v_\h + (1 + \phi_i\partial_i)\vt.
\end{align}
By homogeneity, w.l.o.g. we assume that $ \| v_\h \|_{k,r}= 1$. By the estimate for $L_Y^\h$ of Proposition \ref{Rom4} and the triangle inequality, to prove \eqref{localization.1} it suffices to show that for every $R \geq D$
\begin{align}\label{localization.2}
\biggl( \fint_{|x| > R} |\nabla ( Ev_\h - \tilde E v_\h)|^2 \biggr)^\oh \lesssim R^{-(d-1 + k + \beta)}.
\end{align}
\BR5 By the definitions of $Ev_\h$ and $\tilde Ev_\h$ (see~\eqref{m15} and \eqref{Etilde}\ER) and estimate \eqref{equi.v.full} for $v_\h$ (after a summation over dyadic annuli) we have
\begin{align}\label{localization.3}
\biggl( \fint_{|x| > R} |\nabla ( Ev_\h - \tilde E v_\h)|^2 \biggr)^\oh \lesssim R^{-(d-1 + k + 3)}\bigg|\fint_{|x-y|< r_*(y)}\psi_{ij}\bigg|.
\end{align}
Since we assumed $\fint_{|x|< 1}(\psi, \Psi)= 0$, we have
\begin{align*}
\biggl| \fint_{|x-y|< r_*(y)}(\psi, \Psi)\biggl| &= \biggl|\fint_{|x-y|<  r_*(y)}(\psi, \Psi) - \fint_{|x|< 1}(\psi, \Psi)\biggl|
\end{align*}
and claim that 
\begin{align}\label{Psi.centers}
\biggl|\fint_{|x-y|<  r_*(y)}(\psi, \Psi) - \fint_{|x|< 1}(\psi, \Psi)\biggl| \lesssim D^{2-\beta}.
\end{align}
Indeed, by the triangle inequality we write
\begin{equation*}
\begin{aligned}
\biggl| \fint_{|x| <1} &(\psi,\Psi)- \fint_{|x-y| < r_*(y)} (\psi,\Psi)\biggr| 
\\
&\lesssim 
\biggl( \fint_{|x - y | < 2D} \biggl|(\psi,\Psi) -  \fint_{|x| <1} (\psi,\Psi)\biggl|^2 \biggr)^\oh\\
&\quad\quad\quad+ \biggl( \fint_{|x - y | < 2D} \biggl|(\psi,\Psi) -  \fint_{|x-y| < r_*(y)} (\psi,\Psi)\biggl|^2 \biggr)^\oh 
\\
&\lesssim
\biggl( \fint_{|x | < 4D} \biggl|(\psi,\Psi) -  \fint_{|x| <1} (\psi,\Psi)\biggl|^2 \biggr)^\oh\\
&\quad\quad\quad+ \biggl( \fint_{|x-y| < 4D} \biggl|(\psi,\Psi) -  \fint_{|x-y| < r_*(y)} (\psi,\Psi)\biggl|^2 \biggr)^\oh .
\end{aligned}
\end{equation*}
The first term is bounded by $D^{2-\beta}$ by \eqref{sub.correct.}; for the second one we use the same argument as in \eqref{sub.correct.} since we may use assumption \BR5 \eqref{T.2} at $y$ \ER and bound this term by $\lesssim \bigl(\frac{D}{r_*(y)} \bigr)^{2-\beta}\lesssim D^{2-\beta}$.
By inserting the bound \eqref{Psi.centers} into \eqref{localization.3} yields \eqref{localization.2} and thus also \eqref{localization.1}.

\medskip

Since $v$ is $a$-harmonic in $\{|x| > r \}$ with $r \leq D$, by the same calculation carried out in Step 1 in the proof of Proposition \ref{Rom4} we infer that $v- \tilde E v_\h$ solves
\begin{align}\label{localization.4}
-\nabla \cdot a\nabla ( v - \tilde E v_\h) = \nabla \cdot h \ \ \ \text{in $\{|x| > r \}$,}
\end{align}
with $h$ as in \eqref{definition.h} where $\psi$ and $\Psi$ are substituted by $\psi - \fint_{|x-y| < r_*(y)}\psi$ and $\Psi - \fint_{|x-y| < r_*(y)}\Psi$. An argument analogous to the one for the second line of \eqref{rhs.estimate}, 
yields that for every $r_*(y) \leq R' \leq D$
\begin{align*}
\biggl( \fint_{|x - y| < R'}|h|^2 \biggr)^\oh \lesssim D^{-(d + 1 + k)} (r_*(y))^\beta (R')^{2-\beta}.
\end{align*}
In particular, since $r_*(y) \le D$, 
this implies for every $ r_*(y) \leq R' \leq D$ 
\begin{align*}
\biggl( \fint_{|x - y| < R'}|h|^2 \biggr)^\oh \lesssim D^{-(d - 1 + k + \beta)}\biggl( \frac{R'}{D} \biggr)^{2-\beta}.
\end{align*}
We now appeal to Lemma \ref{propositionlocalize} of Section \ref{abstract.r}, this time over a ball centered at $y$; we remark that the assumption \eqref{T.1} yields the result of Proposition \ref{Ck.1} also centered at $y$, with $r_*$ substituted by $r_*(y)$. Therefore, conditions \eqref{F1} and \eqref{F2} hold for the radius $r_*(y)$ also if centered at $y$, and the results of Section \ref{abstract.r} as well. We thus apply Lemma \ref{propositionlocalize} in $\{ |x - y| < D \}$ to the function $D^{d-1 + k +\beta }(v - \tilde E v_\h)$ and infer, by equation \eqref{localization.4}, inequality \eqref{localization.1} and the last estimate above, that
\begin{align*}
\biggl(\fint_{|x-y| < r_*(y)}|v - \tilde E v_\h|^2 \biggr)^\oh \lesssim D^{-(d -1 + k +\beta)}.
\end{align*}
By reasoning as in \eqref{localization.2}, we may substitute $\tilde E v_\h$ with $E v_\h$ in the above estimate and thus conclude the proof of Proposition \ref{localization}.
\end{proof}


\subsection{Proof of Proposition~\ref{invariants.preserved}}\hspace*{\fill} 

\begin{proof}
\PfStart{P10}

For $m\ge 2$ and $r\ge r_*$ let $u_\h\in X_m^\h$, $v_\h\in Y_{m-1}^\h(r)$, $u\in X_m$, and $v\in Y_{m-1}(r)$ 
be as in Proposition \ref{Rom4} (with $k=m-1$); let $(\ut,\vt)$ be defined in terms of
$(u_\h,v_\h)$ according to Lemmas \ref{Rom1} and \ref{Rom2}. 
We give a short overview of the proof and its steps.
Introducing an ``asymptotic invariant'' of the pair of two-scale expansions $(Eu_\h,Ev_\h)$, see  Step~\ref{P10:3}, 
we split the identity $(u,v)$ $=(u_\h,v_\h)_\h$ into two parts which are stated in Step~\ref{P10:3} and Step~\ref{P10:Conclusion}. 
Step~\ref{P10:0} collects the (obvious) asymptotic properties of $(u_\h,\ut,v_\h,\vt)$;
Step~\ref{P10:2} deduces the (easy) asymptotic properties of $(Eu_\h,Ev_\h)$. Step \ref{P10:1} recalls the error estimates on $\nabla(Eu_\h-u,Ev_\h-v)$ and deduces those on $(Eu_\h-u,Ev_\h-v)$. Steps~\ref{P10:flux}, \ref{P10:reduce}, and
\ref{P10:reduce2} serve to get rid of lower-order terms in the asymptotic invariant. Of structural significance
are Step~\ref{P10:reduce3}, where the normalization of the correctors is used, and Steps~\ref{P10:sym} and \ref{P10:Null},
where cancellation properties are uncovered.  

\PfStep{P10}{P10:0}
Estimates of $(u_\h,\ut,v_\h,\vt)$;
we claim
\begin{align}
\limsup_{|x|\uparrow\infty}|x|^{-m}(|u_\h|+|x||\nabla u_\h|+|x|^2|\nabla^2 u_\h|+|x|^3|\nabla^3 u_\h|)<\infty,\label{wi36}\\
\limsup_{|x|\uparrow\infty}|x|^{-m+1}(|\ut|+|x||\nabla\ut|+|x|^2|\nabla^2\ut|)<\infty,\label{wi37}\\
\limsup_{|x|\uparrow\infty}|x|^{(d-2)+(m-1)}(|v_\h|+|x||\nabla v_\h|+|x|^2|\nabla^2 v_\h|+|x|^3|\nabla^3 v_\h|)<\infty,\label{wi38}\\
\limsup_{|x|\uparrow\infty}|x|^{(d-2)+m}(|\vt|+|x||\nabla\vt|+|x|^2|\nabla^2\vt|)<\infty.\label{wi39}
\end{align}
Besides the estimate on the functions themselves,
these are just a qualitative reformulation of the statements (\ref{equi.u}), \eqref{equi.u.tilde},
(\ref{equi.v}), and (\ref{equi.v.tilde}), at the beginning of the proof of Lemma \ref{Rom5}. For $u_\h$ and
$\ut$ themselves, the statement is obvious since these are polynomials of degree $m$ and $m-1$, respectively.
For $v_\h$ and $\vt$, this follows from the decay estimate on $\nabla v_\h$ and $\nabla \vt$ by integration
from infinity.

\PfStep{P10}{P10:2}
Estimate of two-scale expansion; we claim
\begin{align*}
\limsup_{R\uparrow\infty}R^{-m}\bigg(\fint_{|x|<R}(Eu_\h)^2\bigg)^\frac{1}{2},
\limsup_{R\uparrow\infty}R^{(d-2)+(m-1)}\bigg(\fint_{|x|>R}(Ev_\h)^2\bigg)^\frac{1}{2}<\infty.\nonumber
\end{align*}
Indeed,
for $Eu_\h$, see \eqref{m15},
this follows from the $L^\infty$-control on the growth of $(u_\h,\nabla u_\h,\nabla^2 u_\h,\ut,\nabla\ut)$
stated in (\ref{wi36}) and (\ref{wi37}) together with the (sub)-linear $L^2$-growth of $(\phi,\psi)$
assumed in (\ref{T.1}) and (\ref{T.2}).
For $Ev_\h$ this follows from the $L^\infty$-control on the decay of $(v_\h,\nabla v_\h,\nabla^2 v_\h,\vt,\nabla\vt)$,
see \eqref{wi38} and (\ref{wi39}), together with the same $L^2$-control on $(\phi,\psi)$.

\PfStep{P10}{P10:1}
Estimate of error in two-scale approximation; we claim
\begin{align}
\lim_{R\uparrow\infty}R^{-m+1}\bigg(\fint_{|x|<R}(|Eu_\h-u|+R|\nabla(Eu_\h-u)|)^2\bigg)^\frac{1}{2}=0,\label{wi34}\\
\lim_{R\uparrow\infty}R^{(d-2)+m}\bigg(\fint_{|x|>R}(|Ev_\h-v|+R|\nabla(Ev_\h-v)|)^2\bigg)^\frac{1}{2}=0.\label{wi41}
\end{align}
Indeed, the statement on the gradients is a consequence of the outcome of Proposition \ref{Rom4} with $k=m-1$, which 
on a qualitative level reads as
\begin{align}
\limsup_{R\uparrow\infty}R^{-m+\beta}\bigg(\fint_{|x|<R}(R|\nabla(Eu_\h-u)|)^2\bigg)^\frac{1}{2}<\infty,\label{wi30}\\
\limsup_{R\uparrow\infty}R^{(d-2)+(m-1)+\beta}\bigg(\fint_{|x|>R}(R|\nabla(Ev_\h-v)|)^2\bigg)^\frac{1}{2}<\infty,\label{wi31}
\end{align}
see the definitions (\ref{m01}) and (\ref{m02.bis}) of the norms, and of $\beta>1$.
It remains to address the functions themselves. In view of (\ref{wi30}) and (\ref{wi31})
it suffices to establish for arbitrary exponent $\alpha>0$ and function $w$
\begin{align}
\sup_{R\ge r}R^{-\alpha}\bigg(\fint_{|x|<R}(w-\fint_{|x|<r}w)^2\bigg)^\frac{1}{2}
\lesssim\sup_{R\ge r}R^{-\alpha}\bigg(\fint_{|x|<R}(R|\nabla w|)^2\bigg)^\frac{1}{2},\label{wi32}\\
\sup_{R\ge r}R^{\alpha}\bigg(\fint_{|x|>R}w^2\bigg)^\frac{1}{2}
\lesssim\sup_{R\ge r}R^{\alpha}\bigg(\fint_{|x|>R}(R|\nabla w|)^2\bigg)^\frac{1}{2}.\label{wi29}
\end{align}
Let us point out that in (\ref{wi32}), which we apply to $\alpha=m-\beta$ (which is positive by our assumptions
of $m\ge 2>\beta$) and to $w=Eu_\h-u$, the averaged value $\fint_{|x|<r}w$ is over the {\it fixed} ball
$\{|x|<r\}$ and thus does not affect the qualitative statement (\ref{wi34}).

\medskip

We first turn to (\ref{wi32}); by scaling we may w.\ l.\ o.\ g.\ assume that $r=1$, and that $R=2^N$ is dyadic,
so that it suffices to show
\begin{align}\label{wi36bis}
\bigg(\fint_{|x|<2^N}\biggl(w-\fint_{|x|<1}w\biggl)^2\bigg)^\frac{1}{2}
\lesssim\sup_{0\le n\le N}2^{\alpha(N-n)}\bigg(\fint_{|x|<2^n}(2^n|\nabla w|)^2\bigg)^\frac{1}{2}.
\end{align}
This follows via telescoping from Poincar\'e's inequality: 
\begin{align*}
\lefteqn{\bigg(\fint_{|x|<2^N}\biggl(w-\fint_{|x|<1}w\biggl)^2\bigg)^\frac{1}{2}}\nonumber\\
&\le \bigg(\fint_{|x|<2^N}\biggl(w-\fint_{|x|<2^N}w\biggl)^2\bigg)^\frac{1}{2}
+\sum_{n=1}^N\biggl|\fint_{|x|<2^n}w-\fint_{|x|<2^{n-1}}w\biggl|\nonumber\\
&\lesssim \sum_{n=1}^N\bigg(\fint_{|x|<2^n}\biggl(w-\fint_{|x|<2^n}w\biggl)^2\bigg)^\frac{1}{2}
\lesssim \sum_{n=1}^N\bigg(\fint_{|x|<2^n}(2^n|\nabla w|)^2\bigg)^\frac{1}{2}.
\end{align*}
That this sum over $n$ in can be estimated by sup over $n$ in (\ref{wi36bis})
is a consequence of the prefactor $2^{\alpha(N-n)}$ with $\alpha>0$ in the latter.

\medskip

We now turn to (\ref{wi29}). By a division of the left-hand side integral
into dyadic annuli and the convergence of the geometric series (thanks to $\alpha>0$), this reduces to
\begin{align*}
\bigg(\fint_{R<|x|<2R}w^2\bigg)^\frac{1}{2}\lesssim R\bigg(\fint_{|x|>R}|\nabla w|^2\bigg)^\frac{1}{2}
\end{align*}
for any radius $R$,
which follows from H\"older's inequality on the annulus $\{R<|x|<2R\}$ followed by Poincar\'e's inequality
on the exterior domain $\{|x|>R\}$ (recall $d>2$):
\begin{align*}
\bigg(\fint_{R<|x|<2R}w^2\bigg)^\frac{1}{2}\lesssim R\bigg(\fint_{R<|x|<2R}|w|^\frac{2d}{d-2}\bigg)^{\frac{1}{2}-\frac{1}{d}}
\lesssim R\bigg(\fint_{|x|>R}|\nabla w|^2\bigg)^\frac{1}{2}.
\end{align*}

\PfStep{P10}{P10:3}
Invariant equals asymptotic invariant of two-scale expansion; we claim
\begin{align*}
(u,v)=-\lim_{R\uparrow\infty}\int\nabla\eta\cdot(Ev_\h a\nabla Eu_\h-Eu_\h a^*\nabla Ev_\h),
\end{align*}
where $\eta_R(x)=\eta(\frac{x}{R})$ with $\eta$ a smooth cut-off function for $\{|x|<1\}$ in $\{|x|<2\}$.
Indeed, we recall definition (\ref{bil.def}) of the invariants, and write
\begin{align}
\lefteqn{\big(Ev_\h a\nabla Eu_\h-Eu_\h a^*\nabla Ev_\h\big)-\big(va\nabla u-ua^*\nabla v\big)}\nonumber\\
&=(Ev_\h-v)a\nabla u-(Eu_\h-u)a^*\nabla v\label{wi40}\\
&+Ev_\h a\nabla(Eu_\h-u)-Eu_\h a^*\nabla(Ev_\h-v).\label{wi42}
\end{align}
By definition of $\eta_R$ we have for any functions $u$ and $v$
\begin{align*}
\biggl|\int\nabla\eta_R\cdot(va\nabla u)\biggl|
&\lesssim R^{d-1}\bigg(\fint_{|x|>R}v^2\bigg)^\frac{1}{2}\bigg(\fint_{|x|<2R}|\nabla u|^2\bigg)^\frac{1}{2},\\
\biggl|\int\nabla\eta_R\cdot(ua^*\nabla v)\biggl|
&\lesssim R^{d-1}\bigg(\fint_{|x|<2R}u^2\bigg)^\frac{1}{2}\bigg(\fint_{|x|>R}|\nabla v|^2\bigg)^\frac{1}{2}.
\end{align*}
Hence the contribution from the first term in line (\ref{wi40}) vanishes in the limit $R\uparrow\infty$
by (\ref{wi41}) in Step \ref{P10:1} and by the defining
property of $u$ being in $X_m$; likewise, the second term in line (\ref{wi40}) vanishes by (\ref{wi34}) in Step \ref{P10:1}
and $v\in Y_{m-1}(r)$. The contribution from the two terms in line (\ref{wi42}) vanishes by Steps \ref{P10:2} and \ref{P10:1}.

\PfStep{P10}{P10:flux}
A suitable representation of asymptotic invariant; we claim 
\begin{align}\label{wi25}
\lefteqn{\int\nabla\eta_R\cdot(Ev_\h a\nabla Eu_\h)=\int\nabla\eta_R\cdot}\nonumber\\
&\Big(Ev_\h\big(\ah\nabla(u_\h+\ut)+\partial_{ij}u_\h C_{ij}
+(\phi_ia-\sigma_i)\nabla\partial_i\ut+(\psi_{ij}a-\psi_{ij})\nabla\partial_{ij}u_\h\big)\nonumber\\
&-(\partial_i(u_\h+\ut)\sigma_i+\partial_{ij}u_\h\Psi_{ij})\nabla Ev_\h\Big)
\end{align}
and a similar formula with the roles of $v_\h$ and $u_\h$ exchanged and with $a$ replaced by $a^*$.
Indeed, this follows from the flux representation (\ref{ir1}) together with the integration by parts formula
\begin{align*}
\int\nabla\eta_R\cdot(v\nabla\cdot\sigma)=-\int\nabla\eta_R\cdot\sigma\nabla v.
\end{align*}
that holds for any function $v$ and any skew-symmetric tensor field $\sigma$ that
are smooth on $\{|x|\ge r\}$.

\PfStep{P10}{P10:reduce}
Getting rid of lower-order terms
in asymptotic invariant; we claim
\begin{align}\label{wi27}
\lim_{R\uparrow\infty}&\bigg|\int\nabla\eta_R\cdot(Ev_\h a\nabla Eu_\h)
-\int\nabla\eta_R\cdot\Big(v_\h\big(\ah\nabla(u_\h+\ut)+\partial_{ij}u_\h C_{ij}\big)\nonumber\\
&+(\phi_i^*\partial_iv_\h+\vt)\ah\nabla u_\h
-\partial_iv_\h\partial_ju_\h\sigma_j(e_i+\nabla\phi_i^*)\Big)\bigg|=0
\end{align}
and a similar statement with the roles of $v_\h$ and $u_\h$ exchanged and with $a$ replaced by $a^*$.

\medskip

Here comes the argument, where we drop the index $h$ (besides on $\ah$): 
We start from the representation from Step \ref{P10:flux}. 
The integrand of (\ref{wi25}) consists of two summands, 
``Product A'' involving $Ev$ and ``Product B'' involving $\nabla Ev$. 
We now list the contributions of $(v,\tilde v)$ to these products with increasing order of $L^2$-decay, indicating
the decay exponent (where a $-$ means slightly better and a $+$ as little worse as we wish) and indicating
which of the two products they belong to:
\begin{align*}
\begin{array}{lc@{\hspace{5ex}}ll@{\hspace{5ex}}l}
v1)&v&-(d-2)-m+1&&\mbox{A},\\
v2)&\phi_i^*\partial_iv&-(d-2)-m&+&\mbox{A},\\
v3)&\tilde v&-(d-2)-m&&\mbox{A},\\
v4)&\partial_iv(e_i+\nabla\phi_i^*)&-(d-2)-m&&\mbox{B},\\
v5)&\phi_i^*\partial_i\tilde v+\psi_{ij}^*\partial_{ij}v&-(d-2)-m&-&\mbox{A},\\
v6)&\phi_i^*\nabla\partial_iv&-(d-2)-m-1&+&\mbox{B},\\
v7)&\partial_i\tilde v(e_i+\nabla\phi_i^*)+\partial_{ij}v\nabla\psi_{ij}^*&-(d-2)-m-1&&\mbox{B},\\
v8)&\phi_i^*\nabla\partial_i\tilde v+\psi_{ij}^*\nabla\partial_{ij}v&-(d-2)-m-1&-&\mbox{B}.
\end{array}
\end{align*}
The basis for these qualitative estimates are the $L^\infty$-estimates of $(v,$ $\nabla v,$ $\nabla^2 v,$ $\nabla^3 v,$ $\tilde v,$
$\nabla\tilde v,$ $\nabla^2\tilde v)$ in Step \ref{P10:0},
see (\ref{wi38}) and (\ref{wi39}).
Clearly, the pluses in v2) and v6) (and of u1) below) are a consequence of the subalgebraic $L^2$-growth (\ref{T.1}) of $(\phi,\sigma)$,
while the minuses in v5) and v8) (and of u3) and u5) below) are a consequence of the sublinear $L^2$-growth (\ref{T.2}) of $(\psi,\Psi)$.
The $L^2$-boundedness of the gradients $\nabla(\phi^*,\psi^*)$ needed for v4) and v7) was established through
Caccioppoli estimates, see (\ref{grad.phi}) and 
(\ref{grad.psi}). Based on (\ref{wi36}) and (\ref{wi37}) we obtain for the contributions of $(u,\tilde u)$, 
listed with decreasing growth,
\begin{align*}
\begin{array}{lc@{\hspace{5ex}}ll@{\hspace{5ex}}l}
u1)&\partial_j u\sigma_j&m-1&+&\mbox{B},\\
u2)&\ah\nabla u&m-1&&\mbox{A},\\
u3)&\partial_j\tilde u\sigma_j+\partial_{ij}u\Psi_{ij}&m-1&-&\mbox{B},\\
u4)&\ah\nabla\tilde u+\partial_{ij}uC_{ij}&m-2&&\mbox{A},\\
u5)&(\phi_ia-\sigma_i)\nabla\partial_i\tilde u+(\psi_{ij}a-\Psi_{ij})\nabla\partial_{ij}u&m-2&-&\mbox{A}.
\end{array}
\end{align*}
Since $\int|\nabla\eta|\lesssim R^{d-1}$, if the sum of two rates is strictly less than $-d+1$, 
then the contribution to the integral is negligible in the limit $R\uparrow\infty$. Therefore, the contribution from v1) $\times$ u4)
to product A, which gives rise to $-d+1-$, is negligible. Likewise the contribution from v2) $\times$ u5) 
to A is negligible, since it gives rise to $-d+$ because the sum of two $+$ acts like a plus $+$. 
Likewise, the contributions from v4) $\times$ u3) and of v6) $\times$ u1) to Product B are negligible. 
All subordinate contributions are a fortiori negligible. This leaves us with the four surviving terms
from A and a single surviving term from B:
\begin{align*}
\mbox{v1)}\times\mbox{u2)},\quad
\mbox{v1)}\times\mbox{u5)},\quad
\mbox{v2)}\times\mbox{u2)},\quad
\mbox{v3)}\times\mbox{u2)},\quad
\mbox{v4)}\times\mbox{u1)}.
\end{align*}
The statement with the roles of $v_\h$ and $u_\h$ exchanged, and with $a$ replaced by $a^*$,
follows by the same argument since it is the {\it sum} of the decay rates that matters.

\PfStep{P10}{P10:reduce2} Getting rid of more lower-order terms in asymptotic invariant; we claim
\begin{align*}
\lim_{R\uparrow\infty}&\bigg|\int\nabla\eta_R\cdot(Ev_\h a\nabla Eu_\h)
-\int\nabla\eta_R\cdot\Big(v_\h\ah\nabla u_\h+v_\h'(\ah\nabla\ut+\partial_{ij}u_\h'C_{ij})\nonumber\\
&+(\phi_i^*\partial_iv_\h'+\vt)\ah\nabla u_\h'
-\partial_iv_\h'\partial_ju_\h'\sigma_j(e_i+\nabla\phi_i^*)\Big)\bigg|=0
\end{align*}
and a similar statement with the roles of $v_\h$ and $u_\h$ exchanged, and with $a$ replaced by $a^*$.
We drop the index $h$ (besides on $\ah$). 
Our goal is to substitute $(u,v)$ by $(u',v')$ 
in all but the first \BR6 right-hand side term \ER in (\ref{wi27}). We recall that $u'$ is the $m$-homogeneous part of $u$ and $v'$ 
is the $(d-2)-(m-1)$-homogeneous part of $v$. The difference satisfies 
$(\delta u,\delta v)\in X_{m-1}^\h\times Y_m^\h$ so that as in Step \ref{P10:0}
\begin{align*}
\limsup_{|x|\uparrow\infty}|x|^{-m+1}(|\delta u|+|x||\nabla\delta u|+|x|^2|\nabla^2\delta u|)<\infty,\\
\limsup_{|x|\uparrow\infty}|x|^{(d-2)+m}(|\delta v|+|x||\nabla\delta v|)<\infty.
\end{align*}
Equipped with this additional information we may proceed as in Step \ref{P10:reduce} by dealing with the effect on the
three products
\begin{align*}
\mbox{A}:\;v\;(\ah\nabla\tilde u+\partial_{ij}uC_{ij}),
\quad\mbox{B}:\phi_i^*\partial_iv\;\ah\nabla u,\quad\mbox{C}:\;\partial_ju\sigma_j\;\partial_iv(e_i+\nabla\phi_i^*).
\end{align*}
This time, the tables are given by 
\begin{align*}
\begin{array}{lc@{\hspace{5ex}}ll@{\hspace{5ex}}l}
v1)&v'&-(d-2)-m+1&&\mbox{A},\\
v2)&\phi_i^*\partial_iv'&-(d-2)-m&+&\mbox{B},\\
v3)&\delta v&-(d-2)-m&&\mbox{A},\\
v4)&\partial_iv'(e_i+\nabla\phi_i^*)&-(d-2)-m&&\mbox{C},\\
v5)&\phi_i^*\partial_i\delta v&-(d-2)-m-1&+&\mbox{B},\\
v6)&\partial_i\delta v(e_i+\nabla\phi_i^*)&-(d-2)-m-1&&\mbox{C}\\
\end{array}
\end{align*}
and
\begin{align*}
\begin{array}{lc@{\hspace{5ex}}ll@{\hspace{5ex}}l}
u1)&\partial_j u'\sigma_j&m-1&+&\mbox{C},\\
u2)&\ah\nabla u'&m-1&&\mbox{B},\\
u3)&\partial_j\delta u\sigma_j&m-2&+&\mbox{C},\\
u4)&\ah\nabla\tilde u+\partial_{ij}u'C_{ij}&m-2&&\mbox{A},\\
u5)&\ah\nabla\delta u&m-2&&\mbox{B},\\
u6)&\partial_{ij}\delta uC_{ij}&m-3&&\mbox{A}.
\end{array}
\end{align*}
The only with sum of rates $\ge -d+1$ and thus surviving terms are the leading-order
terms for the three products
\begin{align*}
v1)\;\times\;u4),\quad v2)\;\times u2),\quad v4)\;\times u1).
\end{align*}

\PfStep{P10}{P10:reduce3}
Making use of the normalization
of the corrector; we claim that for all $R>0$ 
\begin{align}
\lefteqn{\lim_{R'\uparrow\infty}\int\nabla\eta_{R'}\cdot(Ev_\h a\nabla Eu_\h-Eu_\h a^*\nabla Ev_\h)
=-(u_\h,v_\h)_\h}\nonumber\\
&+\int\nabla\eta_R\cdot\big(v_\h'(\ah\nabla\ut+\partial_{ij}u_\h'C_{ij})-\ut \ah^*\nabla v_\h'\big)\nonumber\\
&-\int\nabla\eta_R\cdot\big(u_\h'(\ah^*\nabla\vt+\partial_{ij}v_\h'C_{ij}^*)-\vt \ah\nabla u_\h'\big)\nonumber\\
&-\int\partial_k\eta_R\partial_iv_\h'\partial_ju_\h'C_{jki}.\label{wi26}
\end{align}
We note that $\partial_iv_\h'\partial_ju_\h'$ is homogeneous of degree $(-(d-2)-m)+(m-1)=-d+1$ so that 
$\partial_k\eta_R\partial_iv_\h'\partial_ju_\h'(x)$ 
$=\frac{1}{R^d}\partial_k\eta_1\partial_iv_\h'\partial_ju_\h'(\frac{x}{R})$, i.\ e.\
$\{\partial_k\eta_R\partial_iv_\h'\partial_ju_\h'\}_{R\uparrow\infty}$ acts like sequence of smooth averaging functions. 
Hence by Corollary~\ref{ergodicity} we obtain
\begin{align*}
\lim_{R'\uparrow\infty}\int\nabla\eta_R\cdot(\phi_i^*\partial_iv_\h'\ah\nabla u_\h')&=0,\nonumber\\
\lim_{R'\uparrow\infty}\int\nabla\eta_R\cdot(\partial_iu_\h'\partial_jv_\h'\sigma_ie_j)&=0,\nonumber
\end{align*}
and a similar statement with the roles of $v_\h$ and $u_\h$ exchanged and with $a$ replaced by $a^*$.
Using~\eqref{definition.C} we 
likewise learn from Corollary~\ref{ergodicity}
\begin{align*}
\lim_{R'\uparrow\infty}\int\nabla\eta_{R'}\cdot\big(\partial_iv_\h'\partial_ju_\h'
(\sigma_i^*\nabla\phi_j-\sigma_j\nabla\phi_i^*)\big)=
-\int\partial_k\eta_R\partial_iv_\h'\partial_ju_\h'C_{jki}.
\end{align*}
Recall that the integrands $v_\h'(\ah\nabla\ut+\partial_{ij}u_\h'C_{ij})-\ut \ah^*\nabla v_\h'$
and $u_\h'(\ah^*\nabla\vt+\partial_{ij}v_\h'C_{ij}^*)-\vt \ah\nabla u_\h'$ are homogeneous
of degree $-d+1$, see the proof of Lemmas \ref{Rom1} and \ref{Rom2}, so that the two middle
integrals of (\ref{wi26}) are independent of $R$.
Finally, because of $u_\h$ and $v_\h$ are $\ah$-harmonic and $\ah^*$-harmonic in $\{|x|>r\}$, respectively,
we have that $\int\nabla\eta_{R}\cdot(v_\h\ah\nabla u_\h-u_\h\ah^*\nabla v_\h)$ does not depend on $R$,
it is in fact equal to $-(u_\h,v_\h)_\h$.
Hence (\ref{wi26}) follows from Step \ref{P10:reduce2}.

\PfStep{P10}{P10:sym}
Only the symmetric part of $C$ matters; we claim
\begin{align}\label{ir23}
\lefteqn{\int\partial_k\eta_R\big(v_\h'\partial_{ij}u_\h'C_{ijk}-u_\h'\partial_{ij}v_\h'C_{ijk}^*-\partial_iv_\h'\partial_ju_\h' C_{jki}\big)}
\nonumber\\
&=\int\partial_k\eta_R\big(v_\h'\partial_{ij}u_\h'C_{ijk}^{\mathrm{sym}}-u_\h'\partial_{ij}v_\h'C_{ijk}^{*,\mathrm{sym}}
-\partial_iv_\h'\partial_ju_\h' C_{ijk}^{\mathrm{sym}}\big).
\end{align}
For notational simplicity we drop the indices $R$, $h$ and the primes. We first note that it is sufficient
to establish the formula
\begin{align}\label{ir22}
\lefteqn{\int\partial_k\eta\big(v\partial_{ij}uC_{ijk}-u\partial_{ij}vC_{ijk}^*-\partial_iv\partial_ju C_{jki}\big)}
\nonumber\\
&=-\int\partial_k\eta\partial_iv\partial_ju(C_{ijk}+C_{kij}+C_{jki})
  +\int\partial_{ijk}\eta v u C_{ijk}.
\end{align}
Indeed, (\ref{ir22}), which we apply to both $C$ and $C^{\mathrm{sym}}$ implies (\ref{ir23}) because 
1) by the symmetry of the third derivatives we may replace $C$ by $C^{\mathrm{sym}}$ in the second right-hand side term
of (\ref{ir22}) and 2) because of Step \ref{P10:Null} (which relies on the symmetry of second derivatives) 
we also may replace $C$ by $C^{\mathrm{sym}}$ in the first right-hand side term.

\medskip

We now turn to (\ref{ir22}). Using $C^*_{ijk}=-C_{kji}$, which immediately follows from~\eqref{definition.C}, 
and removing the identical
term on both sides, it turns into
\begin{align*}
\lefteqn{\int\partial_k\eta\big(v\partial_{ij}uC_{ijk}+u\partial_{ij}vC_{kji}\big)}
\nonumber\\
&=-\int\partial_k\eta\partial_iv\partial_ju(C_{ijk}+C_{kij})
  +\int\partial_{ijk}\eta v u C_{ijk}.
\end{align*}
By relabelling we see that it is enough to show for any triplet $i,j,k=1,\ldots,d$
\begin{align*}
\int\big(\partial_k\eta\,v\,\partial_{ij}u+\partial_i\eta\,\partial_{kj}v\,u\big)
&=\int\big(-\partial_k\eta\,\partial_iv\,\partial_ju
-\partial_i\eta\,\partial_jv\,\partial_ku
+\partial_{ijk}\eta\,v\,u\big).
\end{align*}
This formula follows  from the identity
\begin{align*}
\partial_k\eta\,v\,\partial_{ij}u+\partial_i\eta\,\partial_{kj}v\,u
+\partial_k\eta\,\partial_iv\,\partial_ju+\partial_i\eta\,\partial_jv\,\partial_ku-\partial_{ijk}\eta\,v\,u\\
=\partial_i(\partial_k\eta\,v\,\partial_ju)+\partial_k(\partial_i\eta\,\partial_jv\,u)
-\partial_j(\partial_{ik}\eta\,v\,u)
\end{align*}
since $\nabla\eta$ is compactly supported in $\mathbb{R}^d-\{0\}$, where $v$ and $u$ are smooth.

\PfStep{P10}{P10:Null}
Cancellation via Null-Lagrangian; we claim
\begin{align}\label{ir21}
\int\partial_k\eta_R\partial_iv_\h'\partial_ju_\h'(C_{ijk}+C_{kij}+C_{jki}-3C_{ijk}^{\mathrm{sym}})=0.
\end{align}
For notational simplicity we drop the indices $R$ and $h$ and the primes, and start by noting that
by definition of $C^{\mathrm{sym}}$,
\begin{align*}
C_{ijk}+C_{kij}+C_{jki}-3C_{jki}^{\mathrm{sym}}=\frac{1}{2}(C_{ijk}+C_{kij}+C_{jki}-C_{ikj}-C_{kji}-C_{jik}).
\end{align*}
Hence we have for the integrand of (\ref{ir21})
\begin{align}\label{ir20}
2\partial_k\eta\partial_iv\partial_ju(C_{ijk}+C_{kij}+C_{jki}-3C_{ijk}^{\mathrm{sym}})=D_{ijk} C_{ijk},
\end{align}
where $D_{ijk}$ denotes 
\begin{align*}
D_{ijk}&:=\partial_iv\partial_ju\partial_k\eta+\partial_kv\partial_iu\partial_j\eta
+\partial_jv\partial_ku\partial_i\eta\nonumber\\
&-\partial_iv\partial_ku\partial_j\eta-\partial_kv\partial_ju\partial_i\eta-\partial_jv\partial_iu\partial_k\eta,
\end{align*}
which is a $3$-minor
of the derivative matrix of the map $(v,u,\eta)$ from $\mathbb{R}^d-\{0\}$ into
$\mathbb{R}^3$. It is a classical result that such a minor can be written as a divergence, for the convenience
of the reader, and since $\eta$ plays a special role, we now display the short argument. 
One first has to realize that by symmetry of the second derivatives,
\begin{align*}
\partial_i(\partial_ju\partial_k\eta-\partial_ku\partial_j\eta)
+\partial_k(\partial_iu\partial_j\eta-\partial_ju\partial_i\eta)
+\partial_j(\partial_ku\partial_i\eta-\partial_iu\partial_k\eta)=0.
\end{align*}
This implies the desired representation as divergence:
\begin{align*}
\lefteqn{D_{ijk}}\nonumber\\
&=\partial_iv(\partial_ju\partial_k\eta-\partial_ku\partial_j\eta)
+\partial_kv(\partial_iu\partial_j\eta-\partial_ju\partial_i\eta)
+\partial_jv(\partial_ku\partial_i\eta-\partial_iu\partial_k\eta)\nonumber\\
&=\partial_i\big(v(\partial_ju\partial_k\eta-\partial_ku\partial_j\eta)\big)
+\partial_k\big(v(\partial_iu\partial_j\eta-\partial_ju\partial_i\eta)\big)\nonumber\\
&+\partial_j\big(v(\partial_ku\partial_i\eta-\partial_iu\partial_k\eta)\big).
\end{align*}
Since $\nabla\eta$ is compactly supported in $\mathbb{R}^d-\{0\}$, where $v$ and $u$ are smooth,
this yields $\int D_{ijk}=0$, which in view of (\ref{ir20}) yields (\ref{ir21}).

\PfStep{P10}{P10:Conclusion}
Asymptotic invariant of two-scale expansion
equals $\ah$-invariant; we claim
\begin{align*}
\lim_{R\uparrow\infty}\int\nabla\eta_{R}\cdot(Ev_\h a\nabla Eu_\h-Eu_\h a^*\nabla Ev_\h)
=-(u_\h,v_\h)_\h.
\end{align*}
Indeed, inserting Step \ref{P10:sym} into (\ref{wi26}) we obtain
\begin{align*}
\lefteqn{\lim_{R'\uparrow\infty}\int\nabla\eta_{R'}\cdot(Ev_\h a\nabla Eu_\h-Eu_\h a^*\nabla Ev_\h)
=-(u_\h,v_\h)_\h}\nonumber\\
&+\int\nabla\eta_R\cdot\big(v_\h'(\ah\nabla\ut+\partial_{ij}u_\h'C_{ij}^{\mathrm{sym}})-\ut \ah^*\nabla v_\h'\big)\nonumber\\
&-\int\nabla\eta_R\cdot\big(u_\h'(\ah^*\nabla\vt+\partial_{ij}v_\h'C_{ij}^{*,\mathrm{sym}})-\vt \ah\nabla u_\h'\big)\nonumber\\
&-\int\partial_k\eta_R\partial_iv_\h'\partial_ju_\h'C_{jki}^{\mathrm{sym}}.
\end{align*}
Now the boundary conditions of $\ut$ and $\vt$ at infinity
are defined just so that the last three terms cancel, see (\ref{R11}) and (\ref{R1}).
\end{proof}

\section{Proof of Theorem \ref{main}}
\begin{proof}
We fix $\beta > 1$ as prescribed in the statement of the theorem. As \BR5 stated in Propositions~\ref{stoch.results} and~\ref{Ck.1} \ER in Section \ref{stochastic.r}, under the hypothesis of stationarity and LSI$(\rho)$ for $\langle \cdot \rangle$, for $\langle \cdot \rangle$-almost every realization of $a$ 
\BR5 there are the first and second-order correctors $(\phi, \sigma), (\phi^*, \sigma^*)$ and $(\psi, \Psi), (\psi^*, \Psi^*)$ which satisfy the estimates \eqref{T.2} and \eqref{T.1} at the origin for an $r_* < +\infty$ and with exponents $\bar\beta > \beta$ and $\alpha \geq \bar\beta-1$. \ER
According to Corollary \ref{r.star.log}, we have for every $y \in \frac{1}{2\sqrt{d}}\mathbb{Z}^d$~\footnote{While in Corollary~\ref{r.star.log} we consider $\mathbb{Z}^d$, the same argument works verbatim also if $\mathbb{Z}^d$ is replaced by $\frac{1}{2\sqrt{d}}\mathbb{Z}^d$.}
\begin{align}\label{r.star.log2}
r_*(y) \lesssim \tilde c ( 1+ |y|^\varepsilon ) \quad \textrm{ with } \varepsilon := (\bar\beta - \beta)\BR5 \frac1d.
\end{align}
The above constant $\tilde c$ does depend on the realization $a$, \BR5 and \ER not only via the ellipticity ratio. In spite of this, since the estimates of Theorem \ref{main} are all purely qualitative, 
we do not care for the dependence of this constant on $a$ and use in this section the notation $\lesssim$ for $\leq C$ with $C$ depending only on $d$, $\lambda$, $m$ and $\tilde c$. We thus fix $a$ such that the above properties hold and identify the functions $\phi_i$ and $\psi_{ij}$ with the first and second-order correctors $\phi_i(a, \cdot)$, $\psi_{ij}(a, \cdot)- \fint_{|x|< r_*}\psi_{ij}(a,\cdot)$ in the case of the spaces $X_m$ and $X_m^\h$, and with  $\phi^*=\phi_i(a^*, \cdot)$, $\psi^*=\psi_{ij}(a^*, \cdot)- \fint_{|x|< r_*}\psi_{ij}(a^*,\cdot)$ in the case of the spaces $Y_{m-1}$ and $Y_{m-1}^\h$. For the tensor $C_{ijk}$ given by Proposition 1, we construct the maps $X_m^\h\ni u_\h \mapsto \ut$ and $Y_{m-1}^\h\ni v_\h \mapsto \vt$ according to Lemmas \ref{Rom1} and \ref{Rom2}. As in Section \ref{deterministic.r}, 
we have $\ut =0$ \BR5 for $m=2$\ER. We are therefore in the position to appeal to Theorem \ref{Rom3}.

\medskip

We now claim that the relation $X_m^\h \ni u_\h \leftrightarrow u \in X_m$ given by \eqref{o04} is equivalent to the one given by condition \eqref{m03} of Theorem \ref{Rom3}. Similarly, $Y_{m-1}^\h \ni v_\h\leftrightarrow v\in Y_{m-1}$ via \eqref{o05} is equivalent to $v_\h\leftrightarrow v$ via \eqref{m04}. Since by Theorem \ref{Rom3} this last two conditions define an isomorphism between $X_m^\h / X_{m-2}^\h$ and $X_m / X_{m-2}$ and between $Y_{m-1}^\h/Y_{m+1}^\h$ and $Y_{m-1}/Y_{m+1}$ which preserves the bilinear form, this is enough to conclude the proof of the theorem.

\medskip

We show the argument in the case of the growing functions. The other case follows analogously. \BR5 For the easy direction, if $u \in X_m$ and $u_\h \in X_m^\h$ are coupled by the relation \eqref{o04}, then using $\beta >1$ and $m\ge 2$ we immediately obtain~\eqref{m03}. 

%

\medskip
\ER

Vice versa, if we assume that $u$ and $u_\h$ are related by \eqref{m03}, then by Theorem \ref{Rom3} there exist $u' \in X_m$ and $u_\h' \in X_m^\h$ with $u'':= u-u' \in X_{m-2}$ and $u_\h'':= u_\h - u_\h'\in X_{m-2}^\h$ for which we have
\begin{align*}
\| u' - Eu_\h'\|_{m-\beta} \lesssim \| u_\h'\|_m.
\end{align*} 
Moreover, 
we apply Theorem \ref{T3} and obtain that for $y\in\frac{1}{2\sqrt{d}}\mathbb{Z}^d$,
\begin{align*}
\biggl( \fint_{|x - y| < 1} |\nabla (u' - Eu_\h')|^2\biggr)^\oh &\leq r_*(y)^{\frac d 2} \biggl( \fint_{|x - y| < r_*(y)} |\nabla (u' - Eu_\h')|^2\biggr)^\oh
\\
&\stackrel{\eqref{growth.loc}}{\lesssim} r_*(y)^{\frac d2} \biggl( \frac{|y|}{r_*(0)} \biggr)^{m-1-\bar\beta} \|u_\h'\|_m,
\\
&\stackrel{\eqref{r.star.log2}}{\lesssim} \BR5 r_*(0)^{\bar\beta-\beta}|y|^{\frac{\beta-\bar\beta}{2}}  \ER \biggl( \frac{|y|}{r_*}\biggr)^{m-1-\beta} \|u_\h'\|_m,
\end{align*} 
which, since we may cover each ball $\{ |x-y| < 1 \}$ with an order-one number of unit balls at points of the lattice $\frac{1}{2\sqrt{d}}\mathbb{Z}^d$, implies that the couple $u', u_\h'$ satisfies \eqref{o04}. By the triangle inequality it remains to show that also $u'', u_\h''$ satisfy \eqref{o04}. For every $y \in \mathbb{R}^d$ with $|y|=: R \gg 1$ we may use the triangle inequality and Lemma \ref{Rom5} to bound
\begin{align*}
|y|^{-(m-1-\beta)} \biggl( \fint_{|x - y| < 1}&|\nabla (u'' - Eu_\h'')|^2\biggr)^\oh \\
&\lesssim r_*(y)^{\frac d 2} R^{-(m-1-\beta)}\biggl( \fint_{|x-y| < r_*(y)} |\nabla u''|^2 + |\nabla u_\h''|^2\biggr)^\oh.
\end{align*}
We now appeal to Lemma \ref{Lup}, 
this time centered at $y$; we remark that this is allowed since the estimate on $r_*(y)$ and its definition (see Proposition \ref{stoch.results}) imply that conditions \eqref{F1} and \eqref{F2} are satisfied also when centered at $y$, with $r_*$ substituted by $r_*(y)$. 
Therefore, we may bound
\begin{align*}
|y|^{-(m-1-\beta)} \biggl( \fint_{|x - y| < 1}&|\nabla (u'' - Eu_\h'')|^2\biggr)^\oh\\
& \lesssim r_*(y)^{\frac d 2} R^{-(m-1-\beta)}\biggl( \fint_{|x - y| < R} |\nabla u''|^2+ |\nabla u_\h''|^2\biggr)^\oh,
\end{align*}
and use the fact that $|y|= R$, \BR5 and thus $\{|x-y| < R\} \subset \{|x| < 2R\}$, \ER to infer
\begin{align*}
|y|^{-(m-1-\beta)} \biggl( \fint_{|x - y| < 1}&|\nabla (u'' - Eu_\h'')|^2\biggr)^\oh\\
& \lesssim r_*(y)^{\frac d 2} r_*^{-(m-3)} R^{\beta-2}\bigl( \|u''\|_{m-2}+ \|u_\h''\|_{m-2} \bigr),
\end{align*}
which implies that \eqref{o04} holds also for the couple $u'', u_\h''$. This yields the theorem in the case of the spaces $X_m$ and $X_m^\h$.
\end{proof}

\appendix
\section{Proof of Corollary~\ref{r.star.log}.}

\begin{proof}

\BR5
Without loss of generality we show the argument only for $\phi$, and also assume $y=0$. 
\ER 
Let $0 < \alpha < 1$ be fixed, and define $\tilde r = \tilde r(a) \ge 1$ as the smallest radius such that for $R \ge \tilde r$
\begin{equation}\nonumber
 \frac1R \biggl( \fint_{|x| < R} \phi^2 \biggr)^{\frac12} \le \biggl( \frac{\tilde r}{R} \biggr)^\alpha.
\end{equation}
Given $\tilde r$, we redefine $r_*$ from Proposition~\ref{stoch.results} by $\max(r_*,\tilde r)$, so that for the new $r_*$ both~\eqref{T.2} and~\eqref{T.1} are satisfied.

\medskip
We now show that all algebraic moments of $\tilde r$ are finite, which then implies the same for the redefined $r_*$ (since the original $r_*$ also had finite algebraic moments, see Proposition~\ref{stoch.results}).  

\medskip
For that purpose we define $r_1=r_1(a)$ as the smallest (dyadic) radius satisfying for any dyadic $R \ge r_1$ (i.e. $R=2^k$ for some $k \in \mathbb{N}$) 
\begin{equation}\label{c22}
 \biggl( \fint_{|x| < R} \phi^2 \biggr)^{\frac12} \le 2^{-(d/2+1)} r_1^\alpha R^{1-\alpha}.
\end{equation}
By the choice of the prefactor $2^{-1-d/2}$ we see that for any (non-dyadic) $R \ge r_1$, and for $2^k$ such that  $2^{k-1} < R \leq 2^{k}$, we have as desired
\begin{align}\nonumber
 \frac1R \biggl( \fint_{|x| < R} \phi^2 \biggr)^{\frac12} 
&\le \frac1R \frac{2^{kd/2}}{R^{d/2}} \biggl( \fint_{|x| < 2^k} \phi^2 \biggr)^{\frac12}
\le  \biggl( \frac{r_1}{R} \biggr)^\alpha,
\end{align}
i.e. $r_1$ provides an upper bound for $\tilde r$. Hence, in order to show that $\en{ \tilde r^p } < \infty$ for any $1 \le p < \infty$ it is enough to show that $\en{ r_1^p } < \infty$ for any $1 \le p < \infty$. 

\medskip 

To prove the previous moment bounds we inspect the probability of the event $A_n := \{ r_1 > n \}$. In case of this event, by the definition of $r_1$ 
there exists a dyadic (random) $R \geq n$ for which~\eqref{c22} is false, in particular (after taking $p$-th power), for which it holds
\begin{equation}\nonumber
 \biggl( \fint_{|x| < R} \phi^2 \biggr)^{p/2} \ge 2^{-p(d/2+1)} n^{p\alpha} R^{p(1-\alpha)}.
\end{equation}
The left-hand side can be bounded from above using Jensen's inequality (while we momentarily restrict to $p \geq 2$):
\begin{equation}\nonumber
 \biggl( \fint_{|x| < R} \phi^2 \biggr)^{p/2} \lesssim R^{-d} \sum_{x_i} \biggl( \int_{|x-x_i|<1} \phi^2 \biggr)^{p/2},
\end{equation}
where we covered $\{|x| < R\}$ with $\lesssim R^d$ unit balls $\{ |x-x_i|<1 \}$, and where $\lesssim$ means $\le C(d,p)$. If we combine the two previous inequalities we thus obtain that, whenever $r_1 > n$, for some (random) dyadic $R \geq n$ one has
\begin{equation}\nonumber
n^{-p\alpha} R^{-p(1-\alpha)} R^{-d} \sum_{x_i} \biggl( \int_{|x-x_i|<1} \phi^2 \biggr)^{p/2} \gtrsim 1.
\end{equation}
This implies, by the definition of $A_n$, 
\BR5
the fact that the random variable $R$ only assumes values $r \ge n, r=2^k$, 
\ER and the stationarity of $\phi$, that
\begin{align*}
 \en{ I(A_n) } &\lesssim \en{ n^{-p\alpha} R^{-p(1-\alpha)-d} \sum_{x_i} \biggl( \int_{|x-x_i|<1} \phi^2 \biggr)^{p/2}}\\
 & \lesssim \sum_{r \ge n, r=2^k} n^{-p\alpha} r^{-p(1-\alpha)} 
\biggl< \biggl( \int_{|x| < 1} \phi^2 \biggr)^{p/2} \biggr>
\\
&\lesssim n^{-p} \biggl< \biggl( \int_{|x| < 1} \phi^2 \biggr)^{p/2} \biggr>.
\end{align*}
Thanks to Proposition \ref{stoch.results}, this last estimate implies that for any $2 \le p < \infty$ and $n \in \mathbb{N}$, the probability of $A_n$ is $\lesssim n^{-p}$. This yields that $\langle\, r_1^p \, \rangle \lesssim 1$ for any $2 \leq p < +\infty$ 
and, by Jensen's inequality, that the same bound holds for any $1\leq p < +\infty$.


\medskip

We now show~\eqref{rstar:bound}. Using the stationarity of $r_*$ we bound
\begin{align*}
\bigg\langle \sup_{y\in \mathbb{Z}^d} ((1+ |y|)^{-\varepsilon}r_*(y))^q \bigg\rangle^{\frac 1 q} \leq \bigg\langle \sum_{y\in \mathbb{Z}^d}(1+|y|)^{-\varepsilon q}r_*(y)^q \bigg\rangle^{\frac 1 q }
\le \left\langle r_*^q \right\rangle^{\frac 1 q }\biggl( \sum_{y\in \mathbb{Z}^d}(1+|y|)^{-\varepsilon q} \biggr)^{\frac 1 q}.
\end{align*}
We now restrict to $q > \frac{2d}{\varepsilon}$ and appeal to the assumption on the moments of $r_*$ (see Proposition \ref{stoch.results}) to get that $\langle \sup_{y\in \mathbb{Z}^d} ((1+ |y|)^{-\varepsilon}r_*(y))^q \rangle < \infty$. This yields that \eqref{rstar:bound} holds almost surely. 
\end{proof}


\section{Proof of Lemma \ref{tensor.C}. \ } \hspace*{\fill} 
\label{subsect_lemma1}

\begin{proof}
We begin by observing that \eqref{C.bdd} is an immediate consequence of definition \eqref{ir14} of $C$ and the second moment bounds on $\nabla \psi_{ij}$, $\phi_i$, $\sigma_i$ of Proposition~\ref{stoch.results}, together with the
boundedness \eqref{i01} of $a$.

\medskip

We now turn to identity \eqref{definition.C}. 
%
%
By appealing to the stationarity of $\phi^*$ and $\nabla\psi$ (see Proposition \ref{stoch.results}) and equation \eqref{i2} for $\psi^*_k$ lifted to the probability space\footnote{For a rigorous proof of the previous identity see, for instance, \cite[formula after (26)]{BellaFehrmanFischerOtto}.}, we infer that 
\begin{align}\label{eq.a}
\langle e_k\cdot a\nabla \psi_{ij} \rangle = -\langle \nabla\phi^*_k \cdot a\nabla \psi_{ij}\rangle,
\end{align}
and so rewrite~\eqref{ir14} as
\begin{align*}
C_{ijk}= \langle  e_k\cdot (\phi_ia-\sigma_i)e_j - \nabla\phi^*_k\cdot a\nabla \psi_{ij} \rangle \stackrel{\eqref{ir14bis}}{=}\langle \phi_i e_k\cdot a e_j - \nabla\phi^*_k\cdot a\nabla \psi_{ij} \rangle.
\end{align*}
Equation \eqref{ir13} for $\psi_{ij}$ allows us, by arguing as for \eqref{eq.a}, to reduce to
\begin{align*}
C_{ijk} = \langle  \phi_i e_j\cdot \BR2 a^* \ER ( e_k + \nabla\phi^*_k) - \nabla\phi^*_k\cdot \sigma_i e_j \rangle.
\end{align*}
We finally use equation \eqref{i5} for $\sigma_k^*$ and \eqref{ir14bis} in form of $\langle \phi \rangle =0$ to get
\begin{align*}
C_{ijk}= \langle \phi_i e_j \cdot \nabla \cdot \sigma_k^*  - \nabla\phi^*_k \cdot \sigma_i e_j \rangle.
\end{align*}
Using integration by parts on the first terms yields $\en{ \phi_i e_j \cdot \nabla \cdot \sigma_k^* } = \en{- e_j \cdot \sigma_k^* \nabla \phi_i}$. Owing to skew-symmetry of $\sigma_i$ we rewrite the second term $- \nabla\phi_k^* \cdot \sigma_i e_j = e_j \cdot \sigma_i \nabla\phi_k^*$, to infer~\eqref{definition.C}.  


\medskip

It remains to show ii): We stress that the ensemble $\langle\cdot\rangle$ being centrally symmetric means that the two coefficient fields $a$ and $x\mapsto a(-x)$, which we denote by $a(-\cdot)$, 
have the same distribution under $\langle\cdot\rangle$. We note that if $\phi_i=\phi_i(a,\cdot)$
solves (\ref{i2}) for a given realization $a$, then $x\mapsto-\phi_i(a,-x)$ solves (\ref{i2}) for
the coefficient field $a(-\cdot)$. By uniqueness of a stationary corrector $\phi_i$ of
vanishing expectation we thus have (in \BR3 a $\en{\cdot}$-almost \ER sure sense) the transformation rule
\begin{align*}
\phi_i(a(-\cdot),x)=-\phi_i(a,-x).
\end{align*}
This entails $\nabla\phi(a(-\cdot),x)=\nabla\phi_i(a,-x)$ and thus by (\ref{i5})
\begin{align*}
\nabla\cdot\sigma_i(a(-\cdot),x)=\nabla\cdot\sigma_i(a,-x).
\end{align*}
Since the same holds for $\phi_k^*$ and $\nabla\cdot\sigma_k^*$ we obtain
\begin{align*}
(\phi_i\nabla\cdot\sigma_k^*)(a(-\cdot),x)=-(\phi_i\nabla\cdot\sigma_k^*)(a,-x)
\end{align*}
and thus by invariance of $\langle\cdot\rangle$ under central symmetry (and translation)
\begin{align*}
\langle\phi_i\nabla\cdot\sigma_k^*\rangle=0,
\end{align*}
which by integration by parts and (\ref{definition.C}) in particular yields $C_{ijk}=0$.
\end{proof}

\section{Construction of the second-order correctors $(\psi,\Psi)$}
\label{construction_psiPsi}

We adapt the proof \cite[Lemma 1]{GNO4} to construct a second-order corrector $\psi_{ij}$ and its flux potential $\Psi_{ij}$:

\begin{lemma}
Let $\langle \cdot \rangle$ be stationary and ergodic. In addition let us assume that the correctors $(\phi, \sigma)$ are stationary with bounded second moments. Then there exists a random stationary vector field $\nabla \psi$, which is curl-free, satisfies $\en{\nabla \psi} = 0$, has bounded second moment $\en{ |\nabla \psi|^2 } \lesssim \en{ |(\phi,\sigma)|^2}$, and $\en{\cdot}$-almost surely solves~\eqref{ir13}.
We define the random vector field
\begin{align}\label{Q}
q_{ij}:= a\nabla\psi_{ij} + (\phi_ia - \sigma_i)e_j - C_{ij},
\end{align}
with $C_{ij}$ as defined in~\eqref{ir14} and $i,j=1,\ldots,d$. 
Then 
there exists a random fourth-order tensor field $\Psi$, skew symmetric in its last two indices, such that its gradient field $\nabla\Psi$ is stationary, $\en{ \nabla \Psi } = 0$, it satisfies
\begin{align}\label{Q0}
\langle |\nabla\Psi|^2 \rangle \lesssim \en{ |(\phi,\sigma)|^2},
\end{align}
and $\langle \cdot \rangle$-almost surely solves
\begin{equation}\label{PsiE}
\nabla\cdot \Psi_{ij} = q_{ij}.
\end{equation}
Moreover, for all $k,l = 1,..., d$ it also holds
\begin{align}\label{PsiE2}
-\Delta\Psi_{ij,kl} = \partial_k q_{ij,l} -\partial_l q_{ij,k}.
\end{align}
\end{lemma}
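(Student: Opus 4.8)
The plan is to mimic the construction of the flux corrector $\sigma$ in \cite[Lemma 1]{GNO4}, now starting from the second-order corrector $\psi$ rather than from $\phi$. The two statements to prove are (i) existence of a stationary curl-free $\nabla\psi$ with vanishing expectation, bounded second moment, solving \eqref{ir13} almost surely, and (ii) existence of the flux corrector $\Psi$ with the stated properties. For (i), I would work on the probability space: the right-hand side of \eqref{ir13} is $\nabla\cdot\big((\phi_i a-\sigma_i)e_j\big)$, and since $\phi_i$ and $\sigma_i$ are stationary with bounded second moments (by hypothesis), the vector field $b_{ij} := (\phi_i a - \sigma_i)e_j$ is stationary and in $L^2(\en{\cdot})$. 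One then solves $-\nabla\cdot a\nabla\psi_{ij} = \nabla\cdot b_{ij}$ by the standard Lax–Milgram / regularization argument in the space of stationary curl-free $L^2$-fields of vanishing mean (adding a massive term $\mu\psi_{ij}$ and passing $\mu\downarrow 0$, exactly as in the construction of $\phi$ in \cite{papvar,Kozlov}, or as in \cite[Lemma 1]{GNO4}). The energy estimate gives $\en{|\nabla\psi_{ij}|^2}\lesssim\en{|b_{ij}|^2}\lesssim\en{|(\phi,\sigma)|^2}$ using the boundedness \eqref{i01} of $a$. Curl-freeness of $\nabla\psi_{ij}$ and $\en{\nabla\psi_{ij}}=0$ are built into the function space; ergodicity then yields $\langle\cdot\rangle$-almost sure sublinearity of the potential $\psi_{ij}$ obtained by integrating the stationary gradient, and that it solves \eqref{ir13} in the physical space. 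This also makes $C_{ij}$ in \eqref{ir14} well-defined.

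For (ii), I would define $q_{ij}$ by \eqref{Q} and first record that it is stationary, in $L^2(\en{\cdot})$ with $\en{|q_{ij}|^2}\lesssim\en{|(\phi,\sigma)|^2}$ (again by boundedness of $a$), and has vanishing expectation: indeed $\en{q_{ij}} = \en{a\nabla\psi_{ij}+(\phi_i a-\sigma_i)e_j} - C_{ij} = 0$ by the very definition \eqref{ir14} of $C_{ij}$. Next, $q_{ij}$ is divergence-free: by \eqref{ir13} we have $\nabla\cdot a\nabla\psi_{ij} = -\nabla\cdot((\phi_i a-\sigma_i)e_j)$, and $\nabla\cdot C_{ij}=0$ since $C_{ij}$ is constant, so $\nabla\cdot q_{ij}=0$ in the physical space, hence (by stationarity) also on the probability space. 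Now I would construct each component tensor $\Psi_{ij} = \{\Psi_{ij,kl}\}_{k,l}$ componentwise: for fixed $k,l$ set $\Psi_{ij,kl}$ to be (the potential of) the stationary $L^2$-solution of the Poisson equation \eqref{PsiE2}, i.e. $-\Delta\Psi_{ij,kl} = \partial_k q_{ij,l}-\partial_l q_{ij,k}$, solved in the space of stationary curl-free mean-zero $L^2$-gradient fields via the same massive-term regularization. The right-hand side is a distributional divergence of an $L^2$ stationary field ($\partial_k q_{ij,l}-\partial_l q_{ij,k} = \nabla\cdot(e_k\otimes q_{ij,l} - e_l\otimes q_{ij,k})$ up to signs), so the construction applies and gives $\en{|\nabla\Psi_{ij,kl}|^2}\lesssim\en{|q_{ij}|^2}\lesssim\en{|(\phi,\sigma)|^2}$, which is \eqref{Q0}. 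Skew-symmetry in $(k,l)$ is immediate from the antisymmetry of the right-hand side in $k\leftrightarrow l$ and uniqueness. Finally I would verify \eqref{PsiE}, namely $\partial_l\Psi_{ij,kl} = q_{ij,k}$: introduce the auxiliary field $w_k := \partial_l\Psi_{ij,kl} - q_{ij,k}$; using \eqref{PsiE2}, skew-symmetry of $\Psi_{ij}$ and $\nabla\cdot q_{ij}=0$ one computes $-\Delta w_k = \partial_l(\partial_k q_{ij,l}-\partial_l q_{ij,k}) - (-\Delta)q_{ij,k} = \partial_k(\partial_l q_{ij,l}) = 0$ (the last because $\nabla\cdot q_{ij}=0$). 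Since $w_k$ is a stationary gradient field (it is $\nabla$ of a stationary sublinear potential) with vanishing mean that is harmonic, it vanishes: a stationary harmonic $L^2$-field has constant gradient, hence zero gradient by the mean-zero normalization — this is the standard Liouville argument in the stationary setting. This gives \eqref{PsiE}.

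The routine parts are the Lax–Milgram/regularization arguments and the energy estimates, which I would carry over verbatim from \cite[Lemma 1]{GNO4} with only cosmetic changes (replace $\phi$ by $\psi$, $a(e_i+\nabla\phi_i)-\ah e_i$ by $q_{ij}$). The one point deserving care — and I expect it to be the main obstacle, though a mild one — is the passage from the equations \eqref{PsiE2} on the components to the single vector identity \eqref{PsiE}: one must check that the curl-type compatibility of the right-hand sides of \eqref{PsiE2} across different $(k,l)$ is exactly what is needed for $\partial_l\Psi_{ij,kl}$ to reproduce $q_{ij,k}$, and this hinges on $\nabla\cdot q_{ij}=0$, which in turn is where the definition \eqref{ir14} of $C_{ij}$ (not its value) enters. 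I would therefore state the divergence-free property of $q_{ij}$ and its vanishing mean as the first two lemmas of the proof, then do the Poisson construction, then close with the $w_k\equiv 0$ argument. The only genuinely ensemble-specific input beyond \cite{GNO4} is the stationarity of $\psi$ (not just $\nabla\psi$), which we either take from Proposition~\ref{stoch.results} or, under mere stationarity and ergodicity, is exactly what part (i) delivers since a stationary $L^2$-gradient with vanishing mean has a sublinear potential.
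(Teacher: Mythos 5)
Your construction of $\nabla\psi$ is identical to the paper's: Lax--Milgram on the space of stationary, curl-free, mean-zero $L^2$ gradient fields on the probability space, with the energy estimate following from boundedness of $a$. For $\Psi$, however, you and the paper take genuinely different routes, and it is worth noting. You solve the Poisson equations \eqref{PsiE2} componentwise (rewriting the right-hand side as a divergence so the regularized Lax--Milgram construction applies), obtaining skew-symmetry for free from the antisymmetry of the right-hand side, and then you \emph{deduce} \eqref{PsiE} by showing that $w_k := \partial_l\Psi_{ij,kl} - q_{ij,k}$ is harmonic and hence vanishes. This is in fact the strategy of \cite[Lemma~1]{GNO4} for $\sigma$, transported verbatim as you anticipated. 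The paper instead projects $q_{j}\mathrm{I}$ in $L^2(\en{\cdot})$ onto the closed subspace $B$ of symmetric, curl-free, mean-zero tensor fields to get $b_j = \{b_{jkl}\}$, proves the trace identities $\sum_k b_{jkk}=q_j$ and $\sum_k b_{kkj}=0$ by testing the orthogonality relation against Hessians $D^2\zeta$, and then \emph{defines} $\nabla\Psi_{jk} := b_{jk\cdot} - b_{kj\cdot}$; both \eqref{PsiE} and \eqref{PsiE2} then follow by direct algebraic manipulation from the symmetry and curl-freeness of $b$. The projection argument avoids the Liouville step entirely, at the cost of introducing the auxiliary tensor $b$ and its trace identities; your route avoids $b$ but requires the rigidity statement $w_k\equiv0$. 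Either gives the lemma.

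One phrase deserves correction. You justify $w_k\equiv0$ by asserting that ``$w_k$ is a stationary gradient field (it is $\nabla$ of a stationary sublinear potential).'' That is not the case: $\partial_l\Psi_{ij,kl}$ is a divergence of a skew tensor, not a gradient, and $q_{ij,k}$ is not a gradient either, so $w_k$ has no reason to be curl-free. The correct and standard argument is the energy/ergodicity one: $w_k$ is stationary, $L^2$, mean-zero, and distributionally harmonic; testing $\Delta w_k = 0$ against $w_k$ in the ensemble (after the usual massive regularization that you already invoke for the construction, so that the necessary regularity of $D w_k$ is available at the regularized level) gives $\en{|D w_k|^2}=0$, hence $w_k$ is shift-invariant, hence constant by ergodicity, hence zero by $\en{w_k}=0$. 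The intermediate claim about $w_k$ being a gradient is not needed and is false as stated; but once you replace it by the energy argument, the step closes and the proposal is a sound, if different, proof of the lemma.
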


\begin{proof}
We first construct $\nabla \psi_{ij}$. Since we basically follow the argument of the first step of the proof of~\cite[Lemma 1]{GNO4}, we only briefly sketch the idea. We seek $\nabla \psi_{ij}$ as an element of $X := \{ g \in L^2(\Omega,\R^d) : D_j g_k = D_k g_j \textrm{ distributionally}, \en{g_j} = 0 \}$, where $D$ denotes the horizontal derivative, and observe that ellipticity of $a$ implies that $\en{ g \cdot a g } \ge \lambda \en{|g|^2}$ for all $g \in X$. By the Lax-Milgram there exists a unique $g \in X$ such that for all $\tilde g \in X$ we have $\en{\tilde g \cdot a g}= - \en{ \tilde g \cdot ((\phi_i a - \sigma_i)e_j)}$, which can be shown to satisfy $\en{ |g|^2 } \lesssim C_0$ and $D \cdot a g = - D \cdot ((\phi_ia -\sigma_i)e_j)$. It is this $g$ which plays the role $\nabla\psi_{ij}$. 

\medskip
It remains to construct $\nabla \Psi$. We start by observing that the stationarity of $(\phi,\sigma)$ and $\nabla\psi$ yield that $q_{ij}$ defined in \eqref{Q} is a stationary random field. 
In addition, we have that
\begin{align}\label{Q1}
\langle q_{ij} \rangle = 0 , \qquad \langle |q_{ij}|^2 \rangle \lesssim C_0, \qquad  D\cdot q_{ij}  \stackrel{(\ref{ir13})}{=}0.
\end{align}
In the remaining part of the proof, we consider $ij$ fixed and suppress it in the notation of $q$ and $\Psi$. 
Throughout this proof, we do not use Einstein's summation convention on repeated indices. We introduce the space of curl-free symmetric tensor fields of vanishing expectation
\begin{align*}
B:= \biggl\{ \tilde b \in L^2(\Omega, \mathbb{R}_{sym}^{d\times d}) \ : \  \partial_{k}\tilde b_{ij}= \partial_{j}\tilde b_{ik}, \ \ \langle \tilde b_{ij} \rangle =0 \biggr\}.
\end{align*}
For every $j =1,...,d$, let us consider $q_j {\rm I}$, where ${\rm I }$ denotes the $d \times d$ identity matrix, and let us denote by $b_j \in B$ the $L^2$-projection  of $q_j {\rm I}$ onto B. This implies that
\begin{align}\label{Q4}
\langle |b_j|^2 \rangle \leq \langle |q_j{\rm I}|^2 \rangle \stackrel{(\ref{Q1})}{\lesssim} |E|^2.
\end{align}
We now argue that the third-order tensor $b= b_{jkm}$ satisfies the identities
\begin{align}
\sum_k b_{jkk}= q_j, \qquad \sum_k b_{kkj}=0. \label{Q5}
\end{align}
To prove the first identity in (\ref{Q5}) we observe that since, $\{D^2\zeta : \zeta\in H^2(\Omega)\} \subset B$, it follows by orthogonality, integration by parts and the definition of $B$ that
\begin{align*}
0 &= \langle D^2 \zeta : (b_j - q_j{\rm I}) \rangle = \sum_{k,l}\langle D_{k}D_l\zeta (b_{jkl} - q_j\delta_{kl})\rangle 
= \sum_{k,l}\langle  D_{k}D_l\zeta \, b_{jkl} \rangle - \langle \text{trace}{(D^2\zeta)} q_j \rangle\\
& = -\sum_{k,l} \langle  D_l\zeta  D_{k}b_{jkl} \rangle - \langle\text{trace}{(D^2\zeta)} q_j \rangle = -\sum_{k,l} \langle  D_l\zeta  D_{l}b_{jkk} \rangle - \langle\text{trace}{(D^2\zeta)} q_j \rangle \\
&= \langle \text{trace}{(D^2\zeta)}(\text{trace}(b_j) - q_j) \rangle.
\end{align*}
Therefore, as by ergodicity $\{$trace$(D^2\zeta) \, \zeta \in H^2(\Omega) \}$ is dense in $B$ and since both $b_j$ and $q_j$ have vanishing expectation, identity (\ref{Q5}) follows. Similarly, the remaining identity in  \eqref{Q5} is implied once we argue that
\begin{align*}
\langle \text{trace}(D^2\zeta) (\sum_k b_{kkj}) \rangle = - \langle D D_j \zeta \cdot q \rangle \stackrel{(\ref{Q1})}{=} 0.
\end{align*}
This is obtained by integrating by parts the left-hand side in the line above and by observing that the curl-freeness and the symmetry conditions in the definition of $B$ combined with the first identity in (\ref{Q5}) yield
\begin{align*}
D_l D_l b_{kkj} = D_l D_j b_{kkl} = D_j D_l b_{kkl}= D_j D_l b_{klk}= D_{j}D_k b_{kll}.
\end{align*}
We now may extend the random tensor $b$ to a stationary random tensor field $b(a, x) := b(a( \cdot + x))$ such that, thanks to the relationship between horizontal derivatives $D_i$ and spatial derivatives $\partial_i$, it satisfies $\langle \cdot \rangle$- almost surely in the distributional sense
\begin{align*}
\partial_l b_{jkm}= \partial_m b_{jkl}.
\end{align*}
Therefore, there exists a field $\Psi= \Psi(a,x)$, defined up to a skew-symmetric constant tensor, such that 
\begin{align}\label{Q8}
\partial_l\Psi_{jk}= b_{jkl} - b_{kjl} \qquad \text{for all $j,k= 1,...,d$.}
\end{align}
Condition \eqref{Q0} trivially follows from \eqref{Q4} while equations (\ref{PsiE}) and (\ref{PsiE2}) are implied by the following calculations: By symmetry of $b$ it indeed holds
\begin{align*}
(\nabla \cdot \Psi)_j\stackrel{(\ref{Q8})}{=} \sum_{l} \partial_l\Psi_{jl} = \sum_l (b_{jll} - b_{ljl})=\sum_l (b_{jll} - b_{llj})\stackrel{(\ref{Q5})}{=} q_j,
\end{align*}
and by the symmetry and curl-freeness of $b$
\begin{align*}
-\Delta \Psi_{jk}&= -\sum_{l}\partial_{l}\partial_l \Psi_{jk} \stackrel{(\ref{Q8})}{=} -\sum_l (\partial_l b_{jkl}- \partial_l b_{kjl}) = -\sum_l (\partial_l b_{jlk}- \partial_l b_{klj})\\
&= -\sum_l (\partial_k b_{jll}- \partial_j b_{kll})\stackrel{(\ref{Q5})}{=} \partial_jq_k - \partial_k q_j.
\end{align*}
\end{proof}

\addtocontents{toc}{\protect\setcounter{tocdepth}{0}} 

\section{Proof of \texorpdfstring{\eqref{li01}}{} in Remark \texorpdfstring{\ref{remark.tensor.C}}{}.}
From (\ref{i5}) we infer the pointwise identity
\begin{align*}
\lefteqn{\big((x_i+\phi_i)a^* (e_k+\nabla\phi_k^*)-(x_k+\phi_k^*)a (e_i+\nabla\phi_i)\big)
            -\big( x_i        \ah^*e_k                - x_k          \ah e_i
 \big)}\nonumber\\
&=\big(\phi_i  \ah^* e_k+x_i\nabla\cdot\sigma_k^*+\phi_i  \nabla\cdot\sigma_k^*\big)
-\big(\phi_k^*\ah  e_i+x_k\nabla\cdot\sigma_i  +\phi_k^*\nabla\cdot\sigma_i  \big).
\end{align*}
We test the right-hand side with $\eta_R(x) = R^{-d} \eta(x/R)$, for a smooth non-negative cut-off function $\eta$ with $\int \eta = 1$. 
\BR5
Integrating by parts in the terms which include $\sigma$ while using sublinearity of $\phi$ and $\sigma$ to argue that the terms involving $\nabla \eta_R$ vanish in the limit $R \to \infty$, by sending $R \to \infty$ we see that by all three statements in Corollary~\ref{ergodicity} we obtain~\eqref{li01}. 
\ER



\section{Proof of estimate \texorpdfstring{\eqref{equi.v.full}}{}. \ } 

\BR5
Throughout this proof we use the notation $\lesssim$ for $\leq C$ with the constant $C=C(d, \lambda) < +\infty$. We will prove that any $\ah$-harmonic function $v$ in the annulus $\{ \frac 12 < |x| < 4 \}$ satisfies
\begin{equation}\label{eqPf1}
\sup_{1 < |x|< 2} ( |\nabla v| + |\nabla^2 v| + |\nabla^3 v|) \lesssim  \biggl(\fint_{\frac 12 <|x| < 4} |\nabla v|^2\biggr)^\oh,
\end{equation}
from which~\eqref{equi.v.full} will immediately follows by scaling. 

\medskip
To show~\eqref{eqPf1}, we first observe that by the Sobolev inequality we have for any function $v$ and for integer exponent $m > d/2$ 
\begin{equation}\nonumber
 \sup_{1 < |x|< 2} |v| \lesssim \sum_{k=0}^m \biggl( \int_{1 < |x| < 2} |\nabla^k v|^2 \biggr)^{\frac{1}{2}}
\end{equation}
and so to obtain~\eqref{eqPf1} it is enough to show that
\begin{equation*}
 \biggl( \int_{1 < |x| < 2} |\nabla^k v|^2 \biggr)^{\frac{1}{2}} \lesssim 
 \biggl( \int_{1/2 < |x| < 4} |\nabla v|^2 \biggr)^{\frac{1}{2}}
\end{equation*}
for $1 \le k \le d/2+5$. Since $\ah$ is constant, in particular any derivative of $v$ is again $\ah$-harmonic, the last estimate follows from iterating standard Caccioppoli's inequality. 
\ER



\def\cprime{$'$}
\providecommand{\bysame}{\leavevmode\hbox to3em{\hrulefill}\thinspace}
\providecommand{\MR}{\relax\ifhmode\unskip\space\fi MR }
\providecommand{\MRhref}[2]{%
  \href{http://www.ams.org/mathscinet-getitem?mr=#1}{#2}
}
\providecommand{\href}[2]{#2}



\end{document}